\documentclass[reqno]{amsart}
\usepackage{enumitem}
\usepackage{amssymb,mathtools}
\usepackage{hyperref}
\usepackage{graphicx} 
\usepackage{xcolor}
\usepackage[margin=1in]{geometry}
\usepackage{cite}

 \usepackage{mathtools}
 \mathtoolsset{showonlyrefs=true}

\newcommand{\cA}{{\mathcal{A}}}
\newcommand{\cB}{{\mathcal{B}}}

\newcommand{\cI}{{\mathcal I}}
\newcommand{\cH}{{\mathcal H}}
\newcommand{\sF}{{\mathsf{F}}}
\newcommand{\cL}{{\mathcal{L}}}
\newcommand{\G}{{\mathcal{G}}}
\newcommand{\N}{{\mathbb N}}
\newcommand{\R}{{\mathbb R}}

\newcommand{\T}{{\mathbb T}}

\newcommand{\cV}{{\mathcal{V}}}
\newcommand{\cW}{{\mathcal{W}}}
\newcommand{\cR}{{\mathcal{R}}}

\renewcommand{\d}{\mathrm{d}}
\newcommand{\ep}{\varepsilon}

\renewcommand{\u}{\boldsymbol{u}}
\newcommand{\bv}{\boldsymbol{v}}

\newcommand{\n}{\mathbf{n}}

\newtheorem*{theorem*}{Theorem}
\DeclareMathOperator\Div{div}
\DeclareMathOperator{\vspan}{span}
\DeclareMathOperator{\Id}{Id}

\newcommand{\norm}[1]{\left\|#1\right\|}
\newcommand{\abs}[1]{\left\vert#1\right\vert}
\newcommand{\avg}[1]{\left\langle #1\right\rangle}

\DeclareSymbolFont{bbold}{U}{bbold}{m}{n}
\DeclareSymbolFontAlphabet{\mathbbold}{bbold}
\newcommand{\ind}{\mathbbold{1}}

\newtheorem{proposition}{Proposition}[section]
\newtheorem{theorem}[proposition]{Theorem}
\newtheorem{lemma}[proposition]{Lemma}
\newtheorem{corollary}[proposition]{Corollary}

\theoremstyle{definition}
\newtheorem{definition}[proposition]{Definition}

\newtheorem*{question}{Question} 
\newtheorem{example}[proposition]{Example} 
\newtheorem{remark}[proposition]{Remark}

\theoremstyle{plain}
\newtheorem{informaltheorem}{Theorem}

\numberwithin{equation}{section}

\title{Stationary Structures of the Boussinesq equations}

\author{Marc Nualart}
\address{Instituto de ciencias matem\'{a}ticas, consejo superior de investigaciones cient\'{i}ficas, 28049 Madrid, Spain}
\email{marc.nualart@icmat.es}

\author{Jaemin Park}
\address{Department of Mathematics, Yonsei University, Yonseiro 50, Seodaemungu, Seoul 03722, South Korea}
\email{jpark776@yonsei.ac.kr}

\author{Javier Pe\~{n}afiel-Tom\'as}
\address{Institute of Mathematics, 
\'Ecole Polytechnique F\'ed\'erale
de Lausanne (EPFL), 1015 Lausanne, Switzerland}
\email{javier.penafieltomas@epfl.ch}

\begin{document}

\begin{abstract}
    We investigate the steady configurations supported by the inviscid two-dimensional Boussinesq equations in the periodic channel. We show steady laminar velocity fields (closed non-contractible fluid trajectories) coupled to non-decreasing density stratifications are necessarily stratified shear flows. The direction of density monotonicity along level sets is in fact fundamental: in striking contrast with the Euler equations, we exhibit laminar non-stagnant non-shear stationary solutions in the periodic channel. We also provide distinct flexibility and rigidity results (laminar persistence or appearance of islands for nearby non-shear solutions) close to stratified shear flows according to their stagnation points. As a by-product of our analysis, we establish a uniform Crandall-Rabinowitz bifurcation theorem that might be of independent interest.
\end{abstract}

\maketitle

\setcounter{tocdepth}{1}

\tableofcontents

\section{Introduction}
\label{intro:section}
In this paper we consider the stationary two-dimensional Boussinesq
equations posed in the periodic channel
$M\coloneqq \mathbb{T}\times(-1,1)$:
\begin{equation}\label{eq:BoussinesqVel}
\begin{cases}
\u\cdot\nabla\u+\nabla p=-\mathfrak{g}\rho e_2,\\
\u\cdot\nabla\rho=0,\\
\nabla\cdot\u=0,
\end{cases}
\end{equation}
where $e_2=(0,1){\in \R^2}$, together with the boundary condition
\[
\u\cdot n=0\quad\text{on }\partial M.
\]
Here $\u=(u,v)$ is the velocity, $\rho$ is the density variable,
and $p$ is the pressure. {For simplicity, we set the gravity constant $\mathfrak{g}=1$ throughout the manuscript.}

The Boussinesq system is a classical model for incompressible flows
in which variations of density are neglected in the inertial terms
but retained in the buoyancy force; see, for instance,
\cite{Pedlosky1987,Vallis2017}. Originating from the Boussinesq
approximation for stratified fluids, it provides one of the simplest
models in which fluid motion is coupled to an advected scalar through
gravity. The equations arise naturally in the study of atmospheric
and oceanic flows, thermal convection, and density-stratified fluids
\cite{Chandrasekhar1961,Majda2003}. In the inviscid setting, they may
be viewed as the incompressible Euler equations coupled to a
transported density field.
\newpage
Every pair of one-dimensional profiles $u$ and $\rho$ determines
a stratified shear equilibrium
\begin{equation}\label{intro:shear}
\u(y)=(u(y),0),
\qquad
\rho=\rho(y).
\end{equation}
Our aim is to understand the rigidity and flexibility of stratified
shear equilibria by providing answers to the following question:
\begin{question}\label{main_question}
\textit{Under what conditions must a stationary solution be a stratified shear flow,
and when can equilibria with nontrivial horizontal dependence occur?}
\end{question}

{For the Euler equations {($\rho\equiv 1$)}, the above question is posed for shear flows and has been recently answered in different directions. Regarding geometric characterizations, laminar steady Euler flow (closed, non-contractible fluid trajectories) in the channel/annulus is necessarily shear/radial, see \cite{drivas2024geometric}, and non-vanishing stationary solutions in the infinite channel, plane and annulus inherit the domain symmetries as well, \cite{hamel2017shear, hamel2019liouville, HM23, drivas2024geometric}. On the other hand, studying the stationary structures nearby specific steady states sheds light on the potential long-time limits of the Euler equations nearby them. In \cite{LinZeng}, Lin and Zeng established a regularity dichotomy for the steady states nearby Couette flow $u(y)=y$ in the periodic channel $M$: for all $\tau>0$, all steady states sufficiently close to Couette in $H^{\frac52+\tau}$ must be shear flow (local rigidity), and there exist non-shear steady solutions (local flexibility) arbitrarily close to Couette in $H^{\frac52-\tau}$.}

 \subsection{Main results}
We begin with rigidity results, giving conditions on steady Boussinesq flows that constrain them to be stratified shears. We later show the existence of {sufficiently smooth} non-shear solutions close to some stratified shear flows.

\subsubsection{\textbf{Global rigidity under laminar structure}}

{Inspired by the laminar definition in \cite{drivas2024geometric},} we first fix the geometric terminology used below.

\begin{definition}\label{def_laminar_shear}
An autonomous incompressible vector field $\u$ is called
\emph{laminar} if there exist a function
$\overline{\psi}\in C^1([-1,1])$, independent of the horizontal
variable, and a $C^1$ orientation-preserving
diffeomorphism $X:M\to M$ satisfying
\[
X(\mathbb T\times\{1\})=\mathbb T\times\{1\},\quad X(\mathbb T\times\{-1\})=\mathbb T\times\{-1\},
\qquad
\u=\nabla^\perp\bigl(\overline{\psi}\circ X^{-1}\bigr).
\]
We say that $\u$ admits a \emph{$C^k$ laminar representation} if both $X$ and $\overline{\psi}$ can be chosen to be of class $C^k$. Note that in this case $\u \in C^{k-1}(\overline{M})$. {Moreover, if $\u\in C^{k-1}(\overline{M})$, $|\u|>0$ in $\overline{M}$ and $\u\cdot n = 0$ on $\partial M$ then $\u$ admits a $C^k$ laminar representation.} A velocity field whose stream function is independent of the horizontal variable is called a \emph{shear flow}. See Figure~\ref{fig1}.

\begin{figure}[h]
\includegraphics[scale=1.1]{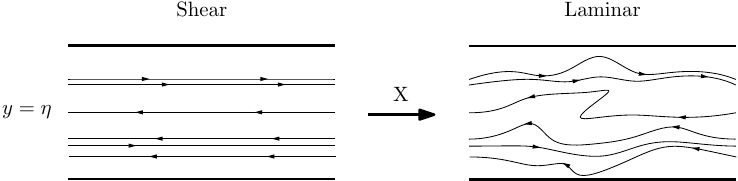}
\caption{Shear and laminar flows}
\label{fig1}
\end{figure}

For a fixed laminar representation, we call a leaf
$\Gamma_\eta:=X(\mathbb T\times\{\eta\})$
\emph{stagnant} if $\u$ vanishes identically on it,
and \emph{non-stagnant} otherwise.

A scalar function $\rho$ is called \emph{nondecreasing
across the leaves} if it is constant on every non-stagnant
leaf and, for any two non-stagnant leaves $\Gamma_{\eta_1},\Gamma_{\eta_2}$ with
$\eta_1<\eta_2$,
\[
\rho|_{\Gamma_{\eta_1}}
\leq \rho|_{\Gamma_{\eta_2}}.
\]
\end{definition}

For a laminar velocity field, the curves
\[
\mathbb T\ni\xi\longmapsto X(\xi,\eta),
\qquad \eta\in[-1,1],
\]
form a foliation of $M$ by simple closed
\emph{non-contractible} curves, that is, curves not homotopic
to a point in $M$. The velocity is tangent to each leaf,
and every non-stagnant leaf is a streamline.
Laminarity itself imposes no condition on the density
and allows entire open regions of stagnation.
Every shear flow is laminar, with $X$ equal to the identity.

Our first rigidity result concerns laminar stationary solutions
whose density is nondecreasing across the non-stagnant leaves. 

\begin{informaltheorem}[\textbf{=Theorem~\ref{laminar_monotone_rigidity_formal}}]\label{informal_rigidity_1}
Let $(\u,\rho)\in {C^1(\overline{M})}$ be a stationary
solution to \eqref{eq:BoussinesqVel}. Assume that
\begin{enumerate}[label=\textbf{(A\arabic*)}]
\item \label{laminar}
\textnormal{(Laminar structure)}
$\u$ admits a {$C^2$} laminar representation.
\item \label{density_monT}
\textnormal{(Density monotonicity)}
$\rho$ is nondecreasing across the leaves.
\end{enumerate}
Then $(\u,\rho)$ is a shear equilibrium, that is, $\partial_x\u=0$ and $\partial_x\rho=0$.
\end{informaltheorem}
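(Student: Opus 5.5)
The plan is to convert the momentum equation into a semilinear elliptic equation for the stream function that carries a sign coming from \ref{density_monT}, and then to run a rigidity argument for that equation. The argument extends the laminar rigidity for Euler in \cite{drivas2024geometric}, with the density entering only through a term of definite sign.

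\textbf{Step 1: a functional relation on the non-stagnant region.} Write $\u=\nabla^\perp\psi$ with $\psi=\overline\psi\circ X^{-1}$, and let $\omega=\Delta\psi$. Using $\u\cdot\nabla\u=\nabla\frac{|\u|^2}{2}-\omega\,\u^\perp$ and the identity $\rho e_2=\nabla(\rho y)-y\nabla\rho$, the first equation of \eqref{eq:BoussinesqVel} becomes
\begin{equation}
\nabla B = \omega\,\nabla\psi + y\,\nabla\rho,\qquad B:=\tfrac12|\u|^2+p+\rho y .
\end{equation}
In particular $\u\cdot\nabla B=0$.

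\begin{itemize}
\item On the union of non-stagnant leaves, $\rho$ and $B$ are constant on leaves. Laminarity then lets me write $\rho=f(\eta)$ and $B=\beta(\eta)$ in the leaf coordinate $\eta$, where $X^{-1}=(\xi,\eta)$.
\item Hypothesis \ref{density_monT} says $f$ is nondecreasing along non-stagnant leaves.
\item On open stagnant regions, $\u\equiv0$ gives $\nabla p=-\rho e_2$. Hence $\partial_x\rho=0$ there, and $\rho$ depends only on $y$.
\end{itemize}

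Taking the gradient along $\nabla\eta$ gives the key identity
\begin{equation}
\omega\,\overline\psi'(\eta)=\beta'(\eta)-y\,f'(\eta),
\end{equation}
at least where $\overline\psi'(\eta)\neq0$. The $C^2$ regularity in \ref{laminar} is used here, so that $\eta$ is $C^2$. The monotonicity of $f$ enters only through the sign of the $-y f'(\eta)$ term. If $f$ is merely monotone, I would work distributionally with the measure $df$.

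\textbf{Step 2: integral identities on sub-channels.} For a leaf $\Gamma_c$, let $D_c$ be the region between $\mathbb T\times\{-1\}$ and $\Gamma_c$.

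\begin{itemize}
\item Integrate the momentum equation in divergence form over $D_c$. Since $\u\cdot n=0$ on $\Gamma_c$ and on the bottom boundary, the convective flux drops out.
\item The horizontal component then gives $\oint_{\Gamma_c}p\,n_x=0$.
\item The vertical component gives $\oint_{\Gamma_c}p\,n_y-\int_{\mathbb T}p(x,-1)\,dx=-\int_{D_c}\rho$.
\item Substitute $p=B-\frac12|\u|^2-\rho y$, using that $B$ and $\rho$ are constant on $\Gamma_c$. This yields
\begin{equation}
\oint_{\Gamma_c}|\u|^2 n_x=0,
\qquad
\oint_{\Gamma_c}\tfrac12|\u|^2 n_y = 2\pi(\beta(c)-\beta(-1)) -f(c)|D_c| +\int_{D_c}\rho + (\text{bottom terms}).
\end{equation}
\end{itemize}

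By monotonicity, $\int_{D_c}\rho\le f(c)|D_c|$, with equality iff $\rho$ is constant on $D_c$. This should give a one-sided inequality relating the kinetic energy flux through $\Gamma_c$ to a quantity depending only on $c$.

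I would then pair this with the analogous identity written in the region above $\Gamma_c$, where the inequality reverses. Combining the two, and differentiating in $c$ via the coarea formula with the leaf coordinate, should force $\oint_{\Gamma_c}(y-\bar y_c)\,f'\,|\nabla\eta|^{-1}=0$. Here $\bar y_c$ is the mean height of $\Gamma_c$, and this is the analogue of the Euler proof in which the horizontal Noether current vanishes.

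\textbf{Step 3 (main obstacle): concluding shear.} I must deduce from Step 2 either that $f'=0$ or that $\Gamma_c$ is horizontal, i.e.\ $y\equiv\bar y_c$ on $\Gamma_c$. In the first case the equation reduces to the Euler case, $\omega=\beta'/\overline\psi'$ is a function of $\psi$, and \cite{drivas2024geometric} applies leaf by leaf. In the second case $X$ is a shear diffeomorphism.

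The difficulty is that a general leaf has both $y-\bar y_c$ signs along it, so the sign of $f'$ alone does not close the argument. I would first try to exploit the extra vanishing relation $\oint|\u|^2n_x=0$. The idea is to show that an extremal leaf which is non-horizontal produces a strict inequality in Step 2, and then to propagate horizontality from the boundary leaves $\eta=\pm1$ inward by a continuity argument in $c$.

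Stagnant leaves are handled by the observation in Step 1 that $\partial_x\rho=0$ there. Once every non-stagnant leaf is horizontal, $\psi$ and $\rho$ are independent of $x$, which gives $\partial_x\u=0$ and $\partial_x\rho=0$.
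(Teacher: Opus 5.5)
Your Steps~1--2 are correct as computations: the horizontal balance $\oint_{\Gamma_c}|\u|^2 n_x\,d\sigma=0$ and the vertical balance do follow from the equations. But the proof stops exactly where the theorem starts. Step~3 is only a plan, and Steps~1--2 cannot supply it.

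Every identity you derive follows from the stationary equations alone. It therefore also holds for the laminar non-shear states of Example~\ref{rmk:nontrivialsteady}, where the density decreases across the leaves. Monotonicity enters only through the sign of $\int_{D_c}\rho-f(c)|D_c|$, which equals $-\int_{-1}^{c}f'(s)\,|D_s|\,ds$ when $\rho=f(\eta)$ on $D_c$.
\begin{itemize}
\item In the vertical balance this term sits next to the unknown Bernoulli profile $\beta(c)$. The balance is therefore just a formula for $\beta$, and the sign is absorbed into it.
\item The horizontal balance does not involve $\rho$ at all.
\item Differentiating in $c$ and comparing with $\omega\overline\psi'=\beta'-yf'$ only returns consistency relations that every solution satisfies.
\item The relation $\oint_{\Gamma_c}(y-\bar y_c)f'|\nabla\eta|^{-1}d\sigma=0$ is either trivial (if $\bar y_c$ is the corresponding weighted mean) or unproved.
\item The idea that an ``extremal non-horizontal leaf gives a strict inequality'' has no identified mechanism.
\item The fallback of applying the Euler result ``leaf by leaf when $f'(c)=0$'' is not meaningful. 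Vanishing of $f'$ on one leaf gives no relation $\omega=F(\psi)$ on an open set, and that rigidity theorem is global.
\end{itemize}

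What is missing is a global comparison principle that turns the sign of $f'$ into geometry. The paper uses the variational structure. Stationarity means $\frac12\int|\u|^2-\int y\rho$ has vanishing first variation under measure-preserving rearrangements of the leaves; Lemma~\ref{B_condition} makes this rigorous through the first integral $\mathcal P=\overline Q'(Y)$. The argument then runs as follows.
\begin{itemize}
\item Reflect the lower half-channel across $y=0$ with an arbitrary horizontal shift $\lambda$, and apply continuous Steiner symmetrization about $y=0$.
\item This does not increase the Dirichlet energy. Precisely because the density is nondecreasing across leaves, it also does not increase the potential energy $\int|y|\theta_\lambda$.
\item Criticality then forces the first variation of the Dirichlet energy to vanish. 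Brock's theorem (Theorem~\ref{dirichelet_lemma}) gives local symmetry of $\phi_\lambda$.
\item Letting $\lambda$ range over $\mathbb T$ forces every non-stagnant leaf to be horizontal.
\end{itemize}
For non-vanishing velocity, the sliding and maximum-principle argument of Theorem~\ref{thm:mainnonzerovelocityrigidity} is an alternative comparison mechanism.

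You also skipped a necessary reduction. Hypothesis \textbf{(A2)} constrains $\rho$ only on non-stagnant leaves. Inside stagnant bands $\rho=\rho(y)$ can be arbitrary, so your inequality $\int_{D_c}\rho\le f(c)|D_c|$ can fail; and $\overline\psi'$ may change sign. The paper first passes to the modified pair of Proposition~\ref{monotonicity_lemma}. It replaces $\overline\psi'$ by $|\overline\psi'|$, which is legitimate since $\bv\otimes\bv=\u\otimes\u$, and replaces $\overline\rho'$ by $\ind_{\mathcal R}\overline\rho'$. It then shows the discarded part of $\rho$ depends only on $y$ by proving $\partial_x\rho=0$ at \emph{every} stagnation point via the vorticity equation, not just on open stagnant regions as in your Step~1.
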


    Note that the density transport equation in \eqref{eq:BoussinesqVel}
already ensures that $\rho$ is constant on each non-stagnant
leaf. Thus, the additional restriction in
assumption~\ref{density_monT} concerns the monotonicity of density
across distinct leaves and the resulting buoyancy forces, {and in general it cannot be dropped in view of the example below.}

\begin{example}\label{rmk:nontrivialsteady}
Fix $k_1,k_2\in\mathbb Z\setminus\{0\}$, set
$\mu=k_1^2+4\pi^2k_2^2$, and define
\[
\psi_A(x,y):=-\frac{y}{\mu}
+A\sin(k_1x)\sin(2\pi k_2y),
\qquad A\in\mathbb R.
\]
A direct computation shows that
\[
\u_A=\nabla^\perp\psi_A:=(-\partial_y\psi_A,\partial_x\psi_A),\qquad
\rho_A=\psi_A
\]
form a stationary solution. If $A\ne0$, this state has nontrivial horizontal dependence.

For $2\pi|k_2A|<1/\mu$, we have $\partial_y\psi_A<0$ throughout $M$.
The flow is therefore non-stagnant and laminar, with streamlines
given by periodic graphs winding around the channel.
As $A\to0$, these nontrivial states converge in $C^k$
for every finite $k$ to
\[
\u_0=(1/\mu,0),\qquad \rho_0=-y/\mu.
\]
For $2\pi|k_2A|\geq1/\mu$, the same family has stagnation points
on $y=0$.
\end{example}

{The example illustrates that the stationary structures in the Boussinesq equations are radically different from those of the Euler equations. In stark contrast with the Eulerian setting, where non-vanishing steady flows in the channel must be shear flows, \cite{drivas2024geometric, CDG21, hamel2017shear}, the example shows the existence of $x$-dependent stationary $(\u,\rho)$ solutions with non-vanishing velocity $|\u|>0$. Gravity effects are thus responsible for the Boussinesq-steadiness of configurations that are otherwise unsteady for the Euler equations.}

 \begin{remark}
The theorem allows degeneracy in both the velocity and the density, {and it is the most general Boussinesq analogue to the geometric characterization of steady Euler flow in a channel \cite{drivas2024geometric}.}
The velocity may vanish on individual leaves or throughout open
regions, while the density need not be strictly increasing
across the non-stagnant leaves, in particular, density level sets do not even have to be non-contractible curves in stagnant regions. No nonvanishing assumption on either
$\u$ or $\nabla\rho$ is imposed. See Appendix \ref{app:rigidity} for shorter, simpler proofs for Theorem \ref{informal_rigidity_1} under the additional assumptions of $\u\neq 0$ and $\rho$ non-decreasing across the leaves of $\u$ (Theorem \ref{thm:mainnonzerovelocityrigidity}); and $\nabla \rho \neq 0$ with $\rho(x,-1)<\rho(x,1)$ (Theorem \ref{thm:maindensityrigidity}). In view of Example \ref{rmk:nontrivialsteady}, the assumed monotonicity of $\rho$ in both theorems cannot be dispensed with.
\end{remark}

\begin{remark}
{Density stratifications $\rho$ with $\partial_y \rho\geq 0$ are usually called unstably stratified, since heavier fluid is placed above lighter fluid and buoyancy is expected to trigger unstable dynamics such as growth of vorticity and finite-time blow-up \cite{elgindi2020finite, elgindi2023instability,kiselev2025small,chen2021finite}. Theorem~\ref{informal_rigidity_1}, and also Theorem~\ref{thm:mainnonzerovelocityrigidity}, Theorem~\ref{thm:maindensityrigidity} can be understood in this broader picture: such configurations are not steady unless $\rho=\rho(y)$, so that the buoyancy force $-\rho e_2$ is constant along each horizontal layer of fluid and can be absorbed into an internal pressure. On the other hand, Example~\ref{rmk:nontrivialsteady} shows the existence of stratified flows $\rho$ with $\partial_y\rho \leq 0$, usually called stably stratified, such that they are stationary and $\partial_x\rho\neq 0$: while the stable stratification may accommodate steady vertical motion, they may also develop growth \cite{bianchini2026nonexistence}}.
\end{remark}

 \color{black}Concerning the proof of
Theorem~\ref{informal_rigidity_1}, we first reduce the problem
to one in which both the stream function and the density
are nondecreasing across the leaves.
More precisely, under assumptions~\ref{laminar}
and~\ref{density_monT}, we modify these quantities while
preserving the stationary Boussinesq equations.
The modified pair is a weak stationary solution whose
stream function and density are nondecreasing functions
of the leaf label
(Proposition~\ref{monotonicity_lemma}).
We then prove that this modified solution is a shear
equilibrium (Theorem~\ref{Rigidity_modified_solutions}),
which implies the same conclusion for the original solution.

The second observation concerns the variational structure
of the stationary equations; a stationary solution
$(\u=\nabla^\perp\psi,\rho)$ satisfies
\begin{align}\label{energy_variation_reformulation}
\int_M
\bigl(\Delta\psi\,\nabla\psi+y\nabla\rho\bigr)
\cdot\boldsymbol{v}\,{\d x}\,{\d y}=0
\end{align}
for every $\boldsymbol{v}\in C^1(\overline M;\mathbb R^2)$
such that
\[
\nabla\cdot\boldsymbol{v}=0
\quad\text{in }M,
\qquad
\boldsymbol{v}\cdot n=0
\quad\text{on }\partial M.
\]
This identity formally expresses stationarity of the functional
\[
\mathcal A(\u,\rho)
:=\frac12\int_M|\u|^2\,{\d x}\,{\d y}
-\int_M y\rho\,{\d x}\,{\d y}
\]
under a measure-preserving transport of the stream
function and density.
Indeed, let
\[
\partial_t\widetilde\psi
+\boldsymbol{v}\cdot\nabla\widetilde\psi=0,
\qquad
\partial_t\widetilde\rho
+\boldsymbol{v}\cdot\nabla\widetilde\rho=0,
\qquad
(\widetilde\psi,\widetilde\rho)|_{t=0}=(\psi,\rho).
\]
Then, formally,
\[
\left.\frac{\d}{\d t}
\mathcal A(\nabla^\perp\widetilde\psi,\widetilde\rho)
\right|_{t=0}
=
\int_M
\bigl(\Delta\psi\,\nabla\psi+y\nabla\rho\bigr)
\cdot\boldsymbol{v}\,{\d x}\,{\d y}
=0.
\]

With this observation, we design a suitable perturbation by means of the continuous
Steiner symmetrization \cite{brock1995continuous,Brock,Brock2}, resulting in a  local symmetry
of the stream function in the sense of Brock
(Definition~\ref{local_symmetry_def}).

Local symmetry alone, however, does not ensure that a solution
must be a shear flow.
To obtain the desired conclusion, we apply the symmetrization
argument to reflected extensions with arbitrary horizontal
shifts motivated by \cite{drivas2024geometric} (Subsection~\ref{sliding_reflection}).
The local symmetries obtained for these shifts then yield that
the stream function must be independent of the horizontal
variable.\color{black}

\subsubsection{\textbf{Global rigidity based on Richardson-number nonresonance}}
  As shown in the example, neither laminar geometry nor the absence of stagnation alone forces a stationary state to be a shear. We investigate how additional conditions on the density and velocity ensure rigidity and how they interact with the topology of approximation to determine the stationary structures nearby.
A different rigidity mechanism
arises from the local relation between the velocity shear
and the density gradient where the horizontal velocity vanishes.

For a shear equilibrium
$((u(y),0),\rho(y))$,
the Richardson number is defined by
\[
{\mathcal J}(y)
:=-\frac{\rho'(y)}{| u'(y)|^2}
\]
whenever $u'(y)\neq 0$. {It quantifies the competing stabilizing effects of density stratification and shear stresses. The celebrated Miles-Howard stability theorem \cite{miles1961stability, howard1961note} ensures spectral stability of stratified shear flows with $\mathcal{J}(y)\geq \frac14$, and $\mathcal{J}(y)$ has been recently seen to govern linear inviscid damping decay rates for a large family of stably stratified strictly monotone shear flows \cite{enciso2025linear}.}
At simple stagnation points, certain discrete values of the Richardson number are associated with possible finite orders of vanishing
of the vertical velocity. Excluding these values yields
the following rigidity result.

\begin{informaltheorem}[\textbf{Theorem~\ref{thm:rigidity_noncrit_J}}]\label{informal_rigidity_2}
Let $(\u,\rho)$ be a smooth stationary Boussinesq solution,
with $\u=(u,v)$ and a nonempty zero set $\Gamma_u:=\{u=0\}$.
Suppose that $|\partial_yu|>0$ on $\Gamma_u$ and that
$\partial_y\rho$ is not identically zero on any of its
connected components. If
\[
\mathcal J:=-\frac{\partial_y\rho}{|\partial_yu|^2}
\notin\{-m(m-1):m=2,3,\ldots\}
\qquad\text{on }\Gamma_u,
\]
then $\u$ is a shear flow and $\rho$ is stratified.
\end{informaltheorem}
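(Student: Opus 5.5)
The plan is to show that the vertical velocity $v$ vanishes to infinite order on $\Gamma_u$, and then propagate $v\equiv0$ to all of $M$ through an elliptic equation satisfied by $v$. In effect, at a critical layer the stationary equations reduce to a Taylor--Goldstein-type ODE. Its indicial roots $s$ satisfy $s(s-1)+\mathcal J=0$. The excluded values $\mathcal J=-m(m-1)$ are exactly those for which an integer exponent $m\ge2$ is admissible, and this is what would allow a finite order of vanishing of $v$.

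\textbf{Step 1 (local structure and $v=0$ on $\Gamma_u$).} Since $|\partial_yu|>0$ on $\Gamma_u$, the implicit function theorem writes each component of $\Gamma_u$ locally as a graph $y=h(x)$; by periodicity and compactness it is a smooth closed curve, either non-contractible or contractible. On $\Gamma_u$ the transport equation gives $v\,\partial_y\rho=0$. Hence $v=0$ on the open subset of $\Gamma_u$ where $\partial_y\rho\neq0$. I would then use the vorticity equation $\u\cdot\nabla\omega=-\partial_x\rho$ together with $\nabla\cdot\u=0$ to extend this. On an open arc of $\Gamma_u$ where $\partial_y\rho\equiv0$, I would argue by differentiating along the arc and using continuity. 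The hypothesis that $\partial_y\rho\not\equiv0$ on each component is used precisely to start this propagation.

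\textbf{Step 2 (order of vanishing).} Suppose $v$ vanishes to some finite order $m\ge1$ at a point of $\Gamma_u$ in the direction transversal to $\Gamma_u$, say $v=a(x)(y-h(x))^m+O(|y-h|^{m+1})$ with $a\not\equiv0$. I would pass to coordinates flattening $\Gamma_u$, write $u=u_1(x)(y-h)+\dots$ with $u_1=\partial_yu\neq0$, and expand $\rho$ similarly. I would then insert these expansions into the transport equation for $\rho$ and into the curl of the momentum equation. Matching the lowest-order terms should give
\[
u_1^2\,m(m-1)\,a=-\partial_y\rho\,a ,
\]
that is, $m(m-1)=\mathcal J\cdot(-1)$, so $\mathcal J=-m(m-1)$. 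The case $m=1$ should be excluded separately: there $\nabla\cdot\u=0$ forces $\partial_xu=-\partial_yv$ on $\Gamma_u$, which is incompatible with $u\equiv0$ along the graph unless $a=0$. The hypothesis on $\mathcal J$ therefore rules out every finite $m$, so $v$ vanishes to infinite order on $\Gamma_u$.

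\textbf{Step 3 (globalization).} Away from critical points of $\psi$, locally $\rho=F(\psi)$, and the stationary equations become the Long-type equation $\Delta\psi-yF'(\psi)=G(\psi)$. Differentiating in $x$ gives
\[
\Delta v=\bigl(G'(\psi)+yF''(\psi)\bigr)v ,
\]
a linear elliptic equation for $v$. Strong unique continuation then yields $v\equiv0$ in a neighbourhood of $\Gamma_u$, and $v\equiv0$ on the connected region reached by continuation. I would conclude by showing that the set $\{v=0\text{ locally}\}$ is open and closed in $M$. In the resulting region $\u=(u(y),0)$, and then $u\,\partial_x\rho=0$ together with $\partial_x p=0$ and $\partial_yp=-\rho$ forces $\rho=\rho(y)$ wherever $u\neq0$; where $u=0$ we use $\partial_x$ of the $y$-momentum equation.

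\textbf{Main obstacle.} I expect the hardest part to be Step 3 near the critical set of $\psi$ and near stagnant regions, where $F$ and $G$ are not defined or not smooth. My first attempt would be to avoid $F$ and $G$ altogether. I would derive a linear system for $(v,\partial_x\rho,\partial_x\omega)$ directly from the equations and apply Carleman-type unique continuation for that system. The secondary difficulty is making the formal matching in Step 2 rigorous when $\partial_y\rho$ vanishes at isolated points of $\Gamma_u$.
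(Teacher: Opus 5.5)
Your overall architecture (infinite-order vanishing of $v$ on $\Gamma_u$, then unique continuation) is the paper's, but the two steps that carry the content are missing.

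\textbf{Flatness of $\Gamma_u$.} In Step~1 you never establish the fact everything else rests on: each component of $\Gamma_u$ is a horizontal line $\{y=h_0\}$ on which $\partial_y\rho$ vanishes nowhere. ``Differentiating along an arc where $\partial_y\rho\equiv0$'' gives no information on $v$; the paper instead shows that such arcs cannot exist.

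Write $a=\partial_yu$ and $b=\partial_y\rho$ on the graph $y=h(x)$, and set $\mathcal{B}=\{b\neq0\}$. On $\mathcal{B}$ the argument runs as follows.
\begin{itemize}
\item The transport equation gives $v=0$, and the vorticity equation at the resulting stagnation point gives $\partial_x\rho=0$.
\item First derivatives of $\u\cdot\nabla\rho=0$ give $\partial_xv=\partial_yv=0$, hence $\partial_xu=0$ and $h'=0$.
\item Second $y$-derivatives of $\u\cdot\nabla\rho=0$ and of $u^2\Delta v-u(\Delta u)v-\partial_y\rho\,v=0$ give $2ab'-ba'=0$ and $(2a^2-b)a'=0$. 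These force $a'=b'=0$.
\end{itemize}
So $b$ is a nonzero constant on each component of $\mathcal{B}$, and continuity of $b$ gives $\mathcal{B}=\T$.

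Without flatness your Step~2 is not correct. For $v=c(x)(y-h)^m+\cdots$ on a curved graph, $\Delta v=m(m-1)(1+h'^2)\,c\,(y-h)^{m-2}+\cdots$. The resonance condition therefore involves $|\nabla u|^2=(1+h'^2)(\partial_yu)^2$, not $(\partial_yu)^2$. Your exclusion of $m=1$ also fails: $u(x,h(x))=0$ and $\nabla\cdot\u=0$ give $\partial_yv=h'\partial_yu$ on $\Gamma_u$, which is compatible with $c=h'\partial_yu\neq0$. The paper excludes $m=1$ via $\partial_yv\,\partial_y\rho=0$ and $b\neq0$. Also, the correct leading balance is $(\partial_yu)^2m(m-1)\,c=+\partial_y\rho\,c$; the sign in your display does not give $\mathcal J=-m(m-1)$.

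\textbf{Unique continuation across $\Gamma_u$.} Strong unique continuation is not available near $\Gamma_u$, which is exactly where you need it. Since $u=v=0$ there, $\Gamma_u$ consists of critical points of $\psi$, so no relation $\rho=F(\psi)$ holds on any neighbourhood of it. Moreover, the equation $\Delta v-\frac{\Delta u}{u}v-\frac{\partial_y\rho}{u^2}v=0$ has an inverse-square potential of size $b/(a^2d^2)$, with $d$ the distance to $\Gamma_u$, along a whole curve. Standard unique continuation results do not cover this.

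You need neither $F,G$ nor a system. The scalar identity $u^2\Delta v-u(\Delta u)v-\partial_y\rho\,v=0$ follows from $\u\cdot\nabla\omega=-\partial_x\rho$, $\u\cdot\nabla\rho=0$ and $\nabla\cdot\u=0$, and holds everywhere. Once the line is flat (take $h_0=0$), write $u=y\alpha$ with $\alpha$ smooth and nonvanishing; the identity becomes $y^2\Delta v+Vv=0$ with $V$ smooth.

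The missing idea is the substitution $y=e^{-t}$, $w=e^{t/2}v(x,e^{-t})$. It gives $\partial_t^2w+e^{-2t}\partial_x^2w+Ww=0$ on a half-cylinder with $W,\partial_tW$ bounded, and it turns infinite-order vanishing into super-exponential decay as $t\to\infty$. A Carleman estimate with the linear weight $e^{\tau t}$ then forces $w\equiv0$ for large $t$, i.e.\ $v\equiv0$ in a strip around the line. The term $e^{-2t}\partial_x^2$ contributes a nonnegative commutator term there.

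Only after this does ordinary unique continuation propagate $v\equiv0$ to all of $M$. It is applied on the complement of thin strips, where $u\neq0$ and $\Delta u/u$, $\partial_y\rho/u^2$ are smooth. Your final deduction of $\rho=\rho(y)$ from $\partial_xp=0$ and $\partial_yp=-\rho$ is fine.
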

Note that no laminarity or global density-monotonicity assumption is imposed in the above theorem. As a corollary, we obtain local rigidity near shear equilibria with non-resonant simple stagnation points. {The proof uses the monotonicity of $u$ near its stagnation curve and the steady Boussinesq equations to show that any critical curve is in fact a flat horizontal curve, where the vertical velocity $v$ vanishes to infinite order and $\partial_y\rho$ is a non-zero constant. The vertical velocity $v$ is then seen to satisfy a degenerate elliptic PDE near the flat critical line, and a suitable Carleman estimate for $v$ propagates its boundary vanishing to the whole domain, obtaining a shear flow.}

{As a direct corollary of Theorem \ref{informal_rigidity_2} we obtain a local rigidity theorem for stratified shear flows.}

\begin{informaltheorem}[\textbf{Theorem~\ref{thm:rigidity_noncrit_shear_dens}}]\label{informal_rigidity_3}
Let $((u(y),0),\rho(y))$
be a shear equilibrium whose zeros form a smooth nonempty finite
set of simple interior stagnation points $y_i$. If
\[
{\mathcal J}(y_i)
\notin\{-m(m-1):m=1,2,\ldots\}
\]
at every stagnation point, then every smooth stationary
Boussinesq solution sufficiently close to this equilibrium
in \textcolor{black}{$C^2$} is a shear equilibrium.
\end{informaltheorem}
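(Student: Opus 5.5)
The plan is to deduce the statement from Theorem~\ref{informal_rigidity_2} (Theorem~\ref{thm:rigidity_noncrit_J}). The whole task is to check that its hypotheses hold for any smooth stationary solution $(\u,\rho)$ that is $\ep$-close in $C^2$ to the shear equilibrium $((u_0(y),0),\rho_0(y))$.

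The non-resonance sets differ in one value. The hypothesis here excludes $\mathcal J(y_i)\in\{-m(m-1):m\ge1\}$, which includes $m=1$, i.e.\ $\mathcal J(y_i)\ne0$, so $\rho_0'(y_i)\neq 0$. This extra condition is what we will use to guarantee that $\partial_y\rho$ is not identically zero near the perturbed stagnation set.

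\textbf{Step 1: locating the zero set of $u$.} Since $u_0$ has finitely many simple zeros $y_i\in(-1,1)$, there are $\delta,c>0$ with
\[
|u_0'(y)|\ge 2c \text{ on } |y-y_i|<\delta, \qquad |u_0|\ge 2c \text{ elsewhere on } [-1,1].
\]
For $\|u-u_0\|_{C^1}<c$ (in fact $C^1$ closeness suffices here), $u$ does not vanish outside the strips $\T\times(y_i-\delta,y_i+\delta)$. Inside each strip we have $\partial_y u\neq0$ and $u$ changes sign across it. By the implicit function theorem, $\Gamma_u$ is therefore a union of smooth graphs $y=h_i(x)$ with $h_i$ close to $y_i$. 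In particular $\Gamma_u$ is nonempty, its connected components are these graphs, and $|\partial_y u|\ge c>0$ on $\Gamma_u$.

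\textbf{Step 2: density and Richardson number on $\Gamma_u$.} On each graph, $C^1$ closeness of $\rho$ and $\rho_0$ gives
\[
|\partial_y\rho-\rho_0'(y_i)| \le |\rho_0'(h_i(x))-\rho_0'(y_i)|+\ep,
\]
which is small for small $\delta,\ep$. Since $\rho_0'(y_i)\ne0$, $\partial_y\rho$ never vanishes on the component, so it is certainly not identically zero there. Likewise $\mathcal J=-\partial_y\rho/|\partial_y u|^2$ is uniformly close to $\mathcal J(y_i)$ on the $i$-th component, because the denominator is bounded below.

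\textbf{Step 3: avoiding the resonant values.} The set $\{-m(m-1):m\ge2\}$ is discrete and unbounded. Hence $\mathcal J(y_i)$, which is not in it, has positive distance $d_i$ to it, and $\mathcal J(y_i)\neq0$ as well. Choosing $\ep$ and $\delta$ so that $|\mathcal J-\mathcal J(y_i)|<\min_i d_i$ on $\Gamma_u$ puts us exactly in the hypotheses of Theorem~\ref{thm:rigidity_noncrit_J}. That theorem then gives that $\u$ is a shear flow and $\rho$ is stratified.

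The main technical point is Step 1: showing that $C^1$ closeness of $u$ to $u_0$ makes $\Gamma_u$ a finite union of graphs with no spurious zeros. The uniform lower bound $|u_0|\ge 2c$ away from the $y_i$ handles this. $C^2$ closeness is more than enough for $\rho$ and $u$ here; it is presumably stated to match the regularity used inside Theorem~\ref{thm:rigidity_noncrit_J}.
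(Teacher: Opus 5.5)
Your proposal is correct and follows essentially the same route as the paper. The paper also locates the perturbed stagnation set as $n$ graphs $y=h_i(x)$ near the $y_i$ with $\partial_y u\neq 0$, observes that $\mathcal J$ on these graphs is uniformly close to $\mathcal J_0(y_i)$, and invokes Theorem~\ref{thm:rigidity_noncrit_J}. You are more explicit than the paper on two points: the $m=1$ exclusion ($\rho_0'(y_i)\neq 0$) is what guarantees $\partial_y\rho\not\equiv 0$ on each component, and only $C^1$ closeness of $(u,\rho)$ is actually used.
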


The density monotonicity in Theorem~\ref{informal_rigidity_1}
requires heavier fluid to lie above lighter fluid.
Although this arrangement can cause disturbances to grow
in time, it need not allow them to persist as stationary
states. A nontrivial stationary state requires fluid motion
and buoyancy to balance exactly. Within the laminar class,
the density monotonicity prevents this balance: the
symmetrization argument shows that the energy variations
can cancel only when the flow is a shear.
The theorem therefore allows dynamical instability
while ruling out nontrivial stationary configurations.

 \subsubsection{\textbf{Flexibility and local rigidity with non-stagnant velocity field}} We now turn to flexibility and ask whether nontrivial stationary
states can be found near a given shear while retaining its
laminar structure. We begin with shears without stagnation.

\begin{informaltheorem}
[\textbf{=Theorem~\ref{thm:nonzeroflexibility}}]
\label{informal_nonstagnant_flexibility}
Let $(( u_0(y),0),\rho_0(y))\in {C^4(\overline{M})}$ be a shear equilibrium with $u_0(y)\neq 0$ for all $y\in[-1,1]$.
For every $1\leq q<\infty$ and every $\varepsilon>0$,
there exists a nontrivial  laminar stationary
Boussinesq solution $(\u,\rho)$ satisfying
\[
\|\u-(u_0,0)\|_{L^q}
+\|\rho-\rho_0\|_{W^{1,q}}
<\varepsilon.
\]
\end{informaltheorem}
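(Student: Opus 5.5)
The plan is to construct the solutions by bifurcation from a modified shear equilibrium $((\tilde u(y),0),\tilde\rho(y))$. This modified shear stays close to $(u_0,\rho_0)$ in $L^q\times W^{1,q}$, but it is changed in a thin horizontal strip so that a linearized problem acquires a nontrivial kernel at horizontal frequency $k=1$.

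\textbf{Reduction to an elliptic problem.} Since $u_0\neq 0$, $\psi_0$ is strictly monotone in $y$. We therefore look for solutions with $\rho=F(\psi)$. The vorticity equation $\u\cdot\nabla\Delta\psi=-\partial_x\rho$ together with $\u\cdot\nabla\bigl(yF'(\psi)\bigr)=F'(\psi)\partial_x\psi=\partial_x\rho$ reduces stationarity to a Dubreil-Jacotin--Long type equation
\[
\Delta\psi + yF'(\psi)=G(\psi),
\]
with the sign to be fixed by the convention in \eqref{eq:BoussinesqVel}, and with $\psi$ constant on $y=\pm1$. For a shear $\tilde\psi(y)$ we choose $F,G$ so that $\tilde\psi$ solves this equation. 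Linearizing at $\tilde\psi$ along $\phi=\varphi(y)\cos x$ gives the Taylor--Goldstein equation with zero phase speed,
\[
\varphi''-\varphi-\Bigl(\frac{\tilde u''}{\tilde u}+\frac{\tilde\rho'}{\tilde u^2}\Bigr)\varphi=0,\qquad \varphi(\pm1)=0,
\]
up to sign conventions. We need this problem to have a one-dimensional kernel, i.e.\ the Schrödinger operator $-\partial_y^2+V$ with $V=\tilde u''/\tilde u+\tilde\rho'/\tilde u^2$ must have eigenvalue exactly $-1$.

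\textbf{Building the potential well cheaply.} The key observation is that we must not make $\tilde\rho'$ large: a well of strength $O(1)$ and width $\delta$ built from $\tilde\rho'$ alone would cost $\delta^{-1}\delta^{1/q}\not\to0$ in $W^{1,q}$. Instead we make the velocity small. In a strip $S_\delta$ of width $\delta$ around an interior point $y_*$, we let $\tilde u$ dip smoothly to size $\eta$ while keeping its sign, so the flow stays non-stagnant and laminar. If $\rho_0'$ has the wrong sign at $y_*$, we also add to $\rho_0$ a bump in $S_\delta$ with $O(1)$ derivative of the right sign; this costs only $O(\delta^{1/q})$ in $W^{1,q}$. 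The term $\tilde\rho'/\tilde u^2$ then produces a well of depth $\sim\eta^{-2}$ over width $\delta$, i.e.\ strength $\sim\delta\eta^{-2}$.

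The term $\tilde u''/\tilde u$ must be controlled alongside. It is of size $\delta^{-2}$, so the shape of the dip must be chosen with $\delta\eta^{-2}$ dominant, or the two terms must be combined consistently, e.g.\ with $\eta=\delta^{1/2}\lambda$. The free amplitude $\lambda$ then tunes the lowest eigenvalue continuously. By a continuity and intermediate value argument, there is a $\lambda_*$ at which the lowest eigenvalue equals $-1$, is simple, and crosses transversally as $\lambda$ varies. The velocity perturbation is $O(1)$ on a set of measure $\delta$, hence $O(\delta^{1/q})$ in $L^q$.

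\textbf{Bifurcation.} With $\lambda$ as the bifurcation parameter, we apply Crandall--Rabinowitz to the map $(\phi,\lambda)\mapsto\Delta(\tilde\psi_\lambda+\phi)+yF_\lambda'(\cdot)-G_\lambda(\cdot)$. We work on functions even and $2\pi$-periodic in $x$ vanishing at $y=\pm1$, so that the kernel is spanned by $\varphi(y)\cos x$; the frequencies $k\ge2$ are excluded because the corresponding eigenvalues $-k^2$ lie below the ground state. This yields nontrivial solutions $\psi=\tilde\psi_{\lambda}+s\varphi\cos x+o(s)$, with $\rho=F_\lambda(\psi)$.

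For $s$ small enough relative to $\eta$, we get $\partial_y\psi\neq0$, so the solution is laminar, and its distance to $\tilde\psi$ in $C^1$ is as small as we like. Choosing $\delta$ small first and then $s$ gives the required $\varepsilon$-closeness.

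\textbf{Main obstacle.} The delicate step is the dependence of all constants on the thin layer. Transversality, the invertibility on the complement of the kernel, and the size of the bifurcation neighbourhood must be uniform in $\delta$, or at least explicit, because $F$ and $G$ have derivatives blowing up as $\delta,\eta\to0$. This is exactly where the uniform Crandall--Rabinowitz theorem mentioned in the abstract should be invoked. The $C^4$ regularity of $(u_0,\rho_0)$ is what makes $F,G\in C^2$ on the range of $\psi$.
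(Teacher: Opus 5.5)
Your overall strategy of a thin strip where $|\tilde u|$ is small, a negative density slope there, and then bifurcation is the paper's. Bifurcating in a background amplitude at fixed frequency is a legitimate alternative to the paper's bifurcation in the wavenumber. But the eigenvalue step has genuine gaps.

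\textbf{First gap.} Fixing $k=1$ and requiring $-1$ to be the \emph{ground state} of $-\partial_y^2+V$ fails for some backgrounds. Take $u_0\equiv 1$, $\rho_0=-100y$: then $V\equiv-100$ and the ground state is $\pi^2/4-100<-1$. Your modifications cannot raise it. Writing $\varphi=\tilde u w$, one has $\int|\varphi'|^2+V\varphi^2=\int \tilde u^2|w'|^2+\tilde\rho' w^2$ and $\int\varphi^2=\int\tilde u^2 w^2$. So decreasing $|\tilde u|$ pointwise, or making $\tilde\rho'$ more negative, only lowers a negative ground state. Hence no $\lambda_*$ with ground state $-1$ exists, and your exclusion of the modes $k\ge 2$ has nothing to stand on. The paper instead chooses an integer $j_*$, depending on the background, with $-j_*^2$ below the unperturbed ground state, and constructs $2\pi/j_*$-periodic solutions.

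\textbf{Second gap.} You never exclude the $x$-independent mode. The kernel is one-dimensional only if $0$ is not an eigenvalue of $-\partial_y^2+V$ at $\lambda_*$, and nothing in your construction prevents a higher eigenvalue from sitting at $0$. The paper removes this possibility with an extra small density perturbation (Lemma~\ref{lemma: control eigenvalues no stag}(iii)).

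\textbf{Third gap.} The intermediate value theorem gives a crossing, not the transversality $\frac{d}{d\lambda}\mu_1(\lambda_*)\neq 0$ that Crandall--Rabinowitz needs when $\lambda$ is the bifurcation parameter. Since $\lambda$ enters through $\tilde u$, the Hellmann--Feynman integrand $\partial_\lambda V$ has no sign. This is repairable in two ways. In the $w$ variables, normalized by $\int\tilde u^2w^2=1$, one gets $\frac{d\mu}{d\lambda}=\int\partial_\lambda(\tilde u^2)\bigl(|w'|^2-\mu w^2\bigr)$, which has a strict sign for $\mu<0$ when $\tilde u^2$ depends monotonically on $\lambda$ at each point. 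Alternatively, vary the density amplitude as the paper does, where $\partial_aF_{\delta,a}\ge 0$.

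\textbf{Comparison and remaining remarks.} With these three fixes your route needs only the classical theorem. Your ``main obstacle'' then disappears: $\delta$ is fixed before bifurcating, so no uniformity in $\delta$ is required. The paper's uniform theorem (Theorem~\ref{thm:unifCR}) serves a different purpose. Its bifurcation parameter is the continuous wavenumber, which makes transversality automatic. Uniformity in the auxiliary parameter $a$ then lets an intermediate value argument land the wavenumber exactly on $j_*$.

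Finally, your scaling heuristic is off. The operator $-\partial_y^2+\tilde u''/\tilde u$ has quadratic form $\int\tilde u^2|(\varphi/\tilde u)'|^2\ge 0$. So an $O(1)$ density slope can push the ground state substantially below its background value only once $\eta\lesssim\delta$. With $\eta=\delta^{1/2}\lambda$ this requires $\lambda$ to go down to order $\delta^{1/2}$, which is harmless if $\lambda$ is allowed to tend to $0$.
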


The construction first modifies the background profiles
in a thin region to obtain a nearby smooth shear flow and density stratification such that the shear velocity is non-zero but very small (thus the $L^q$ regularity) and the density derivative is strictly negative (hence the $W^{1,q}$ regularity). Smooth ($\u,\rho\in H^3(M)$) nontrivial
laminar states bifurcate from these modified profiles by means of a uniform Crandall-Rabinowitz theorem. 

\begin{remark}\label{remark no stagnations non-laminar}
The constructed steady states bifurcate from a modified non-zero velocity field and are thus \emph{laminar}. Without this requirement, constructing non-shear stationary solutions close to
$((u_0,0),\rho_0)$ in
$L^q\times W^{1,q}$ is much simpler.
One first modifies the background profiles so that the
density is constant and the velocity vanishes in a thin
horizontal strip. Within this strip, the constant buoyancy
force can be absorbed into the pressure, leaving the
stationary Euler equations. One can then insert a smooth,
compactly supported non-shear Euler flow inside the strip as in \cite{drivas2024geometric}. See Figure~\ref{figEuler}. The resulting stationary Boussinesq solution is non-laminar.
By taking the strip sufficiently thin, these modifications
can be made arbitrarily small in $L^q\times W^{1,q}$
for every finite $q$.
\end{remark}

\vfill
\begin{figure}[h]
\includegraphics[scale=1]{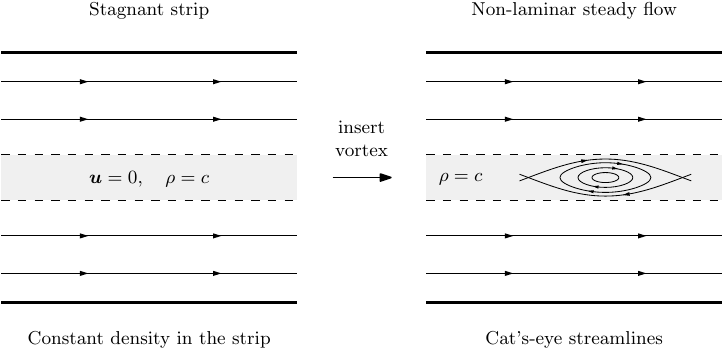}
\caption{Euler-like non-laminar stationary solutions}
\label{figEuler}
\end{figure}
\newpage
Theorem~\ref{informal_nonstagnant_flexibility} applies even
when the background density $\rho_0$ is strictly
increasing. There is no contradiction with
Theorem~\ref{informal_rigidity_1}: small perturbations in
$W^{1,q}$ can destroy the density monotonicity across the leaves.
By contrast, if $\rho_0'>0$ throughout $M$,
sufficiently small perturbations in $W^{1,\infty}$
preserve strict vertical monotonicity, and the global
rigidity theorem excludes nontrivial laminar states.
Thus, the restriction to finite $q$ in
Theorem~\ref{informal_nonstagnant_flexibility}
cannot in general be removed.

While the preceding theorem establishes flexibility
in $L^q\times W^{1,q}$ for finite $q$, our next result
shows that stronger control of the velocity perturbation
restores local rigidity under a coercivity condition
on the background shear, with the density still measured
in $W^{1,q}$. To state the coercivity condition, define
\begin{equation}\label{quadratic_coercivity}
Q_{\mathrm{sh}}(w):=
\int_M\left(
 u_0^2|\nabla w|^2
+\rho_0'|w|^2
\right)\,{\d x}\,{\d y},
\qquad w\in H_0^1(M).
\end{equation}
See Remark~\ref{motivation_quadform_intro} for a motivation of the quadratic form $Q_{\mathrm{sh}}$.

\begin{informaltheorem}
[\textbf{=Theorems~\ref{thm rigidity non-zero Linfty}
and~\ref{thm rigidity non-zero Sobolev}}]
\label{informal_nonstagnant_rigidity}
Let $(( u_0(y),0),\rho_0(y))$ be a smooth
shear equilibrium. Assume that there exist $m,c_0>0$ such that
\[
 u_0\geq m,
\qquad
Q_{\mathrm{sh}}(w)\geq c_0\|w\|_{H^1(M)}^2
\quad\text{for all }w\in H_0^1(M).
\]
For every $1<q<\infty$, the following  hold.
\begin{enumerate}[label=(\arabic*)]
\item \textbf{($L^\infty$ velocity proximity)}
There exists $\varepsilon_0=\varepsilon_0(q,m,c_0)>0$ such that any stationary
Boussinesq solution $(\u,\rho)$ satisfying
\[
\|\u-(u_0,0)\|_{L^\infty}
+\|\rho-\rho_0\|_{W^{1,q}}
<\varepsilon_0
\]
is a shear equilibrium.

\item \textbf{(Sobolev velocity proximity)}
For every $s>\frac12$, there exists $\varepsilon_1=\varepsilon_1(s,q,m,c_0)>0$
such that any stationary Boussinesq solution
$(\u,\rho)$, with $\u=(u,v)$, satisfying
\[
\inf_M u>0,
\qquad
\|\u-(u_0,0)\|_{H^s}
+\|\rho-\rho_0\|_{W^{1,q}}
<\varepsilon_1
\]
is a shear equilibrium.
\end{enumerate}
In both cases, $\partial_x\u=0$ and $\partial_x\rho=0$.
\end{informaltheorem}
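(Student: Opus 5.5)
We have to prove local rigidity near a shear flow with $u_0\ge m$ and a coercive quadratic form $Q_{\mathrm{sh}}$. The strategy is to derive an elliptic equation for the horizontal derivative of the stream function and test it against itself.

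\emph{Setup and the key identity.} Write $\u=\nabla^\perp\psi$. Since $u\geq m/2>0$ in case (1), and $\inf u>0$ by assumption in case (2), the flow has no stagnation points, and every streamline is a non-contractible graph. Hence $\rho=F(\psi)$ and the vorticity satisfies $\Delta\psi=G(\psi)+yF'(\psi)$ in the usual Long's-equation form. This follows from $\u\cdot\nabla(\omega - y\,\partial\rho\ldots)$, or equivalently from the vorticity equation $\u\cdot\nabla\omega=-\partial_x\rho$. Rather than rely on $F,G$ explicitly, which have limited regularity, I would work directly with $w:=\partial_x\psi=v$. This function vanishes on $\partial M$ and is periodic in $x$, so $w\in H^1_0(M)$. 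Differentiating the vorticity equation in $x$ and using $\rho=F(\psi)$ yields a linear equation for $w$ of the form
\[
\nabla\cdot\bigl(u^2\nabla (w/u)\cdots\bigr)+\partial_y\rho\,\frac{w}{u}\cdots=0 .
\]
The cleanest route is the substitution $w=u\,\phi$ (a Rayleigh/Taylor--Goldstein-type factorization). Here $\phi=\partial_x\psi/\partial_y\psi$ up to sign, which is exactly $-\partial_x$ of the streamline graph. Testing against $\phi$ and integrating by parts, I expect an identity
\[
\int_M\bigl(u^2|\nabla\phi|^2+\partial_y\rho\,|\phi|^2\bigr)\,\d x\,\d y \;=\; \mathcal E,
\]
where $\mathcal E$ collects terms involving $\partial_x\rho$ and products of $\partial_x u$, which are quadratic in the non-shear part. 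More precisely, for the exact stationary solution I would aim to show $\mathcal E=0$. Since $\rho=F(\psi)$ and $\partial_x\rho=F'(\psi)w$, the buoyancy term becomes $\partial_y\rho\,\phi^2$ after using $\partial_x\rho/\partial_y\rho=\partial_x\psi/\partial_y\psi$. This is the motivation for $Q_{\mathrm{sh}}$ mentioned in Remark~\ref{motivation_quadform_intro}.

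\emph{Perturbative coercivity.} Next compare the form $Q(\phi):=\int(u^2|\nabla\phi|^2+\partial_y\rho\,\phi^2)$ with $Q_{\mathrm{sh}}$:
\[
|Q(\phi)-Q_{\mathrm{sh}}(\phi)|\le \|u^2-u_0^2\|_{L^\infty}\|\nabla\phi\|_{L^2}^2+\|\partial_y\rho-\rho_0'\|_{L^{q}}\|\phi\|_{L^{2q'}}^2 .
\]
In two dimensions, $H^1_0\hookrightarrow L^{2q'}$ for every $q>1$, so the second term is $\lesssim\varepsilon_0\|\phi\|_{H^1}^2$. In case (1), the first term is $\lesssim\varepsilon_0$ directly. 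Thus $Q(\phi)\ge (c_0/2)\|\phi\|_{H^1}^2$. Combined with $Q(\phi)=0$, this gives $\phi=0$, hence $v=0$. Then $\partial_x u=-\partial_y v=0$, and $\rho=F(\psi)$ depends only on $y$.

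\emph{Case (2) and the main obstacle.} With only $H^s$, $s>1/2$, control, $u$ is not $L^\infty$-close to $u_0$. I would replace the $L^\infty$ bound by $\|u^2-u_0^2\|_{L^p}$ for some $p<\infty$ given by Sobolev embedding, $H^s\hookrightarrow L^{p}$ with $p=4/(2-2s)$. This must then be paired with $|\nabla\phi|^2$, which requires higher integrability of $\nabla\phi$. To get it, I would write the equation for $\phi$ in divergence form $\nabla\cdot(u^2\nabla\phi)=-\partial_y\rho\,\phi$ with $u^2$ bounded below by $(\inf u)^2$. Meyers' estimate then gives $\nabla\phi\in L^{2+\delta}$, but $\delta$ depends on the ellipticity ratio, which is not controlled. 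So the better plan is to rescale: set $\tilde\phi=(u/u_0)\phi$, i.e.\ return to $w/u_0$. This moves the perturbation onto first-order coefficients $\nabla\log(u/u_0)$, which are in $H^{s-1}$, and these can be handled by duality and fractional Leibniz with a loss that $s>1/2$ compensates. I expect this step, making the error term $\lesssim\varepsilon_1\|\tilde\phi\|_{H^1}^2$ at low regularity, to be the hardest. Justifying the integrations by parts for $w\in H^1_0$ with merely $W^{1,q}$ density is the secondary technical point.
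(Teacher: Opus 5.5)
Your part (1) is the paper's argument. With $\phi=v/u$, the equation $u^2\Delta v-u(\Delta u)v-\partial_y\rho\,v=0$ becomes exactly $-\nabla\cdot(u^2\nabla\phi)+\partial_y\rho\,\phi=0$, so there is no remainder $\mathcal E$. That equation comes directly from $u\,\partial_x\omega+v\,\partial_y\omega=-\partial_x\rho$ and $u\,\partial_x\rho=-v\,\partial_y\rho$, with no $x$-differentiation and no use of $F,G$. The comparison with $Q_{\mathrm{sh}}$ through $H^1\hookrightarrow L^{2q'}$ is the same as in the paper.

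\textbf{The gap is in part (2).} For $s>1$ your part (1) suffices, via $H^s\hookrightarrow L^\infty$. For $\frac12<s\le1$, however, your proposed repair does not reach the real obstruction. $H^s$-smallness does not stop $u$ from coming arbitrarily close to $0$ on small sets: for $s<1$, a dip of depth $m$ on a ball of radius $r$ costs only $O(m\,r^{1-s})$ in $H^s$. So any coefficient containing $1/u$ is neither small nor bounded in terms of $\varepsilon_1$. In particular $\nabla\log(u/u_0)=\nabla u/u-\nabla u_0/u_0$ is not controlled in $H^{s-1}$, since composition with $\log$ needs a quantitative lower bound, and fractional Leibniz cannot supply one.

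Worse, the substitution $\phi=(u_0/u)\tilde\phi$ does not only create first-order terms. It turns the identity into
\[
\int_M u_0^2\,\bigl|\nabla\tilde\phi-\tilde\phi\,\nabla\log(u/u_0)\bigr|^2+\int_M\partial_y\rho\,(u_0/u)^2\,\tilde\phi^2=0 .
\]
The buoyancy weight $(u_0/u)^2$ blows up exactly where $u$ is small, and there $\partial_y\rho$ is only $L^q$-close to $\rho_0'$, with no sign. The underlying problem is that the slope $v/u=\partial_x h$ is the wrong unknown. It is unbounded where $u$ is small, and its energy carries the degenerate weight $u^2$. That is precisely the regime used by the flexibility construction of Theorem~\ref{thm:nonzeroflexibility}: tiny $u$ and very negative $\partial_y\rho$ in a thin layer.

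\textbf{The missing idea is to work one derivative lower, with the streamline displacement and horizontally averaged quantities.}

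\emph{Uniform bounds.} In streamline coordinates $(x,c)$ with height $h(x,c)$, set $H=\avg{h}$ and $a(c)=\avg{u_g^2+v_g^2}^{1/2}$. Using $H^s\hookrightarrow L^2_xL^\infty_y$ (this is where $s>\frac12$ enters) and $\|\psi-\psi_0\|_{L^\infty}\lesssim\ep$, one gets $\|h-H\|_{L^\infty}\lesssim\ep$ and $m/2\le a\le C$. Both bounds are independent of $\inf u$.

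\emph{Energy identity.} Remove the Bernoulli function from the vertical momentum equation by subtracting averages, then multiply by $h-H$. This gives $\mathcal K+\int_M\partial_y\rho\,\eta^2=0$, with $\eta=y-H(\psi)$ and
\[
\mathcal K=\tfrac12\int\bigl(1/u_g+1/a\bigr)\bigl((u_g-a)^2+v_g^2\bigr)\ge0 .
\]

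\emph{Jacobian cancellation.} Returning to $(x,y)$, the Jacobian $u$ cancels the $1/u_g$. Hence $\mathcal K=\int_M A|\nabla w|^2$, where $A=\tfrac12 a(\psi)(a(\psi)+u)\ge m^2/8$ is uniformly elliptic and $w=1+y+\int_0^\psi a^{-1}$. This $w$ vanishes at $y=-1$, satisfies $0\le w-\eta\lesssim\mathcal K$, and has trace $O(\mathcal K)$ at $y=1$.

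\emph{Compactness.} A contradiction argument on $z_n=\mathcal K_n^{-1/2}w_n$ produces $z\in H^1_0(M)$ with $1+\int_M\rho_0'z^2=0$ and $\int_M u_0^2|\nabla z|^2\le1$. Then $z\neq0$ and $Q_{\mathrm{sh}}(z)\le0$, contradicting coercivity.

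None of these ingredients appears in your plan: averaging, displacement instead of slope, the Jacobian cancellation, and the normalized compactness step. Without them the case $\frac12<s\le1$ does not go through.
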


\begin{remark}
For $s>1$, assertion~(2) follows from assertion~(1)
by the Sobolev embedding
$H^s(M)\hookrightarrow L^\infty(M)$.
The main difficulty lies in the range $\frac12<s\le1$:
although $\inf_M u>0$ is assumed, the $H^s$ proximity
does not provide a uniform positive lower bound for $u$.
The proof overcomes this difficulty through a different
energy argument in streamline coordinates.
\end{remark}

\begin{remark}\label{motivation_quadform_intro}
The coercivity condition in
Theorem~\ref{informal_nonstagnant_rigidity}
is motivated by an equation for the streamline slope.
For a stationary solution $\u=(u,v)$ with $u>0$,
a direct computation shows (see \eqref{vorticity_eq_rigidt2}) that the ratio $w:=v/u$ satisfies
\[
-\nabla\cdot(u^2\nabla w)+\partial_y\rho\,w=0,
\qquad w|_{\partial M}=0.
\]
Linearizing this equation about the background shear
gives the operator associated with $Q_{\mathrm{sh}}$:
\[
\mathcal L_{\mathrm{sh}}w
:=-\nabla\cdot( u_0^2\nabla w)
+\rho_0' w.
\]
Thus, coercivity rules out nonzero solutions of the
linearized stationary equation. For the full nonlinear equation, testing against $w$
gives
\[
\int_M\left(
u^2|\nabla w|^2+\partial_y\rho\,|w|^2
\right)\,{\d x}\,{\d y}=0.
\]
For the $L^\infty$ assertion of
Theorem~\ref{informal_nonstagnant_rigidity},
sufficiently small perturbations in
$L^\infty\times W^{1,q}$ preserve this coercivity,
forcing $w=0$ and hence $v=0$.

In particular, coercivity holds whenever
$u_0\ge m>0$ and $\rho_0'\ge0$.
For a shear flow, this density condition is precisely
the monotonicity across the leaves assumed in
Theorem~\ref{informal_rigidity_1}.
The present local result also allows negative density
gradients, provided their contribution is sufficiently
small relative to the positive gradient term.
\end{remark}

The preceding local rigidity result relies on the background
velocity being bounded away from zero. We next examine
what happens when the background shear has stagnation
points, beginning with bifurcations that retain
the laminar structure.

\subsubsection{\textbf{Flexibility and rigidity with stagnant velocity field}}

We next consider background shears with stagnation points.
In this case, laminar flexibility persists even when
the velocity perturbation is small in any prescribed
finite $C^k$ norm, while the density is still measured
in $W^{1,q}$.

\begin{informaltheorem}
[\textbf{=Theorem~\ref{thm:zeroflexibility}}]
\label{informal_stagnant_laminar_flexibility}
Let $((u_0(y),0),\rho_0(y))$ be a smooth
shear equilibrium with $u_0(y_0)=0$ for some
$y_0\in[-1,1]$.
For every integer $k\ge1$, every $1\le q<\infty$,
and every $\varepsilon>0$, there exists a nontrivial
laminar stationary Boussinesq solution $(\u,\rho)$
satisfying
\[\|\u-(u_0,0)\|_{{C^{k}}}+\|\rho-\rho_0\|_{W^{1,q}}<\varepsilon.\]
\end{informaltheorem}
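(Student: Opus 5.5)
Since the velocity must be close in $C^k$, we cannot create a region where the velocity is small. Instead we exploit the stagnation point $y_0$ itself and modify only the density in $W^{1,q}$. The plan has three parts: a density modification near $y_0$, an exact laminar solution built as a graph perturbation, and an error estimate.

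\textbf{Step 1: modify the density.} Near $y_0$, in a thin strip $S_\delta=\{|y-y_0|<\delta\}$, we replace $\rho_0$ by a profile $\rho_\delta$ with three properties:
\[
\rho_\delta=\rho_0 \text{ outside } S_{2\delta},\qquad \rho_\delta'\le -\Lambda<0 \text{ on } S_\delta,\qquad \|\rho_\delta-\rho_0\|_{W^{1,q}}\lesssim \Lambda\,\delta^{1/q}.
\]
Choosing $\delta=\delta(\Lambda)$ small keeps this within $\varepsilon/2$ even for large $\Lambda$. The velocity $u_0$ is left untouched, so any shear background $((u_0,0),\rho_\delta)$ is still an exact equilibrium.

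\textbf{Step 2: build a non-shear solution by a graph ansatz.} We deform the streamlines locally, in the spirit of Example~\ref{rmk:nontrivialsteady}. Set $\psi_0'=-u_0$ and consider the ansatz
\[
\psi=\psi_0\bigl(y-h(x,y)\bigr),\qquad \rho=\rho_\delta\bigl(y-h(x,y)\bigr),
\]
with $h$ smooth and supported in $S_{\delta}\times\mathbb{T}$.

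The transport equation holds automatically, since $\rho$ is a function of $\psi$ wherever $u_0\neq0$. We keep $h$ small in $C^{k+1}$, so the map $(x,y)\mapsto(x,y-h)$ is a diffeomorphism. It then gives a laminar representation with $\overline\psi=\psi_0$, and $X$ fixes the boundary because $h$ vanishes there.

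The remaining condition is the vorticity equation $\nabla^\perp\psi\cdot\nabla\Delta\psi=-\partial_x\rho$. In leaf coordinates $(x,\eta)$ this becomes a quasilinear elliptic equation for $h$, whose linearization at $h=0$ is
\[
\mathcal L h=-\partial_\eta\bigl(u_0^2\partial_\eta h\bigr)-u_0^2\partial_x^2h+\rho_\delta'\,h.
\]
This is the operator of Remark~\ref{motivation_quadform_intro}, written for the displacement. In the strip, $u_0^2$ is small (it vanishes at $y_0$) while $\rho_\delta'\le-\Lambda$, so $\mathcal L$ is indefinite there.

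We restrict to a single Fourier mode $e^{ikx}$ with $k$ fixed, and study the ODE on $[y_0-\delta,y_0+\delta]$ with Dirichlet conditions. We then tune the depth $\Lambda$ of the negative well, or a rescaling parameter, so that this ODE has a one-dimensional kernel; this is a Sturm–Liouville eigenvalue crossing in the parameter. Finally we apply the uniform Crandall–Rabinowitz theorem announced in the abstract to obtain a branch of nontrivial solutions $h_s$, with $\|h_s\|_{C^{k+2}}\to0$ as $s\to0$.

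\textbf{Step 3: estimates and the main obstacle.} On the branch,
\[
\|\u-(u_0,0)\|_{C^k}\lesssim\|h_s\|_{C^{k+1}}\to0,\qquad \|\rho-\rho_\delta\|_{W^{1,q}}\to0.
\]
Combined with Step 1, this gives the required bound $<\varepsilon$.

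The main obstacle is the degeneracy of $\mathcal L$ at $y_0$, where $u_0^2$ vanishes to order at least two. The natural energy space is weighted, so we must verify three things on spaces where the nonlinear map is $C^2$:
\begin{itemize}
\item the Fredholm property of $\mathcal L$;
\item simplicity of the kernel;
\item the transversality condition $\partial_\Lambda\mathcal L\,\phi\notin\operatorname{Ran}\mathcal L$.
\end{itemize}
I would first try to avoid the degeneracy by placing the support of $h$ in $S_\delta\setminus\{|y-y_0|<\delta/2\}$, on one side of $y_0$, where $u_0\neq0$ (after a tiny $C^\infty$ adjustment of $u_0$ if needed). The equation is then uniformly elliptic there, while $u_0$ is small, which makes it cheap to reach the resonance with $\rho_\delta'$ negative. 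The uniformity of the Crandall–Rabinowitz theorem is needed because the constants depend on $\delta$ and $\Lambda$, which are themselves being sent to their limits.
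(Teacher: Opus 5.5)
Your ansatz and linearization are the right objects. $\psi_0(y-h)$, $\rho_\delta(y-h)$ is Lemma~\ref{lemma formulation h} with $h\mapsto -h$, and your $\mathcal L$ is $u_0^2\mathcal L_0$ from Section~\ref{sec:bifurcation_-zero_01}. The gap is the localization.

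\textbf{Unique continuation rules out a thin strip.} Wherever $u_0(y-h)\neq 0$, the equation for $h$ is uniformly elliptic. Dividing by $u_0(y-h)^2$, it reads $\Delta h+b\cdot\nabla h+c\,h=0$ with locally bounded $b,c$. By unique continuation, a $C^2$ solution vanishing on an open set vanishes on the whole connected component of that region. So no nontrivial $C^2$ solution can be supported in $S_\delta$, and a fortiori not in $S_\delta\setminus\{|y-y_0|<\delta/2\}$. If instead you solve a Dirichlet problem on a sub-strip and extend by zero, then $\partial_y h$ jumps at the interface. The velocity $\nabla^\perp\psi$ jumps with it, so the result is neither $C^k$-close nor a steady solution. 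The only place a nontrivial $h$ can be glued to $0$ is the degenerate line $y=y_0$ itself. The degeneracy is therefore the resource the construction needs, not an obstacle to avoid.

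\textbf{At $y_0$, your density has the wrong sign.} Near a simple zero, $\mathcal L_0\phi=-\phi''-2\frac{u_0'}{u_0}\phi'+\frac{\rho'}{u_0^2}\phi$ has a regular singular point with indicial equation $r^2+r-\rho'(y_0)/u_0'(y_0)^2=0$. With $\rho_\delta'(y_0)\le-\Lambda<0$, both roots have negative real part.
\begin{itemize}
\item Once $\Lambda>u_0'(y_0)^2/4$ the roots are complex with real part $-\tfrac12$. The weighted Rayleigh quotient is then unbounded below, since the Hardy constant $\tfrac14$ is sharp, so there is not even a first eigenvalue to tune.
\item For any $\Lambda>0$, every nontrivial mode has $h$ unbounded at $y_0$, so $v=u_0(y-h)\partial_x h$ is not $C^1$.
\end{itemize}
Hence in any space where your map is $C^2$, the linearization has trivial kernel and there is nothing to bifurcate from. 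In the language of Section~\ref{sec:rigidity_richardson}, your background has a non-resonant Richardson number $\mathcal J(y_0)>0$.

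\textbf{What the paper does instead.} It uses its own indexing, $h\in C^{k,\frac12}$ and velocity in $C^{k-1}$.
\begin{itemize}
\item After making the zeros of $u_0$ simple, Lemma~\ref{lemma:reduction} imposes $\rho'(y_0)=(k+1)(k+2)u_0'(y_0)^2$ at the right-most zero. This is a small $W^{1,q}$ change, resonant with $\mathcal J=-m(m-1)$ for $m=k+2$, and it makes the indicial roots $k+1$ and $-(k+2)$.
\item It solves on the whole slab $\T\times(y_0,1)$, in spaces of functions flat to order $k$ at $y_0$ and vanishing at the wall. Fredholmness comes from the barrier $y^{k+3/2}$ and rescaled Schauder estimates (Lemma~\ref{fredholmness of 2D operator}).
\item It places the negative bump $a\delta^{-1/(2q)}\chi(\delta^{-1}\cdot)$ strictly above $y_0$, where $u_0$ is small but nonzero, to push the first weighted eigenvalue to $-j_\ast^2$ (Lemma~\ref{lemma: perturbation eigenvalues stagnation}).
\item It extends $h$ by zero below $y_0$.
\end{itemize}
The resulting $h$ fills the whole region between the stagnation line and the top wall, not a thin strip.
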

\begin{remark}
In contrast with the non-stagnant setting of
Theorem~\ref{informal_nonstagnant_rigidity},
stronger control of the velocity alone does not restore
local rigidity near a stagnant shear when the density
is measured in $W^{1,q}$. The leaves of the laminar foliation in the constructed
solutions are periodic graphs over the horizontal variable.
This geometry also shows why the density proximity
cannot, in general, be strengthened to $W^{1,\infty}$.
Indeed, if $\rho_0'>0$ throughout $M$, sufficiently
small $W^{1,\infty}$ perturbations preserve
$\partial_y\rho>0$. Since the density is constant on
each non-stagnant leaf and the leaves are vertically
ordered graphs, it is nondecreasing across those leaves.
Theorem~\ref{informal_rigidity_1} then forces the solution
to be a shear equilibrium.
\end{remark}

The construction deforms the stream function and density
through a common change of the vertical coordinate,
preserving the laminar foliation.
The equation governing this deformation degenerates
where the background velocity vanishes.
By modifying the density profile near a stagnation point,
we arrange for nontrivial deformations with the prescribed
finite regularity to bifurcate.
These modifications are small in $W^{1,q}$ for finite $q$,
but need not be small in $C^1$.
In particular, when the background shear satisfies
the assumptions of Theorem~\ref{informal_rigidity_3},
the density perturbation must keep the constructed
states outside the joint $C^1$ neighborhood
in which that theorem guarantees rigidity.

We next ask whether nearby stationary states can leave
the laminar class itself.
When the density gradient also vanishes at a simple
stagnation point, we obtain non-laminar approximations
with both fields close in $C^{1,\alpha}$ in the spirit of \cite{LinZeng}.

\begin{informaltheorem}
[\textbf{=Theorem~\ref{theorem:nonlaminar_flexibility}}]
\label{informal_stagnant_nonlaminar_flexibility}
Let $((u_0(y),0),\rho_0(y))\in C^4(\overline{M})$ be a stratified shear equilibrium with a unique stagnation point
$y_0\in(-1,1)$. Assume that
\begin{equation}\label{eq:monotone_vel_zero_density_gradient}
 u_0'(y_0)\neq0,
\qquad
\rho_0'(y_0)=0.
\end{equation}
For every $0<\alpha<1$ and every $\varepsilon>0$,
there exist  non-laminar stationary Boussinesq
solutions $(\u,\rho)$ satisfying
\[
\|\u-(u_0,0)\|_{C^{1,\alpha}}
+\|\rho-\rho_0\|_{C^{1,\alpha}}
<\varepsilon.
\]
\end{informaltheorem}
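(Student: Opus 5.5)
We plan to construct the non-laminar states in the spirit of Lin--Zeng \cite{LinZeng}, working with the stream function. A stationary solution with $\u=\nabla^\perp\psi$ and $\rho=F(\psi)$ satisfies, after taking the curl of the momentum equation,
\[
\nabla^\perp\psi\cdot\nabla\bigl(\Delta\psi+yF'(\psi)\bigr)=0 .
\]
This holds whenever $\Delta\psi+yF'(\psi)=G(\psi)$ for some function $G$. So it suffices to find $\psi$ solving the semilinear elliptic problem
\[
\Delta\psi = G(\psi)-yF'(\psi),\qquad \psi=\text{const on }y=\pm1,
\]
together with $\rho=F(\psi)$.

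For the background we take $\psi_0$ with $\psi_0'=-u_0$. Then $\psi_0$ has a nondegenerate critical point at $y_0$, and the relation $\rho_0=F_0(\psi_0)$ makes sense near $y_0$ only as a function of the squared distance $\psi_0-\psi_0(y_0)\sim c(y-y_0)^2$. Here the hypothesis $\rho_0'(y_0)=0$ is exactly what allows $F_0$ to be $C^{1,\alpha/2}$-type regular: $\rho_0$ is even to leading order, and $\rho_0-\rho_0(y_0)=O(|y-y_0|^2)$ is compatible with a Lipschitz $F_0$. We take $\psi$ even in the sense of reflection-free functions on each side, handling $y<y_0$ and $y>y_0$ with two branches of $F$ and $G$ if necessary. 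To keep the construction clean, we will instead first modify $(u_0,\rho_0)$ in a small $C^{1,\alpha}$ neighborhood so that, near $y_0$, $u_0$ is exactly linear and $\rho_0$ is exactly constant. Both modifications are $C^{1,\alpha}$-small because $u_0'(y_0)\neq0$ and $\rho_0'(y_0)=0$: replacing a function by its first-order Taylor polynomial on an interval of length $\delta$ costs $O(\delta^{\beta-\alpha})$ in $C^{1,\alpha}$ for $C^{1,\beta}$ data with $\beta>\alpha$, and $C^4$ suffices. In the strip $|y-y_0|<\delta$ the problem is then locally Euler near Couette with constant density, so buoyancy is absorbed into the pressure.

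Inside this strip we insert Kelvin cat's-eye vortices. Concretely, we solve $\Delta\psi=G(\psi)$ with $\psi=\psi_0+\varepsilon' \cos(kx)\,\phi(y)+\dots$, following Lin--Zeng's construction of cat's-eye steady states near Couette. This is done in a rescaled channel of width $\delta$ with large wavenumber $k\sim 1/\delta$, or alternatively by using their $C^{1,\alpha}$-close non-shear Couette steady states directly after rescaling. The rescaling $\psi\mapsto \delta^2\Psi((x,y-y_0)/\delta)$ preserves the Couette structure, and $C^{1,\alpha}$ norms of velocity scale like $\delta^{1-\alpha}$ times the corresponding $C^{1,\alpha}$ norms of the rescaled field. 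Lin--Zeng provide nonshear states within any $H^{5/2-\tau}$ distance, hence small in $C^{1,\alpha}$ after choosing parameters. The periodicity mismatch is handled by taking $1/\delta$ integer. The density stays exactly constant in the strip, so $\rho$ is unchanged there.

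The main obstacle is gluing the inner cat's-eye solution to the outer shear flow while preserving the stationary equations exactly. We plan to use the structure of Lin--Zeng's solutions: their perturbations decay, or can be arranged to be localized through the choice of $G$, in the region where $\psi_0$ is monotone. Failing that, we will instead solve the full semilinear problem on all of $M$, with the modified outer $F$ and the local $G$, by a Crandall--Rabinowitz or fixed-point argument around $\psi_0$. The linearized operator is $\Delta-G'(\psi_0)+yF''(\psi_0)$, and kernel control requires choosing $G$ so that a single mode $\cos(kx)\phi(y)$ is in the kernel.

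Non-laminarity follows because the cat's eyes contain closed contractible streamlines around an elliptic point near $y_0$. Such streamlines are incompatible with a foliation by non-contractible leaves.
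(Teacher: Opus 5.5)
Your primary route, inserting a rescaled Lin--Zeng cat's-eye solution in the strip $|y-y_0|<\delta$ and gluing it to the unperturbed outer shear, cannot work. In the strip you solve $\Delta\psi=G(\psi)$, so $v=\partial_x\psi$ solves the linear equation $\Delta v=G'(\psi)v$. Suppose the velocity is merely continuous across an interface $y=y_0\pm\delta$ and equals a shear on the far side. Then $\psi$ and $\partial_y\psi$ are constant along that line, so $v$ and $\partial_y v$ vanish there. Unique continuation for the Cauchy problem then forces $v\equiv0$ in the whole strip. Hence any nontrivial inner state has $x$-dependent tangential velocity on the interface, and gluing it to a shear produces a velocity jump. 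The perturbation cannot be localized and must live on the whole channel; Lin--Zeng's cat's eyes are likewise global, since their modes are eigenfunctions of a vertical problem on the full interval.

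Two smaller errors. Under $\psi\mapsto\delta^2\Psi((x,y-y_0)/\delta)$ the H\"older seminorm of the velocity gradient is multiplied by $\delta^{-\alpha}$, not $\delta^{1-\alpha}$. And $H^{5/2-\tau}$-closeness of the velocity in two dimensions only gives $C^{1,\beta}$-closeness for $\beta<\frac12-\tau$, so quoting Lin--Zeng's statement does not cover every $\alpha\in(0,1)$.

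Your fallback, Crandall--Rabinowitz for the global semilinear problem around a modified background, is the paper's framework. However, it omits the step that carries the proof, and ``choosing $G$'' is not available. Once the background profiles are fixed, the two-branch relations $\Theta^\pm,G^\pm$ are determined. A kernel element $\cos(jx)\phi(y)$ with $j\in\mathbb{N}$ exists if and only if $\mathcal{L}=-\partial_y^2+u''/u+\rho'/u^2$ has eigenvalue $-j^2$. After your modification ($u$ linear, $\rho$ constant near $y_0$) the potential vanishes near $y_0$. Nothing in your plan puts an eigenvalue at $-j^2$ or supplies a parameter to move one there.

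The paper does this in three steps:
\begin{enumerate}
\item It adds $a\gamma\int_0^y\Gamma(s/\gamma)\,ds$ to $u$. This term is odd, so $\psi$ stays even near $y_0$ and $G=Q\circ H^{-1}$ stays $C^4$ across the critical level. It is $O(\gamma^{1-\alpha})$ in $C^{1,\alpha}$, yet as $\gamma\to0$ it turns $u''/u$ into a negative point mass of weight $a$ at $y_0$, pushing the first eigenvalue below $-j^2$ for large $a$.
\item A density term $-\beta a(a-a_*)\int_0^y\chi_\delta(r-5\delta)\,dr$ makes $a\mapsto\lambda_1$ strictly monotone near $a_*$.
\item Small extra bumps remove a possible zero eigenvalue.
\end{enumerate}

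The bifurcating wavenumber $j(a)+\kappa(s,a)$ moves with the amplitude $s$. You therefore need bifurcation uniform in $a$ (the paper's uniform Crandall--Rabinowitz theorem) plus the intermediate value theorem to land exactly on an integer; taking $1/\delta\in\mathbb{N}$ plays no role. Blending $\Theta^\pm,G^\pm$ near $y_0$ is consistent only because $\rho$ is constant and $\psi$ is even there, and only while $\|\varphi\|_{L^\infty}$ stays small.

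With these ingredients in place, your non-laminarity argument goes through, using $\psi_0''(y_0)\neq0$ and $\phi(y_0)\neq0$ (the first eigenfunction is positive).
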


The construction first makes the density constant
in a thin region around the stagnation point.
The condition $\rho_0'(y_0)=0$ allows this
modification to be small in $C^{1,\alpha}$ for any fixed $0\leq\alpha<1$.
Within that region, the buoyancy force can be absorbed
into the pressure, reducing the equations to the stationary
Euler equations. We then obtain non-laminar states
by bifurcating from a modified shear, following the
approach of
\cite{LinZeng,sinambela2023transition,zhao2026traveling}. Reducing to Euler in a constant-density region also
underlies the simpler construction discussed in
Remark~\ref{remark no stagnations non-laminar};
see also \cite[Theorem 1.3]{drivas2024geometric}.
The distinction here is the control of the velocity:
the bifurcation construction gives proximity in
$C^{1,\alpha}$, including the first derivatives,
rather than merely $C^{0,\alpha}$.

{Let us remark that the appearance of islands in Theorem \ref{informal_stagnant_nonlaminar_flexibility} takes place in the region of constant density. As such, the density stratification is still laminar. It is then an open question to exhibit other steady states with non-laminar density other than the simple example shown in Example \ref{rmk:nontrivialsteady}.}

\begin{remark}
Under the same assumptions, non-laminar stationary
solutions can also approximate the background shear
in Sobolev spaces. More precisely, for every $\delta>0$
and every $\varepsilon>0$, such a solution can be chosen
to satisfy
\[
\|\u-(u_0,0)\|_{H^{\frac52-\delta}}
+\|\rho-\rho_0\|_{H^{\frac52-\delta}}
<\varepsilon.
\]
\end{remark}

These results describe two ways in which flexibility
occurs near stagnant shears.
Laminar deformations are possible with arbitrarily
high finite control of the velocity when the density
is measured in $W^{1,q}$.
If the density gradient also vanishes at a simple
stagnation point, non-laminar states can occur with
both fields close in $C^{1,\alpha}$.
The latter result permits background densities with
$\rho_0'\ge0$: even near such backgrounds,
the global rigidity in
Theorem~\ref{informal_rigidity_1} does not exclude
approximations outside the laminar class. 

{We finish our discussion showing the sharpness of the regularity threshold in Theorem \ref{informal_stagnant_nonlaminar_flexibility} under a spectral assumption, completing the regularity dichotomy of Lin and Zeng in \cite{LinZeng} in the Boussinesq equations. Requiring spectral control for a local rigidity result has been necessary also in the in-homogeneous Euler equations, see \cite{zhao2026traveling}. For $u_0,\rho_0$ as in Theorem \ref{informal_stagnant_nonlaminar_flexibility}, let
\begin{align}
    q_0(y)\coloneqq  \frac{\partial_y\rho_0(y)}{u_0(y)}, \quad Q_0(y) \coloneqq u_0''(y) + q_0(y)
\end{align}
and define the Rayleigh operator
\begin{align}
    \mathcal{R}_0\phi\coloneqq  u_0(y)\Delta\phi - Q_0(y)\phi
\end{align}
for $\phi\in H^2(M)\cap H_0^1(M)$.}
\begin{informaltheorem}
[\textbf{=Theorem~\ref{thm:nonlaminar_rigidity}}]
\label{informal_stagnant_nonlaminar_rigidity}
Let $((u_0(y),0),\rho_0(y))\in C^4(\overline{M})$ be a smooth
shear equilibrium with a unique stagnation point
$y_0\in(-1,1)$ satisfying the assumptions of Theorem \ref{informal_stagnant_nonlaminar_flexibility}. Assume further that \textcolor{black}{$\inf_M u_0'>0$ and }$\mathcal{R}_0$ has trivial kernel in $H^2(M)\cap H_0^1(M)$. For every $0<\alpha<1$ there exists $\varepsilon>0$ such that
any stationary Boussinesq
solution $(\u,\rho)$ satisfying
\[
\|\u-(u_0,0)\|_{C^{2,\alpha}}
+\|\rho-\rho_0\|_{C^{2,\alpha}}
<\varepsilon
\]
must be a stratified shear flow, that is, $\partial_x\u=\partial_x\rho = 0$.
\end{informaltheorem}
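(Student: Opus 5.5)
We argue by contradiction and compactness, in the spirit of the rigidity half of Lin and Zeng's dichotomy \cite{LinZeng}. Suppose there is a sequence of non-shear stationary solutions $(\u_n,\rho_n)$ with
\[
\|\u_n-(u_0,0)\|_{C^{2,\alpha}}+\|\rho_n-\rho_0\|_{C^{2,\alpha}}\to0 .
\]
Since $\inf_M u_0'>0$, for $n$ large we also have $\partial_y u_n>0$ uniformly. By the implicit function theorem, $\{u_n=0\}$ is then a $C^{2,\alpha}$ graph $y=\gamma_n(x)$ close to $y_0$.

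\textbf{Step 1: the solutions are laminar.} Write $\u_n=\nabla^\perp\psi_n$. Then $\partial_y\psi_n=-u_n$ is strictly monotone in $y$ away from a shrinking neighborhood of $\gamma_n$. Near $\gamma_n$ we need the flow to have no critical points of $\psi_n$ other than degenerate ones on a flat line. Here I would reuse the argument of Theorem~\ref{informal_rigidity_2}. If $\mathcal J$ on $\Gamma_{u_n}$ avoids the resonant values, that theorem already concludes. Since $\rho_0'(y_0)=0$, however, $\mathcal J\approx0$ there, which is non-resonant as long as $\partial_y\rho_n\not\equiv0$ on the curve. So the only case left is $\partial_y\rho_n\equiv0$ on some component, or the solution is already shear. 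I am not sure this dichotomy is clean, so as a fallback I would skip it and work directly with the vorticity relation.

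\textbf{Step 2: functional relations and linearization.} Since $\psi_n$ has nonvanishing gradient off a small set and is close to $\psi_0$, the relations $\rho_n=F_n(\psi_n)$ and
\[
\Delta\psi_n=G_n(\psi_n)-yF_n'(\psi_n)
\]
hold, which is the Boussinesq vorticity equation in Long's form. Define
\[
\phi_n:=\frac{\psi_n-\overline{\psi_n}}{\|\psi_n-\overline{\psi_n}\|_{C^{2,\alpha}}},
\]
the normalized $x$-dependent part, where the bar denotes the $x$-average. Subtracting the averaged equation and dividing by the norm, the $x$-dependent part of $u_n\Delta\psi_n$ (derived from $\partial_x$ of the vorticity equation, $\u_n\cdot\nabla\omega_n=-\partial_x\rho_n$) gives an equation of the form
\[
u_n\Delta\phi_n-Q_n\phi_n=o(1).
\]
The coefficient $Q_n\to Q_0=u_0''+\rho_0'/u_0$ converges; it is bounded at $y_0$ precisely because $\rho_0'(y_0)=0$ and $u_0'(y_0)\neq0$. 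This is where the density condition is essential. The extra derivative in the $C^{2,\alpha}$ hypothesis is what makes the difference quotients of $\rho_n$ along $\psi_n$ converge in $C^{0,\alpha}$.

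\textbf{Step 3: compactness and the main obstacle.} The $\phi_n$ are bounded in $C^{2,\alpha}$, so a subsequence converges in $C^2$ to some $\phi$ with $\|\phi\|_{C^2}$ bounded below. To get this lower bound, I would renormalize by the $C^2$ norm instead and use Schauder estimates for the equation away from $y_0$. The limit has zero $x$-mean, vanishes on $\partial M$, and satisfies $\mathcal R_0\phi=0$, so $\phi=0$ by the trivial-kernel assumption, which is a contradiction.

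The main obstacle is the nondegeneracy of the limit near the critical layer $y=y_0$, where $u_0$ vanishes and the equation $u_0\Delta\phi=Q_0\phi$ loses ellipticity. A priori, $\phi_n$ could concentrate there, so that its mass disappears in the limit. To exclude this, I would divide by $u_n$ away from $\gamma_n$ and obtain uniform elliptic control. Near $\gamma_n$, I would use the $C^{2,\alpha}$ bound directly: it gives equicontinuity of $\phi_n$ and its second derivatives, so $C^2$ convergence is uniform across the layer and no mass is lost. I expect that making the $o(1)$ remainder genuinely small uniformly near $\gamma_n$ — in particular controlling $(\partial_y\rho_n)/u_n$ via $\rho_n=F_n(\psi_n)$, which is where $C^{2,\alpha}$ regularity of $\rho_n$ enters — will be the delicate computation.
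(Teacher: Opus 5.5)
\textbf{Verdict: genuine gap in Step 3.} Your Step 3 fails at exactly the point you flag as the main obstacle, and the resolution you sketch is circular.

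\textbf{The gap.} With the $C^{2,\alpha}$ normalization, the $C^2$-limit of $\phi_n$ can be zero: the unit norm may sit entirely in the H\"older seminorm, for instance through concentration at the critical layer. If you renormalize by the $C^2$ norm instead, you lose the uniform $C^{2,\alpha}$ bound. The hypothesis controls $\psi_n-\overline{\psi_n}$ only in absolute size, not relative to its $C^2$ norm. Near $\gamma_n$ no Schauder estimate can recover that control, because $u_n\Delta\phi=Q_n\phi$ degenerates there. So the ``equicontinuity of second derivatives across the layer'' is precisely the estimate you are missing, not something the hypothesis supplies. This is not a technicality: Theorem~\ref{theorem:nonlaminar_flexibility} produces islands at the critical layer at slightly lower regularity, so a genuine mechanism is needed there.

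\textbf{The missing idea.} The paper replaces your compactness argument with an $H^1$-level energy argument.
\begin{itemize}
\item Work with $v_n=\partial_x\psi_n$, which satisfies the exact equation $u_n\Delta v_n-Q_nv_n=0$ coming from \eqref{vorticity_eq_rigidt2}. There is no $o(1)$ remainder and no need for the relations $\rho_n=F_n(\psi_n)$. Those relations are in any case unavailable globally across the layer, where $\psi_n$ folds and may have islands.
\item Normalize $w_n=v_n/\|\nabla v_n\|_{L^2}$ and regularize $u_n\mapsto u_n+i\varepsilon_n$. This gives
\[
\int_M\nabla w_n\cdot\nabla\overline{g}+\cB_n(w_n,g)=o(1),\qquad \cB_n(f,g)=\int_M\frac{Q_n}{u_n+i\varepsilon_n}f\overline{g}.
\]
\item The key estimate is Lemma~\ref{lemma:bound_Bn}: $\cB_n$ is uniformly bounded on $L^2_xH^\sigma_y$ for $\sigma>\frac12$. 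This uses H\"older continuity in $y$ of $Q_nf\overline{g}$ together with the uniform logarithmic bound on $\int_{-1}^1(u_n+i\varepsilon)^{-1}\,\d y$. Moreover $\cB_n\to\cB_0$, a principal value minus the Plemelj term $i\pi\frac{Q_0(0)}{u_0'(0)}\int_\T f(x,0)\overline{g(x,0)}\,\d x$.
\item Since $H^1_0(M)\hookrightarrow L^2_xH^\sigma_y$ is compact, the singular term passes to the limit. You obtain $1+\cB_0(w_0,w_0)=0$, which rules out $w_0=0$ and in fact gives $\|\nabla w_0\|_{L^2}=1$.
\item The imaginary part forces $Q_0(0)w_0(\cdot,0)=0$. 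Hence $w_0\in H^2\cap H^1_0$ is a nontrivial element of $\ker\mathcal R_0$, contradicting the hypothesis.
\end{itemize}

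\textbf{Step 1 is needed, not a fallback.} Your Step 1 is the paper's reduction and it is clean. There is a single stagnation curve, and by Lemma~\ref{lemma:flat_stagnation_curve} either $\partial_y\rho_n\equiv0$ on it, or $\partial_y\rho_n$ is a nonzero constant there and the curve is flat. In the second case $\mathcal J$ is small and nonzero, so Theorem~\ref{thm:rigidity_noncrit_J} applies via Remark~\ref{remark:rigidity_smooth_carleman}. You cannot skip this step. The convergence $Q_n\to Q_0$ in $C^{0,\alpha}$ that you use requires $\partial_y\rho_n=0$ on $\{u_n=0\}$ (Lemma~\ref{lemma:bounds_qn}). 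Otherwise $\partial_y\rho_n/u_n$ carries a $1/(y-\gamma_n)$ singularity and $Q_n$ does not converge.

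Your Step 1 also does not establish that the solutions are laminar, and nothing in the argument requires it.
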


\subsection{Related works}
Describing the stationary structures of inviscid fluid equations has greatly attracted the attention of the community these past few years. Undoubtedly, most efforts have focused on the two-dimensional Euler equations, yielding a fairly complete picture of their steady states in several settings. One main avenue of research investigates how the geometry of steady solutions is determined by the symmetries of the domains they occupy. {Apart from the previously mentioned rigidity results \cite{hamel2017shear, hamel2019liouville, HM23, drivas2024geometric}, we refer to \cite{de2026rigidity, cao2027free, gui2024classification, huang2026flexibility, gomez2021remarks, gomez2021symmetry, fan2024radial, fan2026rigidity, gui2026pressure, alonso2026rigidity, tione2026rigidity} for other recent developments in this direction.}

While rigidity for compactly supported velocity fields fails in general, see \cite{enciso2024smooth, enciso2026newapproachcompactlysupported} where non-radial compactly supported smooth Euler steady flows are constructed, one still recovers radial flows if the support of the velocity field is either an open annulus \cite{Ruiz} or a punctured simply connected open domain \cite{WangZhan}.

Some of the above rigidity statements rely on finding a semi-linear elliptic equation $\Delta\psi=F(\psi)$ for some $F$ and studying associated over-determined boundary value problems: both Dirichlet and Neumann conditions usually conspire with the semilinear equation to only support one-dimensional solutions, thus recovering shear flows or radial flows according to the domain. 

Despite the existence of smooth Morse steady states with no global functional relation, see \cite{elgindi2026flexibility}, recently $\omega=F(\psi)$ was derived in \cite{elgindi2024classification} for all analytic steady states $\omega$ on analytic simply connected domains that are not the ball. Once $\omega=F(\psi)$ is established and the linearised operator $\Delta - F'(\psi)$ is invertible, one can investigate the correspondence between steady states and co-adjoint orbits \cite{choffrut2012local}, and the deformation of streamlines along fluid vessel variations \cite{CDG21, drivas2024islands, drivas2025fragility}.

The study of stationary structures near Couette of \cite{LinZeng} has recently been extended in \cite{guo2026flexibility} to the infinite channel. The flexibility/rigidity regularity dichotomy for Poiseuille flow $u_0(y)=y^2$ was settled in \cite{drivas2024geometric}, while \cite{coti2023stationary} found non-trivial non-shear steady states close to Kolmogorov flow in $\T^2$ $u_0(y)=\sin(y)$ in analytic regularity. See \cite{nualart2023zonal, elgindi2025regular, constantin2025classification, sinambela2023transition} for other results nearby specific steady states, and \cite{CastroLear23,CastroLear24,CastroLear24b, castro2025time, enciso2025uniformly} for traveling-wave structures. We refer the interested reader to the survey \cite{hmidi2026rotating} and the references therein for recent rigidity and flexibility concerning vortex patches.

Steady states of the three-dimensional Euler equations have also seen a remarkable development. Steady 3D Euler flows with compact support are known to exist \cite{gavrilov2019steady, constantin2019remarks}, they also enjoy symmetric rigidity in some settings \cite{CDG21, peralta2026symmetry}, there exist steady states that are isolated from other stationary solutions \cite{enciso2025isolated}, and they can approximate any prescribed divergence-free flow topology \cite{enciso2025steady}. 

The steady states of the Boussinesq equations are far less understood. We mention \cite{CDG21}, where rigidity of non-stagnant flows in the periodic channel was established for the first time under the condition $\Theta'(\psi_0)\leq 0$ for decreasing $\psi_0$. We extend this result in Theorem \ref{thm:mainnonzerovelocityrigidity}. Stationary self-similar profiles for the Boussinesq equations were later studied in \cite{abe2026stationary}. See also \cite{zhao2026traveling} for flexibility and rigidity results close to Couette flow in the inhomogeneous Euler equations.

\color{black}
\subsection*{Organization of the paper}
Section~\ref{sec:steady_reformulation} presents the stream function formulation and
the semilinear elliptic equations used in our constructions.
Section~\ref{sec:Rigidity_l} proves global rigidity under laminar geometry
and density ordering, using a monotone modification of
the solution and continuous Steiner symmetrization (Theorem~\ref{informal_rigidity_1}).
Section~\ref{sec:rigidity_richardson} establishes rigidity under Richardson-number
nonresonance through an analysis near stagnation curves
and a unique continuation argument (Theorem~\ref{informal_rigidity_2}-\ref{informal_rigidity_3}).

Sections~\ref{sec: bifurcation non-zero}-\ref{sec:local_rigidity non-zero_2} concern non-stagnant shear equilibria (Theorem~\ref{informal_nonstagnant_flexibility}-\ref{informal_nonstagnant_rigidity}).
We first construct nearby nontrivial laminar states
by bifurcation, then establish local rigidity under
a coercivity condition in stronger velocity topologies.
Section~\ref{sec:bifurcation_-zero_01} treats laminar bifurcation near shears
with stagnation points (Theorem~\ref{informal_stagnant_laminar_flexibility}).
Section~\ref{sec:local_rigidity-zero_012} constructs non-laminar states near shears
whose density gradient vanishes at stagnation and
proves further local rigidity results (Theorem~\ref{informal_stagnant_nonlaminar_flexibility}-\ref{informal_stagnant_nonlaminar_rigidity}).

Appendix~\ref{app:rigidity} contains rigidity results under nonvanishing
velocity or density gradient assumptions.
Appendix~\ref{CR_conti_appendix} provides the parameter-dependent
Crandall--Rabinowitz theorem used in the bifurcation
constructions, {which may be of independent interest}\color{black}
\subsection*{Acknowledgements}
The research of MN was partially supported by the European Research Council (ERC) under the European Union's Horizon 2020 research and innovation programme through the grant agreement~862342. JP was partially supported by the Yonsei University Research Fund of 2025-22-0437, the
POSCO Science Fellowship of POSCO TJ Park Foundation, HYUNSONG Educational and Cultural Foundation, and  the National Research Foundation of Korea (NRF) grant funded by the Korea government (MEST) No. RS-2026-25476891. JPT was supported by the Swiss State Secretariat for Education, Research and Innovation (SERI) under contract number MB22.00034 through the project TENSE.

\subsection*{AI declaration}
This work began in May 2025, before the widespread use of LLMs became a practice, and by August 2026 the main structure and results of the manuscript were laid out by the authors. In September 2026, in view of the current capabilities of LLMs, we used several models as a guide to complete and simplify technical arguments, such as the absence of kernel in Lemma \ref{lemma: control eigenvalues no stag}, the Carleman estimates of Section \ref{sec:rigidity_richardson} and the Sobolev rigidity Theorem \ref{thm rigidity non-zero Sobolev}. All AI-generated suggestions, computations, and references were independently verified and, where necessary, corrected or reformulated by the authors. The resulting document is entirely human written.

 \section{Reformulation of stationary solutions into a semilinear elliptic equation}\label{sec:steady_reformulation}
  In this section, we formulate the stationary equations into a semilinear elliptic equation for a stream function and the density. This formulation identifies quantities that are constant along streamlines and allows us to construct a class of stationary solutions through a semilinear elliptic equation. It reveals the geometric structure of stationary solutions and plays a crucial role throughout the paper.
  
     Let $(\u=(u,v),\rho)$ be a stationary solution and let $\psi$ be a stream function for the incompressible vector field $\u$, that is, $\u=\nabla^\perp \psi:=(-\partial_y\psi,\partial_x\psi)$. The momentum equation in \eqref{eq:BoussinesqVel} takes the form
\begin{align}\label{vorticity_velocity_1rigid}
\Delta\psi\,\nabla\psi=\nabla P+\rho e_2,
\qquad P=p+\frac12|\u|^2.
\end{align}
Using $\rho e_2=\nabla(y\rho)-y\nabla\rho$, we rewrite it as
\begin{equation*}
\Delta\psi\,\nabla\psi+y\nabla\rho=\nabla B,
\qquad B=P+y\rho.
\end{equation*}
Taking the inner product with $\nabla^\perp\psi$ and using
$\nabla^\perp\psi\cdot\nabla\rho=0$, we obtain
\[
\nabla^\perp\psi\cdot\nabla B=0.
\]
Thus, both the density $\rho$ and  $B$ (referred to as the Bernoulli function) are first integrals of $\u$, that is, they are constant along
streamlines. Together with the density transport equation, the reformulated momentum equation is equivalent to the original stationary system. The following proposition records this equivalence and a sufficient condition for constructing stationary solutions.
\begin{proposition}\label{Bernoullie_function_2}
Let $(\u,\rho)\in C^1(\overline{M})$ be a stationary solution to the Boussinesq equations \eqref{eq:BoussinesqVel} and let 
\[
B:=p+\frac{1}{2}|\u|^2+y\rho.
\] Then, for a stream function $\psi$ such that $\u=\nabla^\perp \psi$, it holds that
\begin{align}
\Delta\psi\,\nabla\psi+y\nabla\rho&=\nabla B,\label{first_firstintegral1}\\
\nabla^\perp\psi \cdot\nabla \rho&=0.\label{second_firstintegral1}
\end{align}
Conversely, if $\psi\in C^2(\overline{M})$ and $\rho\in C^1(\overline{M})$ satisfy
\eqref{first_firstintegral1}--\eqref{second_firstintegral1} for a scalar function $B\in C^1(\overline{M})$, then $(\u:=\nabla^\perp\psi,\rho)$ satisfies  \eqref{eq:BoussinesqVel}, with
\[
p:=B-\frac12|\nabla\psi|^2-y\rho.
\]
Consequently, if $\psi\in C^2(\overline{M})$ satisfies
\begin{align}\label{semi_elliptic_stream}
\Delta\psi+y\Theta'(\psi)+G'(\psi)=0, \text{ for some $\Theta,G\in C^1$,}
\end{align} then
\[
\u=\nabla^\perp\psi,\qquad
\rho=\Theta(\psi),\qquad
p=-G(\psi)-\frac12|\nabla\psi|^2-y\Theta(\psi)
\]
satisfy \eqref{eq:BoussinesqVel}.
In both cases, the impermeability condition holds if $\psi$
is constant on each component of $\partial M$.
\end{proposition}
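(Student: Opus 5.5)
The statement is essentially an algebraic rewriting of the momentum equation, so the proof proceeds by direct computation in three steps.

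\emph{Forward direction.} For $\u=\nabla^\perp\psi$ with $\nabla\cdot\u=0$, I would use the two-dimensional identity
\[
\u\cdot\nabla\u=\nabla\tfrac12|\u|^2+\omega\,\u^{\perp},
\qquad \omega=\partial_xv-\partial_yu=\Delta\psi .
\]
Since $\u^\perp=(-v,u)=-\nabla\psi$, the identity reads $\u\cdot\nabla\u=\nabla\frac12|\u|^2-\Delta\psi\,\nabla\psi$. Substituting this into the momentum equation gives $\Delta\psi\nabla\psi=\nabla P+\rho e_2$ with $P=p+\frac12|\u|^2$. I would then write $\rho e_2=\nabla(y\rho)-y\nabla\rho$, which yields \eqref{first_firstintegral1} with $B=P+y\rho$. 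Finally, \eqref{second_firstintegral1} is just the transport equation $\u\cdot\nabla\rho=0$.

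\emph{Regularity at $C^1$.} The one delicate point is that $\Delta\psi$ is only $C^0$ when $\u\in C^1$, while the identity above involves derivatives of $\u$ only. Checking it componentwise as a pointwise identity for $C^1$ fields therefore suffices, and no further derivatives are needed. Here $p$ is $C^1$ because $\nabla p$ equals a continuous expression.

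\emph{Converse.} Given $\psi\in C^2$, $\rho,B\in C^1$, I would define $p:=B-\frac12|\nabla\psi|^2-y\rho$, which is $C^1$. Running the same identity backwards gives
\[
\u\cdot\nabla\u+\nabla p=\nabla\tfrac12|\u|^2-\Delta\psi\nabla\psi+\nabla B-\nabla\tfrac12|\u|^2-\nabla(y\rho).
\]
Using \eqref{first_firstintegral1}, this equals $-\rho e_2$. Divergence-freeness of $\nabla^\perp\psi$ is automatic, and transport follows from \eqref{second_firstintegral1}.

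\emph{Semilinear case.} Set $\rho=\Theta(\psi)$ and $B=-G(\psi)$, both $C^1$. Then $\nabla^\perp\psi\cdot\nabla\rho=\Theta'(\psi)\nabla^\perp\psi\cdot\nabla\psi=0$. Moreover,
\[
\Delta\psi\nabla\psi+y\Theta'(\psi)\nabla\psi=\bigl(-G'(\psi)\bigr)\nabla\psi=\nabla B,
\]
using \eqref{semi_elliptic_stream}. The converse then gives $p=B-\frac12|\nabla\psi|^2-y\Theta(\psi)$, as stated.

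\emph{Boundary condition.} Since $\u\cdot n=\pm\partial_x\psi$ on $y=\pm1$, it vanishes when $\psi$ is constant on each boundary component.

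There is no real obstacle here; the only care needed is the sign convention for $\nabla^\perp$ and $\u^\perp$ in the vorticity identity.
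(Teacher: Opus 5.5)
Your proposal is correct and follows the paper's proof essentially step by step. The forward direction uses the identity $\u\cdot\nabla\u=\nabla\tfrac12|\u|^2-\Delta\psi\,\nabla\psi$ together with $\rho e_2=\nabla(y\rho)-y\nabla\rho$, the converse reverses this computation with $p:=B-\tfrac12|\nabla\psi|^2-y\rho$, and the semilinear case takes $\rho=\Theta(\psi)$, $B=-G(\psi)$; your extra remark that $p\in C^1$ because $\nabla p$ equals a continuous expression is a detail the paper leaves implicit.
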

\begin{proof}
The forward implication that a stationary solution $(\u,\rho)$ satisfies \eqref{first_firstintegral1}--\eqref{second_firstintegral1} has already been proved above. For the converse, we notice that $\u=\nabla^\perp\psi$ and
\eqref{second_firstintegral1} give $\nabla\cdot\u=0$ and  $\u\cdot\nabla\rho=0$, respectively. Moreover, using \eqref{first_firstintegral1}, we obtain
\begin{align*}
(\u\cdot\nabla)\u+\nabla p
&=-\Delta\psi\nabla\psi + \nabla\left(\frac12|\nabla\psi|^2 + p \right)\\
&=-\nabla B + y\nabla\rho + \nabla \left(\frac12|\nabla\psi|^2 + p \right)\\
&=\nabla\left(-B+y\rho + \frac12|\nabla\psi|^2 + p \right)-\rho e_2\\
&=-\rho e_2,
\end{align*}
where the second equality follows from \eqref{first_firstintegral1}. Thus, $(\u,\rho)$ satisfies \eqref{eq:BoussinesqVel}.
Finally, if
$\Delta\psi+y\Theta'(\psi)+G'(\psi)=0$,
set $\rho:=\Theta(\psi)$ and $B:=-G(\psi)$. Then
\[
\nabla^\perp\psi\cdot\nabla\rho=0,
\qquad
(\Delta\psi)\nabla\psi+y\nabla\rho
=-G'(\psi)\nabla\psi=\nabla B,
\]
so the conclusion follows from the converse.
\end{proof}

Before we close this section, we recall the vorticity formulation of the momentum equation. For $\u=(u,v)$, the vorticity $\omega:=\nabla^\perp\cdot \u  =\partial_x v-\partial_yu$ satisfies
\begin{align}\label{vorticity_eq_rigidt1}
\u\cdot\nabla \omega = -\partial_x\rho, \quad \text{in $M$}.
\end{align}
We will use the following form of the equation frequently
\begin{align}\label{vorticity_eq_rigidt2}
u^2\Delta v - u(\Delta u) v - \partial_y \rho v = 0 \qquad \text{in $M$}.
\end{align}
Indeed, noting that $\partial_x\omega = \Delta v$, $\partial_y \omega = - \Delta u$ and combining \eqref{vorticity_eq_rigidt1} with the density transport equation $u\partial_x\rho+v\partial_y\rho=0$, we have
\[
u\left( u\Delta v -v\Delta u\right) = u\left( \u\cdot\nabla \omega \right)= -u\partial_x\rho =v\partial_y\rho,
\]
which gives \eqref{vorticity_eq_rigidt2}.

\begin{remark}\label{rmk:rhoctnpartialM}
The density $\rho$ is constant on each connected component of $\partial M$, regardless of the elliptic formulation of the steady equations. Indeed, for $\u\in C^2(\overline{M})$ and $\rho\in C^1(\overline{M})$ on $\partial M$ we have $v=0$ so that the vorticity equation and the density  transport equation read
\begin{align*}
    u\partial_x\omega = - \partial_x \rho, \quad u\partial_x \rho = 0.
\end{align*}
If $\partial_x\rho\neq 0$ somewhere on $\partial M$, the transport equation forces $u=0$ there. The vorticity equation then implies $\partial_x \rho=0$, reaching a contradiction.
\end{remark}


\section{Rigidity of steady states}\label{sec:Rigidity_l}
In this section we prove the main rigidity characterizations for stationary Boussinesq equations with laminar velocity and monotone density. For notational purposes of the proof, we pose the equations in a smaller domain $M_-:=\mathbb{T}\times (-1,0)$. That is, we consider
\begin{align}\label{Boussinesq_P_1}
\begin{cases}
\u\cdot\nabla\u+\nabla p=-\rho e_2,\\
\u\cdot\nabla\rho=0,\\
\nabla\cdot\u=0,
\end{cases} \text{ for } (x,y)\in M_-,\quad \text{ and }\quad \u\cdot n=0\text{ on $\partial M_-$.}
\end{align}
Let us recast the laminarity  in the domain $M_-$ as
\begin{enumerate}[label=\textbf{(H\arabic*)},ref=\textbf{(H\arabic*)}]
\item\label{density_mon_r12}  There exists $\overline{\psi}\in C^2([-1,0])$ and a $C^2$ orientation-preserving diffeomorphism $X=(X_1,X_2):\overline{M_-}\to \overline{M_-}$ satisfying
\[
X(\mathbb T\times\{-1\})=\mathbb T\times\{-1\},\quad X(\mathbb T\times\{0\})=\mathbb T\times\{0\},\quad
\qquad
\u=\nabla^\perp\bigl(\overline{\psi}\circ X^{-1}\bigr).
\]
\end{enumerate}
See Figure~\ref{fig1} for an illustration. For a laminar velocity, we denote the stream function by $\psi$, that is,
\begin{align}\label{streaM_-aminar_rigiditiy_capsule}
\psi:=\overline{\psi}\circ X^{-1}.
\end{align}
Since $X$ is a diffeomorphism, it is invertible and its inverse is 
\begin{align}\label{inverse_mapping_rg}
X^{-1}=:(Z,Y),\text{ that is, }X(Z(x,y),Y(x,y))=(x,y)\text{ for all $(x,y)\in M_-$.}
\end{align}
The boundary condition on $X$ ensures that
\begin{align}\label{inverse_mapping_rg_Y}
Y(\mathbb{T}\times \left\{-1\right\})=-1 \text{ and }Y(\mathbb{T}\times \left\{0\right\})=0.
\end{align}
For $\eta\in (-1,0)$, denote the corresponding leaf by
\[
\Gamma_\eta:=X(\mathbb{T}\times \left\{ \eta\right\})=\left\{ Y=\eta\right\}.
\]
Note that every $\eta\in (-1,0)$ is a regular value of $Y$. Indeed, since 
 \begin{align}\label{nondegenerate_DY}
 0 \neq \text{det}(DX^{-1})=\nabla Y\cdot \nabla^\perp Z,
 \end{align}
 we have $\nabla Y\ne 0$ everywhere in the domain. Therefore, 
 \[
 \u=\nabla^\perp (\overline{\psi}(Y))=\overline{\psi}'(Y)\nabla^\perp Y\ne 0,\text{ if $\overline{\psi}'(Y)\ne 0$.}
\]
We notice that under the property \ref{density_mon_r12}, $\inf_{\Gamma_\eta}|\u|>0$, if and only if $\overline{\psi}'(\eta)\ne 0$.
Moreover, the density transport equation in \eqref{Boussinesq_P_1} implies $\rho$ is constant on $\Gamma_\eta$, if $\overline{\psi}'(\eta)\ne 0$. For non-stagnant leaves, we state the monotonicity of density across the leaves as
\begin{enumerate}[start=2,label=\textbf{(H\arabic*)},ref=\textbf{(H\arabic*)}]
\item\label{density_mon_r11} $\rho\in C^1(\overline{M_-})$ and for every $\eta_1,\eta_2\in (-1,0)$ such that $\overline{\psi}'(\eta_1),\overline{\psi}'(\eta_2)\ne 0$, it holds that
\[
\rho(\Gamma_{\eta_1})\ge \rho(\Gamma_{\eta_2}),\text{ if $\eta_1\ge \eta_2$.}
\]
\end{enumerate}

The main goal of this section is to establish the following theorem, whose proof will be given at the end of the next subsection.
\begin{theorem}\label{laminar_monotone_rigidity_formal}
Let $(\u,\rho)$ be a solution to \eqref{Boussinesq_P_1}  satisfying \ref{density_mon_r12} and \ref{density_mon_r11}.  Then $(\u,\rho)$ is a shear equilibrium, that is, $\partial_x\u=0$ and $\partial_x\rho=0$.
\end{theorem}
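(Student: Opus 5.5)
The plan follows the two-step strategy announced in the introduction: first replace $(\psi,\rho)$ by a monotone weak stationary pair, then rule out horizontal dependence with a variational symmetrization argument.

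\textbf{Step 1: monotone reduction.} Write $\psi=\overline{\psi}(Y)$ and $\rho$ as in \ref{density_mon_r12}--\ref{density_mon_r11}. On the open set $U=\{\eta:\overline{\psi}'(\eta)\neq0\}$ we have $\rho=\overline{\rho}(Y)$ with $\overline{\rho}$ nondecreasing on $U$. On each interval $I$ of $[-1,0]\setminus U$ the velocity vanishes on the corresponding band $Y^{-1}(I)$. There the momentum equation reduces to $\nabla p=-\rho e_2$, so $\rho$ depends only on $y$ in that band, with the band possibly not horizontal a priori.

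We then define modified data $\widetilde\psi=\widetilde{\overline\psi}(Y)$ and $\widetilde\rho=\widetilde{\overline\rho}(Y)$. Here $\widetilde{\overline\psi}$ is obtained from $\overline{\psi}$ by flipping the sign of $\overline{\psi}'$ on the components of $U$ where it is negative, which is allowed since $\overline{\psi}'$ vanishes on $\partial U$. The function $\widetilde{\overline\rho}$ is $\overline\rho$ on $U$, extended to be constant on gaps so that it stays nondecreasing. We check that the pair still satisfies the weak identity \eqref{energy_variation_reformulation} for all divergence-free $\boldsymbol v$ tangent to $\partial M_-$, leaf by leaf. The expression $\Delta\psi\nabla\psi$ is invariant under $\psi\mapsto-\psi$ locally, and $y\nabla\rho$ vanishes in stagnant regions once $\rho$ is constant there. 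The original stagnant-region profile $\rho(y)$ is absorbed into the pressure. This gives the content of Proposition~\ref{monotonicity_lemma}. Because $x$-independence of the modified pair forces $\nabla Y\parallel e_2$ wherever $\widetilde{\overline\psi}'\neq0$ or $\widetilde{\overline\rho}$ varies, the conclusion transfers back to $(\u,\rho)$. In the residual stagnant bands, $\rho=\rho(y)$ already holds and $\u=0$.

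\textbf{Step 2: symmetrization.} For the modified pair, with both $\psi$ and $\rho$ nondecreasing in the leaf label and hence both "vertically increasing" level structures, we reflect evenly across $y=0$ to $M$ and shift horizontally by an arbitrary $a\in\T$ (Subsection~\ref{sliding_reflection}). We then apply continuous Steiner symmetrization in the $y$-direction. Let $\psi^t,\rho^t$ be the symmetrization family. The Dirichlet energy $\frac12\int|\nabla\psi^t|^2$ is nonincreasing by the Brock/P\'olya--Szeg\H{o} property. The potential term $-\int y\rho^t$ is also nonincreasing, by a rearrangement inequality, since $\rho$ increasing in $y$ symmetrizes toward larger $y$-weight on the side that matters.

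Stationarity \eqref{energy_variation_reformulation} says the first variation along measure-preserving transports vanishes. We justify a one-sided derivative of $\mathcal A$ along $t\mapsto(\psi^t,\rho^t)$ being zero, using Brock's approximation of continuous symmetrization by transports. Combined with monotonicity, this forces equality in the energy inequality to first order. Brock's equality characterization then yields that $\psi$ is locally symmetric in the sense of Definition~\ref{local_symmetry_def}. Doing this for every shift $a$ makes every reflected level-set component symmetric about every horizontal line position. We conclude $\partial_x\psi=0$, hence $Y=Y(y)$, and then $\partial_x\rho=0$.

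\textbf{Main obstacle.} The hardest step is the passage from the vanishing of the first variation to local symmetry. Continuous Steiner symmetrization is not generated by a smooth divergence-free field, and the functions are only $C^1$/$W^{1,\infty}$. I would first try to follow Brock's strategy. That means proving
\[
\liminf_{t\to0}\frac{1}{t}\Big(\int|\nabla\psi|^2-\int|\nabla\psi^t|^2\Big)=0.
\]
To get this, we approximate the symmetrization by polarizations or by flows of piecewise-smooth vector fields to which \eqref{energy_variation_reformulation} applies, and use the density potential's monotonicity to keep the sign.
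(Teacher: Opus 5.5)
Your skeleton matches the paper's: monotone reduction, even reflection with an arbitrary horizontal shift, continuous Steiner symmetrization about $y=0$, Brock's local-symmetry criterion, then varying the shift. But the step that carries the proof is left open, namely $\liminf_{t\searrow0}\frac1t(\mathcal A(t)-\mathcal A(0))\ge0$ along the symmetrization, and the mechanism you propose for it does not work.

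Continuous Steiner symmetrization is not generated by a divergence-free flow, since it merges intervals. More seriously, the reflected-and-shifted pair is not stationary on $M$:
\begin{itemize}
\item the identity \eqref{energy_variation_reformulation} holds only in each half separately;
\item $|\partial_y\phi_\lambda|^2$ jumps across $\{y=0\}$: it equals $\overline{\phi}'(0)^2|\partial_yY|^2$ evaluated at $x$ from below and at $x-\lambda$ from above;
\item so for a transport $\boldsymbol v$, the first variation of $\mathcal A$ reduces, after using stationarity in each half, to an interface term of the type $\frac12\int_{\{y=0\}}v_2\,[\,|\partial_y\phi_\lambda|^2\,]$, which has no definite sign.
\end{itemize}
Transports tangent to $\{y=0\}$ do not avoid this either. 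The symmetrization moves superlevel-set mass of $Y_\lambda$ from one half to the other: an interval $(a,b)\ni0$ with $a+b\neq0$ is translated.

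What you need is the convexity-plus-equimeasurability argument (essentially Brock's argument for $-\Delta u=f(u)$), not a transport argument:
\begin{enumerate}
\item[(i)] Use the Bernoulli-type first integral of Lemma~\ref{B_condition}, $\Delta\phi\,\overline{\phi}'(Y)+y\,\overline{\theta}'(Y)=\overline{Q}'(Y)$. After reflection it reads $\Delta\phi_\lambda\overline{\phi}'(Y_\lambda)-|y|\,\overline{\theta}'(Y_\lambda)=\overline{Q}'(Y_\lambda)$, with the same $\overline{Q}$ on both halves.
\item[(ii)] Use convexity: $\frac12\int|\nabla\phi^t_\lambda|^2-\frac12\int|\nabla\phi_\lambda|^2\ge-\int\Delta\phi_\lambda(\phi^t_\lambda-\phi_\lambda)$. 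Integrating by parts in each half produces no interface term, precisely because $Y^t_\lambda=Y_\lambda$ on $\partial M$ and on $\{y=0\}$.
\item[(iii)] Linearize in $Y^t_\lambda-Y_\lambda$, which gives $\mathcal A(t)-\mathcal A(0)\ge-\int_M\overline{Q}'(Y_\lambda)(Y^t_\lambda-Y_\lambda)+o(t)$.
\item[(iv)] Use equimeasurability of $Y^t_\lambda$ and $Y_\lambda$ on all of $M$, which makes this integral $o(t)$ (Lemma~\ref{limit_equidistribution}).
\end{enumerate}
The cancellation in (iv) happens between the two halves. That is why a rearrangement identity, and not a transport, is the right tool.

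Step 1 also needs repair. Suppose $\widetilde{\overline{\rho}}$ equals $\overline{\rho}$ on $U$ and is constant on each gap. Then it jumps at one end of every stagnant band across which $\overline{\rho}$ increases. That boundary leaf need not be horizontal, so $\rho-\widetilde\rho$ is not a function of $y$ alone, $(\rho-\widetilde\rho)e_2$ is not a gradient, and it cannot be absorbed into the pressure.

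The fix is to make the modified profile continuous by subtracting accumulated increments, as in the paper's $\overline{\theta}(\eta)=\int_{-1}^{\eta}\mathbf{1}_{U}\,\overline{\rho}'$. Then $\partial_x(\rho-\theta)=0$ a.e., and Lipschitz continuity gives $\rho-\theta=f(y)$ on all of $M_-$. Note that this discrepancy is not supported only in the stagnant region.

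Finally, $\partial_x\phi=0$ does not give $Y=Y(y)$ in stagnant bands. Also, ``symmetric about every horizontal line position'' needs to be made concrete:
\begin{itemize}
\item at the highest point $(x_0,y_0)$ of a non-stagnant leaf one has $\partial_xY=0<\partial_yY$;
\item local symmetry of $\phi_\lambda$ on $\{x=x_0\}$ transfers $\partial_xY=0$ to the highest point of that leaf above $x_0-\lambda$;
\item varying $\lambda$ makes the leaf's upper envelope horizontal, and hence the leaf itself.
\end{itemize}
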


 In the rest of the section, we will assume $(\u,\rho)$ satisfies \ref{density_mon_r12} and \ref{density_mon_r11}.

\subsection{Monotone modification of solution and the proof of Theorem~\ref{laminar_monotone_rigidity_formal}} In this subsection, we modify the stream function and the density so that they are monotone increasing in a stronger sense and still solve the stationary Boussinesq equations. At the end of this subsection, the proof of Theorem~\ref{laminar_monotone_rigidity_formal} will be given, assuming that such modified solution is a shear equilibrium.

 We first investigate the structure of stagnant sets.  Let
 \begin{align}\label{stagnant_sets_rigidity_1}
 \mathcal{R}:=\left\{ \eta\in [-1,0]: \overline{\psi}'(\eta)\ne 0\right\}.
 \end{align}
 For $\eta\in [-1,0]$, set
 \begin{align}\label{strongly_monotone_density}
 \overline{\rho}(\eta):=\rho(X(0,\eta)),\quad \overline{\theta}(\eta):=\int_{-1}^\eta \ind_{\mathcal{R}}(s)\overline{\rho}'(s){\d s}. 
 \end{align} 
 \begin{lemma}\label{density_indeed_monotone}
 It holds that   $\overline{\theta}\in W^{1,\infty}([-1,0])$ and $\overline{\theta}'\ge 0$ almost everywhere.
   \end{lemma}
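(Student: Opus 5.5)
The plan is to treat the two claims separately, since regularity is routine and monotonicity is where hypothesis \ref{density_mon_r11} enters.

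\emph{Regularity.} Since $\rho\in C^1(\overline{M_-})$ and $X\in C^2$, the function $\overline{\rho}(\eta)=\rho(X(0,\eta))$ belongs to $C^1([-1,0])$. Its derivative $\overline{\rho}'(\eta)=\nabla\rho(X(0,\eta))\cdot\partial_\eta X(0,\eta)$ is bounded. Because $\mathcal{R}$ is open in $[-1,0]$ (continuity of $\overline{\psi}'$), the function $\ind_{\mathcal{R}}$ is Borel. Hence the integrand is bounded and measurable, $\overline{\theta}$ is Lipschitz, and $\overline{\theta}'=\ind_{\mathcal{R}}\overline{\rho}'$ almost everywhere by the Lebesgue differentiation theorem.

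\emph{Reduction of monotonicity.} Off $\mathcal{R}$ the integrand vanishes, so it suffices to show $\overline{\rho}'(\eta)\ge0$ for every $\eta\in\mathcal{R}\cap(-1,0)$. The boundary points have measure zero and are irrelevant.

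\emph{Monotonicity on each component.} Fix such an $\eta$. Since $\mathcal{R}$ is relatively open, there is an interval $I\ni\eta$ contained in $\mathcal{R}\cap(-1,0)$. For every $s\in I$ we have $\overline{\psi}'(s)\ne0$, so the leaf $\Gamma_s$ is non-stagnant. As noted before the theorem, $\rho$ is then constant on $\Gamma_s$, and this constant is $\overline{\rho}(s)$ because $X(0,s)\in\Gamma_s$. Hypothesis \ref{density_mon_r11} applies to any $s_1\ge s_2$ in $I$ and gives $\overline{\rho}(s_1)\ge\overline{\rho}(s_2)$. So $\overline{\rho}$ is nondecreasing on $I$, and since it is differentiable, $\overline{\rho}'(\eta)\ge0$.

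Combining the steps, $\overline{\theta}'=\ind_{\mathcal{R}}\overline{\rho}'\ge0$ almost everywhere.

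\emph{Main subtlety.} I expect the only delicate point to be the identification of $\overline{\rho}$ with the leafwise constant value of $\rho$ on non-stagnant leaves, together with the openness of $\mathcal{R}$. Openness is what allows us to compare derivatives rather than only point values. Without it, one would need \ref{density_mon_r11} across gaps containing stagnant leaves, and this is exactly why the integrand is cut off by $\ind_{\mathcal{R}}$.
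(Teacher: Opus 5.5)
Your proof is correct and follows essentially the same route as the paper. You get the Lipschitz bound from the boundedness of $\overline{\rho}'$, then use relative openness of $\mathcal{R}$ together with \ref{density_mon_r11} to get $\overline{\rho}'\ge 0$ on $\mathcal{R}$, so the integrand $\ind_{\mathcal{R}}\overline{\rho}'$ is nonnegative. The only cosmetic difference is that you read off $\overline{\theta}'=\ind_{\mathcal{R}}\overline{\rho}'$ a.e. via Lebesgue differentiation, whereas the paper concludes that $\overline{\theta}$ is nondecreasing and invokes Rademacher.
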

 \begin{proof}
 For $\eta_1,\eta_2\in [-1,0]$ with $\eta_1\neq \eta_2$, it is clear that
 \begin{align}\label{theta_lipschitz_conti_1}
|\overline{\theta}(\eta_1)-\overline{\theta}(\eta_2)|\leq \rVert \overline{\rho}'\rVert_{L^\infty}|\eta_1-\eta_2|,
 \end{align}
 where $\overline{\rho}'$ is bounded since $X\in C^2$ and $\rho\in C^1$. Therefore $\overline{\theta}\in W^{1,\infty}$. By Rademacher's theorem, $\overline{\theta}$ is differentiable almost everywhere.
 
Since $\overline{\psi}'$ is continuous, $\mathcal{R}$
is relatively open in $[-1,0]$.
For each $\eta\in\mathcal{R}$, a sufficiently small
neighborhood of $\eta$ consists of non-stagnant labels.
Assumption~\ref{density_mon_r11} therefore implies that
$\overline{\rho}$ is nondecreasing in this neighborhood.
Consequently,
\[
\overline{\rho}'(\eta)\ge0
\qquad\text{for every }\eta\in\mathcal{R}.
\]
Thus, the integrand
$\mathbf{1}_{\mathcal{R}}\overline{\rho}'$ is nonnegative. This implies that  $\overline{\theta}$ is nondecreasing on $(-1,0)$. Hence $\overline{\theta}'\ge0$  almost everywhere.
\end{proof}
Next, let us define
\begin{align}\label{def_monotone_streamfunction1}
\overline{\phi}(\eta)=\int_{-1}^\eta |\overline{\psi}'(s)|{\d s}.
\end{align}
It is clear that $\overline{\phi}$ is monotone increasing, that is
\[
\overline{\phi}'(\eta)\ge 0\text{ for all $\eta\in (-1,0)$.}
\]
With the monotone nondecreasing functions $\overline{\theta}$ and $\overline{\phi}$, we have the following proposition.
 \begin{proposition}\label{monotonicity_lemma}
Let $\overline{\phi}$ and $\overline{\theta}$ be as in \eqref{def_monotone_streamfunction1} and \eqref{strongly_monotone_density}, respectively. Set
\begin{align}\label{defofphiandtheta}
\phi:= \overline{\phi}\circ Y,\quad \bv:=\nabla^\perp \phi,\quad \theta:=\overline{\theta}\circ Y.
\end{align}
Then, $(\bv,\theta)\in W^{1,\infty}$ is a stationary weak solution to the Boussinesq equations, that is,
\begin{align}\label{Boussinesq_P}
\begin{cases}
\bv\cdot\nabla\bv+\nabla q=-\theta e_2,\\
\bv\cdot\nabla\theta=0,\\
\nabla\cdot\bv=0,
\end{cases} \text{ for } (x,y)\in M_-,\quad \text{ and }\quad \bv\cdot n=0\text{ on $\partial M_-$.}
\end{align}
 \end{proposition}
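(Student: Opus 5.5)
The plan is to verify the stream-function form of the equations from Proposition~\ref{Bernoullie_function_2} for the pair $(\phi,\theta)$, in an almost-everywhere sense. Everything is a function of the single variable $Y$, so the work reduces to a one-variable statement about the leaf label $\eta$.

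\emph{Step 1: regularity and the easy equations.} Since $\overline{\psi}\in C^2$, the function $|\overline{\psi}'|$ is Lipschitz, so $\overline{\phi}\in W^{2,\infty}([-1,0])$. By Lemma~\ref{density_indeed_monotone}, $\overline{\theta}\in W^{1,\infty}$. As $Y\in C^2$, this gives $\phi\in W^{2,\infty}(M_-)$, hence $\bv\in W^{1,\infty}$ and $\Delta\phi\in L^\infty$, and also $\theta\in W^{1,\infty}$.
\begin{itemize}
\item Because $\bv=\overline{\phi}'(Y)\nabla^\perp Y$ and $\nabla\theta=\overline{\theta}'(Y)\nabla Y$, we get $\nabla\cdot\bv=0$ and $\bv\cdot\nabla\theta=0$ almost everywhere.
\item By \eqref{inverse_mapping_rg_Y}, $Y$ is constant on each component of $\partial M_-$. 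Hence $\nabla^\perp Y$ is tangent there, and $\bv\cdot n=0$.
\end{itemize}
It then remains to find a Lipschitz $Q$ with $\Delta\phi\nabla\phi+y\nabla\theta=\nabla Q$ a.e. Setting $q:=Q-\frac12|\bv|^2-y\theta$, the computation in the proof of Proposition~\ref{Bernoullie_function_2} then gives the momentum equation a.e., and therefore weakly.

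\emph{Step 2: reduction to a scalar identity.} Expanding gives
\[
\Delta\phi\nabla\phi+y\nabla\theta=F\,\nabla Y,\qquad F:=\overline{\phi}'(Y)\overline{\phi}''(Y)|\nabla Y|^2+\overline{\phi}'(Y)^2\Delta Y+y\,\overline{\theta}'(Y).
\]
The same computation applied to the original solution gives
\[
\Delta\psi\nabla\psi+y\nabla\rho=G\nabla Y,\qquad G:=\overline{\psi}'\overline{\psi}''(Y)|\nabla Y|^2+\overline{\psi}'(Y)^2\Delta Y+y\nabla\rho\cdot\nabla Y/|\nabla Y|^2 .
\]
The last term is only valid on non-stagnant leaves, where $\nabla\rho=\overline{\rho}'(Y)\nabla Y$.

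By \eqref{first_firstintegral1}, $G\nabla Y=\nabla B$. On $Y^{-1}(\mathcal R)$ the Bernoulli function $B$ is constant on leaves, so $B=\overline B(Y)$ with $\overline B(\eta):=B(X(0,\eta))\in C^1$. Consequently $G=\overline B'(Y)$ there.

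\emph{Step 3: comparing $F$ and $G$.}
\begin{itemize}
\item On $Y^{-1}(\mathcal R)$: on each connected component interval of $\mathcal R$, the derivative $\overline{\psi}'$ has a fixed sign $\sigma$. There $\overline{\phi}=\sigma\overline{\psi}+c$ and $\overline{\theta}'=\overline{\rho}'$. Since $\sigma^2=1$, this yields $F=G=\overline B'(Y)$.
\item On the stagnant labels $S=[-1,0]\setminus\mathcal R$: we have $\overline{\phi}'=0$, and $\overline{\phi}''=0$ a.e. on $S$, because the derivative of a Lipschitz function vanishes a.e. on its zero set. Also $\overline{\theta}'=\ind_{\mathcal R}\overline{\rho}'=0$ there.
\item Since $Y$ is a $C^2$ submersion, preimages under $Y$ of null sets of labels are null in $M_-$ (coarea formula). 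Hence $F=0$ a.e. on $Y^{-1}(S)$.
\end{itemize}
Thus $F=\overline F(Y)$ a.e., with $\overline F:=\ind_{\mathcal R}\overline B'$. This is bounded, since $\overline{F}$ agrees a.e. with a combination of bounded quantities. Taking $Q:=\int_{-1}^{Y}\overline F(s)\,\d s$, which is Lipschitz, gives $\nabla Q=F\nabla Y$ a.e., which closes the argument.

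\emph{Main obstacle.} The delicate point is Step 3. There one must justify that the identification $G=\overline B'(Y)$ holds on all of $Y^{-1}(\mathcal R)$, with $\overline B\in C^1$. One must also handle the measure-theoretic vanishing on the stagnant set. That set can have positive measure and empty interior, so it cannot be dealt with leaf by leaf; it requires the a.e. argument through the coarea formula together with the fact that Lipschitz functions have zero derivative a.e. on their level sets.
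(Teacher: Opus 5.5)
Your proof is correct, but it follows a different route from the paper's.

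\textbf{The paper's route.} The paper never builds a Bernoulli function for $(\bv,\theta)$. It notes that $\bv=\pm\u$ pointwise, so $\bv\otimes\bv=\u\otimes\u$. The nonlinear term is then inherited from the original solution: $(\bv\cdot\nabla)\bv=\nabla\cdot(\u\otimes\u)=(\u\cdot\nabla)\u$. It then shows $\partial_x(\rho-\theta)=0$ a.e., so $\rho-\theta=f(y)$, and absorbs $f(y)e_2$ into the pressure via $q=p+\int_{-1}^y f$. This last step needs information about $\rho$ on the stagnant leaves, namely $\partial_x\rho=0$ there. The paper proves this by testing the vorticity equation against indicators of small balls.

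\textbf{Your route.} You verify the stream-function form directly and obtain
\[
\Delta\phi\nabla\phi+y\nabla\theta=\overline F(Y)\nabla Y,\qquad \overline F=\ind_{\mathcal R}\overline B'.
\]
On $Y^{-1}(\mathcal R)$ you import the original Bernoulli function, using $\sigma^2=1$ and $\overline\theta'=\overline\rho'$. On $Y^{-1}(\mathcal R^c)$ every term vanishes a.e.

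\textbf{What each buys.} Your argument never uses $\rho$ in the stagnant region and needs no vorticity argument. It also yields directly the structure the paper only extracts afterwards in Lemma~\ref{B_condition}, namely that $\Delta\phi\,\overline\phi'(Y)+y\overline\theta'(Y)$ is a function of $Y$. The paper's route is shorter on the momentum side and gives the new pressure explicitly in terms of the old one.

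\textbf{A small simplification.} On $Y^{-1}(S)$ you do not need the fact that $\overline\phi''=0$ a.e.\ on $\{\overline\phi'=0\}$. Since $\overline\phi'(Y)=0$ there and $\overline\phi''$ is bounded, both terms of $F$ involving $\phi$ already vanish a.e.
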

\begin{proof}
The incompressibility of $\bv$ and the boundary condition for $\bv$ follow from the definition of $\bv$ and the fact that $Y$ keeps the boundary (see \eqref{inverse_mapping_rg_Y}). Also, we have
\[
\bv\cdot\nabla \theta = \overline{\phi}'\nabla^\perp Y \cdot (\overline{\theta}'\nabla Y)=0\text{ a.e. in $M_-$.}
\]

By construction,
\begin{align}\label{uandv}
\u=\overline{\psi}'(Y)\nabla^\perp Y,
\qquad
\bv=|\overline{\psi}'(Y)|\nabla^\perp Y.
\end{align}
Since $\overline{\psi}'\in C^1$ and $Y\in C^2$,
we have $\bv\in W^{1,\infty}(M_-)$.
Moreover,
\[
\bv\otimes\bv=\u\otimes\u.
\]
Both vector fields are divergence-free, thus
\begin{align}\label{advection_boussinesq_reform}
(\bv\cdot\nabla)\bv
=\nabla\cdot(\bv\otimes\bv)
=\nabla\cdot(\u\otimes\u)
=(\u\cdot\nabla)\u
\end{align}
in the distributional sense.

We next show that $\rho-\theta$ depends only on $y$.
On the non-stagnant region $Y^{-1}(\mathcal R)$,
the density $\rho$ is constant on each streamline $X(\mathbb{T}\times \left\{\eta\right\})$, therefore
\[
\rho(X(\xi,\eta))=\rho(X(0,\eta))=\overline{\rho}(\eta),\text{ for all $\xi\in\mathbb{T}$ and $\eta\in \mathcal{R}$.}
\]
Since $\overline{\theta}'=\overline{\rho}'$
on $\mathcal R$ (see \eqref{strongly_monotone_density}), it follows that
\begin{align}\label{nonstag_points_samedensity}
\nabla\theta=\overline{\theta}'\nabla Y = \overline{\rho}'\nabla Y = \nabla\rho
\quad\text{on }Y^{-1}(\mathcal R).
\end{align}
On the other hand, the definition of $\overline{\theta}$ in \eqref{strongly_monotone_density}
gives
\[
\overline{\theta}'=0
\quad\text{a.e. on }\mathcal R^c.
\]
Since $X$ is a $C^2$ diffeomorphism of the compact closure,
it is Lipschitz and therefore maps sets of measure zero
to sets of measure zero. Thus, for every null set
$N\subset(-1,0)$,
\begin{align}\label{nulset_estimate_YX}
Y^{-1}(N)=X(\mathbb T\times N),
\qquad |Y^{-1}(N)|=0.
\end{align}
In particular, we have
\begin{align}\label{stag_xindep}
\nabla\theta=0
\qquad\text{a.e. on }Y^{-1}(\mathcal R^c).
\end{align}

On the other hand, the vorticity equation \eqref{vorticity_eq_rigidt1} reads
\begin{align}\label{vorticity_eq_rigidt}
\u\cdot\nabla \omega = - \partial_x\rho.
\end{align}
with $\omega:=\Delta \psi$. For $z_0:=(x_0,y_0)\in Y^{-1}(\mathcal{R}^c)$, we test the vorticity equation with $\frac{1}{\delta^2}\ind_{B_{\delta}(z_0)}$, where $B_{\delta}(z_0)$ is the open ball centered at $z_0$ with radius $\delta>0$, to see that
\begin{align*}
\frac{1}{\delta^2}\int_{B_\delta(z_0)}\partial_x\rho(x,y)\d x\d y &=-\frac{1}{\delta^2}\int_{\partial B_{\delta}(z_0)}\omega \u\cdot\n d\sigma\\
& = -\frac{1}{\delta^2}\int_{\partial B_{\delta }(z_0)} \left(\omega(x,y)-\omega(z_0)\right)\u\cdot \n d\sigma\\
&  =-\frac{1}{\delta^2}\int_{\partial B_{\delta}(z_0)} \left(\omega(x,y)-\omega(z_0)\right) (\u-\u(z_0))\cdot\n d\sigma\\
& =o(1),
\end{align*}
where the second equality follows from the incompressibility of $\u$ and the third equality follows from $\u(z_0)=0$, since $z_0$ is a stagnation point. The last equality follows from \ref{density_mon_r12}, which guarantees that $\psi\in C^2$. Taking $\delta\to 0$, we arrive at
\[
\partial_x\rho(z_0)=0.
\]
Combining this with \ref{stag_xindep}, we have $\partial_x(\rho - \theta)=0\text{ a.e. on $Y^{-1}(\mathcal{R}^c)$}$. Therefore, together with \eqref{nonstag_points_samedensity}, we conclude that 
\[
\partial_x(\rho-\theta)=0\qquad\text{a.e. on $M_-$.}
\] Since both $\rho$ and $\theta$ are $W^{1,\infty}$,  there exists $f\in W^{1,\infty}([-1,0])$ such that 
\[
\rho-\theta=f(y).
\]
Finally, plugging this and \eqref{advection_boussinesq_reform} into the momentum equation in \eqref{Boussinesq_P_1}, we have
\[
(\bv\cdot\nabla)\bv =(\u\cdot\nabla)\u =-\nabla p - \rho e_2= -\nabla p -f e_2 - \theta e_2
\]
Denoting $q(x,y):=p(x,y)+\int_{-1}^yf(s){\d s}$, we obtain the momentum equation in \eqref{Boussinesq_P}.
\end{proof}

 The equations \eqref{Boussinesq_P} are our main object that we will further investigate in order to establish rigidity of laminar stationary solutions. We state our goal to prove about $(\bv,\theta)$ as a theorem, under which we can give a proof for Theorem~\ref{laminar_monotone_rigidity_formal}.
 
 \begin{theorem}\label{Rigidity_modified_solutions}
 $(\bv,\theta)$ in Proposition~\ref{monotonicity_lemma} is a shear equilibrium.
 \end{theorem}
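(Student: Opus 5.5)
The plan follows the strategy outlined in the introduction. We use the variational identity \eqref{energy_variation_reformulation} for the weak solution $(\bv,\theta)$, combined with continuous Steiner symmetrization and a sliding-reflection argument.

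\textbf{Variational identity.} Since $\phi=\overline{\phi}\circ Y$ and $\theta=\overline{\theta}\circ Y$ are Lipschitz and nondecreasing functions of the leaf label, the weak form of \eqref{Boussinesq_P} gives
\[
\int_{M_-}\bigl(\Delta\phi\,\nabla\phi+y\nabla\theta\bigr)\cdot\boldsymbol{w}\,\d x\,\d y=0
\]
for every divergence-free $\boldsymbol{w}\in C^1$ tangent to $\partial M_-$. Here $\Delta\phi\,\nabla\phi$ is understood as $-\nabla\cdot(\nabla\phi\otimes\nabla\phi)+\frac12\nabla|\nabla\phi|^2$, which is legitimate since $\bv\in W^{1,\infty}$. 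Equivalently, $\mathcal A(\bv,\theta)=\frac12\int|\nabla\phi|^2-\int y\theta$ has zero first variation under measure-preserving rearrangements transporting $\phi$ and $\theta$ together. Because $\phi$ and $\theta$ are both nondecreasing in $Y$, they share the same superlevel-set structure: each set $\{\phi>t\}$ and $\{\theta>s\}$ is a union of regions $\{Y>\eta\}$ bounded by leaves. A single rearrangement acting on the leaves therefore transports both functions simultaneously.

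\textbf{Continuous Steiner symmetrization.} We reflect $M_-$ evenly across $y=0$ to obtain a domain symmetric in $y$, working with the reflected extensions $\phi(x,-y)$ and $\theta(x,-y)$. We then apply the continuous Steiner symmetrization $S^t$ in the vertical direction, following Brock. The key inequalities are the following.
\begin{itemize}
\item The Dirichlet energy $\int|\nabla\phi|^2$ is nonincreasing in $t$.
\item Because $\theta$ is monotone in the correct direction (heavier fluid above), the potential term $-\int y\theta$ is also nonincreasing under the symmetrization.
\end{itemize}
Stationarity is then used as follows: the one-sided derivative at $t=0$ of $\mathcal A(S^t\phi,S^t\theta)$ equals the first variation, which vanishes. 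Approximating $S^t$ by smooth measure-preserving flows, as Brock does, lets us apply the identity above. Brock's characterization (Definition~\ref{local_symmetry_def}) then yields that $\phi$ is locally symmetric: on each connected component of a level band where $\nabla\phi\ne0$, $\phi$ is symmetric and decreasing about some horizontal line.

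\textbf{Sliding reflection.} Local symmetry alone does not force a shear flow, so we repeat the argument with reflected extensions composed with arbitrary horizontal shifts $x\mapsto x+h$, as in Subsection~\ref{sliding_reflection} and \cite{drivas2024geometric}. Because the leaves are non-contractible graphs-like curves winding around $\mathbb T$, the local symmetries obtained for every shift force each non-stagnant leaf to be a horizontal line. Hence $Y$ depends only on $y$ wherever $\overline{\phi}'\ne0$, and on stagnant regions $\bv=0$ and $\theta$ is locally constant in $x$. It follows that $\partial_x\bv=\partial_x\theta=0$.

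\textbf{Main obstacle.} The hardest step is justifying the first-variation inequality for the symmetrization at $W^{1,\infty}$ regularity, allowing flat regions of $\phi$, and extracting local symmetry in the equality case. In particular, one must show that the gravity term has the right sign using only $\overline{\theta}'\ge0$ a.e. I would first establish this through the layer-cake representation of $\theta$ in terms of superlevel sets, reducing the potential term to a statement about individual sets.
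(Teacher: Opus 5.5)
Your overall architecture matches the paper's: you rearrange the leaf label $Y$ and recover $\phi,\theta$ through $\overline{\phi},\overline{\theta}$, reflect, apply continuous Steiner symmetrization, invoke Brock's local symmetry, and use horizontal shifts. However, the step linking stationarity to the symmetrization does not hold as stated, and the final step is missing its mechanism.

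\textbf{The bridge from stationarity to symmetrization.} You claim that the one-sided derivative of $\mathcal A$ along $S^t$ equals the first variation, obtained by approximating $S^t$ by smooth measure-preserving flows ``as Brock does''. This fails for three reasons.
\begin{itemize}
\item Brock provides no such approximation.
\item The equality has no reason to hold. The remainder $\frac{1}{2t}\|\nabla(\phi^t-\phi)\|_{L^2}^2$ need not vanish as $t\searrow0$, for example when superlevel intervals on a fiber merge immediately near interior local minima of $\phi(x,\cdot)$.
\item Your weak identity applies only to $C^1$ divergence-free test fields. The first-order displacement of $S^t$ is neither; $S^t$ only preserves the measure of each superlevel set.
\end{itemize}
The paper's bridge consists of three ingredients that your proposal lacks.
\begin{itemize}
\item[(i)] Lemma~\ref{B_condition} recasts stationarity as $\Delta\phi_\lambda\,\overline{\phi}'(Y_\lambda)-|y|\,\overline{\theta}'(Y_\lambda)=\overline{Q}'(Y_\lambda)$ a.e., where $\overline{Q}\in W^{1,\infty}$ depends only on the leaf label. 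Note that the weight is $|y|$: the reflected potential is $\int|y|\theta$, not $-\int y\theta$, which has the wrong sign on the upper half.
\item[(ii)] Only a one-sided bound is needed, and convexity supplies it: $\tfrac12\|\nabla\phi^t_\lambda\|^2-\tfrac12\|\nabla\phi_\lambda\|^2\ge-\int\Delta\phi_\lambda(\phi^t_\lambda-\phi_\lambda)$. The integration by parts is legitimate only because $\phi^t_\lambda=\phi_\lambda$ on $\mathbb T\times\{0\}$, since $\Delta\phi_\lambda$ has a singular part there. Expanding $\overline{\phi}(Y^t_\lambda)$ and $\overline{\theta}(Y^t_\lambda)$ then gives $\mathcal A(t)-\mathcal A(0)\ge-\int\overline{Q}'(Y_\lambda)(Y^t_\lambda-Y_\lambda)+o(t)$.
\item[(iii)] This linear term is $o(t)$ purely because $Y^t_\lambda$ and $Y_\lambda$ are equimeasurable (Lemma~\ref{limit_equidistribution}); no divergence-free structure is needed.
\end{itemize}
Squeezing this lower bound against the monotonicity of both energies yields the hypothesis of Theorem~\ref{dirichelet_lemma}.

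\textbf{The sliding step.} This is asserted rather than argued, and its premise is off: leaves need not be graphs over $\mathbb T$. The paper's argument runs as follows.
\begin{itemize}
\item Fix a non-stagnant $\eta$, let $w(x)=\max\{y:Y(x,y)=\eta\}$, and let $(x_0,y_0)$ be the highest point of $\Gamma_\eta$. There $\partial_xY=0<\partial_yY$.
\item On the fiber $\{x=x_0\}$, local symmetry of $\phi_\lambda$ pairs this point with $(x_0,-w(x_0-\lambda))$. This forces $\partial_xY(x_0-\lambda,w(x_0-\lambda))=0$.
\item Letting $\lambda$ range over $\mathbb R$ gives $\partial_xY=0$ along the entire upper envelope. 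The implicit function theorem then gives $w'=0$, so $\Gamma_\eta$ is a horizontal circle.
\end{itemize}
The remaining conclusions follow as you indicate: $\partial_x\phi=0$ on stagnant leaves because $\overline{\phi}'=0$ there, and then $\partial_x\theta=0$.
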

 The proof of this theorem is lengthy. For the reader's convenience, the proof will be carried in the following subsections and given at the end of the section, see Subsection~\ref{prooftheoremRigidity_modified_solutions}.
  Now, we are ready to prove Theorem~\ref{laminar_monotone_rigidity_formal}.
\begin{proof}[Proof of Theorem~\ref{laminar_monotone_rigidity_formal}]
By Theorem~\ref{Rigidity_modified_solutions}, $\bv$ is
a shear flow. In particular, its vertical component
vanishes. Hence from \eqref{uandv} it follows that 
the vertical component of $\u$ also vanishes.
Incompressibility then implies that $\u$ is also a shear.

 Since $\partial_x\u =0$, in the vorticity equation \eqref{vorticity_eq_rigidt}, we have $\u\cdot\nabla \omega =0$ in the distribution sense. Therefore, $\partial_x \rho=0$ as well. 
\end{proof}
In the rest of the section, we will carry out the proof of Theorem~\ref{Rigidity_modified_solutions}.

\subsection{Additional first integral $\mathcal{P}$}
Thanks to Proposition~\ref{monotonicity_lemma}, we know that $(\bv,\theta)$ is a stationary solution. As we observed in Proposition~\ref{Bernoullie_function_2}, the so-called Bernoulli function is constant along each streamline. However, due to the lack of regularity of $(\bv,\theta)$, we need to be more careful to justify such a function indeed satisfies such a property. In this subsection, we establish this rigorously.

 Let us define
  \begin{align}\label{generalized_pressure_P}
\mathcal{P}(x,y):=\Delta \phi(x,y) \overline{\phi}'(Y(x,y)) + y\overline{\theta}'(Y(x,y)),\text{ for $(x,y)\in M_-$.}
\end{align}
Note that from the definition of $\phi$ in \eqref{def_monotone_streamfunction1} and Proposition \ref{monotonicity_lemma}, it follows that $\phi\in W^{2,\infty}$. And we already know that $\theta\in W^{1,\infty}$ from  \eqref{theta_lipschitz_conti_1}. Therefore, $\mathcal{P}\in L^\infty(M_-)$.

 Recall that  $\phi$ and $\theta$ are constant on each level set of $Y=\eta$ by their definitions. In the next lemma, we show that $\mathcal{P}$ is also constant on each level set of $Y$. 
 \begin{lemma}\label{B_condition}
 Let $(\phi,\theta)$ be a steady state.  Then $\mathcal{P}$ defined in \eqref{generalized_pressure_P} is constant on each level set $Y=\eta$. More precisely, there exists a $W^{1,\infty}$ function  $\overline{Q}:[-1,0]\to \mathbb{R}$ such that
 \[
 \mathcal{P}(X(\xi,\eta))=\overline{Q}'(\eta) \text{ for $a.e. \ (\xi,\eta)\in M_-$}.
 \]
  \end{lemma}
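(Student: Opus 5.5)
We aim to show that the vector field $\mathcal{P}\,\nabla Y$ is a gradient of a function of $Y$ alone, using the weak momentum equation for $(\bv,\theta)$ from Proposition~\ref{monotonicity_lemma}. Since $\bv=\overline{\phi}'(Y)\nabla^\perp Y=\nabla^\perp\phi$ with $\phi\in W^{2,\infty}$, the identity $(\bv\cdot\nabla)\bv=\nabla\tfrac12|\bv|^2-\Delta\phi\,\nabla\phi$ holds a.e. Combining this with $\theta e_2=\nabla(y\theta)-y\nabla\theta$, the momentum equation in \eqref{Boussinesq_P} becomes
\[
\Delta\phi\,\nabla\phi+y\nabla\theta=\nabla B,\qquad B:=q+\tfrac12|\bv|^2+y\theta,
\]
in the distributional sense. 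Using $\nabla\phi=\overline{\phi}'(Y)\nabla Y$ and $\nabla\theta=\overline{\theta}'(Y)\nabla Y$ a.e. (the chain rule for Lipschitz $\overline{\phi},\overline{\theta}$ composed with the $C^2$ submersion $Y$ is justified by \eqref{nulset_estimate_YX}), the left side equals $\mathcal{P}\nabla Y$. Thus $\nabla B=\mathcal{P}\nabla Y\in L^\infty$, so $B\in W^{1,\infty}(M_-)$ and $\nabla^\perp Y\cdot\nabla B=0$ a.e.

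Next, we pass to leaf coordinates. Set $\widetilde B:=B\circ X\in W^{1,\infty}(\mathbb{T}\times(-1,0))$. Then
\[
\partial_\xi\widetilde B=\nabla B(X)\cdot\partial_\xi X.
\]
Since $Y(X(\xi,\eta))=\eta$, the vector $\partial_\xi X$ is tangent to the leaves, that is, parallel to $\nabla^\perp Y$, so $\partial_\xi\widetilde B=\mathcal{P}(X)\,\nabla Y(X)\cdot\partial_\xi X=0$ a.e. The measure-zero compatibility again comes from $X$ being bi-Lipschitz. Hence $\widetilde B(\xi,\eta)=\overline{Q}(\eta)$ with $\overline{Q}\in W^{1,\infty}([-1,0])$; the Lipschitz bound is inherited by restricting to a.e. vertical line and using Fubini. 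Moreover, $\partial_\eta\widetilde B=\nabla B(X)\cdot\partial_\eta X=\mathcal{P}(X)\,\nabla Y(X)\cdot\partial_\eta X=\mathcal{P}(X)$, because differentiating $Y\circ X=\eta$ in $\eta$ gives $\nabla Y\cdot\partial_\eta X=1$. This yields $\mathcal{P}(X(\xi,\eta))=\overline{Q}'(\eta)$ a.e.

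The main obstacle is the low regularity: $\overline{\theta}'$ and $\overline{\phi}''$ are only $L^\infty$, and $q$ is a priori only a distribution. We plan to handle this in three ways.
\begin{enumerate}
\item Derive $\nabla B\in L^\infty$ directly from the distributional identity. A distribution whose gradient is $L^\infty$ on a Lipschitz domain is $W^{1,\infty}$.
\item Verify the a.e. chain rules via composition with the bi-Lipschitz change of variables $X$. In these coordinates everything becomes a product structure in $(\xi,\eta)$.
\item Confirm that $(\bv\cdot\nabla)\bv$ computed distributionally, as in \eqref{advection_boussinesq_reform}, coincides with the pointwise a.e. expression. This holds because $\bv\in W^{1,\infty}$.
\end{enumerate}
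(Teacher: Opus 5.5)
Your proposal is correct and follows essentially the paper's argument. You rewrite the momentum equation as $\mathcal{P}\nabla Y=\nabla B$ with $B=q+\tfrac12|\bv|^2+y\theta$, pass to leaf coordinates through $X$, and show that $B\circ X$ depends only on $\eta$ with $\eta$-derivative $\mathcal{P}\circ X$. The only cosmetic difference is that you differentiate $B\circ X$ directly using $Y\circ X=\eta$, which gives $\nabla Y\cdot\partial_\xi X=0$ and $\nabla Y\cdot\partial_\eta X=1$ at once. The paper instead writes $B=\overline{Q}(Z,Y)$, uses $\nabla Z\cdot\nabla^\perp Y=-\det(DX^{-1})\neq0$ to kill $\partial_\xi\overline{Q}$, and then cancels $\nabla Y\neq0$.
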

 \begin{proof}
As we observed in \eqref{vorticity_velocity_1rigid}, the momentum equation in \eqref{Boussinesq_P} can be written as
 \[
-\Delta\phi \nabla \phi=-\nabla \left(q+\frac{1}{2}|\bv|^2\right) -\theta e_2 = -\nabla \underbrace{\left(q+\frac{1}{2}|\bv|^2+y\theta\right)}_{=:\tilde{B}} + y\nabla \theta.
 \]
 Using $\phi(x,y)=\overline{\phi}(Y(x,y))$ and $\theta(x,y)=\overline{\theta}(Y(x,y))$, we have
 \[
 \left(\Delta \phi \overline{\phi}'(Y(x,y))+y\overline{\theta}'(Y(x,y))\right)\nabla Y(x,y) = \nabla \tilde{B}(x,y),
 \]
 in other words,
 \begin{align}\label{P_condition_1}
 \mathcal{P}(x,y)\nabla Y(x,y)=\nabla \tilde{B}(x,y).
 \end{align}
 Now, define
 \[
 \overline{Q}(\xi,\eta):=\tilde{B}(X(\xi,\eta)).
 \]
 Since $X,\phi\in W^{2,\infty}$ and $\theta\in W^{1,\infty}$, we have $\tilde{B}\in W^{1,\infty}$, consequently, $\overline{Q}\in W^{1,\infty}$.
 
 We claim that $\overline{Q}$ is independent of $\xi$. Indeed, taking the gradient of $\tilde{B}(x,y)=\overline{Q}(Z(x,y),Y(x,y))$, we have
 \[
 \nabla \tilde{B} = \partial_{\xi}\overline{Q}\nabla Z + \partial_\eta \overline{Q}\nabla Y.
 \]
 Taking the inner product with $\nabla^\perp Y$ on both sides and using \eqref{P_condition_1}, we obtain
 \begin{align*}
 \nabla \tilde{B} \cdot  \nabla^\perp Y&=\mathcal{P}(x,y)\nabla Y(x,y)\cdot\nabla^\perp Y=0,\\
 \left( \partial_{\xi}\overline{Q}\nabla Z + \partial_\eta \overline{Q}\nabla Y\right)\cdot\nabla^\perp Y&=\partial_\xi\overline{Q}\nabla Z\cdot\nabla^\perp Y=-\partial_\xi\overline{Q}\operatorname{det}(DX^{-1}),
 \end{align*}
 where the last equality follows from \eqref{nondegenerate_DY}. Therefore, $\overline{Q}$ is independent of $\xi$ and consequently, $\tilde{B}(x,y)=\overline{Q}(Y(x,y))$. Moreover, since $\overline{Q}$ is $W^{1,\infty}$ as a function on $M_-$, it is $W^{1,\infty}$ as a function on $[-1,0]$.  Plugging this into \eqref{P_condition_1}, we obtain
 \[
 \mathcal{P}\nabla Y = \partial_\eta \overline{Q}(Y)\nabla Y.
 \]
 Since $\nabla Y\neq 0$,  we obtain $\mathcal{P} = \partial_\eta \overline{Q}(Y)$. Thus $\mathcal{P}$ is constant on each level set of $Y$, and $\mathcal{P}(X(\xi,\eta))=\partial_\eta\overline{Q}(\eta)$ for almost every $(\xi,\eta)\in M_-$
   \end{proof}
  
  \subsection{Sliding parameter $\lambda$ and extension of the domain}\label{sliding_reflection} In this section, we introduce a parameter $\lambda\in\mathbb{R}$ and extend the equations to a larger domain $M=\mathbb{T}\times (-1,1)$. This approach is motivated by \cite{drivas2024geometric}, where a sliding parameter was combined with the usual continuous Steiner symmetrization. In our setting, we carry out the symmetrization argument in more detail.
  
    For a given $\lambda$, we define
  \begin{align}\label{def_Y_extended}
  Y_\lambda(x,y):=\begin{cases}
  Y(x,y)&\text{ if $(x,y)\in \mathbb{T}\times [-1,0]$,}\\
  Y(x-\lambda,-y)&\text{ if $(x,y)\in\mathbb{T}\times (0,1]$,}\\
  -1 & \text{ if $|y|>1$.}
  \end{cases}
  \end{align} By the boundary conditions on $Y$ in \eqref{inverse_mapping_rg_Y}, $Y_\lambda$ is $C^2$ in $M\backslash \left(\mathbb{T}\times \left\{ 0 \right\} \right)$ and Lipschitz continuous in $\mathbb{T}\times \mathbb{R}$ (see Figure~\ref{Slided_symmetrization} for an illustration). In view of the representations of $\phi$ and $\theta$ in \eqref{defofphiandtheta}, we define
 \begin{align}\label{def_psi_rho_B}
 \phi_\lambda(x,y):=\overline{\phi}(Y_\lambda(x,y)),\quad \theta_\lambda(x,y):=\overline{\theta}(Y_\lambda(x,y)), \text{ for $(x,y)\in \mathbb{T}\times\mathbb{R}$}.
 \end{align}
 According to our definition of $\overline{\theta}$ and $\overline{\phi}$, we  have $\overline{\phi}(-1)=\overline{\theta}(-1)=0$ so that their monotonicity  implies that $\phi_\lambda$ and $\theta_\lambda$ are nonnegative in $M$.
  \begin{figure}[h!]
\includegraphics[scale=1.1]{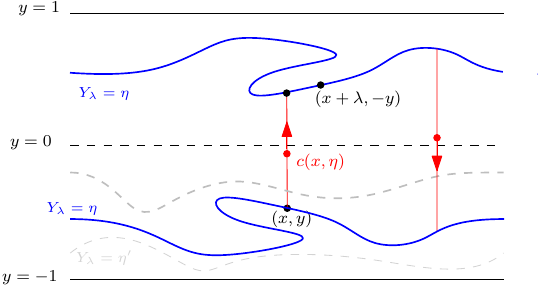}
\caption{Illustration of continuous Steiner symmetrization after $\lambda$-sliding}
\label{Slided_symmetrization}
\end{figure}
 Next, for $\overline{Q}$ in Lemma~\ref{B_condition}, we define
 \begin{align}\label{blambda_def}
 \mathcal{P}_\lambda(x,y):=\overline{Q}'(Y_\lambda(x,y)),\text{ for $(x,y)\in M$.}
 \end{align}
In the lower half of the domain, the quantities $Y,\phi,\theta,$ and $\mathcal{P}$ remain unchanged, and therefore \eqref{generalized_pressure_P} still holds,
\[
\mathcal{P}_\lambda(x,y)=\Delta \phi_\lambda(x,y) \overline{\phi}'(Y_\lambda(x,y)) + y\overline{\theta}'(Y_\lambda(x,y)) \text{ for $(x,y)\in M_-$.}
\]
For $y>0$, we can compute $\mathcal{P}_\lambda$  as
\begin{align*}
\mathcal{P}_\lambda(x,y) &= \overline{Q}'(Y_\lambda(x,y)) = \overline{Q}'(Y(x-\lambda,-y))=\mathcal{P}(x-\lambda, -y)\\
& = \Delta \phi(x-\lambda,-y) \overline{\phi}'(Y(x-\lambda,-y)) - y\overline{\theta}'(Y(x-\lambda,-y))\\
& = \Delta \phi_\lambda(x,y)\overline{\phi}'(Y_\lambda(x,y))-y\overline{\theta}'(Y_\lambda(x,y)).
\end{align*}

Hence, we have 
\begin{align}\label{B_aftersliding}
\mathcal{P}_\lambda(x,y)=\Delta \phi_\lambda(x,y)\overline{\phi}'(Y_\lambda(x,y))-|y|\overline{\theta}'(Y_\lambda(x,y)),\text{ for a.e. $(x,y)\in M\backslash(\mathbb{T}\times \left\{0\right\})$.}
\end{align}
\begin{remark}\label{discontinuity_overthe_line}
In general, $\partial_2Y_\lambda$ and $\partial_2\phi_\lambda$ are not necessarily continuous across $\mathbb{T}\times\{0\}$, but are only bounded. Consequently, the distributional Laplacian of $\phi_\lambda$ may contain a singular measure supported on $\mathbb{T}\times\{0\}$. In \eqref{B_aftersliding}, however, $\Delta\phi_\lambda$ denotes the classical Laplacian on $M\setminus(\mathbb{T}\times\{0\})$, or equivalently its almost-everywhere defined regular part. Thus, \eqref{B_aftersliding} is understood as an almost-everywhere identity and does not include the singular part of the distributional Laplacian.
\end{remark}

\begin{remark}\label{lambda_unchanged}
In Subsections~\ref{Steiner_onesided_two_dim}-\ref{energy_symmetrization_po}, we will symmetrize the stream function $\phi_\lambda$ and the density $\theta_\lambda$ while $\lambda$ is fixed. All computations throughout these subsections do not depend on $\lambda$, especially Corollary~\ref{each_energy_vanishes} holds for any $\lambda\in\mathbb{R}$.
\end{remark}

\subsection{Preliminaries for continuous Steiner symmetrization}\label{Steiner_onesided_intro} In this subsection, we recall Brock's  continuous Steiner symmetrization \cite{brock1995continuous,Brock,Brock2}. 

 We begin by defining the symmetrization for bounded measurable sets in $\mathbb{R}$.
\begin{definition}\cite[Definition 2.1, Theorem 2.1]{Brock}\label{steiner_original} For $t\ge0$,
\begin{enumerate}[label=(\roman*)]
\item\label{CSS_def1} If $E=[a,b]$ is a  bounded closed interval, then  $
\mathcal{S}_t(E):=[a^t,b^t],$
where \begin{align}\label{def_movement}
a^t:=c^t-\frac{1}{2}l,\quad b^t:=c^t+\frac{1}{2}l,\quad c^t:=e^{-t}c,\quad c:=\frac{b+a}{2},\quad l:=b-a.
\end{align}
\item\label{CSS_def2} If $E$ is a finite union of bounded disjoint compact intervals, $E=\cup_{n=1}^NI_n$, we define $\mathcal{S}_t(E):=\cup_{n=1}^N \mathcal{S}_t(I_n)$ for $0< t< \tau_1$, where $\tau_1$ is the first time that any two or more closed intervals collide. At this time, we merge each collection of colliding intervals into a single closed interval, and repeat the process starting from $t=\tau_1$. 
\item \label{CSS_def4} If $E$ is a finite union of bounded pairwise disjoint open intervals, we define $\mathcal{S}_t(E):=\operatorname{Int}(\mathcal{S}_t(\overline{E}))$.
\item\label{CSS_def3} If $E$ is a countable union of bounded pairwise disjoint open intervals, $E=\cup_{n=1}^\infty I_n$, then we define $\mathcal{S}_t(E):=\cup_{N=1}^\infty \mathcal{S}_t(\cup_{n=1}^N I_n)$.
\item\label{Css_def_5} If $E$ is a bounded measurable set, it has a representation
\[
E=\cap_{n=1}^\infty I_n,
\]
where $I_{n+1}\subset I_n$, $n=1,2,\ldots,$ are open sets, up to a set of measure zero. Then we define
\[
\mathcal{S}_t(E):=\cap_{n=1}^\infty \mathcal{S}_t(I_n).
\]
\end{enumerate}
\end{definition}

\begin{remark}
Strictly speaking, for \ref{CSS_def3} to be well-defined, one must verify
that the resulting set is independent of the enumeration of the open
components of $E$. This follows from the monotonicity of the
symmetrization; see the proof of \cite[Theorem 2.1]{Brock}. The resulting
construction agrees with Brock's precise \emph{pointwise} representative of the
continuous symmetrization of open sets defined in
\cite[Definition 2.2]{Brock}. 
\end{remark}
 
 \begin{lemma}\cite[Definition 2.1]{Brock}\label{basic_prop_lem1}
 For any bounded measurable sets $E,F\subset \mathbb{R}$, the following properties hold for $t\ge0$:
 \begin{enumerate}[label=(\alph*)]
 \item $|\mathcal{S}_t(E)|=|E|$ (equimeasurability),
 \item If $E\subset F$, we have $\mathcal{S}_t(E)\subset\mathcal{S}_t(F)$ (monotonicity),
 \item $\mathcal{S}_{t+s}(E)=\mathcal{S}_{t}(\mathcal{S}_s(E))$ for $t,s\ge 0$ (semigroup property).
 \end{enumerate}
 \end{lemma}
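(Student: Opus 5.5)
The plan is to verify the three properties first for the simplest sets of Definition~\ref{steiner_original} (closed intervals and finite unions of compact intervals), and then pass through the successive limiting stages \ref{CSS_def4}--\ref{Css_def_5} using monotone limits of Lebesgue measure. For a single closed interval $[a,b]$, the interval $\mathcal{S}_t([a,b])=[c^t-\tfrac l2,c^t+\tfrac l2]$ has length $l$, so equimeasurability holds. The semigroup property reduces to $e^{-(t+s)}c=e^{-t}(e^{-s}c)$, since the length is preserved and only the center moves along $c\mapsto e^{-t}c$. For monotonicity between two intervals $[a,b]\subset[a',b']$, I will check $a'^t\le a^t$ and $b^t\le b'^t$. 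Writing $a^t=e^{-t}c-\tfrac l2$, the difference $a^t-a'^t=e^{-t}(c-c')+\tfrac{l'-l}{2}$ is a convex combination (in $e^{-t}\in(0,1]$) of $a-a'\ge0$ and $\tfrac{l'-l}{2}\ge0$, and the same argument applies to the right endpoint.

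For finite unions of disjoint compact intervals, the evolution in \ref{CSS_def2} is piecewise the single-interval flow, with merging at collision times. Measure is preserved between collisions because the intervals stay disjoint. At a collision the intervals touch with total length equal to the sum of the lengths, so the merged interval has the same measure. The semigroup property follows by checking that the evolution is autonomous, i.e.\ the state at time $s$ determines the future. This needs one observation: two merged intervals move like a single interval whose center is the length-weighted barycenter. This is exactly why merging is consistent, since $\sum l_i e^{-t}c_i=e^{-t}\sum l_ic_i$, so the barycenter of a colliding cluster follows the same law. Monotonicity for finite unions $E\subset F$ I would prove by induction on collision times. Each component $I$ of $F$ containing components $J_1,\dots,J_k$ of $E$ is handled using the interval case together with the barycenter identity. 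Concretely, the leftmost point of the evolved cluster of $J$'s stays to the right of $\mathcal{S}_t(I)$'s left endpoint because the cluster barycenter satisfies a convex-combination inequality as before.

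The open-set cases \ref{CSS_def4}--\ref{CSS_def3} follow by taking interiors and increasing unions. Measure is continuous along the increasing unions $\cup_{n\le N}I_n$, monotonicity passes to increasing unions, and the semigroup property passes as well once we know $\mathcal{S}_s$ of an increasing union is the increasing union. For general measurable $E=\cap I_n$ with $I_n$ decreasing open sets of finite measure, equimeasurability follows from continuity from above. Monotonicity needs the construction to be independent of the chosen representation. I would show this by comparing two representations through $I_n\cap I'_m$, using monotonicity on open sets.

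I expect the main obstacle to be monotonicity and the semigroup property across collision times in \ref{CSS_def2}, where the combinatorics of merging must be reconciled with the comparison between $E$ and $F$. I would handle it with the barycenter identity above, which lets me treat every cluster as a single interval. Alternatively, I could reformulate $\mathcal{S}_t(E)$ for finite unions via a monotone rearrangement description, as in \cite{Brock}, and verify the properties there.
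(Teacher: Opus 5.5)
The paper does not prove this lemma; it cites Brock. Your stage-by-stage verification is therefore an independent route. Most of it is sound:
\begin{itemize}
\item the single-interval computations, including your convex-combination check of $a^t-a'^t$;
\item preservation of measure at collisions;
\item the restart argument for (c) on finite unions of compact intervals;
\item the passage to finite open unions through closures and interiors;
\item the comparison of representations through $I_n\cap I_n'$.
\end{itemize}
For (b) on finite unions you do not need barycenters, and your sketch as written leaves open that the component $I$ of $F$ may itself merge. It is cleaner to restart at every collision time of \emph{either} $E$ or $F$. At such a time the inclusion still holds, since merging touching closed intervals does not change the set. So each current component of $E$ lies in one current component of $F$, and until the next event both move as single intervals. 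The one-interval case then closes the induction.

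The genuine gap is the countable-union step: ``the semigroup property passes once we know $\mathcal{S}_s$ of an increasing union is the increasing union.'' What you need is $\mathcal{S}_t\bigl(\bigcup_N G_N\bigr)=\bigcup_N\mathcal{S}_t(G_N)$ for $G_N=\mathcal{S}_s(E_N)$. These $G_N$ are not initial segments of the components of $\bigcup_N G_N$, and for such sequences the identity is false pointwise.

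Take $G=(0,1)\cup(2,3)$, $G_N=(0,1-\tfrac1N)\cup(2,3)$ and $t=\ln 2$. The intervals $[0,1]$ and $[2,3]$ move to $[-\tfrac14,\tfrac34]$ and $[\tfrac34,\tfrac74]$, touching exactly at this time. Hence $\tfrac34\in\mathcal{S}_t(G)=(-\tfrac14,\tfrac74)$. For every $N$, however, the left piece of $\mathcal{S}_t(\overline{G_N})$ ends at $\tfrac34(1-\tfrac1N)$ and the right piece starts at $\tfrac34$. So $\tfrac34\notin\bigcup_N\mathcal{S}_t(G_N)$.

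This gives a genuine failure of (c) as an exact identity. Let $E=\bigl((0,1)\setminus\{1-\tfrac1n\}_{n\ge2}\bigr)\cup(2,3)$, enumerated as $(2,3),(0,\tfrac12),(\tfrac12,\tfrac23),\dots$. Then $\mathcal{S}_{\ln 2}(E)=(-\tfrac14,\tfrac34)\cup(\tfrac34,\tfrac74)$. For $0<s<\ln 2$, though, $\mathcal{S}_s(E)$ is exactly two open intervals whose closure is $\mathcal{S}_s([0,1]\cup[2,3])$. Hence $\mathcal{S}_{\ln 2-s}(\mathcal{S}_s(E))=(-\tfrac14,\tfrac74)$.

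So (c) can only hold modulo null sets. That is the only meaningful sense for measurable sets anyway, since (v) fixes $\mathcal{S}_t(E)$ only up to null sets, as your own comparison of representations shows. In that sense the missing step follows from (a) and (b):
\begin{itemize}
\item $\bigcup_N\mathcal{S}_t(G_N)\subset\mathcal{S}_t(G)$ by monotonicity;
\item both sides have measure $|G|$, by monotone convergence and equimeasurability.
\end{itemize}
State (c) and the well-definedness up to null sets, and your argument closes.
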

 
 The next lemma will be used to estimate the potential energy variation under the symmetrization.
 \begin{lemma}\label{yweight_variation}
 Let $E\subset \mathbb{R}$ be a bounded open set. Then
 \[
 \int_{E}|y| {\d y} \ge \int_{\mathcal{S}_t(E)}|y|{\d y}.
 \]
 \end{lemma}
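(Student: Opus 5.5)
Since $E$ is a bounded open subset of $\mathbb{R}$, it is a countable disjoint union of open intervals. I will prove the inequality in the order the sets were built in Definition~\ref{steiner_original}: first a single interval, then finite unions up to the first collision time, then merging and the semigroup property, and finally countable unions by a limit.

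\emph{Single interval.} Let $E=(a,b)$ with center $c$ and length $l$. Then $\mathcal{S}_t(E)$ is the interval of the same length $l$ centered at $c^t=e^{-t}c$. Write
\[
F(s):=\int_{s-l/2}^{s+l/2}|y|\,\d y .
\]
This function is even and convex, since $F'(s)=|s+l/2|-|s-l/2|$ is nondecreasing with $F'(0)=0$. Hence $F$ is nondecreasing in $|s|$. Because $|c^t|=e^{-t}|c|\le|c|$, we get $F(c^t)\le F(c)$, which is the claim for one interval.

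\emph{Finite unions.} Let $E$ be a finite union of disjoint open intervals. Use the closures, as in \ref{CSS_def4}; endpoints have measure zero and do not affect the integrals. For $0\le t<\tau_1$ the intervals move independently and stay disjoint. Summing the single-interval estimate over them gives $\int_{\mathcal{S}_t(E)}|y|\le\int_E|y|$. At $t=\tau_1$ the colliding intervals merge into one closed interval. Its length is the sum of their lengths, so the integral is continuous in $t$ up to $\tau_1$, and the estimate holds at $\tau_1$.

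\emph{Restarting after a collision.} After $\tau_1$ the merged intervals move with the center rule applied to the new family. By the semigroup property in Lemma~\ref{basic_prop_lem1} we can restart from $\mathcal{S}_{\tau_1}(E)$ and repeat the argument. There are at most $N-1$ collisions, so finitely many steps give the inequality for all $t\ge0$.

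\emph{Countable unions.} Let $E=\cup_n I_n$ and $E_N:=\cup_{n\le N}I_n$. By \ref{CSS_def3} and monotonicity, $\mathcal{S}_t(E_N)$ increases to $\mathcal{S}_t(E)$. Monotone convergence then gives
\[
\int_{\mathcal{S}_t(E)}|y|\,\d y=\lim_{N\to\infty}\int_{\mathcal{S}_t(E_N)}|y|\,\d y\le\lim_{N\to\infty}\int_{E_N}|y|\,\d y=\int_E|y|\,\d y .
\]
The main point needing care is checking that merging at collision times does not increase the integral. I handle this by continuity at $\tau_1$ and restarting via the semigroup property; the single-interval monotonicity of $F$ does the rest.
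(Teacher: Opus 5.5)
Your proof is correct and follows the paper's route: single intervals, then finite unions up to the first collision, then merging and restarting, then countable unions by monotone convergence. The only difference is in the single-interval step: you use the evenness and convexity of $F(s)=\int_{s-l/2}^{s+l/2}|y|\,\d y$, whereas the paper writes the integral explicitly as $2l|c|$ or $c^2+l^2$ (with $l$ the half-length) and notes that each expression is nonincreasing in $t$.
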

\begin{proof}
First, suppose that $E$ is a finite union of bounded
pairwise disjoint open intervals so that 
\[
\mathcal{S}_t(E)=\cup_{n=1}^N (e^{-t}c_n-l_n, e^{-t}c_n+l_n),\text{ for some $c_n\in \mathbb{R}$ and $l_n>0$}
\]
for $t\ge 0$ before the first collision.
Then a direct computation shows that
\[
\int_{\mathcal{S}_t(E)}|y|\,{\d y}
=
\sum_{\left\{n,\ e^{-t}|c_n|\ge l_n\right\}}2l_n e^{-t}|c_n| + \sum_{\left\{n,\ e^{-t}|c_n|<l_n\right\}}\left(e^{-2t}c_n^2+l_n^2\right)
\]
Each summand is nonincreasing in $t$, hence the integral is nonincreasing before the first
collision. At a collision, merging the intervals does
not change the integral. Repeating the argument after
each collision proves the assertion for all $t\ge0$.

For a general bounded open set, write
\[
E=\bigcup_{j=1}^{\infty}I_j,
\qquad
E_N:=\bigcup_{j=1}^N I_j,
\]
where the $I_j$ are its pairwise disjoint open components.
By  Definition~\ref{steiner_original}\ref{CSS_def3} and  monotonicity in Lemma~\ref{basic_prop_lem1},
\[
\cup_{N=1}^\infty E_N= E,
\qquad
\cup_{N=1}^\infty \mathcal S_t(E_N) =\mathcal S_t(E).
\]
The monotone convergence theorem therefore gives
\[
\int_{\mathcal S_t(E)}|y|\,{\d y}
=
\lim_{N\to\infty}\int_{\mathcal S_t(E_N)}|y|\,{\d y}
\le
\lim_{N\to\infty}\int_{E_N}|y|\,{\d y}
=
\int_E|y|\,{\d y}.
\]
\end{proof}
 
 We now turn to two-dimensional sets. For $A\subset \mathbb{T}\times\mathbb{R}$ we denote
 \[
 A_x:=\left\{ y\in  \mathbb{R}: (x,y)\in A\right\}.
 \]
 \begin{definition}\label{twod_steiner_sym}
 For a bounded measurable set $A\subset \mathbb{T}\times \mathbb{R}$, we define its continuous Steiner symmetrization \emph{about $y=0$} by
 \[
 \mathcal{S}_t(A):=\left\{(x,y)\in \mathbb{T}\times  \mathbb{R}: y\in \mathcal{S}_t(A_x)\right\}.
 \]
 \end{definition}
 For a set $A\subset \mathbb{T}\times  \mathbb{R}$, the fibers $A_x$ are periodic in $x$, in the sense that  $A_{x+2\pi}=A_x$. Therefore, $\mathcal{S}_t(A)_{x+2\pi}= \mathcal{S}_t(A)_{x}$.
 The following lemma is an immediate consequence of Lemma~\ref{basic_prop_lem1}, so we omit the proof.
 \begin{lemma}\label{basic_prop_lem2}
 For open sets $A,B\subset \mathbb{T}\times  \mathbb{R}$, the following properties hold for $t,s\ge0$:
 \begin{enumerate}[label=(\arabic*)]
 \item\label{basic_2DsyM_-em1} If $A\subset B$, then $\mathcal{S}_t(A)\subset\mathcal{S}_t(B)$.
 \item\label{basic_2DsyM_-em2} $\mathcal{S}_{t+s}(A)=\mathcal{S}_t(\mathcal{S}_s(A))$.
 \item\label{basic_2DsyM_-em4} $|\mathcal{S}_t(A)|=|A|$.
 \end{enumerate}
 \end{lemma}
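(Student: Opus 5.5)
The three properties follow fiberwise from Lemma~\ref{basic_prop_lem1}, after we check that Definition~\ref{twod_steiner_sym} acts on each vertical fiber $A_x$ separately. For an open set $A\subset\mathbb{T}\times\mathbb{R}$ (bounded, as in Definition~\ref{twod_steiner_sym}), every fiber $A_x$ is a bounded open subset of $\mathbb{R}$. So $\mathcal{S}_t(A_x)$ is given directly by Definition~\ref{steiner_original}\ref{CSS_def3}, and by construction
\[
\mathcal{S}_t(A)_x=\mathcal{S}_t(A_x)\qquad\text{for every }x\in\mathbb{T}.
\]

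For monotonicity, $A\subset B$ gives $A_x\subset B_x$ for every $x$. Lemma~\ref{basic_prop_lem1}(b) then yields $\mathcal{S}_t(A_x)\subset\mathcal{S}_t(B_x)$, and hence $\mathcal{S}_t(A)\subset\mathcal{S}_t(B)$.

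For the semigroup property, the identity $\mathcal{S}_t(\mathcal{S}_s(A))_x=\mathcal{S}_t(\mathcal{S}_s(A)_x)=\mathcal{S}_t(\mathcal{S}_s(A_x))$ requires that the fibers $\mathcal{S}_s(A_x)$ be sets to which $\mathcal{S}_t$ applies consistently. They are, because $\mathcal{S}_s$ maps open sets to open sets in the pointwise representative of Definition~\ref{steiner_original}\ref{CSS_def4}--\ref{CSS_def3}. Lemma~\ref{basic_prop_lem1}(c) then gives $\mathcal{S}_t(\mathcal{S}_s(A))_x=\mathcal{S}_{t+s}(A_x)=\mathcal{S}_{t+s}(A)_x$ for every $x$.

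For equimeasurability, we apply Fubini--Tonelli: $|\mathcal{S}_t(A)|=\int_{\mathbb{T}}|\mathcal{S}_t(A_x)|\,\d x=\int_{\mathbb{T}}|A_x|\,\d x=|A|$, using Lemma~\ref{basic_prop_lem1}(a) on each fiber.

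The one genuine issue is measurability of $\mathcal{S}_t(A)$, which Fubini needs. I would settle it by exhaustion. Write $A$ as an increasing union of finite unions $A^N$ of open rectangles $I\times J$. For each $x$, the fiber $A^N_x$ is a finite union of intervals, and it takes only finitely many patterns as $x$ varies over finitely many intervals. Hence $\mathcal{S}_t(A^N)$ is a finite union of rectangles, so it is measurable.

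Next, monotonicity together with \ref{CSS_def3} gives $\mathcal{S}_t(A_x)=\bigcup_N\mathcal{S}_t(A^N_x)$. This uses the fact that an increasing union of open sets is symmetrized as the union of the symmetrizations, which is how Brock's definition is built. Consequently $\mathcal{S}_t(A)=\bigcup_N\mathcal{S}_t(A^N)$ is measurable and even open. Openness also ensures that the semigroup step above stays within the class of open sets. Checking this exhaustion compatibility is the main obstacle; everything else is immediate from Lemma~\ref{basic_prop_lem1}.
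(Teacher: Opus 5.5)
Your fiberwise reduction is what the paper intends: it calls the lemma an immediate consequence of Lemma~\ref{basic_prop_lem1} and omits the proof. Your arguments for (1), (2) and the Fubini computation in (3) are fine, because the fibers $\mathcal{S}_s(A)_x=\mathcal{S}_s(A_x)$ are open by Definition~\ref{steiner_original}\ref{CSS_def4}--\ref{CSS_def3}.

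The gap is in the measurability paragraph, which you yourself flag as the main obstacle. The identity $\mathcal{S}_t(A_x)=\bigcup_N\mathcal{S}_t(A^N_x)$ is false pointwise. Definition~\ref{steiner_original}\ref{CSS_def3} exhausts $A_x$ by finite unions of \emph{whole components}, whereas your $A^N_x$ are in general unions of proper subintervals of components. The symmetrization is not continuous from below at the level of points: a collision that happens exactly at time $t$ for $A_x$ happens strictly later for each shrunken $A^N_x$. For instance, take $E=(0,1)\cup(2,3)$ and $E^N=(\tfrac1N,1-\tfrac1N)\cup(2+\tfrac1N,3-\tfrac1N)$. At $t=\log 2$ one has $\mathcal{S}_t(E)=(-\tfrac14,\tfrac74)$, while $\bigcup_N\mathcal{S}_t(E^N)=(-\tfrac14,\tfrac34)\cup(\tfrac34,\tfrac74)$.

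The same mechanism shows that $\mathcal{S}_t(A)$ need not be open. Take $A$ open with $A_0=(0,1)\cup(2,3)$ and $A_x=(|x|,1-|x|)\cup(2+|x|,3-|x|)$ for small $x\neq0$. Then the point $(x,y)=(0,\tfrac34)$ lies in $\mathcal{S}_{\log 2}(A)$, but $(x,\tfrac34)$ does not for small $x\neq0$. So your remark that openness keeps the semigroup step within open sets is also unfounded; that step only needs fiberwise openness.

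What monotonicity and equimeasurability actually give is $\mathcal{S}_t(A)=U\cup D$. Here $U=\bigcup_N\mathcal{S}_t(A^N)$ is Borel and every vertical fiber of $D$ is null. A set with null vertical fibers need not be Lebesgue measurable. So you have not shown that $\mathcal{S}_t(A)$ is measurable, which is what Fubini in (3) needs (and what Definition~\ref{twod_steiner_sym} needs in order to apply $\mathcal{S}_t$ to $\mathcal{S}_s(A)$).

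One repair is to exhaust by whole components, as Definition~\ref{steiner_original}\ref{CSS_def3} does:
\begin{itemize}
\item For a finite set $Q\subset\mathbb{Q}$, let $C_Q(x)$ be the union of the components of $A_x$ that meet $Q$. These sets and the finite unions of components used in \ref{CSS_def3} are mutually cofinal, so monotonicity gives $\mathcal{S}_t(A_x)=\bigcup_Q\mathcal{S}_t(C_Q(x))$.
\item Because $A$ is open, the endpoints of the component of $A_x$ containing $q$ are semicontinuous functions of $x$ on the open set $\{x:(x,q)\in A\}$.
\item The finite collision-and-merge procedure of Definition~\ref{steiner_original}\ref{CSS_def2} depends Borel-measurably on these endpoints. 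Hence each set $\{(x,y):y\in\mathcal{S}_t(C_Q(x))\}$ is Borel, and so is $\mathcal{S}_t(A)$ as a countable union.
\end{itemize}
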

 
For a bounded nonnegative continuous function $f:\mathbb{T}\times  \mathbb{R} \to \mathbb{R}$ with compact support, we recall its layer-cake representation:
\[
f(x,y)=\int_{0}^{\sup f} \ind_{A(\eta)}(x,y)\d \eta, \text{ where }A(\eta):=\left\{(x,y)\in\mathbb{T}\times  \mathbb{R}: f(x,y)>\eta\right\}.
\]
Since $f$ is continuous, $A(\eta)$ is open. We define the continuous symmetrization of $f$ as
\begin{align}\label{def_conti_f}
f^t(x,y):=\int_{0}^{\sup f}\ind_{\mathcal{S}_t(A(\eta))}(x,y)\d\eta.
\end{align}
We have (see \cite[Remark 2.5]{Brock})
\begin{align}\label{equimeasurability_symmetrizedf}
\left\{ (x,y)\in\mathbb{T}\times\mathbb{R}: f^t(x,y)>\eta\right\} = \mathcal{S}_t(A(\eta)),\text{ up to a set of measure zero, for all $\eta>0, t\ge0$.}
\end{align}
Note that, in general, exact equality need not hold, since the continuous symmetrization merges disjoint open intervals whose closures collide. More precisely, we have
\begin{align}
\left\{y\in\mathbb{R}: f^t(x,y)>\eta\right\} &= \mathcal{S}_t(A(\eta)_x)\text{ up to a set of measure zero (in $\mathbb{R}$)},\label{onedversion_levelset1}\\
\left\{y\in\mathbb{R}: f^t(x,y)>\eta\right\}&\subset \mathcal{S}_t(A(\eta)_x)\subset \left\{y\in\mathbb{R}: f^t(x,y)\ge \eta\right\}\,\label{onedversion_levelset2},
\end{align}
for all $x\in\mathbb{T},\ \eta>0$, and $t\ge0$.

\begin{lemma}\label{nondecreasing_conjugation}
Let $G:[0,\infty)\to [0,\infty]$ be a nondecreasing continuous function such that $G(0)=0$. Then
\[
G(f^t)=G(f)^t.
\]
\end{lemma}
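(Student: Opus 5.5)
The plan is to compare superlevel sets on both sides of the identity, using the layer-cake definition \eqref{def_conti_f} and the level-set identity \eqref{equimeasurability_symmetrizedf}. Set $g:=G(f)$. We first check that $g$ is admissible: it is nonnegative, continuous, bounded and compactly supported, because $G(0)=0$ and $G$ is continuous. If $G$ can take the value $+\infty$, we restrict attention to the range $[0,\sup f]$ on which $G$ is finite; this range is all that matters. Hence $g^t$ is defined by \eqref{def_conti_f}. To prove $G(f^t)=g^t$ almost everywhere, it is enough to show that for every $s>0$ the sets $\{G(f^t)>s\}$ and $\{g^t>s\}$ agree up to a null set. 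Both functions are nonnegative, so the layer-cake formula then identifies them almost everywhere. With more care, the one-dimensional inclusions \eqref{onedversion_levelset1}--\eqref{onedversion_levelset2} can be used to get the identity on every fiber $x$.

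The key step is the following translation of level sets. Since $G$ is nondecreasing and continuous, for each $s\ge0$ with $s<\sup g$ define $\eta(s):=\sup\{\eta\ge0: G(\eta)\le s\}$. Then
\[
\{G(f)>s\}=\{f>\eta(s)\}=A(\eta(s)).
\]
Indeed, $G(f)>s$ forces $f>\eta(s)$ by monotonicity. Conversely, $f>\eta(s)$ gives $G(f)>s$ by the definition of the supremum together with continuity of $G$. Flat pieces of $G$ are harmless because we take the sup. Applying the same identity to $f^t$ gives
\[
\{G(f^t)>s\}=\{f^t>\eta(s)\}.
\]
By \eqref{equimeasurability_symmetrizedf}, the right-hand side equals $\mathcal S_t(A(\eta(s)))$ up to a null set. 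On the other hand, \eqref{equimeasurability_symmetrizedf} applied to $g$ gives that $\{g^t>s\}$ equals $\mathcal S_t(\{g>s\})=\mathcal S_t(A(\eta(s)))$ up to a null set. So the two superlevel sets coincide up to measure zero for every $s$.

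To finish, write both functions via the layer-cake formula:
\[
G(f^t)=\int_0^\infty \ind_{\{G(f^t)>s\}}\,\d s, \qquad g^t=\int_0^\infty\ind_{\{g^t>s\}}\,\d s .
\]
Integrating the integrands over any measurable rectangle and applying Fubini, the null-set discrepancies (which depend on $s$) integrate to zero. Hence $G(f^t)=g^t$ almost everywhere.

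The main obstacle is the ``up to measure zero'' caveat. The null sets vary with $s$, and Fubini handles this only if the joint set $\{(x,y,s)\}$ is measurable. That holds because $(x,y,s)\mapsto \ind_{\{f^t>\eta(s)\}}$ is measurable, $f^t$ being measurable and $\eta$ monotone. If a pointwise identity is desired, I would instead work fiberwise with \eqref{onedversion_levelset2} and use right-continuity in $s$ of superlevel sets to remove the ambiguity at the boundary points.
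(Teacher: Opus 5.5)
Your proof is correct and follows the paper's argument. Your threshold $\eta(s)$ is the paper's $\gamma(s)$: it turns $\{G(f)>s\}$ into $\{f>\eta(s)\}$, and then \eqref{equimeasurability_symmetrizedf} plus the layer-cake formula finish the proof, with your Fubini remark making rigorous the $s$-dependent null sets that the paper leaves implicit. One small slip: continuity of $G$ is what gives $G(\eta(s))\le s$ and hence $\{G(f)>s\}\subset\{f>\eta(s)\}$, while the reverse inclusion follows from the definition of the supremum alone.
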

\begin{proof}
 For $0\le s<G(\sup f)$, define
 $
\gamma(s):=\sup\{r\in[0,\sup f]:G(r)\le s\}.$
Since $G$ is continuous and nondecreasing, we have
$
\{G(f)>s\}=\{f>\gamma(s)\}.$
Hence, by \eqref{equimeasurability_symmetrizedf},
\[
\begin{aligned}
\{G(f^t)>s\}=\{f^t>\gamma(s)\}=\mathcal S_t(\{f>\gamma(s)\})=\mathcal S_t(\{G(f)>s\})=\{G(f)^t>s\}
\end{aligned}
\]
up to a set of measure zero. The layer-cake representation then yields the desired result.
\end{proof}

 In the remainder of this subsection, we collect several useful properties of the symmetrization $f^t$ from \cite{Brock}. Strictly speaking,
the properties presented below are stated in \cite{Brock} for functions on
$\mathbb R^n$. However, the proofs extend to
$\mathbb T\times\mathbb R$ with minor modifications, since the
symmetrization acts only in the $y$-variable and the arguments in the
transverse variables are local. We therefore omit the proofs.

 \begin{lemma}\label{regularity_lemma_steiner}
Let $f:\mathbb{T}\times  \mathbb{R}\to \mathbb{R}$ be a nonnegative bounded Lipschitz  function with compact support. Then the following hold:
\begin{enumerate}[label=(\alph*)]
\item\label{lipshcitaz_steiner} $\rVert \nabla f^t\rVert_{L^{p}}\le \rVert \nabla f\rVert_{L^{p}}$ for all $t\ge0$ and $p\in [1,\infty]$.
\item\label{time_cosnti_steiner1} $\rVert f^t-f\rVert_{L^p}\le tR\rVert \partial_y f\rVert_{L^p}$, for all $t\ge0$ and $p\in [1,\infty]$, where $R:=\inf\left\{r>0: \text{supp}(f)\Subset \mathbb{T}\times (-r,r)\right\}$.
\end{enumerate}
\end{lemma}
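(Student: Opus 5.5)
The plan is to reduce both estimates to the one-dimensional fibers and then integrate in $x$, using the layer-cake definition \eqref{def_conti_f}. For each fixed $x$, the function $f(x,\cdot)$ is a nonnegative Lipschitz function on $\mathbb{R}$ with compact support. By Definition~\ref{twod_steiner_sym} together with \eqref{onedversion_levelset1}, the fiber $f^t(x,\cdot)$ coincides almost everywhere with the one-dimensional continuous symmetrization of $f(x,\cdot)$.

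\textbf{Part (b).} I would first treat the vertical part of (a) and then (b) fiberwise, following Brock's argument. Approximate $f(x,\cdot)$ in $W^{1,p}$ by piecewise linear functions whose superlevel sets are finite unions of intervals. For these, the motion \eqref{def_movement} is explicit before collisions: each endpoint of a superlevel interval moves with speed $|\frac{\d}{\d t}a^t| = e^{-t}|c| \le R$. This gives the pointwise bound $|\partial_t f^t(x,y)| \le R\,|\partial_y f^t(x,y)|$ almost everywhere. Collisions only merge intervals, so at a collision the identity is unchanged. Combining this with the 1D contraction $\|\partial_y f^t(x,\cdot)\|_{L^p} \le \|\partial_y f(x,\cdot)\|_{L^p}$, I would integrate in time to get
\begin{align*}
\|f^t(x,\cdot)-f(x,\cdot)\|_{L^p} \le \int_0^t R\,\|\partial_y f^s(x,\cdot)\|_{L^p}\,\d s \le tR\,\|\partial_y f(x,\cdot)\|_{L^p}.
\end{align*}
The 1D contraction itself follows from the classical coarea/Pólya--Szegő argument. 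Each level set of the symmetrized fiber has the same measure as the original one and at most as many boundary points, which gives $\int |\partial_y f^t|^p \le \int |\partial_y f|^p$ via $\mu'(\eta)$. Integrating in $x$ then yields (b) and the $\partial_y$ part of (a). The case $p=\infty$ follows by letting $p\to\infty$.

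\textbf{Part (a), full gradient.} Here the $x$-derivative must be controlled, and this is the main obstacle. The symmetrization is nonlinear in $x$ because collision times depend on $x$. My first attempt would use the semigroup property and monotonicity in Lemma~\ref{basic_prop_lem2}. Continuous symmetrization commutes with translations in $x$, so for small $h$ we have $f(\cdot+h,\cdot)^t = f^t(\cdot+h,\cdot)$. Combining this with the $L^p$-nonexpansiveness of $f\mapsto f^t$ gives $\|f^t(\cdot+h,\cdot)-f^t\|_{L^p} \le \|f(\cdot+h,\cdot)-f\|_{L^p}$. Dividing by $h$ and letting $h \to 0$ bounds $\|\partial_x f^t\|_{L^p}$ but not the full $|\nabla f^t|$.

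To get the full gradient, I would instead invoke Brock's polarization approach. The map $f\mapsto f^t$ is a limit of compositions of two-point rearrangements (polarizations) with respect to horizontal lines. Each polarization preserves $\int |\nabla f|^p$ exactly, and the $L^p$ norm of the gradient is weakly lower semicontinuous, so the inequality passes to the limit. The nonexpansiveness itself, $\|f^t-g^t\|_{L^p} \le \|f-g\|_{L^p}$, comes from the layer-cake representation together with the equimeasurability and monotonicity in Lemma~\ref{basic_prop_lem1}, via the Crandall--Tartar lemma. The periodicity in $x$ causes no issue, since polarizations in $y$ act fiberwise and commute with $x$-translations.
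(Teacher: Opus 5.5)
Your plan for (b) matches what the paper relies on. The paper gives no proof of its own: it cites Brock's Theorems~3.2 and~4.2 and notes that the fiberwise arguments carry over to $\mathbb{T}\times\mathbb{R}$. Your fiberwise reduction, the endpoint speed bound $|c(t)|\le R$, and integration in $t$ and $x$ are in that spirit.

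The full-gradient part of (a), however, has a genuine gap. Everything rests on the claim that $f\mapsto f^t$ is a limit of compositions of polarizations across horizontal lines. You give no argument for this, and it is not how Brock proves Theorem~3.2. Approximation by polarizations is classical for the full Steiner symmetrization ($t=\infty$), but I know of no such result for $\mathcal S_t$ at finite $t$. It is not evident either: one polarization reflects every level set in every fiber across the same line, whereas $\mathcal S_t$ translates each component of each fiber of each level set by its own amount $(e^{-t}-1)c$.

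Without that claim you only have $\|\partial_x f^t\|_{L^p}\le\|\partial_x f\|_{L^p}$ and $\|\partial_y f^t\|_{L^p}\le\|\partial_y f\|_{L^p}$ separately. The first is correct, from $x$-translation invariance plus nonexpansiveness. Together they give (a) for $p=2$, since the squares add, and $p=2$ is the case Section~3 actually uses. They do not give (a) for $p\neq 2$, for example not the Euclidean Lipschitz bound at $p=\infty$.

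There is a second gap, which (b) also depends on. The 1D inequality does not follow from ``same measure and no more boundary points'' when $p>1$. The coarea formula gives $\int|g'|^p=\int\sum_{g=\eta}|g'|^{p-1}\,d\eta$, and $\mu'(\eta)$ only fixes $\sum_{g=\eta}|g'|^{-1}$, not how it is split among the boundary points. A one-bump function with unequal slopes on its two sides has the same $\mu$ and the same count as the symmetric one, yet a strictly larger $\int|g'|^p$. The classical argument works for the symmetric decreasing rearrangement only because its two slopes are equal, and this fails for $\mathcal S_t$ at finite $t$.

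The missing ingredient, which repairs both gaps and is essentially Brock's argument, is the explicit endpoint rule. Between collisions, a component $[a,b]$ of $\{f(x,\cdot)>\eta\}$ becomes $[a^t,b^t]$ with
\[
a^t=\lambda a-(1-\lambda)b,\qquad b^t=\lambda b-(1-\lambda)a,\qquad \lambda=\tfrac{1+e^{-t}}{2}.
\]
Hence $(\partial_\eta a^t,\partial_x a^t)=\lambda(\partial_\eta a,\partial_x a)+(1-\lambda)(-\partial_\eta b,-\partial_x b)$, and symmetrically for $b^t$.

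For suitable approximants, the coarea formula writes $\int|\nabla f|^p=\int\!\int\sum\Phi(\pm\partial_\eta y,\partial_x y)\,dx\,d\eta$, summed over the endpoints $y\in\{a,b\}$, where $\Phi(\alpha,\beta)=\alpha^{1-p}(1+\beta^2)^{p/2}$. This $\Phi$ is jointly convex on $\{\alpha>0\}$, being the perspective of $w\mapsto|w|^p$ restricted to $w=(1,\beta)$, and it is even in $\beta$. Therefore $\Phi(a^t)+\Phi(b^t)\le\Phi(a)+\Phi(b)$.

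To finish:
\begin{enumerate}
\item Collisions only delete endpoints, and the semigroup property lets you continue past them.
\item Nonexpansiveness together with lower semicontinuity passes the inequality to general Lipschitz $f$.
\item For $p=\infty$, if $|\nabla f|\le L$, the same convex combination gives $|(1,\partial_x a^t)|\le L\,\partial_\eta a^t$ directly.
\item Dropping the $x$-derivatives yields exactly the 1D inequality you need in (b).
\end{enumerate}
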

The proofs of Properties~\ref{lipshcitaz_steiner} and \ref{time_cosnti_steiner1} can be found in
\cite[Theorem 3.2, Theorem 4.2]{Brock}, respectively. 

Next, let us recall the notion of \emph{local symmetry}. 
 \begin{definition}\cite[Definition 6.1]{Brock}\label{local_symmetry_def}
 Let $f:\mathbb{T}\times  \mathbb{R}\to \mathbb{R}$ be a compactly supported nonnegative function such that $f$ is continuously differentiable on $\left\{ (x,y): 0<f(x,y)<\sup f\right\}$. We say $f$ is \emph{locally symmetric in the $y$-direction} if the following property holds:
 
  Suppose $(x,y_1)\in\mathbb{T}\times \mathbb{R}$ satisfies
  \[
  0 < f(x,y_1)<\sup f,\quad \text{ and }\quad \frac{\partial f}{\partial y}(x,y_1)>0.
  \]
  Then for the unique point $(x,y_2)$ satisfying
  \[
  y_2>y_1,\quad f(x,y_1)=f(x,y_2)<f(x,y)\text{ for all $y\in (y_1,y_2)$},
  \]
 we have
  \begin{align}\label{local_prooperty_fapplied}
  \frac{\partial f}{\partial x}(x,y_1)=\frac{\partial f}{\partial x}(x,y_2),\quad \text{ and }\quad \frac{\partial f}{\partial y}(x,y_1)=-\frac{\partial f}{\partial y}(x,y_2).
  \end{align}
 \end{definition}

  \begin{theorem}\cite[Theorem 6.2]{Brock}\label{dirichelet_lemma}
  Let $f:\mathbb{T}\times  \mathbb{R}\to \mathbb{R}$ be a compactly supported nonnegative function such that $f$ is continuously differentiable on $\left\{ (x,y): 0<f(x,y)<\sup f\right\}$, and let $f^t$ be its continuous Steiner symmetrization. If
  \[
  \lim_{t\searrow 0} \frac{1}t\left( \int_{\mathbb{T}\times\mathbb{R}}|\nabla f|^2\d x\d y -\int_{\mathbb{T}\times\mathbb{R}}|\nabla f^t|^2\d x\d y\right)=0, 
  \]
  then $f$ is locally symmetric in the $y$-direction.
  \end{theorem}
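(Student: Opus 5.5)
The plan is to adapt Brock's first-variation argument \cite[Theorem 6.2]{Brock} from $\mathbb R^n$ to $\mathbb T\times\mathbb R$. This is possible because the symmetrization acts only in $y$, and periodicity in $x$ enters only through integrating over a compact transverse variable.

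\textbf{Coordinates.} Argue by contradiction: suppose there is a point $(x_0,y_1)$ with $0<f(x_0,y_1)<\sup f$ and $\partial_yf(x_0,y_1)>0$, and its partner $(x_0,y_2)$ as in Definition~\ref{local_symmetry_def}, at which \eqref{local_prooperty_fapplied} fails.

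Since $f\in C^1$ on $\{0<f<\sup f\}$, the implicit function theorem gives a neighbourhood $U=(x_0-\delta,x_0+\delta)\times(\eta_0-\delta,\eta_0+\delta)$ of $(x_0,\eta_0)$, with $\eta_0=f(x_0,y_1)$, and two $C^1$ branches $y_1(x,\eta)<y_2(x,\eta)$. These satisfy
\[
f(x,y_i(x,\eta))=\eta,\qquad a:=\partial_\eta y_1>0,\qquad b:=-\partial_\eta y_2>0.
\]
For $(x,\eta)\in U$, the interval $(y_1,y_2)$ is a connected component of the superlevel fibre $\{f(x,\cdot)>\eta\}$. This uses the defining property $f(x,y)>f(x,y_1)$ on $(y_1,y_2)$ together with continuity, possibly after shrinking $U$.

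In these coordinates the coarea formula expresses the local Dirichlet energy as
\[
\int_U\Bigl(g(\partial_xy_1,a)+g(\partial_xy_2,b)\Bigr)\,\d x\,\d\eta,\qquad g(p,q):=\frac{1+p^2}{q}.
\]
Also, $f_y=1/\partial_\eta y_i$ and $f_x=-\partial_xy_i/\partial_\eta y_i$ on the respective branches.

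\textbf{First variation.} For small $t$, and away from collisions, Definition~\ref{steiner_original} moves both endpoints of this component rigidly by the same displacement $(e^{-t}-1)c$, where $c=(y_1+y_2)/2$. Hence
\[
y_i^t=y_i-\tfrac t2(y_1+y_2)+O(t^2).
\]
Differentiating in $x$ and $\eta$ gives
\[
\partial_x y_{1,2}^t=\partial_xy_{1,2}-\tfrac t2(\partial_xy_1+\partial_xy_2),\qquad a_t=a-\tfrac t2(a-b),\quad b_t=b+\tfrac t2(a-b).
\]
Plugging these into $g$ and using the joint convexity of $g$ on $\mathbb R\times(0,\infty)$ (it is the perspective of $1+p^2$), the $t$-derivative at $0$ of the local energy equals
\[
-\tfrac12\int_U\bigl(\nabla g(P_1)-\nabla g(P_2)\bigr)\cdot(P_1-P_2)\,\d x\,\d\eta\le 0,
\]
with $P_1=(\partial_xy_1,a)$ and $P_2=(-\partial_xy_2,b)$, the sign of the $x$-slope of the upper branch being reflected appropriately.

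I expect the integrand to vanish only when $P_1=P_2$, that is $a=b$ and $\partial_xy_1=-\partial_xy_2$ after the reflection. This is precisely $\partial_yf(y_1)=-\partial_yf(y_2)$ and $\partial_xf(y_1)=\partial_xf(y_2)$. The required strict monotonicity of $\nabla g$ transversally should follow from the strict convexity of $p\mapsto1+p^2$ combined with the specific paired displacement, which is not along the degenerate ray of the perspective. I will verify this by a direct $2\times2$ Hessian computation, which is the algebraic core of the argument.

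\textbf{Globalization (main obstacle).} One must show that the total energy decrease $\int|\nabla f|^2-\int|\nabla f^t|^2$ is at least $t$ times the local integral above, up to $o(t)$. The difficulties are the global effects: collisions of intervals elsewhere, merging, and the regions where $\partial_yf$ degenerates.

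My first approach is to use the layer-cake representation \eqref{def_conti_f} together with \eqref{onedversion_levelset1} and the semigroup and monotonicity properties of Lemma~\ref{basic_prop_lem2}, to write the energy fibrewise in $(x,\eta)$. Then:
\begin{itemize}
\item On $U$, no collisions occur for $t<t_0$, because components stay separated by a uniform gap.
\item Outside $U$, the contribution of each level is nonincreasing in $t$. This holds for finitely many intervals, in the spirit of the computation of Lemma~\ref{yweight_variation} but with the convex integrand $g$, and extends to general open sets by the approximation in Definition~\ref{steiner_original}\ref{CSS_def3} and lower semicontinuity (Fatou).
\end{itemize}
This yields
\[
\liminf_{t\searrow0}\tfrac1t\Bigl(\int|\nabla f|^2-\int|\nabla f^t|^2\Bigr)\ge\tfrac12\int_U\bigl(\nabla g(P_1)-\nabla g(P_2)\bigr)\cdot(P_1-P_2)>0,
\]
where strict positivity comes from the continuity of $P_1,P_2$ and the failure of \eqref{local_prooperty_fapplied} at $(x_0,\eta_0)$. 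This contradicts the hypothesis, so $f$ is locally symmetric in the $y$-direction.
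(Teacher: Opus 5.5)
The paper gives no proof here beyond importing \cite[Theorem 6.2]{Brock}, so the question is whether your reconstruction closes. Its local core is correct. In level coordinates the energy density is $g(p,q)=(1+p^2)/q$. The rigid motion $y_i^t=y_i+(e^{-t}-1)c$ moves $P_1^t,P_2^t$ linearly toward their midpoint, and the first variation is $-\tfrac12(\nabla g(P_1)-\nabla g(P_2))\cdot(P_1-P_2)$. (Note that $g$ is not the perspective of $1+p^2$, which would be $q+p^2/q$. It is simply strictly convex on $\mathbb R\times(0,\infty)$, with $g_{pp}=2/q>0$ and $\det D^2g=4/q^4>0$, so equality forces $P_1=P_2$ and there is no degenerate direction to exclude.)

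A first gap is in the setup. The implicit function theorem gives the upper branch with $b=-\partial_\eta y_2>0$ only if $\partial_yf(x_0,y_2)<0$. Definition~\ref{local_symmetry_def} only yields $\partial_yf(x_0,y_2)\le0$, and $\partial_yf(x_0,y_2)=0$ is itself a violation of \eqref{local_prooperty_fapplied} that your contradiction must cover. This is repairable. First prove symmetry at pairs whose two endpoints are both regular. For a degenerate pair, take regular values $\eta_n\downarrow\eta_0$ of $y\mapsto f(x_0,y)$ (Sard for $C^1$ functions of one variable). The partner of $y_1(x_0,\eta_n)$ tends to $y_2$, because $f(x_0,\cdot)$ is increasing near $y_1$ and $f(x_0,\cdot)\ge\eta_0+c$ on compact subsets of $(y_1,y_2)$. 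Then pass to the limit using continuity of $\nabla f$.

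The serious gap is the globalization, which you flag but do not resolve; the mechanism you propose does not work as stated. A fibrewise $(x,\eta)$ representation of the whole Dirichlet energy only sees $|\nabla f|^2$ on $\{\partial_yf\neq0\}$, so it misses $\int_{\{\partial_yf=0\}}|\partial_xf|^2$. Moreover, the density $g(\partial_xy_e,|\partial_\eta y_e|)$ involves $x$- and $\eta$-derivatives of endpoints, and after collisions at $(x,\eta)$-dependent times these are only piecewise smooth. So there is no per-level quantity whose monotonicity can be checked fibre by fibre as in Lemma~\ref{yweight_variation}; what you sketch amounts to re-proving the P\'olya--Szeg\H{o} inequality.

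It is simpler to reduce everything except the local bump to Lemma~\ref{regularity_lemma_steiner}\ref{lipshcitaz_steiner}, in three steps.
\begin{itemize}
\item[(a)] For $0<\eta_a<\eta_b<\sup f$, the truncation $T(s)=\min\{\max\{s,\eta_a\},\eta_b\}-\eta_a$ satisfies $T(f^t)=T(f)^t$ by Lemma~\ref{nondecreasing_conjugation}. The same holds for the windows below $\eta_a$ and above $\eta_b$, and the energy splits additively over these windows.
\item[(b)] For $\chi=\chi(x)\ge0$ one has $(\chi F)^t=\chi F^t$. Expand $|\nabla(\chi F)|^2$ and integrate the cross term by parts in $x$; the lower-order terms coincide for $F$ and $F^t$ by fibrewise equimeasurability. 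Hence $\int\chi^2|\nabla F^t|^2\le\int\chi^2|\nabla F|^2$.
\item[(c)] Take $\chi$ supported near $x_0$ with $\chi(x_0)>0$, and split $T(f)=f_I+f_E$ there, where $f_I(x,y):=\int_{\eta_a}^{\eta_b}\ind_{(y_1(x,\eta),y_2(x,\eta))}(y)\,\d\eta$. The component $(y_1,y_2)$ is separated from the rest of the superlevel fibre by a uniform gap $\epsilon'>0$. All endpoints move with speed at most $R$, since centres stay in $[-R,R]\supset\operatorname{supp}f$. Hence $T(f)^t=f_I^t+f_E^t$ with disjoint supports for $t<\epsilon'/(2R)$.
\end{itemize}
Applying (b) to $f_E$, to the outer windows, and to $T(f)$ with a complementary weight $\tilde\chi^2$ (where $\chi^2+\tilde\chi^2\equiv1$) gives
\[
\int|\nabla f|^2-\int|\nabla f^t|^2\ge\int\chi^2\bigl(|\nabla f_I|^2-|\nabla f_I^t|^2\bigr).
\]
The right-hand side is the decrease of your ($\chi^2$-weighted) local energy. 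It is computed exactly by the rigid motion of the branches of $f_I$, which never collide, and its derivative at $t=0$ yields your contradiction.
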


 \subsection{Symmetrization of the  diffeomorphism}\label{Steiner_onesided_two_dim}
 Recall that $Y_\lambda$ in \eqref{def_Y_extended} is defined in $\mathbb{T}\times \mathbb{R}$ and $Y_\lambda(x,y)=-1$ for $|y|\ge 1$. 
  We define the continuous Steiner symmetrization of $Y_\lambda$ about $y=0$ as
 \begin{align}\label{Y_definition}
 Y_\lambda^t(x,y):=(Y_\lambda+1)^t(x,y)-1,
\end{align}
 where $(Y_\lambda+1)^t$ is defined by \eqref{def_conti_f}.

 Let us collect several properties of $Y^t_\lambda$. By construction, for every $(x,y)\in M$, 
  \begin{align}\label{Ysteiner_definition}
 Y_\lambda^t(x,y)=\int_{-1}^0 \ind_{\mathcal{S}_t(A(\eta))}(x,y)\d\eta-1,\text{ where }A(\eta):=\left\{(x,y)\in\mathbb{T}\times (-1,1): Y_\lambda(x,y)>\eta\right\} \text{ for $\eta\in (-1,0)$}.
  \end{align}
  Importantly, we have
  \begin{align}\label{ykeepsthezero}
   \left\{ (x,y)\in\mathbb{T}\times\mathbb{R}: Y_\lambda^t(x,y)=0\right\}=\mathbb{T}\times \left\{0\right\}\text{ for all $t\ge0$.}
 \end{align}
Now Lemma~\ref{regularity_lemma_steiner} reads
\begin{align}
\rVert \nabla Y_\lambda^t\rVert_{L^p}&\le \rVert \nabla Y_\lambda\rVert_{L^p},\label{Lipschitz_preservation}\\
\rVert Y_\lambda^t-Y_\lambda\rVert_{L^p}&\le 2t\rVert \partial_yY_\lambda\rVert_{L^p},\label{L_pconvergence_steiner}
\end{align}
for all $t\ge0$, $p\in [1,\infty]$.

Next, let us denote
\[
A(\eta)_x:=\left\{ y\in (-1,1): (x,y)\in A(\eta)\right\}\text{ and }A^t(\eta)_x:=\mathcal{S}_t(A(\eta)_x).
\]
Since $y\mapsto Y_\lambda(x,y)$ is continuous for every $x\in\mathbb{T}$, it holds that $A(\eta)_x$ is open in $\mathbb{R}$, thus so is $\mathcal{S}_t(A(\eta)_x)$.

From Lemma~\ref{yweight_variation}, we have
\begin{align}\label{variation_density_2}
\int_{-1}^1 |y| \left(\ind_{A(\eta)_x}-\ind_{A^t(\eta)_x}\right){\d y} \ge0,\text{ for all $\eta\in (-1,0)$, $x\in\mathbb{T}$ and $t\ge0$.}
\end{align}

From \eqref{onedversion_levelset1} and \eqref{onedversion_levelset2}, we deduce
  \begin{align}
\left\{ y\in (-1,1) : Y_\lambda^t(x,y)>\eta\right\} &=A^t(\eta)_x, \text{ up to a set of measure zero},\label{finer_level_set_oned1}
  \end{align}
  for all $(x,\eta)\in \mathbb{T}\times (-1,0)$. In particular, the Fubini-Tonelli theorem gives
 \[
 |\left\{ (x,y)\in M : Y_\lambda^t(x,y)>\eta\right\}| =|\mathcal{S}_t(A(\eta))|=|A(\eta)| = |\left\{ (x,y)\in M : Y_\lambda(x,y)>\eta\right\}|,
 \]
  for all $\eta\in (-1,0)$. Therefore $Y_\lambda^t$ and $Y_\lambda$ have the same distribution on $M$, and hence
   \begin{align}\label{equidistribution}
 \int_{M}\overline{G}(Y_\lambda^t(x,y))\d x\d y = \int_{M} \overline{G}(Y_\lambda(x,y))\d x\d y\text{ for all $\overline{G}\in C([-1,0])$ and  $t\ge 0$.}
 \end{align}
 Consequently, we obtain the following lemma.
\begin{lemma}\label{limit_equidistribution}
Let $\overline{G}\in W^{1,\infty}([-1,0])$.
Then
\begin{align}
\lim_{t\searrow0} \frac{1}{t}\int_M \left| {\overline{G}(Y^t_\lambda)-\overline{G}(Y_\lambda)-\overline{G}'(Y_\lambda)(Y^t_\lambda - Y_\lambda)}\right| \d x\d y&=0, \label{G_limit_1p}\\
\lim_{t\searrow0}\frac{1}{t}\int_M {\overline{G}'(Y_\lambda)(Y^t_\lambda-Y_\lambda)} \d x\d y &= 0. \label{G_limit_1p2}
\end{align}
\end{lemma}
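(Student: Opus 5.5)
The plan is to prove \eqref{G_limit_1p} by a pointwise Taylor-type estimate combined with the $L^1$ rate \eqref{L_pconvergence_steiner}, and then to derive \eqref{G_limit_1p2} from \eqref{G_limit_1p} using the equidistribution identity \eqref{equidistribution}.

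For \eqref{G_limit_1p2}, first extend \eqref{equidistribution} from $\overline{G}\in C([-1,0])$ to $\overline{G}\in W^{1,\infty}$. This is immediate, since $W^{1,\infty}([-1,0])\subset C([-1,0])$. The identity then gives $\int_M \overline{G}(Y^t_\lambda)-\overline{G}(Y_\lambda)\,\d x\,\d y=0$ for every $t$. Writing
\[
\int_M \overline{G}'(Y_\lambda)(Y^t_\lambda-Y_\lambda)=-\int_M\bigl(\overline{G}(Y^t_\lambda)-\overline{G}(Y_\lambda)-\overline{G}'(Y_\lambda)(Y^t_\lambda-Y_\lambda)\bigr),
\]
we see that \eqref{G_limit_1p2} follows directly from \eqref{G_limit_1p}. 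So everything reduces to \eqref{G_limit_1p}.

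For \eqref{G_limit_1p}, define the remainder
\[
R_t(x,y):=\int_0^1\bigl(\overline{G}'(Y_\lambda+s(Y^t_\lambda-Y_\lambda))-\overline{G}'(Y_\lambda)\bigr)\,\d s\,(Y^t_\lambda-Y_\lambda).
\]
This representation is valid for Lipschitz $\overline{G}$ with $\overline{G}'$ defined a.e. It requires some care, since $\overline{G}'$ is only defined a.e. while $Y_\lambda$ is a fixed function, but $\overline{G}'(Y_\lambda)$ is well defined a.e. on $M$ by \eqref{nulset_estimate_YX}: preimages of null sets under $Y$ are null. Dividing by $t$ and using \eqref{L_pconvergence_steiner} with $p=\infty$, we get
\[
\tfrac1t|R_t|\le 2\|\partial_yY_\lambda\|_{L^\infty}\int_0^1|\overline{G}'(Y_\lambda+s(Y^t_\lambda-Y_\lambda))-\overline{G}'(Y_\lambda)|\,\d s.
\]
It therefore suffices to show that
\[
\int_M\int_0^1|\overline{G}'(Y_\lambda+s(Y^t_\lambda-Y_\lambda))-\overline{G}'(Y_\lambda)|\,\d s\,\d x\,\d y\to0 .
\]
The main obstacle is exactly this step, because $\overline{G}'$ is merely $L^\infty$, so continuity of composition is not automatic.

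To handle it, approximate $\overline{G}'$ in $L^1([-1,0])$ by a continuous $h$ with $\|\overline{G}'-h\|_{L^\infty}$ bounded. For the continuous piece, $|h(Y_\lambda+s(Y^t_\lambda-Y_\lambda))-h(Y_\lambda)|\to0$ uniformly, since $\|Y^t_\lambda-Y_\lambda\|_{L^\infty}\le 2t\|\partial_y Y_\lambda\|_{L^\infty}$. For the error piece, we need that the pushforward of Lebesgue measure on $M$ under $Y_\lambda+s(Y^t_\lambda-Y_\lambda)$ has density bounded uniformly in $s$ and $t$. For $Y_\lambda$ itself this holds by the coarea formula and $|\nabla Y|\ge c>0$, which follows from \eqref{nondegenerate_DY} and compactness.

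The interpolants are less clear. There are two ways to proceed:
\begin{enumerate}[label=(\roman*)]
\item Bound the interpolants' distributions via their monotone structure in $y$.
\item Restrict to $s\in\{0,1\}$ by a cruder argument. Using \eqref{equidistribution}, $Y^t_\lambda$ has the same distribution as $Y_\lambda$, so for any measurable $E$, $|\{Y^t_\lambda\in E\}|\le C|E|$. Then replace the $s$-integral by the bound $|\overline{G}(Y^t)-\overline{G}(Y)-\overline{G}'(Y)(Y^t-Y)|\le |\overline{G}(Y^t)-\overline{G}(Y)-h\text{-version}|+\dots$, obtained by splitting $\overline{G}=H+K$ with $H'=h$ continuous and $\|K'\|_{L^1}$ small.
\end{enumerate}

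The second route seems most robust, and I expect it to be the delicate bookkeeping. The $H$ part is handled by uniform continuity. The $K$ part is handled by the Lipschitz bound $|K(a)-K(b)|\le|\int_b^a|K'||$ integrated over $M$. Fubini converts this integral into $\int|K'(\eta)|\,|\{x: \eta\text{ lies between }Y_\lambda,Y^t_\lambda\}|\,\d\eta\le \|K'\|_{L^1}\cdot \sup_\eta|\{|Y_\lambda-\eta|\le 2Ct\}|\lesssim t\|K'\|_{L^1}$, using the density bound for $Y_\lambda$. The term $K'(Y_\lambda)(Y^t-Y)$ is $\le Ct\|K'\|_{L^1}$ by the same coarea bound.
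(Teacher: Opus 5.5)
Your argument is correct, and your reduction of \eqref{G_limit_1p2} to \eqref{G_limit_1p} through \eqref{equidistribution} is exactly the paper's. The difference lies in how you prove \eqref{G_limit_1p}.

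The paper never writes the remainder in integral form. By Rademacher, $\overline G$ is differentiable off a null set $\mathcal N$, and $\{Y_\lambda\in\mathcal N\}$ is null by \eqref{nulset_estimate_YX}. So at a.e.\ point you only need the definition of the derivative at the single value $Y_\lambda(x,y)$. Combined with $|Y^t_\lambda-Y_\lambda|\le 2t\|\partial_yY_\lambda\|_{L^\infty}$, this makes the quotient tend to $0$ pointwise. The uniform bound $4\|\overline G\|_{W^{1,\infty}}\|\partial_yY_\lambda\|_{L^\infty}$ then lets dominated convergence finish.

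The obstacle you single out, composing the merely bounded $\overline G'$ with the interpolants $Y_\lambda+s(Y^t_\lambda-Y_\lambda)$, is therefore created by your integral representation. The paper needs only that the law of $Y_\lambda$ is absolutely continuous.

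Your route (ii) is sound, but it rests on the stronger fact that this law has a bounded density, i.e.\ $|\{|Y_\lambda-\eta|\le\delta\}|\lesssim\delta$ uniformly in $\eta$. Your coarea justification needs, besides $|\nabla Y|\ge c$, a uniform bound on the lengths of the leaves. It is simpler to write $|\{|Y-\eta|\le\delta\}|=|X(\mathbb T\times[\eta-\delta,\eta+\delta])|\le 4\pi\delta\|\det DX\|_{L^\infty}$ and argue in the same way on the reflected half.

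What your approach buys is that it uses only $\overline G'\in L^1$. It would therefore also cover $\overline G\in W^{1,1}$, where the paper's uniform domination is not available. Finally, your remark that $Y^t_\lambda$ has the same law as $Y_\lambda$ is not actually used in your final estimates.
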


\begin{proof}
  
We claim that 
\begin{align}\label{claim_dominated_conv_rigorous}
\lim_{t\searrow 0}\frac{\overline{G}(Y^t_\lambda)-\overline{G}(Y_\lambda)-\overline{G}'(Y_\lambda)(Y^t_\lambda - Y_\lambda)}{t}=0,\text{ a.e. on $M$.}
\end{align}

To see this, let us denote $\mathcal{N}$ the set of all points in $(-1,0)$ where $\overline{G}$ is not differentiable. Since $\overline{G}\in W^{1,\infty}$, the set $\mathcal{N}$ is negligible. Using  \eqref{nulset_estimate_YX}, we have
\[
|Y^{-1}(\mathcal{N})|
=0.
\]
The reflected and translated definition of $Y_\lambda$ in \eqref{def_Y_extended}
therefore gives
\[
|\mathcal{N}_\lambda|=0,\text{ where }\mathcal{N}_\lambda:=\{(x,y)\in M:Y_\lambda(x,y)\in \mathcal{N}\}.
\]
Thanks to \eqref{L_pconvergence_steiner} with $p=\infty$, we have
 \begin{align}\label{bounds_betternotation}
\left|\frac{Y^t_\lambda -Y_\lambda}{t}\right|\le 2\rVert \partial_yY_\lambda\rVert_{L^\infty},\quad  
\left| \frac{\overline{G}(Y^t_\lambda)-\overline{G}(Y_\lambda)-\overline{G}'(Y_\lambda)(Y^t_\lambda - Y_\lambda)}{t}\right|\le 4\rVert \overline{G}\rVert_{W^{1,\infty}}\rVert \partial_y Y_\lambda\rVert_{L^\infty},
\end{align}
 in $M\backslash \mathcal{N}_\lambda$. Fix $(x,y)\in M\backslash \mathcal{N}_\lambda$. Since $\overline{G}$ is differentiable in $\mathcal{N}_\lambda^c$, especially at $Y_\lambda(x,y)$, it holds that
\[
\overline{G}(Y_\lambda^t)-\overline{G}(Y_\lambda)
-\overline{G}'(Y_\lambda)(Y^t_\lambda -Y_\lambda)=o(\left|Y^t_\lambda -Y_\lambda\right|).
\]
Thanks to the first bound in \eqref{bounds_betternotation}, we have
\[
\frac{\overline{G}(Y_\lambda^t)-\overline{G}(Y_\lambda)
-\overline{G}'(Y_\lambda)(Y^t_\lambda -Y_\lambda)}{t}\to 0
\]
 at each fixed $(x,y)\in M\backslash \mathcal{N}_\lambda$. Since $\mathcal{N}_\lambda$ is negligible, we obtain \eqref{claim_dominated_conv_rigorous}. Combining this with the second bound in \eqref{bounds_betternotation} and applying the dominated convergence theorem, we conclude \eqref{G_limit_1p}. Furthermore, this, together with \eqref{equidistribution}, gives \eqref{G_limit_1p2}.
\end{proof} 
 \subsection{Variational properties of Dirichlet and potential energies}\label{energy_symmetrization_po}
Recall from \eqref{def_psi_rho_B} that the stream function $\phi_\lambda$ is given by
\begin{align}\label{psi_conjugated}
\phi_\lambda=\overline{\phi}(Y_\lambda)
\end{align}
where $\overline{\phi}\in W^{2,\infty}$ is nonnegative and monotone increasing (but not necessarily strictly increasing). As with $Y_\lambda$, we define its continuous Steiner symmetrization $\phi_\lambda^t$ by \eqref{def_conti_f}. We have
\begin{align}\label{Steiner_symmetrization_streamfunction}
\phi_\lambda^t(x,y)=\overline{\phi}(Y_\lambda^t(x,y)).
\end{align}
Indeed, since $\overline{\varphi}(\eta):=\overline{\phi}(\eta-1)$ is a nondecreasing continuous function such that $\overline{\varphi}(0)=0$, we have
\[
\phi_\lambda^t=(\overline{\phi}(Y_\lambda))^t= \left(\overline{\varphi}(Y_\lambda+1)\right)^t=\overline{\varphi}((Y_\lambda+1)^t)=\overline{\varphi}(Y_\lambda^t+1)=\overline{\phi}(Y_\lambda^t),
\]
where the third equality is due to Lemma~\ref{nondecreasing_conjugation} and the fourth equality is due to \eqref{Y_definition}. Importantly, we have
\begin{align}\label{psi_lambdat_online}
\phi_\lambda^t(x,0)=\overline{\phi}(Y_\lambda^t(x,0))=\overline{\phi}(0)=\overline{\phi}(Y_\lambda(x,0))=\phi_\lambda(x,0),
\end{align}
where the second equality follows from \eqref{ykeepsthezero}. Furthermore, we use Lemma~\ref{regularity_lemma_steiner} and observe that
\[
\rVert \nabla \phi_\lambda^t \rVert_{L^p}\le \rVert \nabla \phi_\lambda \rVert_{L^p}
\] for all $t\ge0$, $p\in [1,\infty]$. Especially, we have
\begin{align}\label{Lipschitz_preservation_stream}
\rVert \nabla\phi_\lambda^t\rVert_{L^2}^2\le \rVert \nabla \phi_\lambda\rVert_{L^2}^2.
\end{align}

 Next, we consider the density symmetrization. Recall from \eqref{def_psi_rho_B} that the density $\theta_\lambda$ is given by
 \[
 \theta_\lambda(x,y)=\overline{\theta}(Y_\lambda(x,y)),
 \]
 where $\overline{\theta}$ is nondecreasing. We define
 \begin{align}\label{density_stratification}
 \theta_\lambda^t(x,y):=\overline{\theta}(Y_\lambda^t(x,y)).
 \end{align}
 Again, Lemma~\ref{nondecreasing_conjugation} implies that $\theta^t_\lambda$ is the continuous Steiner symmetrization of $\theta_\lambda$.

\begin{proposition}\label{potential_energy_variation_1231}
For $t\ge 0$, it holds that
\begin{align}\label{potential_zero_gT}
\int_{M}|y|\theta_\lambda^t(x,y)\d x\d y - \int_{M}|y| \theta_\lambda(x,y)\d x\d y \le0,
\end{align}\end{proposition}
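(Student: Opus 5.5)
The plan is to write the density through a layer-cake representation in the label variable and reduce the inequality to the one-dimensional Lemma~\ref{yweight_variation} applied fiber by fiber. Since $\overline{\theta}\in W^{1,\infty}([-1,0])$ is nondecreasing with $\overline{\theta}(-1)=0$, we write
\[
\overline{\theta}(s)=\int_{-1}^{0}\overline{\theta}'(\eta)\,\ind_{\{s>\eta\}}\,\d\eta ,\qquad s\in[-1,0],
\]
with $\overline{\theta}'\ge0$ a.e. by Lemma~\ref{density_indeed_monotone}. Then
\[
\theta_\lambda=\int_{-1}^0\overline{\theta}'(\eta)\ind_{\{Y_\lambda>\eta\}}\d\eta=\int_{-1}^0\overline{\theta}'(\eta)\ind_{A(\eta)}\d\eta ,\qquad
\theta^t_\lambda=\int_{-1}^0\overline{\theta}'(\eta)\ind_{\{Y^t_\lambda>\eta\}}\d\eta ,
\]
using \eqref{density_stratification}. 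Here we use that $Y_\lambda=-1$ for $|y|\ge1$, so $A(\eta)\subset M$ and these identities hold on all of $\mathbb T\times\mathbb R$ (with both functions vanishing outside $M$).

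Next we multiply by $|y|$ and integrate over $M$. Tonelli's theorem applies since all integrands are nonnegative and bounded, and gives
\[
\int_M|y|\theta^t_\lambda-\int_M|y|\theta_\lambda=\int_{-1}^0\overline{\theta}'(\eta)\int_{\mathbb T}\int_{-1}^1|y|\Bigl(\ind_{\{Y^t_\lambda(x,\cdot)>\eta\}}-\ind_{A(\eta)_x}\Bigr)\d y\,\d x\,\d\eta .
\]
By \eqref{finer_level_set_oned1}, for every $(x,\eta)$ the set $\{y:Y^t_\lambda(x,y)>\eta\}$ coincides with $A^t(\eta)_x=\mathcal S_t(A(\eta)_x)$ up to a null set in $\mathbb R$. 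The inner $y$-integral therefore equals
\[
\int_{-1}^1|y|\bigl(\ind_{A^t(\eta)_x}-\ind_{A(\eta)_x}\bigr)\d y ,
\]
which is $\le0$ by \eqref{variation_density_2}, that is, by Lemma~\ref{yweight_variation} applied to the bounded open set $A(\eta)_x$. Multiplying by $\overline{\theta}'(\eta)\ge0$ and integrating in $x$ and $\eta$ gives \eqref{potential_zero_gT}.

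The main point requiring care is measurability for Tonelli. Joint measurability of $(x,y,\eta)\mapsto\ind_{\{Y^t_\lambda(x,y)>\eta\}}$ follows because $Y^t_\lambda$ is measurable (indeed Lipschitz by \eqref{Lipschitz_preservation}). We avoid needing joint measurability of the sets $A^t(\eta)_x$ themselves by performing the replacement only inside the inner integral, for each fixed $(x,\eta)$. We also check that no boundary contribution arises from the points where $Y_\lambda$ or $Y^t_\lambda$ equals $-1$: $\overline{\theta}(-1)=0$, and the layer-cake formula only sees the strict superlevel sets, so these points do not contribute.
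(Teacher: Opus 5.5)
Your proof is correct and follows the paper's argument essentially step for step. The paper also writes $\theta^t_\lambda=\overline{\theta}(-1)+\int_{-1}^0\overline{\theta}'(\eta)\ind_{\{Y^t_\lambda>\eta\}}\d\eta$, applies Fubini, replaces the fiber superlevel sets by $A^t(\eta)_x$ via \eqref{finer_level_set_oned1}, and concludes from \eqref{variation_density_2} together with $\overline{\theta}'\ge 0$ from Lemma~\ref{density_indeed_monotone}; your remarks on measurability and on $\overline{\theta}(-1)=0$ just make explicit what the paper leaves implicit.
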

\begin{proof}
We compute
\[
\theta_\lambda^t(x,y)=\overline{\theta}(Y_\lambda^t(x,y))=\int_{-1}^{Y_\lambda^t(x,y)}\overline{\theta}'(\eta)\d\eta + \overline{\theta}(-1) =\int_{-1}^0 \ind_{\left\{\eta\in (-1,0):\ Y_\lambda^t(x,y)>\eta\right\}}\overline{\theta}'(\eta)\d\eta+\overline{\theta}(-1).
\] 
Therefore, by Fubini's theorem,
\[
\int_{M}|y|\theta_\lambda^t(x,y)\d x\d y=\overline{\theta}(-1)\int_{M} |y|\d x\d y +\int_{\mathbb{T}}\int_{-1}^0 \overline{\theta}'(\eta)\left(\int_{-1}^1 |y| \ind_{\left\{(x,y):\ Y_\lambda^t(x,y)>\eta\right\}} {\d y}\right) \d\eta {\d x}.
\]
It follows from \eqref{finer_level_set_oned1} that 
\[
\ind_{\left\{(x,y):\ Y_\lambda^t(x,y)>\eta\right\}} = \ind_{A^t(\eta)_x}(y) \text{ for almost every $y\in (-1,1)$.}
\]
 Therefore
\begin{align*}
\int_{M}|y|\theta_\lambda^t(x,y)\d x\d y - \int_{M}|y|\theta_\lambda(x,y)\d x\d y = \int_{\mathbb{T}}\int_{-1}^0 \overline{\theta}'(\eta)\int_{-1}^1 |y| \left(\ind_{A^t(\eta)_x}-\ind_{A(\eta)_x} \right){\d y} d\eta {\d x}\le 0,
\end{align*} 
where the last inequality follows from \eqref{variation_density_2} and Lemma~\ref{density_indeed_monotone}.
\end{proof}

    Let us define
  \[
  \mathcal{A}(t):=\frac{1}{2}\int_{M} |\nabla \phi_\lambda^t(x,y)|^2\d x\d y  +\int_{M}|y|\theta_\lambda^t(x,y)\d x\d y,\text{ for $t\ge0$.}
  \]
  \begin{proposition}\label{Steniner_variation_P}
 We have
  \[
  \liminf_{t\searrow0 }\frac{\mathcal{A}(t)-\mathcal{A}(0)}{t}\ge0.
  \]
  \end{proposition}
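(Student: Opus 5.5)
The idea is to bound $\mathcal A(t)-\mathcal A(0)$ from below by its first variation. Since $\phi_\lambda^t,\theta_\lambda^t$ are all functions of the single leaf variable $Y_\lambda^t$, that first variation becomes an integral of $\mathcal P_\lambda\,(Y_\lambda^t-Y_\lambda)$. Lemma~\ref{B_condition} says $\mathcal P_\lambda$ is itself a function of $Y_\lambda$, so Lemma~\ref{limit_equidistribution} shows this integral is $o(t)$.

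\emph{Step 1: Dirichlet term.} By convexity of $|\cdot|^2$,
\[
\tfrac12|\nabla\phi_\lambda^t|^2-\tfrac12|\nabla\phi_\lambda|^2\ge \nabla\phi_\lambda\cdot\nabla(\phi_\lambda^t-\phi_\lambda).
\]
To handle the possible singular part of $\Delta\phi_\lambda$ on $\mathbb T\times\{0\}$ (Remark~\ref{discontinuity_overthe_line}), I integrate by parts separately on $M_-$ and $M_+=\mathbb T\times(0,1)$. On each half, $\phi_\lambda\in W^{2,\infty}$ and $\phi_\lambda^t-\phi_\lambda$ is Lipschitz.

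The boundary terms vanish for two reasons. On $y=0$, \eqref{psi_lambdat_online} gives $\phi_\lambda^t=\phi_\lambda$. On $y=\pm1$, the sets $\mathcal S_t(A(\eta)_x)$ stay inside $(-1,1)$, since symmetrization moves intervals toward $0$ without changing their lengths; hence $Y_\lambda^t=-1=Y_\lambda$ there, and $\phi_\lambda^t=\overline\phi(-1)=\phi_\lambda$. This yields
\[
\tfrac12\int_M|\nabla\phi_\lambda^t|^2-\tfrac12\int_M|\nabla\phi_\lambda|^2\ge-\int_M\Delta\phi_\lambda\,(\phi_\lambda^t-\phi_\lambda),
\]
with the a.e.\ (regular) Laplacian. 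By \eqref{Steiner_symmetrization_streamfunction}, $\phi_\lambda^t-\phi_\lambda=\overline\phi(Y_\lambda^t)-\overline\phi(Y_\lambda)$. Apply \eqref{G_limit_1p} with $\overline G=\overline\phi\in W^{2,\infty}\subset W^{1,\infty}$. Since $\Delta\phi_\lambda\in L^\infty$, the right-hand side equals $-\int_M\Delta\phi_\lambda\,\overline\phi'(Y_\lambda)(Y_\lambda^t-Y_\lambda)+o(t)$.

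\emph{Step 2: potential term.} This term is handled exactly, with no convexity needed. By \eqref{density_stratification} and \eqref{G_limit_1p} with $\overline G=\overline\theta\in W^{1,\infty}$ (Lemma~\ref{density_indeed_monotone}),
\[
\int_M|y|(\theta_\lambda^t-\theta_\lambda)=\int_M|y|\,\overline\theta'(Y_\lambda)(Y_\lambda^t-Y_\lambda)+o(t).
\]

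\emph{Step 3: combine via the generalized Bernoulli function.} Adding the two steps and using \eqref{B_aftersliding} a.e.\ on $M\setminus(\mathbb T\times\{0\})$ gives
\[
\mathcal A(t)-\mathcal A(0)\ge-\int_M\bigl(\Delta\phi_\lambda\,\overline\phi'(Y_\lambda)-|y|\overline\theta'(Y_\lambda)\bigr)(Y_\lambda^t-Y_\lambda)+o(t)=-\int_M\overline Q'(Y_\lambda)(Y_\lambda^t-Y_\lambda)+o(t),
\]
where the last equality uses \eqref{blambda_def}. Since $\overline Q\in W^{1,\infty}$ by Lemma~\ref{B_condition}, \eqref{G_limit_1p2} shows the remaining integral is $o(t)$. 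Dividing by $t$ and taking $\liminf$ as $t\searrow0$ gives the claim.

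\emph{Main obstacle.} The delicate point is Step 1's integration by parts across the reflection line $y=0$, where $\phi_\lambda$ is only Lipschitz. I handle it by splitting into the two halves and using the trace identity \eqref{psi_lambdat_online} to kill the interface terms. A secondary point is that every $o(t)$ claim is uniform only because \eqref{L_pconvergence_steiner} with $p=\infty$ gives $|Y_\lambda^t-Y_\lambda|\le 2t\|\partial_yY_\lambda\|_{L^\infty}$, and the coefficients $\Delta\phi_\lambda$ and $|y|$ are bounded.
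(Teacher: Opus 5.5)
Your proposal is correct and matches the paper's proof step for step. You drop the nonnegative term $\tfrac12|\nabla\phi_\lambda^t-\nabla\phi_\lambda|^2$ and integrate by parts, which is justified by $\phi_\lambda^t=\phi_\lambda$ on $\partial M$ and on $\mathbb T\times\{0\}$. You then linearize both energies in $Y_\lambda^t-Y_\lambda$, recombine them into $-\int_M\overline Q'(Y_\lambda)(Y_\lambda^t-Y_\lambda)$ via \eqref{B_aftersliding}, and conclude with \eqref{G_limit_1p2}. The differences are only cosmetic and both versions work: you split the integration by parts explicitly over the two halves, and you linearize $\overline\phi(Y_\lambda^t)-\overline\phi(Y_\lambda)$ through \eqref{G_limit_1p}, whereas the paper uses the direct Taylor bound $\|\overline\phi\|_{W^{2,\infty}}\|Y_\lambda^t-Y_\lambda\|_{L^\infty}^2$.
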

  \begin{proof}
  We first compute the difference of the Dirichlet energy. Observe that
  \[
  |\nabla \phi_\lambda^t|^2 - |\nabla \phi_\lambda|^2 = |\nabla \phi_\lambda^t -\nabla \phi_\lambda|^2 +2\nabla \phi_\lambda \cdot\nabla (\phi_\lambda^t -\phi_\lambda).
  \]
  Therefore
  \begin{align*}
  \frac{1}{2}\int_{M}|\nabla \phi_\lambda^t|^2 - |\nabla \phi_\lambda|^2 \d x\d y &= \frac{1}{2}\int_{M}|\nabla \phi_\lambda^t -\nabla \phi_\lambda|^2 dxdy +\int_{M}\nabla \phi_\lambda \cdot \nabla(\phi_\lambda^t -\phi_\lambda)\d x\d y\nonumber\\
  &\ge \int_{M}\nabla \phi_\lambda \cdot \nabla(\phi_\lambda^t -\phi_\lambda)\d x\d y\\
  &= -\int_{M}\Delta \phi_\lambda (\phi_\lambda^t -\phi_\lambda)\d x\d y.
  \end{align*}
  The integration by parts in the last equality is justified because $\phi^t_\lambda=\phi_\lambda$ on the boundary $\partial M$ and also on $\mathbb{T}\times \left\{0\right\}$ (see \eqref{psi_lambdat_online}), for all $t\ge0$. It follows from \eqref{Steiner_symmetrization_streamfunction} that $ \phi_\lambda^t-\phi_\lambda = \overline{\phi}(Y_\lambda^t)-\overline{\phi}(Y_\lambda)$, and thus
  \[
  | \phi_\lambda^t-\phi_\lambda - \overline{\phi}'(Y_\lambda)(Y_\lambda^t-Y_\lambda)|\le \rVert \overline{\phi}\rVert_{W^{2,\infty}}\rVert Y_\lambda^t-Y_\lambda\rVert_{L^\infty}^2=o(t),
  \]
  where the last equality follows from \eqref{L_pconvergence_steiner}. Hence, we obtain
  \begin{align}\label{dirichlet_firstpart}
   \frac{1}{2}\int_{M}|\nabla \phi_\lambda^t|^2 - |\nabla \phi_\lambda|^2 \d x\d y  \ge -\int_{M} \Delta \phi_\lambda \overline{\phi}'(Y_\lambda)(Y_\lambda^t-Y_\lambda)\d x\d y+ o(t).
  \end{align}
For the potential energy, it follows from \eqref{density_stratification} that
  \[
\theta_\lambda^t-\theta_\lambda= \overline{\theta}(Y_\lambda^t)-\overline{\theta}(Y_\lambda).
  \]
 Therefore 
  \begin{align*}
  \int_{M}|y|(\theta_\lambda^t-\theta_\lambda)\d x\d y &= \int_{M} |y|\overline{\theta}'(Y_\lambda)(Y_\lambda^t-Y_\lambda)\d x\d y + \int_{M}|y| \left(\overline{\theta}(Y^t_\lambda) - \overline{\theta}(Y_\lambda) -\overline{\theta}'(Y_\lambda)(Y_\lambda^t-Y_\lambda) \right) {\d x} {\d y} \\
  &= \int_{M} |y|\overline{\theta}'(Y_\lambda)(Y_\lambda^t-Y_\lambda)\d x\d y  + o(t),
  \end{align*}
  where the last equality follows from Lemma~\ref{limit_equidistribution}.
  Combining this with \eqref{dirichlet_firstpart}, we obtain
  \begin{align}\label{t_linearterms}
  \mathcal{A}(t)-\mathcal{A}(0)&\ge -\int_{M}(\Delta\phi_\lambda\overline{\phi}'(Y_\lambda) -|y|\overline{\theta}'(Y_\lambda))(Y_\lambda^t-Y_\lambda)\d x\d y +o(t)\\
  & = -\int_{M} \overline{Q}'(Y_\lambda)(Y_\lambda^t-Y_\lambda)\d x\d y + o(t),
  \end{align}
  where the last equality follows from \eqref{B_aftersliding} and \eqref{blambda_def}. Lemma~\ref{B_condition} ensures that $\overline{Q}$ is $W^{1,\infty}$, therefore we can apply Lemma~\ref{limit_equidistribution} and obtain
\[
\frac{\mathcal{A}(t)-\mathcal{A}(0)}{t}\ge o(1).
\]
This completes the proof. 
  \end{proof}
  
  \begin{corollary}\label{each_energy_vanishes}
We have
   \begin{align*}
  \lim_{t\searrow 0}\frac{1}{t} \left( \int_{M}|\nabla \phi_\lambda|^2\d x\d y -\int_{M}|\nabla \phi_\lambda^t|^2\d x\d y\right)&=0.
  \end{align*}
  Consequently, $\phi_\lambda$ is locally symmetric in the $y$-direction, in the sense of Definition~\ref{local_symmetry_def}.
   \end{corollary}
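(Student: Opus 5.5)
The plan is to squeeze the Dirichlet energy difference between the two one-sided bounds already available. Write
\[
D(t):=\frac12\int_M|\nabla\phi_\lambda|^2\,\d x\,\d y-\frac12\int_M|\nabla\phi_\lambda^t|^2\,\d x\,\d y,\qquad
P(t):=\int_M|y|\theta_\lambda\,\d x\,\d y-\int_M|y|\theta_\lambda^t\,\d x\,\d y .
\]
By \eqref{Lipschitz_preservation_stream} we have $D(t)\ge0$ for every $t\ge0$. By Proposition~\ref{potential_energy_variation_1231} we have $P(t)\ge0$ as well. Since $\mathcal A(t)-\mathcal A(0)=-(D(t)+P(t))$, Proposition~\ref{Steniner_variation_P} gives
\[
\limsup_{t\searrow0}\frac{D(t)+P(t)}{t}\le0 .
\]
Both terms are nonnegative, so $0\le D(t)/t\le (D(t)+P(t))/t$, and hence $\lim_{t\searrow0}D(t)/t=0$. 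This is the first claim (up to the factor $2$). As a by-product, $P(t)=o(t)$ as well.

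For the local symmetry we apply Theorem~\ref{dirichelet_lemma} to $f:=\phi_\lambda$, viewed as a function on $\mathbb T\times\mathbb R$. To do so, we need to check its hypotheses.

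\emph{Nonnegativity and compact support.} We have $\phi_\lambda\ge0$, since $\overline\phi(-1)=0$ and $\overline\phi$ is nondecreasing. Moreover $\phi_\lambda$ is compactly supported in $\mathbb T\times[-1,1]$, since $Y_\lambda=-1$ for $|y|\ge1$. The symmetrization $\phi_\lambda^t$ is taken on $\mathbb T\times\mathbb R$ and is supported in $\mathbb T\times[-1,1]$. This lets us identify the integrals over $M$ with those over $\mathbb T\times\mathbb R$, because $\nabla\phi_\lambda^t=0$ a.e. outside $M$.

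\emph{Continuous differentiability.} We need $\phi_\lambda$ to be $C^1$ on $\{0<\phi_\lambda<\sup\phi_\lambda\}$. This is the step I expect to be the main obstacle. The function $\phi_\lambda=\overline\phi(Y_\lambda)$ is $C^1$ away from the line $\mathbb T\times\{0\}$, since $\overline\phi\in W^{2,\infty}\subset C^1$ and $Y_\lambda\in C^2$ there. Across $y=0$, however, only Lipschitz continuity is guaranteed. The key point is that on $\mathbb T\times\{0\}$ we have $Y_\lambda=0$, the maximal value of $Y_\lambda$, so $\phi_\lambda=\overline\phi(0)=\sup\phi_\lambda$ there. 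Indeed $\overline\phi$ is nondecreasing and $Y_\lambda\le0$, so this is the supremum. Hence the line $y=0$ lies outside the open set $\{0<\phi_\lambda<\sup\phi_\lambda\}$. Near $|y|=1$, points where $\phi_\lambda>0$ lie inside $M$, and the reflected definition is smooth there. Points with $|y|>1$ give $\phi_\lambda=0$ and are excluded.

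One subtlety remains. The set $\{\phi_\lambda=\sup\phi_\lambda\}$ may contain whole strips on which $\overline\phi$ is constant near $\eta=0$. This is harmless, since those points are likewise excluded from the region where regularity is required. Similarly, plateaus of $\overline\phi$ (stagnant regions) inside the range only make $\nabla\phi_\lambda$ vanish there, and do not affect the $C^1$ property.

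With these hypotheses verified, Theorem~\ref{dirichelet_lemma} applied to $f=\phi_\lambda$ yields that $\phi_\lambda$ is locally symmetric in the $y$-direction in the sense of Definition~\ref{local_symmetry_def}. As noted in Remark~\ref{lambda_unchanged}, nothing in this argument depends on $\lambda$, so the conclusion holds for every $\lambda\in\mathbb R$.
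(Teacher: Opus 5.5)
Your proof is correct and follows the paper's squeeze argument. The nonnegativity of the Dirichlet energy drop (from \eqref{Lipschitz_preservation_stream}) and of the potential energy drop (from Proposition~\ref{potential_energy_variation_1231}), combined with Proposition~\ref{Steniner_variation_P}, forces the Dirichlet drop to be $o(t)$, and then Theorem~\ref{dirichelet_lemma} is invoked. Your explicit check of the $C^1$ hypothesis is a detail the paper leaves implicit, and you identify the right reason: $\mathbb T\times\{0\}$ and $\{|y|\ge1\}$ lie outside $\{0<\phi_\lambda<\sup\phi_\lambda\}$, since $\phi_\lambda=\overline\phi(0)=\sup\phi_\lambda$ on that line and $\phi_\lambda=0$ beyond $|y|=1$.
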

   Note that the qualitative property that $\phi_\lambda$ is locally symmetric holds for any $\lambda$ (see Remark~\ref{lambda_unchanged}).
     \begin{proof}
  Proposition~\ref{potential_energy_variation_1231} gives
  \[
  \limsup_{t\searrow 0}\frac{1}{t}\left(\int_{M}|y|\theta_\lambda^t(x,y)\d x\d y - \int_{M}|y| \theta_\lambda(x,y)\d x\d y \right)\le 0.
  \] Thus Proposition~\ref{Steniner_variation_P} gives
  \begin{align}\label{inequalitychains}
  0\le \liminf_{t\searrow0 }\frac{\mathcal{A}(t)-\mathcal{A}(0)}{t}&\le   \liminf_{t\searrow 0 }\frac{1}{2t} \left( \int_{M}|\nabla \phi^t_\lambda|^2\d x\d y -\int_{M}|\nabla \phi_\lambda|^2\d x\d y\right) \le 0,
  \end{align}
  where the last inequality follows from \eqref{Lipschitz_preservation_stream}. Therefore, the $\liminf_{t\searrow 0}$ can be replaced by $\lim_{t\searrow 0}$ and all the inequalities are equalities. Theorem~\ref{dirichelet_lemma} implies that $\phi_\lambda$ is locally symmetric.
    \end{proof}

\subsection{Proof of Theorem~\ref{Rigidity_modified_solutions}}\label{prooftheoremRigidity_modified_solutions}\begin{proof}
Fix $\eta\in(-1,0)\cap \mathcal{R}$. By \eqref{stagnant_sets_rigidity_1} and \eqref{def_monotone_streamfunction1}, we have $\overline{\phi}'(\eta)>0$. Define
\[
w(x):=\max\{y\in (-1,0):Y(x,y)=\eta\}.
\]
The maximum exists since $\Gamma_\eta=\left\{(x,y)\in M_-: Y(x,y)=\eta\right\}$ is compact,
and
\[
Y(x,y)>\eta\qquad\text{for }w(x)<y\le0.
\]
Choose $(x_0,y_0)\in\Gamma_\eta$ at which the vertical
coordinate attains its maximum over $\Gamma_\eta$. Then  
\[
w(x_0)=y_0,\qquad \partial_xY(x_0,y_0)=0,
\qquad
\partial_yY(x_0,y_0)>0.
\]
Consequently, for every $\lambda\in\mathbb R$,
\[
\partial_x\phi_\lambda(x_0,y_0)=0,
\qquad
\partial_y\phi_\lambda(x_0,y_0)>0.
\]
By our construction of $Y_\lambda$ and $\phi_\lambda$ in \eqref{def_Y_extended} and \eqref{def_psi_rho_B}, and $\overline{\phi}'(\eta)>0$,  we have
\[
\phi_\lambda(x_0,-w(x_0-\lambda))=\overline{\phi}(\eta),\qquad \phi_\lambda(x_0,y)>\overline{\phi}(\eta)\text{ for $y\in (y_0,-w(x_0-\lambda))$.}
\]
Therefore, by local symmetry for $\phi_\lambda$, proved in Corollary~\ref{each_energy_vanishes}, we have
\[
\partial_x\phi_\lambda(x_0,-w(x_0-\lambda))=0,
\qquad
\partial_y\phi_\lambda(x_0,-w(x_0-\lambda))<0.
\]
Using again the definitions of $Y_\lambda$ and $\phi_\lambda$, and $\overline{\phi}'(\eta)>0$, we obtain
\[
\partial_xY(x_0-\lambda,w(x_0-\lambda))=0,
\qquad
\partial_yY(x_0-\lambda,w(x_0-\lambda))>0.
\]
Since $\lambda$ is arbitrary, these identities yield
\[
\partial_xY(x,w(x))=0,
\qquad
\partial_yY(x,w(x))>0,
\]
for all $x\in\mathbb{T}$. Hence the implicit function theorem and compactness of
$\Gamma_\eta$  give $w\in C^1(\mathbb T)$ and
\[
w'(x)
=-\frac{\partial_xY(x,w(x))}
        {\partial_yY(x,w(x))}=0.
\]
Thus $w$ is constant and consequently, $\Gamma_\eta$ is a straight horizontal line. 
Hence $
\partial_x\phi=\overline{\phi}'(Y)\partial_xY=0$ on $\Gamma_\eta$. If $\eta\notin \mathcal{R}$, again \eqref{def_monotone_streamfunction1} gives $\partial_x\phi=0$ on $\Gamma_\eta$. Therefore, $\phi$ is independent of the variable $x$. Lastly, since $\bv=\nabla^\perp \phi$ and $\bv$ is shear, $\bv\cdot\nabla \bv =0$. Hence, the momentum equation in \eqref{Boussinesq_P} yields $\theta e_2$ is a gradient, thus
  \[
  \partial_x\theta = 0
  \]
in the distributional sense. This finishes the proof of Theorem~\ref{Rigidity_modified_solutions}.
\end{proof}

\section{Rigidity for non-resonant Richardson numbers}\label{sec:rigidity_richardson}
We investigate the effects of the Richardson number on the rigidity properties of stationary solutions \eqref{eq:BoussinesqVel}. The main goal of the section is to prove the following result.
\begin{theorem}\label{thm:rigidity_noncrit_J}
    Assume $(\u:=(u,v),\rho)$ is a smooth steady solution of the Boussinesq equations such that {$\lbrace u = 0 \rbrace\subset M$ is nonempty and }for each connected component of  $\Gamma_u = \lbrace u = 0 \rbrace$ there holds $|\partial_y u| > 0$ and $\partial_y\rho \not\equiv0$. Assume further that for $\mathcal{J}\coloneqq  \frac{-\partial_y \rho}{|\partial_y u|^2}$ there holds $\mathcal{J}\neq -m(m-1)$ for all $\N\ni m\geq 2$ along all connected components of $\Gamma_u$.  
    Then $v\equiv 0$ and $u=u(y)$ and $\rho=\rho(y)$.
\end{theorem}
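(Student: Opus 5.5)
Following the outline given in the introduction, the proof has three stages: a local analysis at a component of $\Gamma_u$ showing it is a flat horizontal line along which $v$ vanishes to infinite order, then unique continuation of $v\equiv0$ from that line via a Carleman estimate, and finally recovery of the shear structure.

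\textbf{Step 1: $\Gamma_u$ is locally a graph and $v$ vanishes on it.} Fix a connected component $\Gamma$ of $\Gamma_u$. Since $|\partial_y u|>0$ there, the implicit function theorem writes $\Gamma$ locally as a graph $y=g(x)$. On $\Gamma$ the transport equation reduces to $v\,\partial_y\rho=0$. Evaluating \eqref{vorticity_eq_rigidt2} on $\Gamma$, where $u=0$, gives the same relation. Since $\partial_y\rho\not\equiv0$ on $\Gamma$, $v=0$ on the open subset where $\partial_y\rho\neq0$. On that arc $\u=0$, so the points are stagnation points. From $u=0=v$ along the arc we get $g'\partial_y v=-\partial_x v=\partial_y u$ (incompressibility, $u_x=-v_y$). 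This links the tangential slope to the derivatives.

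\textbf{Step 2: infinite-order vanishing and flatness.} Expand $v$ in the normal variable $s=y-g(x)$, writing $v\sim\sum_k a_k(x)s^k$ and $u\sim b_1(x)s+\dots$ with $b_1=\partial_y u\neq0$. Inserting this into \eqref{vorticity_eq_rigidt2}, the leading order is $s^{k}$ in $u^2\Delta v$, which gives $b_1^2k(k-1)a_k$. The term $\partial_y\rho\,v$ gives $\partial_y\rho\,a_k$, up to lower-order couplings involving $g'$ and $u\Delta u\, v$. This yields the indicial relation
\begin{align*}
\bigl(k(k-1)b_1^2-\partial_y\rho\bigr)a_k=\text{(terms in }a_j,\ j<k),
\end{align*}
that is, $(k(k-1)+\mathcal J)\,a_k=\dots$. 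The non-resonance assumption $\mathcal J\neq -m(m-1)$ therefore forces inductively $a_k=0$ for all $k\geq0$. So $v$ vanishes to infinite order on the arc. Feeding this into the density transport equation and the vorticity equation gives $\partial_x\rho=0$ to all orders, and then shows $g'=0$, i.e.\ the curve is a horizontal line $y=y_*$. Along this line $\partial_y\rho$ is a nonzero constant. A connectedness/continuation argument along $\Gamma$ extends this to the whole component, which is then $\mathbb T\times\{y_*\}$. I expect the bookkeeping here (terms involving $g'$ at each order, and the $k=0,1$ cases, where $m\geq2$ enters) to be delicate but routine.

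\textbf{Step 3: Carleman propagation.} Near $y=y_*$, $u\approx b_1(y-y_*)$, and \eqref{vorticity_eq_rigidt2} reads
\begin{align*}
u^2\Delta v-u\Delta u\,v-\partial_y\rho\,v=0.
\end{align*}
This is elliptic with a quadratic degeneracy at the line, i.e.\ of Fuchsian/Euler type in $y-y_*$. I would prove a Carleman estimate with weight $|y-y_*|^{-\tau}$, choosing $\tau$ away from the indicial roots, which are nondegenerate by non-resonance. Combined with infinite-order vanishing, this gives $v\equiv0$ in a strip. This is the main obstacle: the Carleman estimate must handle the degenerate weight. I would first try conjugating by $|y-y_*|^{\tau}$ and using the Hardy-type structure, with $\tau\to\infty$ along non-resonant values.

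\textbf{Step 4: conclusion.} Away from $\Gamma_u$ we have $u\neq0$, and the equation is uniformly elliptic after division by $u^2$. Standard unique continuation then propagates $v\equiv0$ to all of $M$; the other components of $\Gamma_u$ are handled the same way, or avoided, since the zero set of a nontrivial solution cannot be open. Then $\partial_x u=-\partial_y v=0$, so $u=u(y)$. The transport equation gives $u\,\partial_x\rho=0$, and the vorticity equation gives $\partial_x\rho=-\u\cdot\nabla\omega=0$, so $\rho=\rho(y)$.
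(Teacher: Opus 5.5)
Your architecture matches the paper's: each stagnation component is shown to be flat, non-resonance gives infinite-order vanishing of $v$, a Carleman estimate in $t=-\log(y-y_*)$ with weight $(y-y_*)^{-\tau}$ kills $v$ near the line, and unique continuation in $\{u\neq0\}$ finishes. The gap is Step~2. It is the heart of the argument (Lemma~\ref{lemma:flat_stagnation_curve} in the paper), and as written it is not justified.

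You run the induction on a possibly curved arc $y=g(x)$ and claim that $g'$ enters only through lower-order couplings. It does not. In coordinates $(x,s)$ with $s=y-g(x)$,
\[
\Delta=\partial_x^2-2g'\partial_x\partial_s-g''\partial_s+(1+g'^2)\partial_s^2 .
\]
So once $a_j=0$ for $j<k$, the order-$s^k$ relation is $\bigl((1+g'^2)k(k-1)(\partial_y u)^2-\partial_y\rho\bigr)a_k=0$. The hypothesis $\mathcal J\neq -k(k-1)$ does not prevent this coefficient from vanishing. Flatness therefore has to come first, and it is available at first order:
\begin{itemize}
\item where $\partial_y\rho\neq0$ on $\Gamma$ you have $u=v=0$, so the vorticity equation gives $\partial_x\rho=0$;
\item differentiating $u\partial_x\rho+v\partial_y\rho=0$ in $y$ then gives $\partial_y v\,\partial_y\rho=0$, so $\partial_y v=0$;
\item since $\partial_y v=-\partial_x u=g'\partial_y u$ along the arc, $g'=0$.
\end{itemize}
Your Step~1 identity is also off: $-\partial_x v=(g')^2\partial_y u$, not $\partial_y u$. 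Only after flatness is established is your indicial relation the correct one.

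The passage from the arc where $\partial_y\rho\neq0$ to the whole component is not a soft connectedness argument either. The hypothesis is only $\partial_y\rho\not\equiv0$ on $\Gamma$. If $\partial_y\rho$ tended to $0$ at an endpoint of the arc, nothing would force $v=0$ or flatness beyond it. What is needed is exactly the fact you assert without proof: $b:=\partial_y\rho$ is constant along each such arc. At an endpoint $b=0$ by definition, but continuity would give the nonzero constant, so there are no endpoints and the arc is all of $\Gamma$.

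The paper proves this constancy before any high-order expansion. Set $a:=\partial_y u$ and use $\partial_y^2 v=-\partial_x\partial_y u=-a'$ on the flat arc. Differentiating the transport equation twice in $y$ gives $2ab'-ba'=0$. Differentiating \eqref{vorticity_eq_rigidt2} twice, with $\Delta v=\partial_y^2v$ there, gives $(2a^2-b)a'=0$. Together these force $a'=b'=0$ without invoking non-resonance. In your ordering you could instead get $a'=-\partial_y^2 v=0$ from the $m=2$ step of the induction and then $b'=0$ from the first identity, but some such argument has to be supplied.

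Finally, non-resonance plays no role in the Carleman step. After $y=e^{-t}$ and $w=e^{t/2}v$, the estimate holds for every $\tau\geq\tau_0$; the term $e^{-2t}\partial_x^2$ contributes the positive commutator $4\tau\|e^{-t}\partial_x G\|^2$. The hypothesis is needed only for the infinite-order vanishing.
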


\begin{remark}\label{remark:rigidity_smooth_carleman}
    The smoothness assumptions on $u,v,\rho$ can be relaxed to $u,v,\rho\in C^2(M)$,  and the conclusions of Theorem \ref{thm:rigidity_noncrit_J} are still true if one further assumes that $\mathcal{J}\in (-\frac34, \frac14) \setminus \lbrace 0 \rbrace$ on all connected components of $\Gamma_u$. We shall give more details later during the section. 
\end{remark}

Since $\partial_y u \neq 0$ whenever $u=0$ the implicit function theorem shows that each connected component of $\Gamma_u$ is given by a graph, namely
\begin{align}
    \Gamma_u = \bigcup_{i=1}^n \lbrace (x, h_i(x)) : x \in \T \rbrace
\end{align}
for smooth functions $h_i:\T\rightarrow \R$. As we shall see from the arguments, we may assume without loss of generality that $\Gamma_u$ is connected and given by $\lbrace (x, h(x)) : x \in \T \rbrace$ for some $h:\T\rightarrow \R$. We define the functions
\begin{align}
    a(x) \coloneqq   \partial_y u(x,h(x)), \quad b(x) \coloneqq  \partial_y \rho(x,h(x))
\end{align}
and we set 
\begin{align}
    \mathcal{B} = \lbrace x\in \T : b(x) \neq 0 \rbrace.
\end{align}

\begin{lemma}\label{lemma:flat_stagnation_curve}
    There holds $\mathcal{B} = \T$ and $h(x)=h_0$, for some $h_0\in (-1,1)$, for all $x\in \T$.
\end{lemma}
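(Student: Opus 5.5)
The plan is to extract everything from the equations restricted to the stagnation curve $\Gamma_u=\{y=h(x)\}$, using $u=0$ and $a\neq0$ there. I first show that on each connected component (open interval) $I$ of the open set $\mathcal B$ the curve is flat and $b$ is constant. Then I conclude $\mathcal B=\T$ by an open–closed argument. Note that $\mathcal B\neq\emptyset$ because $\partial_y\rho\not\equiv0$ on $\Gamma_u$.

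\emph{Step 1: flatness on $\mathcal B$.} On $\Gamma_u$ the transport equation $u\partial_x\rho+v\partial_y\rho=0$ reduces to $vb=0$, so $v=0$ on $\Gamma_u$ over $\mathcal B$. The vorticity equation \eqref{vorticity_eq_rigidt1} then gives $\partial_x\rho=0$ there. Next I differentiate the transport equation once in $y$ and evaluate on $\Gamma_u$ over $\mathcal B$:
\[
a\,\partial_x\rho+u\,\partial_{xy}\rho+\partial_yv\,\partial_y\rho+v\,\partial_{yy}\rho=0 .
\]
The first, second and fourth terms vanish, so $\partial_y v\,b=0$, hence $\partial_yv=0$. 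Differentiating $u(x,h(x))=0$ gives $\partial_xu=-h'a$, and incompressibility gives $\partial_yv=-\partial_xu=h'a$. Since $a\neq0$, this forces $h'=0$. So $h\equiv h_0$ on each interval $I\subset\mathcal B$, and $h_0\in(-1,1)$ because $\Gamma_u\subset M$.

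\emph{Step 2: $a$ and $b$ are constant on $I$.} On $y=h_0$, $x\in I$, we have $u=v=\partial_yv=\partial_x\rho=0$ and $\partial_{xx}v=0$. Differentiating the vorticity equation $u\partial_x\omega+v\partial_y\omega=-\partial_x\rho$ in $y$ gives $a\,\partial_x\omega=-\partial_x b$. Here $\partial_x\omega=\partial_{xx}v-\partial_{xy}u=-a'$, so $b'=aa'$. Differentiating the transport equation twice in $y$ gives $2a\,\partial_{xy}\rho+\partial_{yy}v\,b=0$. Using $\partial_{xy}\rho=b'$ and $\partial_{yy}v=-\partial_{xy}u=-a'$, this becomes $2ab'=a'b$. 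Combining the two identities yields
\[
a'\,(2a^2-b)=0\quad\text{on }I
\]
(signs to be rechecked carefully; with the opposite sign convention the factor is $2a^2+b$, the $m=2$ resonance). On the open set where $2a^2\neq b$ we get $a'=0$. On the interior of the set where $b=2a^2$ we differentiate to get $b'=4aa'$; together with $b'=aa'$ and $a\neq0$ this again gives $a'=0$. The union of these two open sets is dense in $I$, so continuity gives $a'=0$ on all of $I$, and then $b'=0$. Thus $b$ is a nonzero constant on $I$. I expect this step to be the main obstacle: it needs the correct bookkeeping of the second-order expansions on the curve, and possibly the nonresonance $\mathcal J\neq-2$ if the degenerate-set argument turns out not to close.

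\emph{Step 3: conclusion.} Since $b$ is a nonzero constant on each interval $I$, by continuity $b\neq0$ at the endpoints of $I$. So $\mathcal B$ is also closed, and connectedness of $\T$ gives $\mathcal B=\T$. Step 1 then applies on all of $\T$, so $h'\equiv0$ and $h\equiv h_0$.
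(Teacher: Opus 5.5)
Your proof is correct and follows the paper's own argument. You first get flatness of $\Gamma_u$ over each component of $\mathcal{B}$ from $v=\partial_x\rho=\partial_y v=0$. You then combine $(2a^2-b)a'=0$ with the transport identity $2ab'=ba'$ to force $a'=b'=0$, and close with the open--closed argument giving $\mathcal{B}=\T$.

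The one difference is how you reach the factorization: the paper differentiates \eqref{vorticity_eq_rigidt2} twice in $y$, whereas you use $b'=aa'$ from the once-differentiated vorticity equation, which is equivalent. Your signs are right as written, so neither the sign caveat nor any appeal to $\mathcal{J}\neq-2$ is needed.
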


\begin{proof}
    Let $x\in \mathcal{B}$, then $b(x)\neq 0$, which is an open property and $b(x)$ is continuous. The continuity equation $u\partial_x\rho + v \partial_y \rho = 0$ along $\Gamma_u$ then yields $v(x,h(x))=0$ since $u(x,h(x))=0$. Therefore, $|(u,v)|=0$ at $(x,h(x))$ and the steady Boussinesq equation $(u,v)\cdot\nabla \omega = -\partial_x \rho$ immediately gives $\partial_x \rho(x,h(x)) = 0$ on $\Gamma_u$, whenever $x\in \mathcal{B}$. Taking a $\partial_x$ derivative in $\u\cdot\nabla\rho=0$, we observe that
    \begin{align*}
        0 = \partial_x u \partial_x \rho + \partial_x v \partial_y \rho
    \end{align*}
    at $(x,h(x))$, so that $\partial_x v(x,h(x))=0$. Similarly, differentiating once with respect to $\partial_y$, we obtain 
    \[
    0=\partial_y\u\cdot\nabla \rho = \partial_yu\partial_x\rho+\partial_yv\partial_y\rho=\partial_y v\partial_y\rho \qquad \text{on $\Gamma_u$.}
    \]
    Hence, $\partial_yv=0$ on $\Gamma_u$ and the incompressibility yields $\partial_x u =0$ on $\Gamma_u$.  Thus, $h'(x) = - \frac{\partial_x u(x,h(x))}{\partial_y u(x,h(x))}=0$. Hence, all connected components of $\mathcal{B}$ are given by open intervals $I_j$ such that $h(x) = h_j$ for all $x\in I_j$. {We next show that there is only one such $I_j=\T$}.

    Indeed, from the equation $(u,v)\cdot\nabla \rho =0$, taking two $\partial_y$ derivative and evaluating at $x\in \mathcal{B}$ we have
    \begin{align*}
        2a(x) b'(x) - b(x)a'(x) = 0.
    \end{align*}
  Similarly, differentiating  (see \eqref{vorticity_eq_rigidt2})
  \[
u^2\Delta v-u(\Delta u)v-\partial_y\rho\,v=0
\] twice with respect to $y$ and evaluating at $(x,h(x))$, $x\in\mathcal{B}$,
we therefore obtain
\[
2(\partial_y u)^2\Delta v-\partial_y\rho\partial_y^2v=0 \text{ at $(x,h(x))$, $x\in\mathcal{B}$.}
\]
Since $v(x,h(x))\equiv0$ and $h(x)$ is constant for $x\in\mathcal{B}$, we have
$\partial_x^2v=0$, and hence
$\Delta v(x,h(x))=\partial_y^2v(x,h(x))$ for $x\in\mathcal{B}$.
Moreover, incompressibility gives
\[
\partial_y^2v(x,h(x))
=-\partial_x\partial_yu(x,h(x))
=-a'(x).
\]\color{black}
Consequently,
\[
\bigl(2a(x)^2-b(x)\bigr)a'(x)=0
\qquad\text{for all }x\in\mathcal B.
\]
    Assume next that $a'(x)\neq 0$ for some $x\in \cB$. By continuity it is non-zero on a small neighborhood of it. Then, $b(x) = 2a(x)^2$ on such neighborhood and $b'(x)=4a(x) a'(x)$, so that
    \begin{align}
        0 = 2a(x) b'(x) - b(x) a'(x) = 8 a^2(x)a'(x) - 2a^2(x)a'(x) = 6 a^2(x)a'(x)
    \end{align}
    from which we deduce that $a'(x)=0$. Hence, $a'(x)=0$ and thus $b'(x)=0$ for all $x\in \mathcal{B}$. This means that $b(x)$ is a non-zero (perhaps distinct) constant on each connected component of $\mathcal{B}$. If $\mathcal{B}\neq \T$, then $b(x)=0$ on $\mathcal{B}^c$ and in particular on $\partial \mathcal{B}$. Since $b:\T\rightarrow\R$ is continuous, we reach a contradiction, so that $\mathcal{B}=\T$ and $h(x)=h_0$, for some $h_0\in (-1,1)$, for all $x\in \T$.    
\end{proof}

\begin{remark}[Continuation of Remark \ref{remark:rigidity_smooth_carleman}]
    Once we know that all possible stagnation lines are horizontal, that $u,v,\rho\in C^2(M)$ and that $v= \nabla v = 0$ there, we can appeal to the local unique continuation Theorem 3.11 and Theorem 4.1 of \cite{shao2024approximate}. 
\end{remark}

Without loss, we assume that $h_0=0$ and we denote $a_0=\partial_y u(x,0)$ and $b_0=\partial_y \rho(x,0)$, with $a_0>0$ and $b_0\neq 0$. We next show that $v(x,y)$ vanishes at infinite order at $y=0$.

\begin{lemma}
    For all $j\geq 0$ and $n\geq 0$ there holds $\partial_x^j \partial_y^n v(x,0)=0$ for all $x\in \T$.
\end{lemma}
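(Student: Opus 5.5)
The plan is an induction on the order $n$ of $y$-derivatives, based on the identity \eqref{vorticity_eq_rigidt2},
\[
u^2\Delta v-u(\Delta u)v-\partial_y\rho\, v=0 \quad\text{in } M,
\]
together with the information on the flat stagnation line $\{y=0\}$ obtained in the proof of Lemma~\ref{lemma:flat_stagnation_curve} (with $h_0=0$).

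\textbf{What is already known on $\{y=0\}$.} We have $u(x,0)=0$, $\partial_y u(x,0)=a_0\neq0$ and $\partial_y\rho(x,0)=b_0\neq0$. We also have $v(x,0)=0$, $\partial_xv(x,0)=0$ and $\partial_yv(x,0)=0$ for every $x\in\T$.

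\textbf{Reduction to pure $y$-derivatives.} It suffices to prove that $c_n(x):=\partial_y^n v(x,0)$ vanishes identically on $\T$ for every $n\ge0$. Indeed, $x\mapsto \partial_y^n v(x,0)$ is then the zero function, so all its $x$-derivatives vanish, and $\partial_x^j\partial_y^n v(x,0)=0$ for all $j$. The cases $n=0,1$ are exactly the facts listed above.

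\textbf{Inductive step.} Fix $n\ge2$ and assume $c_k\equiv0$ for all $k<n$; by the reduction, every mixed derivative $\partial_x^j\partial_y^k v$ with $k<n$ also vanishes on $y=0$. Apply $\partial_y^n$ to the identity and evaluate at $(x,0)$, expanding each product with the Leibniz rule.

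\emph{The term $u^2\Delta v$.} Since $u^2$ vanishes to second order at $y=0$ with $\partial_y^2(u^2)(x,0)=2a_0^2$, only derivatives of $u^2$ of order at least $2$ survive. The term with exactly two derivatives on $u^2$ gives
\[
\binom n2\,2a_0^2\,\partial_y^{n-2}\Delta v(x,0)=n(n-1)a_0^2\,c_n(x).
\]
Here $\partial_y^{n-2}\partial_x^2v(x,0)=0$ by the inductive hypothesis, so only $\partial_y^n v$ contributes. Every other term contains $\partial_y^{k}\Delta v$ with $k\le n-3$, which involves $y$-derivatives of $v$ of order at most $n-1$ and therefore vanishes.

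\emph{The term $u(\Delta u)v$.} Since $u(x,0)=0$, at least one derivative must fall on $u$. Hence at most $n-1$ derivatives fall on $v$, and every such term vanishes by the inductive hypothesis.

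\emph{The term $\partial_y\rho\,v$.} Only the term with all $n$ derivatives on $v$ survives, giving $b_0\,c_n(x)$.

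Collecting the three contributions,
\[
\bigl(n(n-1)a_0^2-b_0\bigr)\,c_n(x)=0\qquad\text{for all }x\in\T.
\]
On $\Gamma_u=\{y=0\}$ we have $\mathcal J=-b_0/a_0^2$, so the non-resonance hypothesis $\mathcal J\neq-n(n-1)$ for $n\ge2$ says precisely that $n(n-1)a_0^2-b_0\neq0$. Therefore $c_n\equiv0$, which closes the induction.

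\textbf{Main point of care.} The only delicate step is the Leibniz bookkeeping: one must check that no other term of order $y^n$ survives. Both the mixed derivatives coming from $\Delta v$ and the factor $u\Delta u$ vanish to sufficient order, so this amounts to counting derivatives and uses only the smoothness of $(\u,\rho)$. This is exactly where the resonant values $-m(m-1)$ enter.
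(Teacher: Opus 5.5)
Your proposal is correct and follows essentially the same route as the paper: an induction on the order of $y$-derivatives driven by \eqref{vorticity_eq_rigidt2}, with the leading-order balance $\bigl(n(n-1)a_0^2-b_0\bigr)c_n=0$ closed by non-resonance. The only difference is bookkeeping: the paper reads off the coefficient of $y^m$ in a Taylor expansion, whereas you apply $\partial_y^n$ and use the Leibniz rule, which makes the vanishing of the mixed term $\partial_y^{n-2}\partial_x^2 v$ slightly more explicit.
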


\begin{proof}
    It is sufficient to show $\partial_y^m v(x,0)=0$ for all $x\in \T$ and all $m\geq 0$. We have seen in the proof of the lemma above that it holds for $m=0,1$ so we assume that $m\geq 2$ and we proceed by induction, so that $\partial_y^{m'}v(x,0)=0$ for all $0\leq m' < m$. For $v_m(x) = \partial_y^m v(x,0)$ we have by Taylor's formula and smoothness of $v(x,y)$ that $v(x,y) = \frac{v_m(x)}{m!}y^m + O(y^{m+1})$. Moreover, $u(x,y) = a_0 y + O(y^2)$ while $\partial_y \rho(x,y) = b_0 + O(y)$. Hence, there holds
    \begin{align}
        u^2 \Delta v &= a_0^2\frac{v_m(x)}{(m-2)!}y^m + O(y^{m+1}), \\
        u(\Delta u) v &= O(y^{m+1}), \\
        \partial_y \rho v &= b_0 \frac{v_m(x)}{m!}y^m + O(y^{m+1}).
    \end{align}
    Here, we say that $f(x,y)\in O(y^n)$ if there exists $C>0$ such that $\lim_{y\rightarrow 0}\frac{|f(x,y)|}{|y|^n}\leq C$ for all $x\in \T$. Therefore,
    \begin{align}
        0 = u^2 \Delta v - u(\Delta u) v - \partial_y \rho v = \frac{1}{m!}\left( a_0^2 m(m-1) - b_0 \right) v_m(x) y^m + O(y^{m+1})
    \end{align}
    from which we deduce that $v_m(x)\equiv 0$ since $\mathcal J(x,0) = \frac{-b_0}{a_0^2}\neq -m(m-1)$ for all $m\geq 2$.
\end{proof}

Since $v(x,y)$ vanishes to infinite order at $y=0$, we aim to show that $v\equiv 0$ in a neighborhood of $\T\times \lbrace 0 \rbrace$ by means of a Carleman estimate for an effective equation on $v(x,y)$. To that purpose, since $u(x,0)=0$ and $\partial_y u(x,0)=a_0\neq 0$, we have that
\begin{align}
    u(x,y) = y \int_0^1 \partial_y u(x,ty) \d t = y\alpha(x,y),
\end{align}
for $\alpha:\T\times(-1,1)$ smooth and such that $\alpha(x,0)=a_0\neq 0$ and $\alpha(x,y) \geq \frac{a_0}{2}>0$ for all $(x,y)\in \T \times (-\delta,\delta)$, for some $\delta>0$. Therefore, on $\T\times (-\delta, \delta)$ there holds
\begin{align}\label{eq:reduced_B_equation}
    y^2 \Delta v + V(x,y) v =0, \quad V(x,y) \coloneqq  -y\frac{\Delta u}{\alpha} - \frac{\partial_y \rho}{\alpha^2}
\end{align}
with also $V\in C^\infty(\T\times (-\delta,\delta))$.

Before proceeding further, we first show a Carleman-type estimate that will be useful later.

\begin{lemma}\label{lemma:carleman_estimate}
Let $T_0>0$ and $L\coloneqq \partial_t^2 + e^{-2t}\partial_x^2 + W(x,t)$ on $(x,t)\in \T\times (T_0,\infty)$, for some $W:\T\times (T_0,\infty)\rightarrow \R$ such that $\Vert W \Vert_{L^\infty} + \Vert \partial_t W \Vert_{L^\infty}\leq M$, for some $M>0$. Then, there exist $C>0$ and $\tau_0>0$ such that for all $\tau\geq \tau_0$ and all $g\in C^\infty_0(\T\times (T_0,\infty))$ there holds
\begin{align}
    \tau^3 \Vert G \Vert_{L^2}^2 + \tau^2 \Vert \partial_t G \Vert_{L^2}^2 + \tau \Vert e^{-t}\partial_x G \Vert_{L^2}^2 \leq C \Vert e^{\tau t}L g\Vert_{L^2}^2
\end{align}
for $G\coloneqq  e^{\tau t}g$.
\end{lemma}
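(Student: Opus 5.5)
The plan is to conjugate $L$ by the weight, split the conjugated operator into a symmetric and an antisymmetric part, and read off the gradient bounds from the cross term. The missing $\tau^3\|G\|_{L^2}^2$ bound will then come from testing the symmetric part against $G$, and the potential $W$ is absorbed at the end as a perturbation.

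\textbf{Step 1: conjugation.} Since $g=e^{-\tau t}G$ and $e^{\tau t}\partial_t(e^{-\tau t}G)=(\partial_t-\tau)G$, we have
\begin{align}
e^{\tau t}Lg = SG + AG + WG,\qquad SG:=\partial_t^2G+\tau^2G+e^{-2t}\partial_x^2G,\qquad AG:=-2\tau\partial_tG.
\end{align}
Here $S$ is formally symmetric and $A$ antisymmetric on $L^2(\T\times(T_0,\infty))$. Since $g$ has compact support, all integrations by parts below have no boundary terms.

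\textbf{Step 2: the cross term.} Writing $F:=SG+AG$,
\begin{align}
\|F\|_{L^2}^2=\|SG\|_{L^2}^2+\|AG\|_{L^2}^2+2\langle SG,AG\rangle .
\end{align}
In $2\langle SG,AG\rangle$, the terms $\langle\partial_t^2G,-2\tau\partial_tG\rangle$ and $\langle\tau^2G,-2\tau\partial_tG\rangle$ vanish, being integrals of exact $t$-derivatives. The remaining term is
\begin{align}
-4\tau\int e^{-2t}\partial_x^2G\,\partial_tG
=2\tau\int e^{-2t}\partial_t|\partial_xG|^2
=4\tau\|e^{-t}\partial_xG\|_{L^2}^2 .
\end{align}
This uses one integration by parts in $x$ and then one in $t$, where differentiating $e^{-2t}$ produces the favourable sign. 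Together with $\|AG\|^2=4\tau^2\|\partial_tG\|^2$, this gives
\begin{align}
\|F\|^2\ge \|SG\|^2+4\tau^2\|\partial_tG\|^2+4\tau\|e^{-t}\partial_xG\|^2 .
\end{align}

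\textbf{Step 3: the zeroth-order term (main obstacle).} The weight $e^{\tau t}$ is linear in $t$ and has no convexity, so the commutator alone does not produce $\tau^3\|G\|^2$. I would instead test $SG$ against $G$:
\begin{align}
\tau^2\|G\|^2=\langle SG,G\rangle+\|\partial_tG\|^2+\|e^{-t}\partial_xG\|^2 .
\end{align}
Multiplying by $\tau$ and using $\tau\|SG\|\|G\|\le \frac1{2\tau}\|SG\|^2+\frac{\tau^3}{2}\|G\|^2$ yields, for $\tau\ge1$,
\begin{align}
\tau^3\|G\|^2\le \|SG\|^2+2\tau\|\partial_tG\|^2+2\tau\|e^{-t}\partial_xG\|^2\lesssim\|F\|^2 .
\end{align}
Adding this to Step 2 gives $\tau^3\|G\|^2+\tau^2\|\partial_tG\|^2+\tau\|e^{-t}\partial_xG\|^2\le C\|F\|^2$.

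\textbf{Step 4: absorbing $W$.} Since $F=e^{\tau t}Lg-WG$, we have $\|F\|^2\le 2\|e^{\tau t}Lg\|^2+2M^2\|G\|^2$. Taking $\tau_0$ large so that $2CM^2\le\tau_0^3/2$, the term $2CM^2\|G\|^2$ is absorbed into the left-hand side, which gives the claim. The bound on $\partial_tW$ does not appear to be needed in this argument; it would only matter for a variant in which $W$ enters the symmetric part.
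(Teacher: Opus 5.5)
Your proof is correct and follows the paper's route: conjugate by $e^{\tau t}$, split into a symmetric part and the skew part $-2\tau\partial_t$, read off $4\tau^2\|\partial_tG\|^2+4\tau\|e^{-t}\partial_xG\|^2$ from the cross term, and test the symmetric part against $G$ to recover $\tau^3\|G\|^2$. The one difference is that the paper places $W$ inside the symmetric operator, which produces the commutator term $2\tau\int\partial_tW\,|G|^2$ and is the only place the hypothesis on $\partial_tW$ is used; you keep $W$ outside and absorb $\|WG\|^2\le M^2\|G\|^2$ at the end, which shows that hypothesis is superfluous.
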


\begin{proof}
    We begin by writing 
    \begin{align}
        e^{\tau t} L e^{-\tau t} = S_\tau + A_\tau,
    \end{align}
    where $S_\tau \coloneqq   \partial_t^2 + e^{-2t}\partial_x^2 + \tau^2 + W(x,t)$ is symmetric and $A_\tau\coloneqq  -2\tau \partial_t$ is skew-symmetric. We next define $E\coloneqq  \Vert (S_\tau + A_\tau) G\Vert_{L^2}^2$. It is such that $E=\Vert e^{\tau t} Lg \Vert_{L^2} ^2$ and, moreover,
    \begin{align}
        E = \Vert S_\tau G \Vert_{L^2}^2 + 2\langle S_\tau G, A_\tau G \rangle + \Vert A_\tau G \Vert_{L^2}^2. 
    \end{align}
    Furthermore,
    \begin{align}
        2\langle S_\tau G, A_\tau G \rangle = {\langle S_\tau G, A_\tau G \rangle}+ \langle S_\tau G, A_\tau G \rangle = \langle A_\tau G, S_\tau G \rangle + \langle S_\tau G, A_\tau G \rangle = \langle [S_\tau,A_\tau]G, G \rangle
    \end{align}
    and 
    \begin{align}
        [S_\tau,A_\tau] &= -2\tau \left( \partial_x^2 [e^{-2t},\partial_t] + [W, \partial_t] \right) \\
        &= -4\tau e^{-2t}\partial_x^2 +2 \tau \partial_t W(x,t). 
    \end{align}
    Therefore,
    \begin{align}
        E = \Vert S_\tau G \Vert_{L^2}^2 + 4\tau^2 \Vert \partial_t G \Vert_{L^2}^2 + 4\tau \Vert e^{-t} \partial_x G \Vert_{L^2}^2  + 2\tau \int_{\T\times(T_0,\infty)} \partial_t W |G|^2 \d t \d x
    \end{align}
    and thus
    \begin{align}
        \Vert S_\tau G \Vert_{L^2}^2 + 4\tau^2 \Vert \partial_t G \Vert_{L^2}^2 + 4\tau \Vert e^{-t} \partial_x G \Vert_{L^2}^2 \leq E + 2\tau M \Vert G \Vert_{L^2}^2.
    \end{align}
    On the other hand, 
    \begin{align}
        \langle S_\tau G, G \rangle = - \Vert \partial_t G \Vert_{L^2}^2 - \Vert e^{-t}\partial_x G\Vert_{L^2}^2 + \tau^2 \Vert G \Vert_{L^2}^2 + \int_{\T\times(T_0,\infty)} W |G|^2 \d x \d t
    \end{align}
    from which we deduce that, for $\tau_0>0$ large enough,
    \begin{align}
        \frac{\tau^2}{2}\Vert G \Vert_{L^2}^2 &\leq \langle S_\tau G, G \rangle + \Vert \partial_t G \Vert_{L^2}^2 + \Vert e^{-t} \partial_x G \Vert_{L^2}^2 \\
        &\leq \frac{\tau^2}{4}\Vert G \Vert_{L^2}^2 + 4\tau^{-2}\Vert S_\tau G \Vert_{L^2}^2 + \Vert \partial_t G \Vert_{L^2}^2 + \Vert e^{-t} \partial_x G \Vert_{L^2}^2.
    \end{align}
    Then, 
    \begin{align}
        \frac{\tau^2}{4}\Vert G \Vert_{L^2}^2 \leq \left( 4\tau^{-2} + \frac{\tau^{-2}}{4} + \frac{\tau^{-1}}{4}\right) (E + 2\tau M \Vert G \Vert_{L^2}^2)\leq C( \tau^{-1} E + \Vert G \Vert_{L^2}^2)  
    \end{align}
    which yields $\tau^3\Vert G \Vert_{L^2}^2 \leq C' \Vert e^{\tau t}Lg \Vert_{L^2}^2$ for $\tau_0$ large enough and some $C'>C$. As a result,
    \begin{align}
        \tau^2 \Vert \partial_t G \Vert_{L^2}^2 + \tau \Vert e^{-t}\partial_x G \Vert_{L^2}^2 \leq E + 2\tau M \Vert G \Vert_{L^2}^2 \leq C''E
    \end{align}
    for some $C''>C'$. With this, the proof is concluded.
\end{proof}

The next lemma shows that any smooth solution $v$ to \eqref{eq:reduced_B_equation} vanishing infinitely fast at $y=0$ must be identically 0 in a neighborhood of $\lbrace y = 0 \rbrace$.

\begin{lemma}
    Let $\delta>0$ and $V\in C^1(\T\times[0,\delta])$. Let $v\in C^\infty(\T\times[0,\delta])$ satisfy
    \begin{align}
        y^2 \Delta v + V(x,y) v = 0, \quad (x,y)\in \T\times (0,\delta)
    \end{align}
    and assume that $\partial_x^j\partial_y^n v(x,0)=0$ for all $j,n\geq 0$ and all $x\in \T$. Then, there exists $0< \delta_1 \leq \delta$ such that $v\equiv 0$ on $\T\times(0,\delta_1)$.
\end{lemma}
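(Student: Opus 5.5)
The plan is to change variables $y=e^{-t}$, which turns the degenerate operator into the one of Lemma~\ref{lemma:carleman_estimate}, and then run the standard Carleman unique continuation argument with a cutoff, using the infinite-order vanishing to control the behaviour as $t\to\infty$.

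\textbf{Change of variables.} For $y\in(0,\delta)$ set $y=e^{-t}$, so $t\in(T_0,\infty)$ with $T_0=-\log\delta$. Write $g(x,t):=v(x,e^{-t})$. Then $y\partial_y=-\partial_t$ and $y^2\partial_y^2=\partial_t^2+\partial_t$. Hence
\[
y^2\Delta v=\partial_t^2 g+\partial_t g+e^{-2t}\partial_x^2 g .
\]
To remove the first-order term, conjugate with $g=e^{-t/2}\tilde g$. This produces
\[
\partial_t^2\tilde g+e^{-2t}\partial_x^2\tilde g+\bigl(W_0-\tfrac14\bigr)\tilde g=0,
\qquad W_0(x,t):=V(x,e^{-t}).
\]
So $L\tilde g=0$ with $L$ as in Lemma~\ref{lemma:carleman_estimate} and $W=W_0-\tfrac14$. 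Since $V\in C^1$, we have $\|W\|_{L^\infty}\le C$ and $|\partial_tW|=e^{-t}|\partial_yV|\le C$, so the hypotheses of the lemma hold. Alternatively, one can keep the term $\partial_t g$ and absorb it into the skew part. Conjugation is cleaner.

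\textbf{Decay at infinity.} Infinite-order vanishing at $y=0$ gives, for every $N$,
\[
|\partial_x^j\partial_y^n v(x,y)|\le C_N y^N,
\]
uniformly in $x$ for $j,n\le2$. Translated to the new variables, $\tilde g$, $\partial_t\tilde g$, $\partial_t^2\tilde g$ and $\partial_x^2\tilde g$ are all $O(e^{-Nt})$ for every $N$. Consequently $e^{\tau t}\tilde g$ and its relevant derivatives lie in $L^2$ for every $\tau$.

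\textbf{Carleman step.} Fix $T_1>T_0+1$ and a cutoff $\chi(t)$ with $\chi=0$ for $t\le T_1$ and $\chi=1$ for $t\ge T_1+1$. Also take $\chi_R(t)$ with $\chi_R=1$ for $t\le R$, $\chi_R=0$ for $t\ge R+1$, and bounded derivatives. Apply Lemma~\ref{lemma:carleman_estimate} to $\chi\chi_R\tilde g$. This is not $C^\infty_0$, but approximation by mollification is routine. The commutator $[L,\chi\chi_R]\tilde g$ is supported in two regions:
\begin{enumerate}[label=(\roman*)]
\item $[T_1,T_1+1]$, where the weight is at most $e^{\tau(T_1+1)}$;
\item $[R,R+1]$, where the weight is $e^{\tau(R+1)}$ times $O(e^{-Nt})$ with $N>\tau$, so this contribution tends to $0$ as $R\to\infty$.
\end{enumerate}
Letting $R\to\infty$ then yields
\[
\tau^3\int_{t\ge T_1+2}e^{2\tau t}|\tilde g|^2\le C e^{2\tau(T_1+1)}\|\tilde g\|_{H^1(\T\times(T_1,T_1+1))}^2 .
\]
Bound the left side below by $\tau^3e^{2\tau(T_1+2)}\int_{t\ge T_1+2}|\tilde g|^2$. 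Letting $\tau\to\infty$ forces $\tilde g=0$ for $t\ge T_1+2$. Therefore $v\equiv0$ on $\T\times(0,\delta_1)$ with $\delta_1=e^{-(T_1+2)}$.

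\textbf{Main obstacle.} I expect the main obstacle to be justifying the Carleman inequality for the non-compactly-supported function $\chi\tilde g$, that is, the truncation at $t=R$. One must check that the super-exponential decay inherited from infinite-order vanishing beats the weight $e^{\tau t}$ for each fixed $\tau$. This needs decay for $\partial_t\tilde g$ and for $e^{-t}\partial_x\tilde g$, not just for $\tilde g$, since these terms appear in the commutator. The $\partial_t W$ bound is a minor point, and it follows directly from $V\in C^1$.
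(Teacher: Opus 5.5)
Your proposal is correct and follows the paper's proof essentially step by step: the substitution $y=e^{-t}$, the conjugation by $e^{t/2}$ giving $W=V(x,e^{-t})-\tfrac14$, and the two-cutoff Carleman argument, where super-exponential decay kills the $[R,R+1]$ commutator term. The only difference is that you stop at $T_1+2$ and use the exponential gap $e^{-2\tau}$, whereas the paper concludes on $(T+1,\infty)$ from the $\tau^3$ factor alone. Two small corrections: $\chi\chi_R\tilde g$ is already in $C^\infty_0(\T\times(T_0,\infty))$, so no mollification is needed; and since both cutoffs depend only on $t$, the commutator involves only $\tilde g$ and $\partial_t\tilde g$, not $e^{-t}\partial_x\tilde g$.
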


\begin{proof}
    We introduce the change of variables $y=e^{-t}$, equivalently $t=-\log(y)$ and we define $f(x,t)\coloneqq  v(x,e^{-t})$ for $(x,t)\in \T\times (T_0,\infty)$, with $T_0 = -\log(\delta)$. Then,
    \begin{align}
        \partial_t f = -e^{-t}\partial_y v, \quad \partial_t^2 f = -\partial_t f + y^2 \partial_y^2 v
    \end{align}
    and thus there holds
    \begin{align}
        \partial_t^2 f + \partial_t f + e^{-2t}\partial_x^2 f + \tilde{V}(x,t) f = 0, \quad \tilde V(x,t) = V(x,e^{-t}).
    \end{align}
    To remove the $\partial_t f$ term we introduce $w(x,t)\coloneqq  e^{t/2}f(x,t)$ so that 
    \begin{align}
        \left(\partial_t^2 + \partial_t \right) f = e^{-t/2} \left( \partial_t^2 - \frac14 \right)w
    \end{align}
    from which we obtain
    \begin{align}
        \partial_t^2 w + e^{-2t}\partial_x^2 w + W(x,t)w = 0, \quad W(x,t) \coloneqq  \tilde V(x,t) - \frac14
    \end{align}
    with further $\Vert W \Vert_{L^\infty} + \Vert \partial_t W \Vert_{L^\infty}\leq M$ for some $M>0$. Let $T, R>0$ such that $T>T_0$ and $R> T+2$, we introduce the smooth cut-off functions $\chi_T$ and $\eta_R$. They are such that $0\leq \chi_T, \eta_R\leq 1$ and
    \begin{align}
        \chi_T(t) = \begin{cases}
            0, & t \leq T, \\
            1, & t \geq T+1
        \end{cases}, \quad \eta_R(t) = \begin{cases}
            1, & t\leq R, \\
            0, & t\geq R+1
        \end{cases}.
    \end{align}
    We note that the supports of $\chi_T'$ and of $\eta_R'$ are disjoint, $\eta_R\equiv 1$ on the support of $\chi_T'$ and $\chi_T\equiv 1$ on the support of $\eta_R'$. We assume without loss of generality that $\Vert \chi_T \Vert_{C^2}+ \Vert \eta_R \Vert_{C^2}\leq 10$ and we define 
    \begin{align}
        g(x,t)\coloneqq \chi_T(t)\eta_R(t)w(x,t)\in C^\infty_0(\T\times (T_0,\infty)).
    \end{align}
    In view of Lemma \ref{lemma:carleman_estimate}, we have
    \begin{align}
        \tau^3 \Vert e^{\tau t}g \Vert_{L^2}^2 \leq C \Vert e^{\tau t}L g \Vert_{L^2}^2.
    \end{align}
    By the support properties of the cut-off derivatives and the identity $Lw = 0$ we have
    \begin{align}
        L g = [L, \chi_T]\eta_R w + \chi_T [L,\eta_R]w.
    \end{align}
    Next, since $|w(x,t)| + |\partial_x w(x,t)| + |\partial_t w(x,t)| \leq C_N e^{-N t}$ for all $N\geq 1$ due to the infinite vanishing of $v(x,y)$ at $y=0$, for $N>\tau+1$ we have that
    \begin{align}
        \Vert e^{\tau t}\chi_T[L, \eta_R]w \Vert_{L^2}^2 \leq C_N\int_R^{R+1} e^{2\tau t} e^{-2N t} \d t  \lesssim e^{-2R}\rightarrow 0
    \end{align}
    as $R\rightarrow \infty$. On the other hand,
    \begin{align}
        \Vert e^{\tau t}[L,\chi_T]\eta_R w \Vert_{L^2}^2 \leq C \int_\T\int_{T}^{T+1} e^{2\tau t}\left(|w(x,t)|^2 + |\partial_t w(x,t)|^2 \right) \d t \d x \leq C e^{2\tau(T+1)}M_T,
    \end{align}
    with $M_T\coloneqq  \int_\T\int_{T}^{\infty}\left(|w(x,t)|^2 + |\partial_t w(x,t)|^2 \right) \d t \d x< \infty$ due to the exponential decay of $|w(x,t)|$ and $|\partial_t w(x,t)|$. Finally,
    \begin{align}
        \tau^3 e^{2\tau(T+1)} \int_\T\int_{T+1}^{R} |w(x,t)|^2 \d t \d x \leq \tau^3 \Vert e^{\tau t}\chi_T\eta_R w \Vert_{L^2}^2  \leq C e^{2\tau(T+1)} M_T + C e^{-2R}. 
    \end{align}
    Since $|w|^2\in L^1(\T\times (T+1,\infty))$, the dominated convergence theorem yields
    \begin{align}
        \tau^3 \int_\T\int_{T+1}^{\infty}|w(x,t)|^2 \d t \d x \leq CM_T
    \end{align}
    for all $\tau\geq \tau_0$, with $CM_T$ independent of $\tau$. Hence, for $\tau$ sufficiently large we obtain a contradiction unless $w\equiv 0$ on $\T\times (T+1,\infty)$. From here, we deduce that $v(x,y)\equiv 0$ on $\T\times (0,\delta_1)$, for $\delta_1 = e^{-(T+1)}$ and the lemma is proved.
\end{proof}

We have all the ingredients to prove Theorem \ref{thm:rigidity_noncrit_J}.

\begin{proof}[Proof of Theorem \ref{thm:rigidity_noncrit_J}]
    We know from Lemma \ref{lemma:flat_stagnation_curve} that $\Gamma_u = \bigcup_{i=1}^n \lbrace (x,h_i) : x\in \T \rbrace$ for some $-1 < h_1 < ... < h_n < 1$. Moreover, $v(x,y)\equiv 0$ on $\bigcup_{i=1}^n \T \times (h_i-\delta_i, h_i + \delta_i)$ for sufficiently small $\delta_i>0$, for $i=1,...,n$. Finally, 
    \begin{align}
        \Delta v - \frac{\Delta u}{u}v - \frac{\partial_y \rho}{u^2}v = 0
    \end{align}
    is a smooth second-order linear elliptic equation on $v(x,y)$ in $\Omega\coloneqq \T\times(-1,1)\setminus \bigcup_{i=1}^n \T \times (h_i-\delta_i/2, h_i + \delta_i/2)$ and $v(x,y)\equiv 0$ in non-empty compact subsets of $\Omega$. By unique continuation, $v(x,y)\equiv 0$ in $\T\times (-1,1)$. By incompressibility, we have that $u=u(y)$ and since $(u,v)\cdot\nabla\omega=0$ we conclude that $\rho=\rho(y)$.
\end{proof}

We finish the section proving Theorem \ref{informal_rigidity_3}.

\begin{theorem}\label{thm:rigidity_noncrit_shear_dens}
    Let $u_0,\rho_0\in C^2([-1,1])$. Assume that $u_0(y_i)=0$ for finitely many $\lbrace y_i \rbrace_{i=1}^n\subset (-1,1)$, with $u'_0(y_i)\neq 0$ and $u_0(\pm 1)\neq 0$, and assume that for
    \begin{align}
        \mathcal{J}_0(y_i)\coloneqq  -\frac{\rho_0'(y_i)}{(u_0'(y_i))^2}
    \end{align}
    there holds $\mathcal{J}_0(y_i)\neq -m(m-1)$ for all $m\geq 1$. Then, there exists $\ep_0>0$ such that all smooth steady Boussinesq solutions $((u,v),\rho)$ with
    \begin{align}
        \Vert u - u_0 \Vert_{C^2} + \Vert v \Vert_{C^2} + \Vert \rho - \rho_0\Vert_{C^2} < \ep_0
    \end{align}
    must have $v(x,y)=0$, $u=u(y)$ and $\rho=\rho(y)$.
\end{theorem}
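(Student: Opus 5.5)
The plan is to deduce the statement directly from Theorem~\ref{thm:rigidity_noncrit_J}. Since the latter carries no closeness assumption, it suffices to check that for $\ep_0$ small every smooth steady solution $((u,v),\rho)$ in the stated $C^2$-neighborhood satisfies all its hypotheses. Following Theorem~\ref{informal_rigidity_3}, we take the set of stagnation points $\{y_i\}_{i=1}^n$ to be nonempty, so $n\ge1$; otherwise $\Gamma_u$ could be empty and Theorem~\ref{thm:rigidity_noncrit_J} would not apply. In fact only $C^1$-closeness of $u$ and $\rho$ to $u_0,\rho_0$ will be used.

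\textbf{Step 1 (localizing $\Gamma_u$).} Since $u_0'(y_i)\neq0$ and $u_0\in C^2$, there are $\delta>0$ and $c>0$ with $|u_0'|\ge 2c$ on $I_i:=[y_i-\delta,y_i+\delta]$. We may take the $I_i$ pairwise disjoint and contained in $(-1,1)$. The set $[-1,1]\setminus\bigcup_i (y_i-\delta,y_i+\delta)$ is compact and contains no zero of $u_0$, since $u_0(\pm1)\neq0$ and the $y_i$ are the only zeros. Hence $|u_0|\ge 2c'>0$ there. Taking $\ep_0<\min(c,c')$, we get $|u|\ge c'$ off the strips $\T\times I_i$ and $|\partial_y u|\ge c$ inside them. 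Since $u_0$ changes sign across each $y_i$, the intermediate value theorem and the monotonicity of $u(x,\cdot)$ on $I_i$ show that for each $x$ there is exactly one zero $y=h_i(x)\in I_i$. Thus $\Gamma_u$ is nonempty and equals $\bigcup_i\{(x,h_i(x))\}$, and $|\partial_y u|>0$ on $\Gamma_u$.

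\textbf{Step 2 (density and Richardson number).} The hypothesis with $m=1$ gives $\mathcal J_0(y_i)\ne0$, i.e.\ $\rho_0'(y_i)\neq0$. Shrinking $\delta$, we have $|\rho_0'|\ge 2b>0$ on each $I_i$, so $|\partial_y\rho|\ge b$ on the strips once $\ep_0<b$. In particular $\partial_y\rho\not\equiv0$ on every component of $\Gamma_u$.

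For the resonance condition, note that the set $\{-m(m-1):m\ge1\}$ is discrete and unbounded below. Therefore each $\mathcal J_0(y_i)$ lies at some positive distance $d_i$ from it, and it suffices to avoid $\{-m(m-1):m\ge 2\}$. On $\Gamma_u\cap(\T\times I_i)$ we have
\[
\left|\mathcal J(x,h_i(x))-\mathcal J_0(y_i)\right|\le \left|\mathcal J(x,h_i(x))-\mathcal J_0(h_i(x))\right|+\left|\mathcal J_0(h_i(x))-\mathcal J_0(y_i)\right|.
\]
The first term is $O(\ep_0)$, because the map $(p,q)\mapsto -q/p^2$ is Lipschitz on $\{|p|\ge c\}$ with $q$ bounded. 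For the second term, $u(x,h_i(x))=0$ and $|u-u_0|<\ep_0$ give $|u_0(h_i(x))|<\ep_0$, hence $|h_i(x)-y_i|\le \ep_0/(2c)$; the term is then small by continuity of $\mathcal J_0$ on $I_i$. Choosing $\ep_0$ so that the total is below $\min_i d_i/2$ gives $\mathcal J\ne -m(m-1)$ for all $m\ge2$ along $\Gamma_u$.

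\textbf{Conclusion.} All hypotheses of Theorem~\ref{thm:rigidity_noncrit_J} hold, which yields $v\equiv0$, $u=u(y)$ and $\rho=\rho(y)$. The only delicate point is Step 2: the uniform separation of $\mathcal J$ from the resonant set, which requires controlling both the perturbation of the fields and the displacement of the stagnation curves $h_i$. Both are handled by the quantitative bounds above, so I expect no genuine obstacle beyond bookkeeping of the constants $c,c',b,d_i$.
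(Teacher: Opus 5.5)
Your proposal is correct and follows the paper's proof. Both arguments reduce to Theorem~\ref{thm:rigidity_noncrit_J} by showing that, for small $\ep_0$, the zero set of $u$ consists of graphs $y=h_i(x)$ near $y_i$ with $\partial_y u\neq 0$, and that $\mathcal{J}$ along these graphs stays close to $\mathcal{J}_0(y_i)$, hence away from the resonant set. You also make explicit two points the paper leaves implicit: the case $m=1$ gives $\rho_0'(y_i)\neq 0$, which yields $\partial_y\rho\not\equiv 0$ on each component, and $n\geq 1$ is needed for $\Gamma_u$ to be nonempty.
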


\begin{proof}
    Local monotonicity of $u_0$ near its zeros and $\Vert u - u_0 \Vert_{C^2}\leq \ep_0$ small enough assert the existence of $n$ smooth graphs $\lbrace (x,h_i(x))\rbrace_{i=1}^n$ over $x\in \T$ where $u$ vanishes and $\partial_y u$ is non-zero. Moreover, $\sup_{i=1,...,n}\Vert h_i(x) - y_i\Vert_{L^\infty(\T)}\lesssim \ep_0$. Due to $\Vert \rho - \rho_0\Vert_{C^2}\leq \ep_0$ we further have that $|\mathcal{J}(x,h_i(x))-\mathcal{J}_0(y_i)|\lesssim \ep_0$, for all $x\in \T$ and $1\leq i \leq n$, for  $\mathcal{J}=-\frac{\partial_y\rho}{(\partial_y u)^2}$. For $\ep_0>0$ small enough, $((u,v), \rho)$ satisfies the conditions of Theorem \ref{thm:rigidity_noncrit_J}. It is thus a stratified shear flow.
\end{proof}

\section{Bifurcation from non-zero velocity}\label{sec: bifurcation non-zero}

We next show the existence of non-trivial steady states arbitrarily close to a background shear stationary solution with non-zero velocity. 

\begin{theorem}\label{thm:nonzeroflexibility}
Let $q\in[1,\infty)$ and let $(\u_0(y),\rho_0(y))\in C^4(\overline{M})$ be such that $\u_0(y)= (u_0(y),0)$ with $u_0(y)\neq 0$ for all $y\in [-1,1]$. Then, for any $\varepsilon>0$ there exist \underline{laminar} $(\u, \rho)$ stationary solutions to the Boussinesq equations such that $\partial_x(\u, \rho) \not\equiv 0$ and
\begin{align}
    \Vert \u - \u_0\Vert_{L^q} + \Vert \rho - \rho_0 \Vert_{W^{1,q}} < \ep.
\end{align}
\end{theorem}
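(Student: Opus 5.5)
We follow the strategy announced in the introduction: first modify the background in a thin horizontal strip, then bifurcate nontrivial laminar states from the modified shear via the uniform Crandall--Rabinowitz theorem of Appendix~\ref{CR_conti_appendix}. Without loss of generality $u_0>0$ on $[-1,1]$. Fix $y_*\in(-1,1)$ and a small $\delta>0$. Replace $(u_0,\rho_0)$ by $(u_\delta,\rho_\delta)$ that coincide with $(u_0,\rho_0)$ outside $I_\delta:=(y_*-\delta,y_*+\delta)$. Inside $I_\delta$, $u_\delta$ is positive but very small, of size $\kappa$, and $\rho_\delta'\le -\Lambda<0$ on a core subinterval; $\rho_\delta$ stays close to $\rho_0$ in $W^{1,q}$ because the bounded correction of $\rho'$ lives on a set of measure $O(\delta)$. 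This gives $\|u_\delta-u_0\|_{L^q}+\|\rho_\delta-\rho_0\|_{W^{1,q}}\lesssim\delta^{1/q}(1+\Lambda)$. We keep $u_\delta>0$ so that laminarity is preserved; since $u_\delta$ need not be small in $L^\infty$, only $L^q$ closeness is claimed.

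Next we reformulate the problem through Proposition~\ref{Bernoullie_function_2}. Let $\psi_\delta$ be the background stream function, with $\psi_\delta'=-u_\delta<0$, so $\psi_\delta$ is strictly monotone. Then $\rho_\delta=\Theta(\psi_\delta)$ for some $\Theta$, and $G$ is defined by $G'(\psi_\delta)=-\psi_\delta''-y\Theta'(\psi_\delta)$, so that $\psi_\delta$ solves \eqref{semi_elliptic_stream}. To make the bifurcation problem tractable and to keep laminarity manifest, we use the common change of vertical coordinate mentioned in the introduction. We seek solutions $\psi=\psi_\delta(y+\zeta(x,y))$ with $\zeta\in H^3\cap H^1_0$, periodic with mode $\cos(kx)$, and density $\rho=\rho_\delta(y+\zeta)$; the leaves are then the graphs $\{y+\zeta=\mathrm{const}\}$. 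We let the profile $\Theta$ carry a bifurcation parameter $\mu$, for instance the depth $\Lambda$ or a multiplicative factor of $\rho_\delta'$ in the strip. The linearization at $\zeta=0$ is a Sturm--Liouville type operator, which after the substitution $w=v/u$ of Remark~\ref{motivation_quadform_intro} becomes
\[
\mathcal L_\mu w=-\nabla\cdot(u_\delta^2\nabla w)+\rho_{\delta,\mu}'\,w .
\]
On the Fourier mode $k$ this reduces to $-(u_\delta^2 w')'+(k^2u_\delta^2+\rho_{\delta,\mu}')w$ on $(-1,1)$ with Dirichlet conditions. Because $u_\delta^2\sim\kappa^2$ is tiny while $\rho_\delta'\le-\Lambda$ in the strip, the quadratic form becomes negative on functions localized in $I_\delta$. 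Hence, as $\mu$ increases, the lowest eigenvalue of the mode-$k$ operator crosses zero at some $\mu_*$. This eigenvalue is simple, being the principal eigenvalue of a one-dimensional Sturm--Liouville problem.

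To exclude the other Fourier modes we use that for $k'\neq\pm k$ the term $k'^2u_\delta^2$ shifts the eigenvalue strictly. Restricting to even functions of $x$ with period $2\pi/k$ and choosing $k$ equal to the smallest admissible mode leaves a one-dimensional kernel; this is presumably the content of Lemma~\ref{lemma: control eigenvalues no stag}. The transversality condition $\partial_\mu\lambda_1(\mu_*)\neq0$ follows from the Hellmann--Feynman formula $\partial_\mu\lambda_1=\int\partial_\mu\rho'_{\delta,\mu}\,w^2$, which has a sign.

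The Crandall--Rabinowitz theorem then gives a curve of nontrivial solutions $\zeta_s= s\,w_*\cos(kx)+o(s)$. The main obstacle is the uniformity: the modified background depends on $\delta$ and $\kappa$, and both the ellipticity $u_\delta^2\ge\kappa^2$ and the spectral gap degenerate. The bifurcation branch therefore has a size $s_0(\delta,\kappa)$ that could shrink. This is exactly why the uniform Crandall--Rabinowitz theorem is needed. We would first fix $\delta$ and $\kappa$ to get the $W^{1,q}$ and $L^q$ smallness of the background modification, then apply the theorem at that fixed background and take $|s|$ small. The deviation $\|\u-\u_\delta\|_{L^q}+\|\rho-\rho_\delta\|_{W^{1,q}}\lesssim|s|$ is then made below $\varepsilon/2$; this step only needs continuity of the branch in $H^3$, which embeds into $C^1$. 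Finally, for small $s$ we have $\partial_y(y+\zeta_s)>0$ and $\partial_y\psi<0$, so $|\u|>0$ and laminarity follows from Definition~\ref{def_laminar_shear}. Since $\partial_x\zeta_s\not\equiv0$, we get $\partial_x(\u,\rho)\not\equiv0$.
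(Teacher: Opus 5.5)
Your route differs from the paper's in a useful way. You fix an integer wave number $k$ from the start and bifurcate in a density amplitude $\mu$, using the graph ansatz $\psi=\psi_\delta(y+\zeta)$ of Lemma~\ref{lemma formulation h}. The paper instead bifurcates from $\psi_\delta+\phi$ in the wave number $j$. It needs the uniform Crandall--Rabinowitz theorem (uniform in the auxiliary density parameter $a$, not in $\delta$) and an intermediate value argument only to force $j(a(s))+\kappa(s,a(s))=j_\ast\in\mathbb N$. In your setup the period is imposed a priori, and transversality is the Kato/Hellmann--Feynman derivative, the same computation as \eqref{eq: derivative eigenvalues no stagnation1}. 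The classical Crandall--Rabinowitz theorem at a fixed background therefore suffices. Your stated reason for needing uniformity (the branch shrinking in $\delta,\kappa$) is a misreading of the paper, but it is harmless, since you fix the background before bifurcating.

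The genuine gap is your claim that the kernel is one-dimensional. On $x$-even, $2\pi/k$-periodic functions the linearization splits into the modes $\cos(mkx)$, $m\ge0$, with operators $A_m w=-(u_\delta^2w')'+(m^2k^2u_\delta^2+\rho'_{\delta,\mu})w$. Since $A_m=A_1+(m^2-1)k^2u_\delta^2$, the modes $m\ge2$ are strictly positive at $\mu_\ast$. For $m=0$, however, the shift goes the wrong way: $A_0$ has a negative lowest eigenvalue, and nothing in your argument prevents one of its higher eigenvalues from vanishing exactly at $\mu_\ast$. The kernel would then contain an $x$-independent direction, and Crandall--Rabinowitz fails. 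Choosing $k$ as the smallest admissible mode does not help: $m=0$ is always present, and it cannot be projected out because the nonlinearity does not preserve zero $x$-mean. This is precisely item (iii) of Lemma~\ref{lemma: control eigenvalues no stag}. The paper proves it by adding a further density perturbation $b\,g\,u_{\delta_\ast}^2$, with $g$ chosen not to move the crossing eigenvalue to first order but to move the offending one, and then following the crossing with the implicit function theorem. You need the analogous step, e.g.\ $g$ with $\int g\,w_\ast^2=0$ and $\int g\,w_0^2\neq0$ for $w_0\in\ker A_0$.

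A smaller point of the same kind: you show that $A_1$ becomes negative for large $\mu$, but a crossing also requires $A_1>0$ for some admissible $\mu$. If $\rho_0'<0$ where $u_\delta$ is small and $\mu$ only deepens the negativity, test functions localized there make $A_1$ negative for every $\mu$, so there is no crossing. You must take $k^2\min u_0^2$ large compared with $\|\rho_0'\|_{L^\infty}$ and let $\mu$ range far enough that $k^2u_\delta^2+\rho'_{\delta,\mu}\ge0$ on $[-1,1]$ for some $\mu$. The paper's analogue is the choice of $j_\ast$ and the case distinction on $\inf_\delta\lambda^1_{\delta,0}$.
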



\begin{remark}
    An analogous statement holds for $H^s$ velocity proximity, for $s<\frac12$. For any $q\in [1,\infty)$ and $\ep>0$, there exist laminar non-shear stationary solutions $(\u,\rho)$ such that $\Vert \u - \u_0\Vert_{H^s} + \Vert \rho - \rho_0\Vert_{W^{1,q}}<\ep$. See \eqref{eq:pos_density_profiles} and Lemma \ref{lemma:perturbation close to original no stag} for closeness in $H^s$ and see Theorem \ref{thm rigidity non-zero Sobolev} for sharpness on the Sobolev regularity threshold.
\end{remark}

We will construct the solutions claimed in Theorem~\ref{thm:nonzeroflexibility} by means of a bifurcation argument. The first step is to perturb the profiles $u_0$ and $\rho_0$ into suitable nearby profiles that allow us to control the spectrum of the relevant operators.

\begin{remark}
Since the steady Boussinesq equations are invariant under the transformation $\u\mapsto -\u$, we may assume $u_0(y)>0$ for all $y\in [-1,1]$.
\end{remark} 

Let $\eta\in C^\infty(\mathbb{R},[0,1])$ such that $\eta(x)=1$ if $|x|\leq 1$ and $\eta(x)=0$ if $|x|\geq 2$. Fix any $s\in(0,q^{-1})$ and let $a\in(-1,1)$ be a parameter that will be free to move and $\delta>0$ a fixed number to be determined later. We define:
\begin{equation}\label{eq:pos_density_profiles}
\begin{aligned}
u_{\delta}(y)&\coloneqq u_0(y)\left(\delta^2+1-\eta(\delta^{-1}y)\right), 
\\\rho_{\delta,a}(y)&\coloneqq\rho_0(y)+\int_{-1}^y a \delta^{-s}\eta(\delta^{-1}z)\,dz.
\end{aligned}
\end{equation}

Note that by our choice of $s$, the new pairs $(u_\delta,\rho_{\delta,a})$ can be made arbitrarily close to $(u_0,\rho_0)$ in $L^q\times W^{1,q}$ if the parameter $\delta>0$ is chosen small enough. \textcolor{black}{We omit the proof of the lemma since the proof is straightforward.}

\begin{lemma}\label{lemma:perturbation close to original no stag}
    Let $(u_{\delta},\rho_{\delta,a})$ be given by \eqref{eq:pos_density_profiles} for $\delta>0$ and $a\in(-1,1)$. For all $q\in[1,\infty)$ and all $\varepsilon>0$ there exists $\delta_0>0$ such that
\begin{align}
    \Vert u_{\delta}(y) - u_0(y) \Vert_{L^q} + \Vert \rho_{\delta,a} - \rho_0 \Vert_{W^{1,q}}\leq \varepsilon/2,
\end{align}
for all $\delta< \delta_0$.
\end{lemma}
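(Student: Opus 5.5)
The estimate will follow by computing both differences explicitly and exploiting that every modification is supported in the strip $\{|y|\le 2\delta\}$, whose measure is $O(\delta)$.

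\emph{Velocity.} From \eqref{eq:pos_density_profiles},
\[
u_\delta-u_0=u_0\bigl(\delta^2-\eta(\delta^{-1}y)\bigr).
\]
Since $0\le\eta\le1$, this gives the pointwise bound
\[
|u_\delta-u_0|\le \|u_0\|_{L^\infty}\bigl(\delta^2+\ind_{\{|y|\le2\delta\}}\bigr).
\]
Integrating over $M$, whose measure is $4\pi$, we obtain
\[
\|u_\delta-u_0\|_{L^q}\lesssim \|u_0\|_{L^\infty}\bigl(\delta^2+\delta^{1/q}\bigr)\to0 .
\]

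\emph{Density gradient.} Here
\[
\partial_y(\rho_{\delta,a}-\rho_0)=a\delta^{-s}\eta(\delta^{-1}y),
\]
which is bounded by $\delta^{-s}$ and supported in $|y|\le2\delta$. Hence
\[
\|\partial_y(\rho_{\delta,a}-\rho_0)\|_{L^q}\lesssim \delta^{-s}\delta^{1/q}=\delta^{1/q-s}.
\]
This tends to zero precisely because $s<q^{-1}$; the bound is uniform in $a\in(-1,1)$.

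\emph{Density itself.} The difference $\rho_{\delta,a}-\rho_0$ is the integral of the previous function starting from $-1$. It is therefore bounded by $\int |a|\delta^{-s}\eta(\delta^{-1}z)\,dz\le 4\delta^{1-s}$. This gives $\|\rho_{\delta,a}-\rho_0\|_{L^q}\lesssim\delta^{1-s}\to0$, since $s<1$.

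Combining the three bounds, we choose $\delta_0$ so that the total is at most $\varepsilon/2$ for all $\delta<\delta_0$. The constants depend only on $q$ and $\|u_0\|_{L^\infty}$. The only point requiring any care is the balance between $\delta^{-s}$ and the strip width, which is exactly what the choice $s\in(0,q^{-1})$ ensures. The same computation in $H^s$ for $s<\frac12$ gives the Sobolev variant mentioned in the preceding remark.
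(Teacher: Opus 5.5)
Your proof is correct and is exactly the direct computation the paper has in mind (it omits the proof as straightforward). Every modification lives in $\{|y|\le 2\delta\}$, and the balance $\delta^{1/q-s}\to0$ is precisely why $s\in(0,q^{-1})$ was fixed.

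You also rightly note that the bounds, and hence $\delta_0$, are uniform in $a\in(-1,1)$; this is what is needed later when $\delta_\ast$ is fixed and $a$ is varied. Keep in mind that ``for all $q$'' in the statement implicitly means the $q$ for which $s<q^{-1}$, as you assumed.
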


\color{black}
Let 
\[
\psi_\delta(y):=-\int_{-1}^y u_\delta(z) dz
\] be a stream function associated to $(u_\delta,0)$.
Since $u_\delta$ does not vanish, $\psi_\delta$ is strictly monotone:
\begin{align*}
   -\psi_\delta'(y) \geq u_0(y)\delta^2 > 0.
\end{align*}
Let $I_\delta:=\psi_\delta([-1,1])$ and denote its inverse by
$\varphi_\delta:I_\delta\to[-1,1]$.
Define
\begin{equation}
p_{\delta,a}(y):=-\int_{-1}^y\rho_{\delta,a}(z)dz,
\qquad
B_{\delta,a}(y):=
p_{\delta,a}(y)+\frac12u_\delta(y)^2+y\rho_{\delta,a}(y).
\label{def p B non stagnation}
\end{equation}
Setting
\begin{align}\label{theta_G_regularity_p}
\Theta_{\delta,a}:=\rho_{\delta,a}\circ\varphi_\delta \in C^4,
\qquad
G_{\delta,a}:=-B_{\delta,a}\circ\varphi_\delta\in C^4,
\end{align}
it is straightforward to check that
\begin{align}\label{eq:ODEpsiBackGround}
\rho_{\delta,a}(y)=\Theta_{\delta,a}(\psi_\delta(y)),\qquad \psi_\delta''(y)+y\Theta_{\delta,a}'(\psi_\delta(y))
+G_{\delta,a}'(\psi_\delta(y))=0.
\end{align}
Note that $\Theta_{\delta,a}$ and $G_{\delta,a}$ are $C^4$ in $I_\delta$. In the rest of the section, we will assume that they are extended to $\mathbb{R}$ so that $C^4$ regularity still holds in $\mathbb{R}$.

Next, for $\xi=jx$, and $\Delta_\xi=j^2 \partial_\xi^2 + \partial_y^2$,  we look for a nontrivial stream function $\psi\in H^3(\mathbb{T}\times [-1,1])$ such that $\psi$ is constant on each component of $\partial M$, and $(j,a)\in (\mathbb{R}\backslash\left\{0\right\})\times (-1,1)$ such that
\begin{align}\label{functional_equation_laminar_p}
\mathcal{F}(j,\psi,a):=j^2\partial_\xi^2\psi +\partial_y^2\psi + y\Theta'_{\delta,a}(\psi)+G'_{\delta,a}(\psi)=0.
\end{align}
Indeed, by Proposition~\ref{Bernoullie_function_2}, if
$(j,\psi,a)$ solves \eqref{functional_equation_laminar_p}
with $j\ne0$, then the functions
\[
\Psi(x,y):=\psi(jx,y),\qquad
\mathbf u:=\nabla^\perp\Psi,\qquad
\rho:=\Theta_{\delta,a}(\Psi)
\]
yield a stationary Boussinesq solution $(\mathbf u,\rho)$
with horizontal period $2\pi/|j|$.

Thanks to \eqref{eq:ODEpsiBackGround}, we have that 
\begin{align}
    \mathcal{F}(j,\psi_{\delta},a)=0
\end{align}
for all $a\in(-1,1)$ and all $j\in \R\backslash\left\{0\right\}$. It is in fact more convenient to work with the translated functional
\begin{align}\label{eq:defsF}
    \mathsf{F}(j,\phi,a)\coloneqq  \mathcal{F}(j, \psi_{\delta} + \phi, a),\quad \phi(\xi,\pm1)=0
\end{align}
 Using the definition of $\Theta_{\delta,a}$ and $G_{\delta,a}$, it is straightforward to see that
 \begin{align}
    \partial_\phi \sF (j,0,a)\phi = j^2\partial_\xi^2 \phi - \cL_{\delta,a}\phi,
    \label{eq: linearized operator positive}
\end{align}
where $\cL_{\delta,a}$ is a one-dimensional linear operator
\begin{align}
    \mathcal{L}_{\delta,a}\phi \coloneqq -\partial_y^2 \phi + \left( \frac{u''_{\delta}(y)}{u_{\delta}(y)} + \frac{\rho'_{\delta,a}(y)}{u^2_{\delta}(y)} \right) \phi,
    \label{eq:def Ldeltaa positive}
\end{align}
In order to apply the Crandall-Rabinowitz local bifurcation theorem (Theorem~\ref{thm:unifCR}), in the next subsection we show that $\mathcal{L}_{\delta,a}$ admits negative eigenvalues for $\delta$ small enough.

\color{black}
\subsection{Spectrum of the vertical linearized operator}
In this subsection we will show that we can control the first eigenvalue $\lambda_{\delta,a}$ of $\mathcal{L}_{\delta,a}$ with a suitable choice of $\delta$ and $a$. Our goal is to be able to prescribe any value in a sufficiently small neighborhood of $-j_\ast^2$ for some $j_\ast\in \N$. This will allow us to construct a function in the kernel of the linearized operator \eqref{eq: linearized operator positive} with controlled period.

The mechanism that allows us to prescribe $\lambda_{\delta,a}$ is that the perturbation \eqref{eq:pos_density_profiles} produces a region where the velocity is very small, of order $\delta^2$. By forcing the derivative of the density to be negative, we can produce enough negativity to shift the first eigenvalue of $\mathcal{L}_{\delta,a}$. However, perturbing the velocity and density profiles also changes the first term in parentheses in \eqref{eq:def Ldeltaa positive}, which could potentially shift the eigenvalue in the opposite direction. As we will see, the perturbation is designed so that the desired effect prevails. 

We begin by collecting some basic properties:
\begin{lemma}
For fixed $(\delta,a)\in (0,1)\times (-1,1)$, consider the operator
\[\mathcal{L}_{\delta,a}: D\subset L^2(-1,1)\,\to\, L^2(-1,1), \quad\phi\mapsto -\phi''+F_{\delta,a}\phi,\]
where 
\[D\coloneqq H^2(-1,1)\cap H^1_0(-1,1), \qquad \qquad F_{\delta,a}(y)\coloneqq \frac{u''_{\delta}(y)}{u_{\delta}(y)}+\frac{\rho'_{\delta,a}(y)}{u^2_{\delta}(y)}.\]
Then the spectrum of $\mathcal{L}_{\delta,a}$ consists of an increasing sequence of real eigenvalues $\lambda^1_{\delta,a}<\lambda_{\delta,a}^2 <\cdots$ that are simple and accumulate only at $+\infty$. The corresponding eigenfunctions are {$C^4([-1,1])$}. Furthermore, the first eigenvalue is given by
\[\lambda^1_{\delta,a}=\inf_{0\ne \phi\in H^1_0(-1,1)}\frac{\int_{-1}^1 |\phi'(y)|^2 \d y + \int_{-1}^{1}F_{\delta,a}(y)|\phi(y)|^2 \d y}{\int_{-1}^1 |\phi(y)|^2 \d y}.\]
The associated normalized eigenfunction $\phi^1_{\delta,a}$ can be chosen to be positive on $(-1,1)$.
\end{lemma}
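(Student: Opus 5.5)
This is a regular Sturm--Liouville problem with Dirichlet conditions, so the plan is to check that the potential is smooth enough and then invoke the standard Sturm--Liouville and compact-resolvent theory.

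\textbf{Regularity of the potential.} Since $u_0>0$ on $[-1,1]$ and $\delta^2+1-\eta(\delta^{-1}y)\ge\delta^2$, we have $u_\delta\ge\delta^2\min u_0>0$. Because $u_0\in C^4$ and $\eta$ is smooth, $u_\delta\in C^4([-1,1])$. Likewise $\rho'_{\delta,a}=\rho_0'+a\delta^{-s}\eta(\delta^{-1}\cdot)\in C^3$. Hence
\[
F_{\delta,a}=\frac{u_\delta''}{u_\delta}+\frac{\rho'_{\delta,a}}{u_\delta^2}\in C^2([-1,1]),
\]
and in particular $F_{\delta,a}$ is bounded. Thus $\mathcal L_{\delta,a}=-\partial_y^2+F_{\delta,a}$ is a bounded perturbation of the Dirichlet Laplacian on $(-1,1)$, with the same domain $D$.

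\textbf{Self-adjointness and discrete spectrum.} The Dirichlet Laplacian with domain $D$ is self-adjoint, and adding the bounded symmetric multiplication operator $F_{\delta,a}$ preserves self-adjointness. For $\mu>\|F_{\delta,a}\|_{L^\infty}$, the operator $\mathcal L_{\delta,a}+\mu$ is coercive on $H^1_0$. By Lax--Milgram its inverse maps $L^2$ into $H^1_0$, hence into $D$ by elliptic regularity, and is compact by Rellich's theorem. The spectral theorem for compact self-adjoint operators then gives real eigenvalues, bounded below by $-\|F_{\delta,a}\|_{L^\infty}$, which accumulate only at $+\infty$.

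\textbf{Simplicity.} Any two Dirichlet eigenfunctions for the same eigenvalue $\lambda$ solve the same second-order ODE and both vanish at $y=-1$. Their Wronskian is therefore constant and equal to zero at $y=-1$, so they are linearly dependent. Equivalently, an eigenfunction is determined up to scaling by its derivative at $-1$. This shows each eigenvalue is simple, so the eigenvalues can be ordered strictly as $\lambda^1_{\delta,a}<\lambda^2_{\delta,a}<\cdots$.

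\textbf{Regularity of eigenfunctions.} We bootstrap with $\phi''=(F_{\delta,a}-\lambda)\phi$. Starting from $\phi\in H^2\subset C^1$, we get $\phi''\in C^1$, so $\phi\in C^3$. Then $\phi''\in C^2$, since $F_{\delta,a}\in C^2$, so $\phi\in C^4([-1,1])$.

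\textbf{Variational characterization and positivity.} The formula for $\lambda^1_{\delta,a}$ is the Rayleigh quotient characterization of the bottom of the spectrum of the closed, lower-bounded quadratic form $\int|\phi'|^2+\int F_{\delta,a}|\phi|^2$ on $H^1_0$. The infimum is attained at the first eigenfunction.

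For positivity, take a minimizer $\phi$ and replace it by $|\phi|$. This lies in $H^1_0$ and has the same Rayleigh quotient, because $\bigl||\phi|'\bigr|=|\phi'|$ almost everywhere. So $|\phi|$ is also a minimizer, hence an eigenfunction, and by simplicity it spans the same eigenspace. Thus $\phi^1_{\delta,a}$ can be taken nonnegative.

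It remains to rule out interior zeros. If $\phi^1_{\delta,a}(y_0)=0$ at some $y_0\in(-1,1)$, that point is an interior minimum, so $(\phi^1_{\delta,a})'(y_0)=0$. ODE uniqueness would then force $\phi^1_{\delta,a}\equiv0$, a contradiction. Hence $\phi^1_{\delta,a}>0$ on $(-1,1)$.

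This last positivity step is the only non-routine point, and it is handled by the $|\phi|$ trick combined with ODE uniqueness; every other step is standard.
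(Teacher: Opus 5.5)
Your proposal is correct and follows the same route as the paper, which only notes that $u_\delta$ is bounded away from zero, so $F_{\delta,a}$ is continuous and bounded, and then invokes classical Sturm--Liouville theory. You supply the details the paper leaves to that citation: the compact resolvent, simplicity via the Wronskian, the bootstrap to $C^4$ using $F_{\delta,a}\in C^2$, and positivity via the $|\phi|$ trick combined with ODE uniqueness; all of these steps are sound.
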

\begin{proof}
Non-degeneracy of $u_\delta$ guarantees that the potential $F_{\delta,a}$ is continuous and bounded. The results then follow from the classical Sturm--Liouville theory (e.g. \cite[Theorem~5.11,  Lemma~5.12]{Teschl2012}).
\end{proof}

Next, we prove continuous dependence on $a$ of the first eigenvalue $\lambda_{\delta,a}^1$ and the associated normalized eigenfunction $\phi^1_{\delta,a}$, which will be necessary to apply the uniform Crandall-Rabinowitz theorem. Proving continuity in $\delta$  of the first eigenvalue will be useful for controlling  its value in Lemma~\ref{lemma: control eigenvalues no stag}. In the rest of this section, omitting the superscript in $\lambda_{\delta,a}$ or $\phi_{\delta,a}$ will refer to the first eigen-pair.
\begin{lemma}\label{lemma:properties_lambda_ep_a}
For all $(\delta,a)\in(0,1)\times(-1,1)$, the first eigenvalue $\lambda_{\delta,a}$ is continuous in $(\delta,a)\in(0,1)\times(-1,1)$. Furthermore, for a fixed $\delta\in (0,1)$, the following map is continuous: $$(-1,1)\;\to\; H^4(-1,1)\cap H^1_0(-1,1), \qquad a\,\mapsto\, \phi_{\delta,a}.$$
\end{lemma}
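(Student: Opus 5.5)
The plan is to exploit the Rayleigh-quotient characterization of $\lambda_{\delta,a}$ together with the fact that the potential $F_{\delta,a}$ depends continuously on $(\delta,a)$ in $L^\infty(-1,1)$. First I would check this continuity. Since $u_\delta = u_0(\delta^2+1-\eta(\delta^{-1}\cdot))$ with $u_0\ge m>0$ and $\delta^2+1-\eta\ge \delta^2$, the denominator $u_\delta$ stays bounded below by $m\delta^2$ locally uniformly in $\delta\in(0,1)$. The maps $\delta\mapsto u_\delta, u_\delta''$ and $(\delta,a)\mapsto \rho'_{\delta,a}=\rho_0'+a\delta^{-s}\eta(\delta^{-1}\cdot)$ are continuous into $C^0([-1,1])$, because $\eta$ is smooth and the dependence on $\delta$ enters through the smooth dilation $\delta^{-1}y$. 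Hence $(\delta,a)\mapsto F_{\delta,a}\in C^0([-1,1])$ is continuous. Note also that $F_{\delta,a}$ is affine in $a$, with $\partial_aF_{\delta,a}=\delta^{-s}\eta(\delta^{-1}y)/u_\delta^2$.

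\textbf{Continuity of $\lambda_{\delta,a}$.} For any $\phi\in H^1_0$ with $\|\phi\|_{L^2}=1$, the Rayleigh quotients for two parameter pairs differ by at most $\|F_{\delta,a}-F_{\delta',a'}\|_{L^\infty}$. Taking infima gives
\[
|\lambda_{\delta,a}-\lambda_{\delta',a'}|\le \|F_{\delta,a}-F_{\delta',a'}\|_{L^\infty},
\]
which yields joint continuity at once.

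\textbf{Continuity of $a\mapsto\phi_{\delta,a}$, $\delta$ fixed.} I would first prove $L^2$, then $H^1$ convergence, and finally bootstrap with the equation. Take $a_n\to a$ and write $\phi_n=\phi_{\delta,a_n}$, normalized and positive. Testing the eigenvalue equation against $\phi_n$ gives $\|\phi_n'\|_{L^2}^2=\lambda_{\delta,a_n}-\int F_{\delta,a_n}\phi_n^2$, so $\{\phi_n\}$ is bounded in $H^1_0$. The equation $\phi_n''=(F_{\delta,a_n}-\lambda_{\delta,a_n})\phi_n$ then bounds it in $H^2$. By compactness, any subsequence has a further subsequence converging in $H^1$ to some $\phi$ with $\|\phi\|_{L^2}=1$ and $\phi\ge 0$. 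Passing to the limit in the weak form shows that $\phi$ is an eigenfunction of $\mathcal L_{\delta,a}$ with eigenvalue $\lambda_{\delta,a}$. Simplicity of the first eigenvalue, together with the sign normalization, forces $\phi=\phi_{\delta,a}$, so the whole sequence converges.

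\textbf{Upgrade to $H^4$.} I would use the differentiated equations
\[
\phi''=(F-\lambda)\phi,\qquad \phi''''=\bigl((F-\lambda)\phi\bigr)'' .
\]
For fixed $\delta$, $F_{\delta,a}$ depends on $a$ affinely with $C^2$ coefficients; they are $C^2$ because $u_0\in C^4$ gives $u_\delta''\in C^2$, and $\rho_0\in C^4$ gives $\rho'_{\delta,a}\in C^3$. Therefore $a\mapsto F_{\delta,a}$ is continuous into $C^2$. Writing $\phi_n''-\phi''$ and $\phi_n''''-\phi''''$ as sums of products of convergent factors then gives $H^4$ convergence.

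The main, though mild, obstacle is the identification of the limit. It needs simplicity and the fixed sign normalization to rule out convergence to $-\phi_{\delta,a}$, and the uniform lower bound on $u_\delta$ to keep $F$ bounded. Both are available for fixed $\delta>0$.
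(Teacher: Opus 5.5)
Your proposal is correct and follows the paper's proof essentially step by step: comparing Rayleigh quotients in $L^\infty$ gives $|\lambda_{\delta,a}-\lambda_{\gamma,b}|\le\|F_{\delta,a}-F_{\gamma,b}\|_{L^\infty}$, and continuity of the eigenfunction comes from compactness, identification of the limit via simplicity and positivity, and a bootstrap through the eigenvalue equation. Your explicit check that $a\mapsto F_{\delta,a}$ is continuous into $C^2$ (using $u_0,\rho_0\in C^4$) supplies the detail behind the paper's brief remark that ``repeating this argument bootstraps the regularity to $H^4$''.
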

\begin{proof}
 Consider  $\delta,\gamma\in(0,1)$ and $a,b\in (-1,1)$ and let $(\lambda_{\delta,a},\phi_{\delta,a})$ and $(\lambda_{\gamma,b},\phi_{\gamma,b})$ be the eigen-pairs associated to $\cL_{\delta,a}$ and $\cL_{\gamma,b}$, respectively. Then,
$$
    \lambda_{\delta,a} = \langle \mathcal{L}_{\delta,a}\phi^1_{\delta,a},\phi_{\delta,a} \rangle = \langle \cL_{\gamma,b} \phi_{\delta,a}, \phi_{\delta,a} \rangle + \int_{-1}^1 (F_{\delta,a}(y)-F_{\gamma,b}(y))|\phi_{\delta,a}(y)|^2 \d y \geq \lambda_{\gamma,b} - \Vert F_{\delta,a}-F_{\gamma,b} \Vert_{L^\infty}.$$
Similarly, we have
$$
    \lambda_{\gamma,b} = \langle \cL_{\delta,a} \phi_{\gamma,b}, \phi_{\gamma,b} \rangle + \int_{-1}^1 (F_{\gamma,b}(y)-F_{\delta,a}(y))|\phi_{\gamma,b}(y)|^2 \d y \geq \lambda_{\delta,a} - \Vert F_{\delta,a}-F_{\gamma,b} \Vert_{L^\infty},
$$
Thus, $|\lambda_{\delta,a}-\lambda_{\gamma,b}| \leq \Vert F_{\delta,a}-F_{\gamma,b} \Vert_{L^\infty}$, which implies that $\lambda_{\delta,a}$ is continuous in $(\delta,a)$ because so is $F_{\delta,a}$.

Let us now prove continuity in $a$ of the eigenfunction. Fix $b\in (-1,1)$ and let $\{a_n\}_{n=1}^\infty\subset (-1,1)$ be a sequence converging to $b$. Since $F_{\delta,a}$ depends smoothly on $a$, we have $F_{\delta,a_n}\to F_{\delta,b}$ in $C^0([-1,1])$ and $\lambda_{\delta,a_n}\to\lambda_{\delta,b}$. Testing the eigenvalue equation against $\phi_{\delta,a_n}$ and using $\norm{\phi_{\delta,a_n}}_{L^2}=1$ lead to:
\[\norm{\phi_{\delta,a_n}'}_{L^2}^2=\lambda_{\delta,a_n}-\int_{-1}^1F_{\delta,a_n}\phi_{\delta,a_n}^2\leq \abs{\lambda_{\delta,a_n}}+\norm{F_{\delta,a_n}}_{L^\infty}.\]
The right-hand side is bounded uniformly in $n$. Thus $\phi_{\delta,a_n}$ is bounded in $H^1_0(-1,1)$. Therefore, every subsequence has a further subsequence converging weakly in $H^1_0$ and strongly in $L^2$ to some $\phi\in H^1_0$. Strong $L^2$ convergence preserves nonnegativity and normalization:
\begin{equation}
\phi\geq0\quad \text{a.e.,} \qquad \qquad \norm{\phi}_{L^2}=1.
\label{aux continuous dependence eigenalue}
\end{equation}
On the other hand, multiplying the eigenvalue equation by a fixed test function {$\varphi\in C^\infty_c(-1,1)$}, integrating by parts and passing to the limit yields:
\[0=\int_{-1}^1(\phi_{\delta,a_n}'\varphi'+F_{\delta,a_n}\phi_{\delta,a_n}\varphi-\lambda_{\delta,a_n}\phi_{\delta,a_n}\varphi)\to \int_{-1}^1(\phi'\varphi'+F_{\delta,b}\phi\varphi-\lambda_{\delta,b}\phi\varphi).\]
Since the test function $\varphi$ is arbitrary, we see that $\phi$ is a weak solution of the eigenvalue equation for $\lambda_{\delta,b}$. Uniqueness and \eqref{aux continuous dependence eigenalue} imply $\phi=\phi_{\delta,b}$.  Since every subsequence has a further
subsequence with this same limit, the entire sequence satisfies $\phi_{\delta,a_n}\to\phi_{\delta,b}$ in $L^2$.

We conclude that $(-1,1)\to L^2, \;a\mapsto \phi_{\delta,a}$ is continuous. Hence, the right-hand side of the equation $-\phi_{\delta,a}''=(\lambda_{\delta,a}-F_{\delta,a})\phi_{\delta,a}$ is continuous as a map $(-1,1)\to L^2$, from which we deduce that $a\mapsto \phi_{\delta,a}$ is continuous as a map $(-1,1)\to H^2$. Repeating this argument bootstraps the regularity to $H^4$.
\end{proof}

Finally, we prove that we can control $\lambda_{\delta,a}$ with a suitable choice of the parameters. Recall the value $\delta_0$ given by Lemma~\ref{lemma:perturbation close to original no stag}.
\begin{lemma}\label{lemma: control eigenvalues no stag}
There exist $\delta_\ast\in(0,\delta_0)$, an integer $j_\ast\geq 1$ and intervals $\mathcal{J}\ni -j_\ast^2$ and $\mathcal{A}\subset (-1,1)$ such that:
\begin{enumerate}
\renewcommand{\labelenumi}{(\roman{enumi})}
    \item $a\mapsto \lambda_{\delta_\ast,a}$ is strictly increasing.
    \item For any $\lambda\in \mathcal{J}$ there exists $a\in \cA$ such that $\lambda_{\delta_\ast,a}=\lambda$. In particular, there exists $a_\ast\in\cA$ such that $\lambda_{\delta_\ast,a_\ast}=-j_\ast^2$.
    \item $0$ is not an eigenvalue of $\mathcal{L}_{\delta_\ast,a}$ for any $a\in \cA$.
\end{enumerate}
\end{lemma}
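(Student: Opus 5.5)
The plan rests on two structural facts about $F_{\delta,a}$: it is affine in $a$, and the velocity part $u_\delta''/u_\delta$ is harmless in the quadratic form because $u_\delta>0$. I would first show that for fixed $\delta$ every eigenvalue is strictly increasing in $a$. Then I would show that $\lambda^1_{\delta,a}$ is bounded below uniformly in $\delta$ at a positive value of $a$, while it tends to $-\infty$ as $\delta\to0$ at a negative value of $a$. A perfect square $-j_\ast^2$ can then be hit by the intermediate value theorem, using the continuity from Lemma~\ref{lemma:properties_lambda_ep_a}.

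\emph{Monotonicity, part (i).} Each eigenvalue $\lambda^k_{\delta,a}$ is simple and $F_{\delta,a}$ depends smoothly on $a$. The Hellmann--Feynman formula, justified by differentiating the eigenvalue equation or via the two-sided comparison used in Lemma~\ref{lemma:properties_lambda_ep_a}, gives
\[
\partial_a\lambda^k_{\delta,a}=\int_{-1}^1 \frac{\delta^{-s}\eta(\delta^{-1}y)}{u_\delta(y)^2}\,|\phi^k_{\delta,a}(y)|^2\,\d y .
\]
This is strictly positive, since an eigenfunction cannot vanish on the interval $|y|<\delta$. Hence every $\lambda^k_{\delta,a}$ is strictly increasing in $a$, which gives (i).

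\emph{Upper bound at $a=-\tfrac12$.} I would take a fixed bump $\chi$ and the test function $\chi(\delta^{-1}y)$, supported in $|y|<\delta$. There $u_\delta=\delta^2u_0$ and $\rho'_{\delta,a}=\rho_0'+a\delta^{-s}$, so the Rayleigh quotient is at most
\[
C\delta^{-2}+C+\frac{\|\rho_0'\|_\infty-\tfrac12\delta^{-s}}{\delta^4\min u_0^2}\;\le\;-c\,\delta^{-4-s}\longrightarrow-\infty .
\]

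\emph{Lower bound at $a=\tfrac12$.} Writing $\phi=u_\delta\psi$ with $\psi\in H^1_0$, the ground-state transform gives
\[
\int|\phi'|^2+\frac{u_\delta''}{u_\delta}|\phi|^2=\int u_\delta^2|\psi'|^2\ge0,
\]
which removes the dangerous $O(\delta^{-4})$ transition terms in $u_\delta''/u_\delta$. For the density term I split according to the size of $\eta$:
\[
\frac{\rho_0'+\tfrac12\delta^{-s}\eta}{u_\delta^2}\ \ge\ \begin{cases}0,& \text{where } \eta\ge 2\|\rho_0'\|_\infty\delta^{s},\\[2pt] -C', & \text{where } \eta< 2\|\rho_0'\|_\infty\delta^{s}.\end{cases}
\]
In the second region $1-\eta\ge\tfrac12$, so $u_\delta\gtrsim u_0$ there. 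Hence $\lambda^1_{\delta,1/2}\ge -C'$ uniformly in $\delta$.

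\emph{Choice of parameters, part (ii).} I would pick an integer $j_\ast$ with $j_\ast^2>C'+1$, and then $\delta_\ast<\delta_0$ small enough that $\lambda^1_{\delta_\ast,-1/2}<-j_\ast^2-1$. By continuity and strict monotonicity in $a$, the map $a\mapsto\lambda^1_{\delta_\ast,a}$ is a homeomorphism from $[-\tfrac12,\tfrac12]$ onto an interval containing $[-j_\ast^2-1,-j_\ast^2+1]$. This yields a unique $a_\ast$ with $\lambda^1_{\delta_\ast,a_\ast}=-j_\ast^2$. I would then let $\mathcal J$ be a small open interval around $-j_\ast^2$ and $\cA$ its preimage.

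\emph{Excluding the zero eigenvalue, part (iii): the main obstacle.} The first eigenvalue is negative near $a_\ast$, but some higher eigenvalue $\lambda^k_{\delta_\ast,a}$ could vanish. On the compact range $a\in[-\tfrac12,\tfrac12]$ the potentials are uniformly bounded, so only finitely many $k$ have $\lambda^k\le 0$ anywhere. Each of these is strictly increasing in $a$ and therefore crosses $0$ at most once. The bad set $N\subset[-\tfrac12,\tfrac12]$ is thus finite. If $a_\ast\notin N$, I would shrink $\cA$ (and correspondingly $\mathcal J$) to avoid $N$. If $a_\ast\in N$, I would try first to perturb $\delta_\ast$ slightly. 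The solution $a_\ast(\delta)$ of $\lambda^1_{\delta,a}=-j_\ast^2$ depends continuously on $\delta$ by joint continuity and monotonicity. Meanwhile the zero crossings $a_k(\delta)$ of the higher eigenvalues also move continuously, and I would argue that they cannot all coincide with $a_\ast(\delta)$ on a whole interval of $\delta$. Should this fail, the fallback is to replace $j_\ast$ by a nearby integer square still inside the range of $\lambda^1_{\delta_\ast,\cdot}$. Only finitely many values of $a$ are excluded, while the admissible squares $-j^2$ in $(-j_\ast^2-1,-j_\ast^2+1)$ would need enlarging that window, which the $\delta^{-4-s}$ growth of the upper bound allows.
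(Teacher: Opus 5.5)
Your parts (i) and (ii) are sound. Part (i) is the paper's argument. Your (ii) takes a cleaner route than the paper's. The ground-state identity $\int|\phi'|^2+\frac{u_\delta''}{u_\delta}\phi^2=\int u_\delta^2|\psi'|^2$ gives a lower bound on $\lambda^1_{\delta,1/2}$ that is uniform in $\delta$. Your test function, supported in $\{|y|<\delta\}$, never sees the transition layer. Together these avoid both the paper's case distinction on whether $\inf_\delta\lambda^1_{\delta,0}$ is finite and its crude $O(\delta^{-3})$ bounds on $F_{2,\delta},F_{4,\delta}$. One small slip: in the upper bound, the negative term $-\tfrac12\delta^{-s}$ must be divided by $\max u_0^2$, not $\min u_0^2$; this is harmless.

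The gap is in (iii), in the degenerate case $a_\ast\in N$.

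\emph{First remedy (moving $\delta$).} This is unsupported. Nothing you state prevents the level curves $\{\lambda^1_{\delta,a}=-j_\ast^2\}$ and $\{\lambda^k_{\delta,a}=0\}$ from coinciding along an arc in the $(\delta,a)$-plane. Since $F_{\delta,a}$ depends on $\delta$ only through the $C^\infty$ cutoff $\eta(\delta^{-1}\cdot)$, there is no analyticity to rule this out.

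\emph{Fallback (changing $j_\ast$).} This does not close as written. For fixed $\delta_\ast$, the number of excluded values of $a$ is bounded only by $\#\{k:\lambda^k_{\delta_\ast,-1/2}\le0\}$. That count also blows up as $\delta_\ast\to0$, so the growth of the window of available squares proves nothing until the two rates are compared. The comparison can be supplied as follows:
\begin{enumerate}
\item Imposing Dirichlet conditions at $y=\pm2\delta$ changes the count by at most two, and the outer pieces contribute $O(1)$.
\item On $(-2\delta,2\delta)$, the ground-state transform with $u_\delta\ge\delta^2\min u_0$ reduces the problem to counting Dirichlet eigenvalues of $-\partial_y^2$ on an interval of length $4\delta$ below $C\delta^{-4-s}$. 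This gives $|N|\lesssim\delta^{-1-s/2}$.
\item On the other hand, there are $\gtrsim\delta^{-2-s/2}$ integers $j$ with $-j^2\in[\lambda^1_{\delta,-1/2},\lambda^1_{\delta,1/2}]$. Injectivity of $a\mapsto\lambda^1_{\delta,a}$ then produces a good $j$.
\end{enumerate}

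\emph{The paper's route.} The paper avoids counting by introducing an independent parameter. It replaces $\rho_a$ by $\rho_a+b\int_{-1}^y g\,u_{\delta_\ast}^2$, which shifts the potential by $bg$. Here $g\in C^\infty_c$ is chosen so that $\int g|\phi^1_{a_\ast}|^2=0$ and $\int g|\phi^{n_0}_{a_\ast}|^2=1$. The implicit function theorem gives $a(b)$ with $\lambda^1_{a(b),b}=-j_\ast^2$ and $a'(0)=0$. Hence the $b$-derivative of $\lambda^{n_0}_{a(b),b}$ at $b=0$ equals $1$. A small $b_\ast$ therefore moves the zero eigenvalue off $0$ while keeping $\lambda^1=-j_\ast^2$, after which $\mathcal{A}$ and $\mathcal{J}$ are shrunk as in the nondegenerate case.
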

\begin{proof}
For the moment, we keep $\delta>0$ as a parameter. Since the potential $F_{\delta,a}$  depends smoothly on the parameter $a$, it follows from \cite[Chapter VIII, Theorem~2.6]{kato_perturbation_1995} that the $n$-th eigenvalue $\lambda^n_{\delta,a}$ of $\mathcal{L}_{\delta,a}$ is differentiable in $a$ and its derivative is given by
\begin{equation}
\label{eq: derivative eigenvalues no stagnation1}
\frac{\d}{\d a}\lambda^n_{\delta,a}=\int_{-1}^1\frac{\delta^{-s}\eta(\delta^{-1}y)}{u_0^2[\delta^2+1-\eta(\delta^{-1}y)]^2}\abs{\phi^n_{\delta,a}(y)}^2\,{\d y}\geq \left(2\delta^s\norm{u_0}_{L^\infty}^2\right)^{-1}\int_{-\delta}^{\delta}\abs{\phi^n_{\delta,a}}^2,
\end{equation}
where $\phi_{\delta,a}^n$ is an $L^2$-normalized eigenfunction
associated to $\lambda_{\delta,a}^n$. It solves
\begin{align}\label{eigenequation}
-\phi'' + F_{\delta,a}\phi = \lambda_{\delta,a}^n\phi,\quad \phi(-1)=\phi(1)=0,
\end{align}
and a nonzero solution cannot be zero in an open interval in $(-1,1)$. Indeed, if $\phi$ vanished identically on a nonempty open
interval, then at an interior point $y_0$ we would have
$\phi(y_0)=\phi'(y_0)=0$. The initial-value problem
\[\phi''(y)=(F_{\delta,a}(y)-\lambda^n_{\delta,a})\phi(y),
\qquad \phi(y_0)=\phi'(y_0)=0,\]
has only the zero solution on $(-1,1)$ by uniqueness \cite[Theorem~3.9]{Teschl2012}, contradicting $\|\phi\|_{L^2}=1$. Hence,
\begin{align}\label{eq: derivative eigenvalues no stagnation}
\frac{\d}{\d a}\lambda^n_{\delta,a}\ge \frac{\delta^{-s-4}}{\rVert u_0\rVert_{L^\infty}^2}\int_{-\delta}^{\delta}|\phi^n_{\delta,a}|^2dy >0,
\end{align}
so $a\mapsto \lambda^n_{\delta,a}$ is strictly increasing for any $\delta>0$.

Regarding \textit{(ii)}, since $a\mapsto \lambda^1_{\delta,a}$ is continuous and strictly increasing, it suffices to prove that $\lambda^1_{\delta_\ast,a_\ast}=-j_\ast^2$ for some integer $j_\ast\geq 1$ and some values of the parameters $\delta_\ast\in(0,\delta_0)$ and $a_\ast\in(-1,1)$. Let us consider the infimum of $\lambda^1_{\delta,0}$ for $\delta>0$. If it equals $-\infty$, by continuity of $\delta\mapsto \lambda^1_{\delta,0}$, there exists $\delta_\ast\in(0,\delta_0)$ such that $\lambda^1_{\delta_\ast,0}=-j_\ast^2$ for a sufficiently large integer $j_\ast$. On the other hand, if the infimum is finite, there exists an integer $j_\ast\geq 1$ such that $\lambda^1_{\delta,0}>-j_\ast^2$ for any $\delta>0$. We claim that, for $\delta_\ast>0$ sufficiently small, we have $\lambda^1_{\delta_\ast,-1}<-j_\ast^2$. By continuity of $a\mapsto \lambda^1_{\delta_\ast,a}$, there would then exist $a_\ast\in(-1,0)$ such that $\lambda^1_{\delta_\ast,a_\ast}=-j_\ast^2$, thus completing the proof of \textit{(ii)}.

Indeed, setting $F_0\coloneqq u_0^{-1}u_0''+u_0^{-2}\rho_0'$, a straightforward computation yields
\begin{align*}
F_{\delta,-1}(y)&=F_0(y)+\frac{\rho_0'}{u_0^2}\left(\frac{1}{(1+\delta^2)^2}-1\right)+\frac{\rho_0'}{u_0^2}\left(\frac{1}{\left[\delta^2+1-\eta(\delta^{-1}\,y\,)\right]^2}-\frac{1}{(1+\delta^2)^2}\right)-\frac{2\delta^{-1}\,u_0'\,\eta'(\delta^{-1}\,y\,)}{u_0\,[\delta^2+1-\eta(\delta^{-1}\,y\,)]}\\[4pt]
&\hspace{12pt}-\delta^{-2}\frac{\eta''(\delta^{-1}\,y\,)}{\delta^2+1-\eta(\delta^{-1}\,y\,)}-\frac{\delta^{-s}\eta(\delta^{-1}\,y\,)}{u_0^2\left[\delta^2+1-\eta(\delta^{-1}\,y\,)\right]^2}\\[6pt]
&\equiv F_0(y)+F_{1,\delta}(y)+F_{2,\delta}(y)+F_{3,\delta}(y)+F_{4,\delta}(y)- F_{5,\delta}(y).
\end{align*}
Since $\inf\abs{u_0}>0$ and the cutoff $\eta(\delta^{-1}\,\cdot\,)$ takes values in $[0,1]$ and is supported in $[-2\delta,2\delta]$, we see that there exists a universal constant depending only on $u_0$ and $\rho_0$ such that:
$$\norm{F_0}_{L^1}+\norm{F_{1,\delta}}_{L^1}\leq C, \qquad \norm{F_{3,\delta}}_{L^1}\leq C\delta^{-2},\qquad\norm{F_{2,\delta}}_{L^1}+\norm{F_{4,\delta}}_{L^1}\leq C\delta^{-3}.$$
On the other hand, since the cutoff $\eta(\delta^{-1}\,\cdot\,)$ is nonnegative and equals 1 on $[-\delta,\delta]$, we have
$$
   \int_{-1}^{1}F_{5,\delta}(y)|\phi(y)|^2\,{\d y} \geq \frac{\delta^{-s}}{\delta^{4} \Vert u_0\Vert_{L^\infty}^2} \int_{-\delta}^{\delta}|\phi(y)|^2 \d y
$$
for any $\phi\in H^1_0(-1,1)$. In particular, for $\varphi(y)=1-|y|\in H^1_0(-1,1)$ we have
$$
   \int_{-1}^{1}F_{5,\delta}(y)|\varphi(y)|^2\,{\d y} \geq \delta^{-3-s}\norm{u_0}_{L^\infty}^{-2},
$$
where we have assumed that $2\delta\leq1$ so that $2\varphi(y)\geq 1$ for $y\in[-\delta,\delta]$. Testing the Rayleigh quotient with this particular $\varphi$ leads to
\begin{align}
    \lambda^1_{\delta,-1} \leq \frac{2 + C(1+\delta^{-2}+\delta^{-3}) - \Vert u_0 \Vert_{L^\infty}^{-2}\delta^{-3-s}}{2/3}.
\end{align}
Here we have used that $\varphi$ satisfies $\norm{\varphi}_{L^\infty}=1$, $\Vert \varphi \Vert_{L^2}^2 = 2/3$ and $\Vert \varphi'\Vert_{L^2}^2=2$. Since $s>0$, we will have $\lambda^1_{\delta_\ast,-1}<-j_\ast^2$ for $\delta_\ast>0$ sufficiently small, which proves the claim.

Finally, let us prove \textit{(iii)}. We consider the value $\delta=\delta_\ast$ to be fixed and we omit the dependence on $\delta$, for simplicity. First of all, if $\lambda_{a_\ast}^n\neq 0$ for all $n\geq 2$, then property \textit{(iii)} readily holds for sufficiently small $\mathcal{J}$ and $\mathcal{A}$. Indeed, recall that the eigenvalues depend continuously on $a$ and that they are ordered. Hence, the first eigenvalues to cross the origin would be the two closest eigenvalues. By continuity, this can be prevented by choosing $\mathcal{A}$ to be a sufficiently small neighborhood of $a_\ast$. We then reduce $\mathcal{J}\ni -j_\ast^2$ accordingly, so that \textit{(ii)} still holds.

Let us consider the case that there exists $n_0\geq 2$ such that $\lambda^{n_0}_{a_\ast}=0$. Our goal is to introduce an additional perturbation to break the degeneracy so that we can perform the reduction from the previous paragraph. We fix $g\in C^\infty_c(0,1)$ such that 
\begin{equation}
    \int_{-1}^1g\abs{\phi^1_{a_\ast}}^2=0, \hspace{40pt}\int_{-1}^1g\abs{\phi^{n_0}_{a_\ast}}^2=1,
    \label{eq: properties of g}
\end{equation}
which can be achieved by an appropriate linear combination of two bump functions concentrated around two suitable points. Then, for a small parameter $b\in \R$ we consider $$\rho_{a,b}(y)\coloneqq \rho_a(y)+\int_{-1}^yb\hspace{1pt}g(t) \hspace{1pt}u_{\delta_\ast}(t)^2\d t.$$We denote by $\mathcal{L}_{a,b}$ the operator associated to this density and by $\lambda^n_{a,b}$ and $\phi^n_{a,b}$ its eigenvalues and normalized eigenfunctions, respectively. Let $H(a,b)\coloneqq \lambda^1_{a,b}+j_\ast^2$, which satisfies $H(a_\ast,0)=0$ and $$\partial_aH(a_\ast,0)=\int_{-1}^1\delta_\ast^{-s}\eta(\delta_\ast^{-1}\,\cdot\,)\,u_{\delta_\ast}^{-2}\,|\phi^1_{a_\ast,0}|^2>0,$$as in \eqref{eq: derivative eigenvalues no stagnation}. Hence, by the implicit function theorem, there exists $\epsilon>0$ and a function $a\in C^1((-\epsilon,\epsilon),\mathcal{A})$ such that $a(0)=a_\ast$ and $\lambda^1_{a(b),b}=-j_\ast^2$. Differentiating this expression at $b=0$ and using \cite[Chapter VIII, Theorem~2.6]{kato_perturbation_1995} yields:
\[0=\partial_a \lambda^1_{a_\ast,0}\,a'(0)+\partial_b\lambda^1_{a_\ast,0}=a'(0)\int_{-1}^1\delta_{\ast}^{-s}\eta(\delta_\ast^{-1}\,\cdot\,)\,u_{\delta_\ast}^{-2}\,|\phi^1_{a_\ast,0}|^2+\int_0^1g\abs{\phi^1_{a_\ast,0}}^2.\]
By the properties of $g$, the last term vanishes. Since the integral multiplying $a'(0)$ is strictly positive, we deduce $a'(0)=0$. Next, let $\mu(b)\coloneqq \lambda^{n_0}_{a(b),b}$, which satisfies $\mu(0)=0$, by assumption. Meanwhile:
\[\partial_b\mu(0)=\partial_a\lambda^{n_0}_{a_\ast,0}\,a'(0)+\partial_b\lambda^{n_0}_{a_\ast,0}=\int_{-1}^1g\abs{\phi^{n_0}_{a_\ast,0}}^2=1,\]
where we have substituted \eqref{eq: properties of g} and used $a'(0)=0$. We conclude that for a fixed $b_\ast>0$ sufficiently small so that the eigenvalues for $n\neq n_0$ are not an issue, we will have $\lambda^1_{a(b_\ast),b_\ast}=-j_\ast^2$ and $\lambda^n_{a(b_\ast),b_\ast}\neq 0$ for all $n\in \N$. We can now perform the reduction of $\mathcal{A}$ and $\mathcal{J}$ discussed at the beginning of the proof of property \textit{(iii)}, which completes the proof.
\end{proof}

\begin{remark}
\label{remark abuse of notation}
In the previous lemma we potentially change the density, so that it is no longer given by \eqref{eq:pos_density_profiles}. However, once $b_\ast>0$ sufficiently small is fixed, the additional perturbation is smooth, as small as desired in any $C^m$ or $H^k$ norm, and independent of $a$. It thus has no impact in the subsequent arguments. Therefore, with some abuse of notation, we will denote $\rho_{a,b_\ast}$ and $a(b_\ast)$ simply as $\rho_a$ and $a_\ast$. We also drop the dependence on $b_\ast$ in the notation for eigenvalues and eigenfunctions. Similarly, the quantities defined in \eqref{theta_G_regularity_p}-\eqref{eq:defsF} will keep the same notation but must be understood to be defined using the new density.
\end{remark}

\color{black}
\subsection{Functional framework and proof of Theorem \ref{thm:nonzeroflexibility}}
We introduce the Banach spaces
\begin{align}\label{eq:defX}
    X\coloneqq  \lbrace \phi(\xi,y)\in H^4(\T\times[-1,1]) : \phi(\xi,1)=\phi(\xi,-1)=0, \:\phi(\xi,y) = \phi(-\xi,y) \rbrace, 
\end{align}
and
\begin{align}\label{eq:defY}
    Y\coloneqq  \lbrace \phi(\xi, y) \in H^2(\T\times[-1,1]) : \phi(\xi,y) = \phi(-\xi,y) \rbrace.
\end{align}
For the fixed $\varepsilon>0$ given by Theorem~\ref{thm:nonzeroflexibility}, we fix $\delta_\ast>0$ sufficiently small, as given by Lemma~\ref{lemma:perturbation close to original no stag} and Lemma~\ref{lemma: control eigenvalues no stag}. We consider this value of $\delta=\delta_\ast$ to be fixed and, from now on, omit the dependence on $\delta$, for simplicity. We only keep the index in the velocity $u_{\delta_\ast}$, its streamfunction $\psi_{\delta_\ast}$ and its inverse $\varphi_{\delta_\ast}$.

We define the function $j:{\cA}\rightarrow\R$ given by $j(a)=\sqrt{-\lambda_{a}}$, which is continuous for all $a\in {\cA}$ due to Lemma \ref{lemma:properties_lambda_ep_a} and satisfies $j(a_\ast)=j_\ast$. We further define the functional 
\begin{align}\label{eq:defG}
    \G:\R\times X\times \cA\rightarrow Y, \quad \G(\mu,\phi,a)\coloneqq  \sF(j(a) + \mu, \phi, a). 
\end{align}
To use the uniform Crandall-Rabinowitz local bifurcation Theorem, we first show that $\G(\mu,\phi,a)$ satisfies the conditions of Theorem \ref{thm:unifCR}. \textcolor{black}{While Theorem~\ref{thm:unifCR} argues for $\cA$ an open neigborhood of the origin, it is likewise valid for $\cA$ an open neigborhood of $a_*$, mutatis mutandis.}

\begin{lemma}\label{lemma:conditionsCR}
    There exist $\cI^\mu\subset \R$ and $U\subset X$ open sets with $(0,0,a_\ast)\in\cI^\mu\times U\times \cA$ such that the operator $\G(\mu,\phi,a):\cI^\mu\times U\times \cA\rightarrow Y$ defined in \eqref{eq:defG} satisfies the conditions of Theorem \ref{thm:unifCR}.
\end{lemma}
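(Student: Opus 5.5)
We do not have the statement of Theorem~\ref{thm:unifCR} in front of us, but a parameter-dependent Crandall--Rabinowitz theorem should require the following. The trivial branch satisfies $\G(\mu,0,a)=0$, and $\G$ is $C^2$ (or $C^1$ with continuous mixed derivative $\partial_\mu\partial_\phi\G$) jointly in $(\mu,\phi)$, with these derivatives continuous in $a$. The kernel of $\partial_\phi\G(0,0,a)$ is one-dimensional, spanned by some $\phi_a$ depending continuously on $a$. The range has codimension one. Finally, the transversality quantity $\partial_\mu\partial_\phi\G(0,0,a)\phi_a\notin \mathrm{Range}$ holds uniformly in $a\in\cA$. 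I will verify these in order.

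\textbf{Step 1 (trivial branch and regularity).} By \eqref{eq:ODEpsiBackGround}, $\sF(j,0,a)=0$ for every $j\neq0$, so $\G(\mu,0,a)=0$ once $\cI^\mu$ is small enough that $j(a)+\mu$ stays away from $0$; here $j(a)\ge j_\ast/2$ on $\cA$ after shrinking $\cA$.

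For regularity, write $\G(\mu,\phi,a)=(j(a)+\mu)^2\partial_\xi^2\phi+\partial_y^2(\psi_{\delta_\ast}+\phi)+y\Theta_a'(\psi_{\delta_\ast}+\phi)+G_a'(\psi_{\delta_\ast}+\phi)$. The linear part maps $X\to Y$ boundedly and preserves evenness in $\xi$. For the Nemytskii part, $\Theta_a',G_a'\in C^3$ and $H^4(\T\times[-1,1])$ is an algebra embedded in $C^2$. Hence $\phi\mapsto \Theta_a'(\psi_{\delta_\ast}+\phi)$ is $C^2$ from $X$ into $H^2$, with derivatives continuous in $a$ because $\Theta_a,G_a$ depend affinely, hence smoothly, on $a$ through $\rho_{\delta_\ast,a}$. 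This regularity count ($C^4$ profiles, $H^4\to H^2$) is where I must be careful. If $C^3$ nonlinearities only give $C^1$ into $H^2$, I will lower the spaces to $X=H^3$, $Y=H^1$, or use that the background is $C^4$ and only one more derivative is needed.

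\textbf{Step 2 (kernel and range).} By \eqref{eq: linearized operator positive},
\[
\partial_\phi\G(0,0,a)\phi=j(a)^2\partial_\xi^2\phi-\cL_a\phi .
\]
On the Fourier mode $\cos(k\xi)\,\phi_k(y)$ (only cosines appear, by evenness) this acts as $-(k^2j(a)^2+\cL_a)\phi_k$. A mode lies in the kernel iff $-k^2j(a)^2$ is an eigenvalue of $\cL_a$.

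\begin{itemize}
\item For $k=1$, $-j(a)^2=\lambda^1_a$ is the simple first eigenvalue, which gives the kernel element $\cos\xi\,\phi^1_a(y)$.
\item For $k=0$, $0$ is not an eigenvalue, by Lemma~\ref{lemma: control eigenvalues no stag}(iii).
\item For $k\ge2$, $-k^2j(a)^2<\lambda^1_a=\inf\operatorname{spec}\cL_a$, so nothing is added.
\end{itemize}

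Thus the kernel is $\vspan\{\cos\xi\,\phi^1_a\}$, and it is continuous in $a$ by Lemma~\ref{lemma:properties_lambda_ep_a}.

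For the range, each modal operator $k^2j^2+\cL_a$ with $k\ne1$ is invertible from $H^2\cap H^1_0$ to $L^2$. Its inverse norm is bounded by $\operatorname{dist}(-k^2j^2,\operatorname{spec}\cL_a)^{-1}\lesssim k^{-2}$ for large $k$. Elliptic regularity for the full operator $j^2\partial_\xi^2+\partial_y^2-F_a$ gives the $H^4$ regularity. Hence the range is the closed $L^2$-orthogonal complement of $\cos\xi\,\phi^1_a$ within $Y$, which has codimension one. Uniformity in $a$ follows because the spectral gaps are continuous in $a$ and bounded below on a compact subinterval of $\cA$.

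\textbf{Step 3 (transversality).} We have
\[
\partial_\mu\partial_\phi\G(0,0,a)\phi=2j(a)\partial_\xi^2\phi .
\]
Applied to $\cos\xi\,\phi^1_a$, this gives $-2j(a)\cos\xi\,\phi^1_a$. Its pairing with the cokernel element is $-2j(a)\pi\|\phi^1_a\|_{L^2}^2=-2\pi j(a)\ne0$, bounded away from $0$ uniformly on $\cA$. So the transversality condition holds, and shrinking $\cA$, $\cI^\mu$ and $U$ concludes the verification.

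The main obstacle I expect is Step 1, that is, matching the differentiability required by Theorem~\ref{thm:unifCR} with the $C^4$ regularity of $\Theta_a,G_a$ (extended to $\R$) in the $H^4\to H^2$ setting.
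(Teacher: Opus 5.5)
Your proposal never verifies hypothesis (3) of Theorem~\ref{thm:unifCR}, so it does not establish that all the conditions of that theorem hold.

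\textbf{The missing condition.} Hypothesis (3) asks, for closed complements $\cW_a$ of $\ker L_a=\vspan\{\Phi_a\}$, for bounded linear isomorphisms $T_a^X:\cW_{a_\ast}\to\cW_a$ that depend continuously on $a$ in operator norm. Here $\cW_{a_\ast}$ plays the role of $\cW_0$. This is not a formality. In the Lyapunov--Schmidt reduction the unknown is written as $s(\Phi_a+T_a^Xw)$ with $w$ ranging over the \emph{fixed} space $\cW_{a_\ast}$. That is what lets the uniform implicit function theorem act on an $a$-independent space, and it needs $L_aT_a^X:\cW_{a_\ast}\to\cR_a$ to be an isomorphism.

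The repair is short once you have continuity of $a\mapsto\Phi_a$ in $X$, and it is what the paper does. Normalize $\|\Phi_a\|_{L^2}=1$, and set $\cW_a=\{h\in X:\langle h,\Phi_a\rangle_{L^2}=0\}$ and $T_a^Xw=w-\langle w,\Phi_a\rangle\Phi_a$. Its inverse is $w\mapsto w-\frac{\langle w,\Phi_{a_\ast}\rangle}{\langle \Phi_{a_\ast},\Phi_a\rangle}\Phi_a$. This is bounded after shrinking $\cA$ so that $\langle\Phi_{a_\ast},\Phi_a\rangle\neq 0$. Moreover $\|T_a^X-T_b^X\|_{\cL(\cW_{a_\ast},X)}\lesssim\|\Phi_a-\Phi_b\|_X$, so operator-norm continuity follows from the $H^4$-continuity of $a\mapsto\phi_a$ in Lemma~\ref{lemma:properties_lambda_ep_a}.

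\textbf{The range.} Your identification of the range via Fourier modes, modal resolvent bounds and elliptic regularity is valid. The paper's route is shorter and directly gives the Fredholm-index-zero property that hypothesis (1) asks for explicitly. It writes $L_a=(-\lambda_a\partial_\xi^2+\partial_y^2-1)+(1-F_{\delta_\ast,a})$, an isomorphism $X\to Y$ plus a compact multiplication operator. Symmetry of $L_a$ then gives $\cR_a\subset\{r\in Y:\langle r,\Phi_a\rangle_{L^2}=0\}$, with equality because both have codimension one.

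\textbf{Step 1.} The obstacle you anticipate does not arise. Theorem~\ref{thm:unifCR} only requires continuity of $\G,\partial_\mu\G,\partial_\phi\G,\partial^2_{\mu,\phi}\G$, not $C^2$ regularity. The mixed derivative $\partial^2_{\mu,\phi}\G=2(j(a)+\mu)\partial_\xi^2$ is trivially continuous. Continuity of $\phi\mapsto\Theta_a''(\psi_{\delta_\ast}+\phi)$ and $\phi\mapsto G_a''(\psi_{\delta_\ast}+\phi)$ into $H^2$ needs only $\Theta_a,G_a\in C^4$, so there is no need to change $X$ or $Y$. Also, $\G(\mu,0,a)=0$ holds for every $\mu$, since $\psi_{\delta_\ast}$ does not depend on $\xi$; you do not need to keep $j(a)+\mu$ away from zero.
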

\begin{proof}
    First, it is immediate to see that $\G$ maps $\R\times X\times \cA$ to $Y$. Next, since
    \begin{align}
        \G(\mu,\phi,a) = (j(a)+\mu)^2\partial_\xi^2\phi+\partial_y^2(\psi_{\delta_\ast}+\phi) + y\Theta'_{a}(\psi_{\delta_\ast}+\phi) + G'_{a}(\psi_{\delta_\ast} + \phi),
    \end{align}
the  Fr\'{e}chet partial derivatives of $\G(\mu,\phi,a)$ read
\begin{equation}\label{eq:Frechet_G}
\begin{aligned}
    \partial_\mu\G(\mu,\phi,a)&= 2(j(a)+\mu)\partial_\xi^2\phi \\
    \partial_\phi\G(\mu,\phi,a)\widetilde{\phi} & = (j(a)+\mu)^2 \partial_\xi^2 \widetilde{\phi} +\partial_y^2 \widetilde{\phi} + y\Theta''_{a}(\psi_{\delta_\ast}+\phi)\widetilde{\phi} + G''_{a}(\psi_{\delta_\ast}+\phi)\widetilde{\phi}, \\
    \partial^2_{\mu,\phi}\mathcal{G}(\mu,\phi,a)\widetilde{\phi} &=2(j(a)+\mu)\partial_\xi^2 \widetilde{\phi}, 
\end{aligned}
\end{equation}
for all $\widetilde{\phi}\in X$ and $(\mu,\phi,a)\in \R\times X\times \cA$.  We recall from \eqref{theta_G_regularity_p} that $\Theta_a$ and $G_a$ are $C^4$. With this and the continuity of $a\mapsto j(a)$ for $a\in \cA$, we conclude that
\[
\mathcal G,\partial_\mu\mathcal G\in C(\mathbb R\times X\times\mathcal A;Y),
\qquad
D_\phi\mathcal G,\partial_\mu D_\phi\mathcal G
\in C(\mathbb R\times X\times\mathcal A;\mathcal L(X,Y)).
\] We now turn to verifying assumptions \ref{CR_condi_1}-\ref{CR_condi_3} of Theorem~\ref{thm:unifCR}. For this, we note that
    \begin{align}
        L_a\phi:= \partial_\phi\G(0,0,a)\phi =j(a)^2\partial_\xi^2\phi - \cL_{a}\phi = -\lambda_{a}\partial_\xi^2\phi - \cL_{a}\phi
    \end{align}
    for all $a\in \cA$ and all $\phi\in X$. We denote the range of $L_a$ by
\[
\mathcal R_a:=\{L_a h:h\in X\}\subset Y.
\]

    \subsubsection*{Proof of \ref{CR_condi_1}} Due to Lemma \ref{lemma: control eigenvalues no stag}, for all $a\in \cA$ the minimum eigenvalue of $\cL_{a}$ is $\lambda_{a}<0$. Let $\phi_{a}\in H^1_0((-1,1))$ be the unique normalized positive eigenfunction associated to the eigenvalue $\lambda_a$. Note that $\lambda_a$ being the minimal eigenvalue and $\cL_a$ having no non-trivial kernel shows that 
    \begin{align}\label{kernel_ondi_section5}
        \ker (L_{a})= \vspan   \lbrace \Phi_{a} \rbrace \subset X
    \end{align}
    for $\Phi_a\coloneqq \beta\phi_{a}(y)\cos(\xi)\in X$ with $\beta>0$ such that $\Vert \Phi_a\Vert_{L^2}=1$. 
    Writing
    \begin{align*}
        L_a = (-\lambda_a\partial_\xi^2 +\partial_y^2  - 1) + \left( 1 - \left( \frac{u_{\delta}''(y)}{u_{\delta}(y)} + \frac{\rho_{\delta,a}'(y)}{u_{\delta}^2(y)}\right)\right),
    \end{align*}
    We see that $L_a$ is the sum of an isomorphism and a compact operator. Hence, $L_a$ is Fredholm of index zero.  Combining this with \eqref{kernel_ondi_section5}, we obtain that $\mathcal{R}_a$ has codimension $1$.
Since $L_a$ is symmetric, we also have
\[
\langle L_a h,\Phi_a\rangle_{L^2}
=\langle h,L_a\Phi_a\rangle_{L^2}=0
\]
for every $h\in X$. Therefore,
\[
\mathcal R_a
\subset\{r\in Y:\langle r,\Phi_a\rangle_{L^2}=0\}.
\]
Since both spaces have codimension one in $Y$, they coincide.
Setting $\mathcal Z_a:=\operatorname{span}\{\Phi_a\}$, we obtain
\[
Y=\mathcal R_a\oplus\mathcal Z_a,
\] 
    which shows $\cR_a$ is a closed subspace of $Y$.  Furthermore, $a\mapsto \Phi_a\in X$ is continuous because so is $a\mapsto \phi_{a}$, by Lemma~\ref{lemma:properties_lambda_ep_a}. 


    \subsubsection*{Proof of \ref{CR_condi_2}} According to \eqref{eq:Frechet_G} we have
    \begin{align}
        \partial^2_{\mu,\phi} \G(0,0,a)\Phi_a = 2j(a)\partial_\xi^2 [\beta\hspace{1pt}\phi_{a}(y)\cos(\xi)] =  - 2j(a)\Phi_a \not \in \mathcal{R}_a
    \end{align}
    and the transversal condition is satisfied.
    \subsubsection*{Proof of \ref{CR_condi_3}} Let us  denote the $L^2$- orthogonal complement of the kernel by
    \begin{align}
        \cW_a = \lbrace h\in X : \langle h, \Phi_a \rangle = 0 \rbrace.
    \end{align}  We define the linear map
    \begin{align}
        T_a^X:\cW_{a_\ast}\rightarrow \cW_a, \quad T_a^X w = w - \langle w,\Phi_a \rangle\Phi_a
    \end{align}
    where $\langle \cdot, \cdot \rangle$ denotes the usual $L^2$ inner product. It is immediate to see that $T_a^X$ is linear, for all $a\in \cA$. \textcolor{black}{Moreover, it is an isomorphism from $\cW_{a_\ast}$ to $\cW_a$; the inverse map is given by
    \begin{align}\label{inverse_op_formula_p}
        (T_a^X)^{-1}:\cW_a\rightarrow \cW_{a_\ast}, \quad (T_a^X)^{-1} w = w - \frac{\langle w,\Phi_{a_\ast} \rangle}{\langle \Phi_{a_\ast},\Phi_a\rangle}\Phi_a
    \end{align}
    Since $\Phi_a\rightarrow\Phi_{a_\ast}$ as $a\rightarrow a_\ast$ and $\Phi_{a_\ast} \neq 0$, the denominator in the second term of \eqref{inverse_op_formula_p} does not vanish when $a$ is sufficiently close to $a_\ast$. Hence by shrinking the set $\mathcal{A}$, if necessary, it follows  that $(T^X_a)^{-1}$ is a bounded operator for $a\in \mathcal{A}$. Since $T^X_a$ is clearly bounded and its inverse is also bounded, it is indeed an isomorphism. Lastly, since the map $a\mapsto \Phi_a$ is continuous, the map $a\mapsto T_a^X\in \cL(\cW_{a_*},X)$ is  continuous in operator norm for all $a\in \cA$.}
\end{proof}



We are now in a position to complete the proof of Theorem \ref{thm:nonzeroflexibility}.

\begin{proof}[Proof of Theorem \ref{thm:nonzeroflexibility}]
In view of Lemma \ref{lemma:conditionsCR}, the functional $\G(\mu,\phi,a)$ satisfies the conditions of the uniform Crandall-Rabinowitz bifurcation Theorem \ref{thm:unifCR}. Hence, for any compact neighborhood $a_\ast\in \cA'\subset \cA$, there exist open neighbourhoods $0\in \mathcal{U}_{\cA'}\subset \cI^\mu\times \mathcal{W}_{a_\ast}$, and $0\in \cI_{\cA'}^s\subset\R$ and continuous functions 
    \begin{align}
        (\kappa(s,a),\varpi(s,a)):\cI_{\cA'}^s\times \cA'\rightarrow\mathcal{U}_{\cA'}
    \end{align}
    with $(\kappa(0,a),\varpi(0,a))=0$ for all $a\in \cA'$ and such that
    \begin{align}
        \G(\kappa(s,a),s\Phi_a + sT_a^X\varpi(s,a),a) =0
    \end{align}
    for all $(s,a)\in \cI_{\cA'}^s\times \cA'$. Moreover, for all $a\in {\cA'}$,
\begin{align}
        \lbrace (s,\mu,w)\in (\cI_{\cA'}^s \setminus \lbrace 0 \rbrace)\times\mathcal{U}_{\cA'} : \G(\mu, s\Phi_a+sT_a^X w, a) = 0,  \rbrace &=  \lbrace (s,\kappa(s,a), \varpi(s,a)) : s\in \cI^s_{\cA'}\setminus \lbrace 0 \rbrace \rbrace.
\end{align}
Thus, we find that
    \begin{align}
        \sF\left(j(a)+ \kappa(s,a), \; s\Phi_a + sT_a^X\varpi(s,a),\;a\right)=0
    \end{align}
    for all $(s,a)\in \cI_{\cA'}^s\times \cA'$. Next, recall that $j(a)=\sqrt{-\lambda_a}$, so $j(a_\ast)=j_\ast$ by definition of $a_\ast$. Since $a\mapsto -\lambda_a$ is strictly decreasing in $a$, for $a_1,a_2\in \mathcal{A}'$ such that $a_1<a_\ast<a_2$, we have $j(a_2)<j_\ast<j(a_1)$. Since $\kappa$ is continuous and $\kappa(0,a)=0$, for sufficiently small and non-zero $s$ we have
\[j(a_2)+\kappa(s,a_2)<j_\ast<j(a_1)+\kappa(s,a_1).\]
Therefore, by the intermediate value theorem, for every non-zero sufficiently small $s$ there exists $a(s)\in \mathcal{A}'$ such that $j(a(s))+\kappa(s,a(s))=j_\ast\in \N$. Setting
$$\widetilde\psi \coloneqq  \psi_{\delta_\ast} + s\Phi_{a(s)} + sT_{a(s)}^X\varpi(s,a(s)).$$
For $s\ne0$ sufficiently small, define
\[
\psi(x,y):=\widetilde\psi(j_\ast x,y),\qquad
\mathbf u:=\nabla^\perp\psi,\qquad
\rho:=\Theta_{a(s)}\circ\psi.
\]
Then $(\mathbf u,\rho)$ is a stationary solution to the
Boussinesq equations on $\mathbb T\times[-1,1]$ such that
    \begin{align}
        \Vert \nabla^\perp \psi - (u_0,0)\Vert_{L^q} + \Vert \rho - \rho_0 \Vert_{W^{1,q}}< \varepsilon.
    \end{align}
Furthermore, it has nontrivial $x$-dependence because so does $\Phi_a$, and $\varpi(s,a)=o(1)$ as $s\to 0$, therefore $sT^X_{a(s)}\varpi(s,a(s))=o(s)$. This concludes the proof of Theorem \ref{thm:nonzeroflexibility}. 
\end{proof}

\section{Local rigidity for non-vanishing stratified shear flows}\label{sec:local_rigidity non-zero_2}


In this section we prove that there exist shear flows without stagnation points that are isolated from non-shears in suitable topologies. Throughout this section, we consider a fixed shear flow $(\u_0,\rho_0)\in C^2(\overline{M})\times C^1(\overline{M})$ of the form $\u_0=(u_0(y),0)$ and $\rho_0=\rho_0(y)$ satisfying
\begin{equation}
u_0(y)\geq m >0\qquad \forall y\in[-1,1].
\label{eq: lower bound u0}
\end{equation} We further assume there exists $c_0>0$ such that
\begin{equation}
\label{eq:coercivity Q_0}
    Q_0(w) := \int_M u_0^2 |\nabla w |^2 + \rho_0' |w|^2 \geq c_0 \Vert w \Vert_{H_0^1}^2,
\end{equation}
for all $w\in H_0^1(M)$.


\begin{remark}
    A sufficient condition for the quadratic form $Q_0$ to be coercive is $$\Vert (\rho_0')_{-} \Vert_{L^\infty} <  \mu_0(M)\,m^2,$$ where $\mu_0(M)$ denotes the first Dirichlet eigenvalue of $M$ and $(\rho_0')_{-} = \max\lbrace 0 , - \rho_0' \rbrace$. In particular, $\rho_0'\geq 0$ would suffice. 
\end{remark}
\begin{remark}
Writing $w=\frac{g}{u_0}$ for some $g\in H_0^1(M)$, we have 
    $$Q_0(w) = \int_M |\nabla g|^2 + \left( \frac{u''_0}{u_0} + \frac{\rho_0'}{u_0^2} \right) |g|^2,$$
    which relates $Q_0$ and the operator $-\Delta + \left( \frac{u''_0}{u_0} + \frac{\rho_0'}{u_0^2} \right)$ that appears in Section~\ref{sec: bifurcation non-zero}. In that section we perturb the initial shear to break coercivity of $Q_0$. 
\end{remark}

In the following theorem we show that $(\u_0,\rho_0)$ is isolated from non-shears in $L^\infty\times W^{1,q}$, showing the sharpness of Theorem~\ref{thm:nonzeroflexibility}.
\begin{theorem}\label{thm rigidity non-zero Linfty}
Let $q\in(1,\infty)$. Let $(\u_0,\rho_0)\in C^2(\overline{M})\times C^1(\overline{M})$ be a shear flow satisfying \eqref{eq: lower bound u0} and \eqref{eq:coercivity Q_0}. There exists $\varepsilon_0>0$ such that if a steady Boussinesq solution $(\u,\rho)\in C^2(\overline{M})\times C^1(\overline{M})$ satisfies
\[\norm{\u-\u_0}_{L^\infty}+\norm{\rho-\rho_0}_{W^{1,q}}<\varepsilon_0,\]
then $\u=(u(y),0)$ for some $u(y)$ and $\rho=\rho(y)$.
\end{theorem}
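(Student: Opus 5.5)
We will follow the strategy outlined in Remark~\ref{motivation_quadform_intro}: work with the streamline slope $w:=v/u$ and show that an energy identity for $w$, combined with the coercivity \eqref{eq:coercivity Q_0}, forces $w\equiv0$.

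\emph{Step 1: lower bound on $u$ and the equation for $w$.} Choose $\varepsilon_0\le m/2$. Then $u\ge m/2>0$ on $\overline{M}$, so $w=v/u\in C^2(\overline{M})$. Since $v=0$ on $\partial M$, also $w\in H^1_0(M)$. Dividing \eqref{vorticity_eq_rigidt2} by $u$, and using $\Delta(uw)=u\Delta w+2\nabla u\cdot\nabla w+w\Delta u$, we obtain
\[
u^2\Delta w+2u\nabla u\cdot\nabla w-\partial_y\rho\, w=0,
\]
that is, $-\nabla\cdot(u^2\nabla w)+\partial_y\rho\,w=0$. Testing against $w$ and integrating by parts (there is no boundary term since $w=0$ on $\partial M$) gives
\[
\int_M u^2|\nabla w|^2+\partial_y\rho\,|w|^2=0 .
\]

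\emph{Step 2: perturbing the coercive form.} Subtract $Q_0(w)$ to get
\[
Q_0(w)=\int_M (u_0^2-u^2)|\nabla w|^2+\int_M(\rho_0'-\partial_y\rho)|w|^2 .
\]
For the first term, $|u_0^2-u^2|\le(2\|u_0\|_\infty+\varepsilon_0)\varepsilon_0$, so it is bounded by $C\varepsilon_0\|\nabla w\|_{L^2}^2$. For the second term, we use H\"older with exponents $q$ and $q'$:
\[
\|\partial_y\rho-\rho_0'\|_{L^q}\,\|w\|_{L^{2q'}}^2\le \varepsilon_0\,\|w\|_{L^{2q'}}^2 .
\]
Because $q>1$, we have $2q'<\infty$, so the two-dimensional Sobolev embedding $H^1_0(M)\hookrightarrow L^{2q'}(M)$ gives $\|w\|_{L^{2q'}}^2\le C_q\|w\|_{H^1}^2$. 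This is exactly where $q>1$ is needed: for $q=1$ the term would require $L^\infty$ control, which $H^1$ does not provide in dimension two. Combining with \eqref{eq:coercivity Q_0},
\[
c_0\|w\|_{H^1}^2\le C(\varepsilon_0+C_q\varepsilon_0)\|w\|_{H^1}^2 .
\]
Taking $\varepsilon_0$ small depending on $q,m,c_0,\|u_0\|_\infty$ forces $w=0$, and hence $v\equiv0$.

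\emph{Step 3: conclusion.} From $v\equiv0$, incompressibility gives $\partial_x u=0$, so $u=u(y)$. Then $\u\cdot\nabla\omega=u(y)\partial_x(-u'(y))=0$, and the vorticity equation \eqref{vorticity_eq_rigidt1} yields $\partial_x\rho=0$, i.e.\ $\rho=\rho(y)$.

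The main obstacle is the potential term in Step 2, since $\partial_y\rho$ is controlled only in $L^q$. It is handled by the Sobolev embedding as above. We must also check that $w$ is regular enough to integrate by parts; this holds because $u,v\in C^2$ and $u\ge m/2$.
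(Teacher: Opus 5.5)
Your proposal is correct and follows the paper's proof essentially step by step. The common steps are: the substitution $w=v/u$ (with $u$ bounded below once $\varepsilon_0$ is small), the divergence-form equation $-\nabla\cdot(u^2\nabla w)+\partial_y\rho\,w=0$, the energy identity obtained by testing against $w$, and the comparison with $Q_0$ via H\"older's inequality and the embedding $H^1_0(M)\hookrightarrow L^{2q'}(M)$. The only cosmetic difference is that you deduce $\partial_x\rho=0$ from the vorticity equation, whereas the paper uses the transport equation $u\,\partial_x\rho=0$ together with $\inf|u|>0$; both are valid.
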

\begin{proof}
Writing $\u=(u,v)$, it suffices to prove that $v\equiv 0$. Indeed, by incompressibility, $u$ would then depend only on $y$. Furthermore, by reducing $\varepsilon_0>0$, we may assume that $\inf\abs{u}>0$. By the second equation in \eqref{eq:BoussinesqVel}, we conclude that $\rho$ would not depend on $x$. Since $\u$ and $\rho$ would not depend on $x$, neither would $p$. Therefore, it suffices that $v\equiv 0$ for $(\u,\rho,p)$ to be a shear flow.

To prove that $v$ vanishes, we recall from \eqref{vorticity_eq_rigidt2} that 
\[-u\Delta v+v\Delta u+\frac{v}{u}\partial_y\rho=0.\]
Let $w\coloneqq \frac{v}{u}$, so that $v=uw$. By the chain rule, we have:
\[-u\Delta(uw)+uw\Delta u=-u^2\Delta w-2u\nabla u\cdot\nabla w=-\Div(u^2\nabla w).\]
Therefore, $w$ satisfies
\[-\Div(u^2\nabla w)+\partial_y\rho\, w=0.\]
Since $\u$ is tangent to $\partial M$, the second component $v$ must vanish at $\partial M$. Hence, so does $w$. Multiplying the previous equation by $w$ and integrating by parts, we finally obtain:
\[\int_Mu^2\abs{\nabla w}^2+\partial_y\rho\,w^2=0.\]
Let us denote the left-hand side of the previous equation as $Q(w)$ and let us compare it with $Q_0(w)$:
\begin{align*}
\abs{Q(w)-Q_0(w)}&\leq \int_M\abs{u^2-u_0^2}\,\abs{\nabla w}^2+\int_M\abs{\partial_y\rho-\partial_y\rho_0}\,w^2\\&\leq \norm{u^2-u_0^2}_{L^\infty}\norm{w}_{H^1}^2+\norm{\partial_y \rho-\partial_y\rho_0}_{L^q}\norm{w}_{L^{2q'}}^2 \\&\leq C\left( \norm{u-u_0}_{L^\infty}+\norm{\rho-\rho_0}_{W^{1,q}} \right)\norm{w}_{H^1}^2\leq C\varepsilon_0\norm{w}_{H^1}^2
\end{align*}
for some constant $C$ depending on $q$ and $\norm{u}_{L^\infty}\leq \norm{u}_{L^\infty} + \ep_0$. Here $q'\in (1,+\infty)$ satisfies $\frac{1}{q}+\frac{1}{q'}=1$ and we have used the Sobolev embedding $H^1\hookrightarrow L^{2q'}$.

Since $\abs{Q_0(w)}\geq c_0\norm{w}_{H^1}^2$, taking $\varepsilon_0\leq \frac{c_0}{2C}$ yields $\abs{Q(w)}\geq \frac{1}{2}c_0\norm{w}_{H^1}^2$. Taking into account that $Q(w)=0$, we conclude that $w=0$, which implies $v=0$. This completes the proof.
\end{proof}

We will now consider the behavior if we measure the velocity in Sobolev spaces. The previous theorem readily shows that $(\u_0,\rho_0)$ is isolated from non-shears in $H^s\times W^{1,q}$ for $s>1$, due to the embedding $H^s\hookrightarrow L^{\infty}$. 

However, if we restrict to nearby solutions that are laminar, we can improve the rigidity to any $s>\frac12$. Here we only consider solutions in which the field $\u=(u,v)$ satisfies $\inf u>0$; in particular, its streamlines are graphs. This condition holds for the solutions in Theorem \ref{thm:nonzeroflexibility}, so the following theorem proves sharpness of our construction in Sobolev spaces:
\begin{theorem}\label{thm rigidity non-zero Sobolev}
Let $s>\frac{1}{2}$ and $q\in(1,\infty)$. Let $(\u_0,\rho_0)\in C^2(\overline{M})\times C^1(\overline{M})$ be a stratified shear flow satisfying \eqref{eq: lower bound u0} and \eqref{eq:coercivity Q_0}. There exists $\varepsilon_0>0$ such that if steady Boussinesq solution $(\u,\rho)$ with $\u=(u,v)\in C^2(\overline{M})\times C^1(\overline{M})$ satisfies
\begin{align*}
\inf_M u&>0, \\
\norm{\u-\u_0}_{H^s}+\norm{\rho-\rho_0}_{W^{1,q}}&\leq\varepsilon_0,
\end{align*}
then $v\equiv 0$, while $u$ and $\rho$ depend only on $y$.
\end{theorem}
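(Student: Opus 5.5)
The plan is to reduce again to the integral identity from the proof of Theorem~\ref{thm rigidity non-zero Linfty}. Since $\inf_M u>0$, the ratio $w:=v/u$ is well defined, lies in $C^1(\overline M)$ and vanishes on $\partial M$. Exactly as in that proof it satisfies
\[
-\Div(u^2\nabla w)+\partial_y\rho\,w=0,
\qquad
Q(w):=\int_M u^2|\nabla w|^2+\partial_y\rho\,w^2=0 .
\]
The density term is harmless. As before, $\bigl|\int(\partial_y\rho-\rho_0')w^2\bigr|\le C\varepsilon_0\|w\|_{H^1}^2$ by $H^1\hookrightarrow L^{2q'}$. All the difficulty is in comparing $\int u^2|\nabla w|^2$ with $\int u_0^2|\nabla w|^2$ when $u-u_0$ is only small in $H^s$.

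The first key step is to exploit the anisotropic embedding that $s>\frac12$ provides:
\[
H^s(M)\hookrightarrow L^2_x\bigl(\mathbb T;H^s_y(-1,1)\bigr)\hookrightarrow L^2_x\bigl(\mathbb T;L^\infty_y\bigr).
\]
Hence $e(x):=\sup_{y}|u(x,y)-u_0(y)|$ satisfies $\|e\|_{L^2(\mathbb T)}\le C\varepsilon_0$. By Chebyshev, the bad set $\mathcal B:=\{x: e(x)>m/2\}$ has measure $O(\varepsilon_0^2)$. On the good set $\mathbb T\setminus\mathcal B$ the pointwise bound $u\ge m/2$ holds on the whole vertical fibre. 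There the argument of Theorem~\ref{thm rigidity non-zero Linfty} works fibrewise, using $|u^2-u_0^2|\le C e(x)$.

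The second step deals with the bad fibres. There I plan to pass to streamline coordinates, as the paper suggests. Since $u>0$, each streamline is a graph $y=Y(x,\psi)$ with $\partial_\psi Y=1/u$ and $\partial_xY=w$. The density is $\rho=\Theta(\psi)$. The energy transforms as
\[
\int u^2|\nabla w|^2\,dx\,dy=\int\Bigl[u\,(\partial_x W+v\,\partial_\psi W)^2+u^3(\partial_\psi W)^2\Bigr]dx\,d\psi ,
\]
where $W(x,\psi)=w(x,Y(x,\psi))$. The integrand is nonnegative, so dropping the bad set only loses a nonnegative quantity. The aim is then to show
\[
\int_{\mathbb T\setminus\mathcal B}\int \Bigl(u^2|\nabla w|^2+\rho_0'w^2\Bigr)\;\ge\; c\,\|w\|_{H^1}^2-C\varepsilon_0\|w\|_{H^1}^2 .
\]
This in turn needs a control of $\int_{\mathcal B}\int(|w|^2+|\nabla w|^2)$ in terms of the good part. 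For the zeroth-order term I will use that $w$ is $C^1$ and that the fibre measure of $\mathcal B$ is small, together with a one-dimensional Sobolev bound in $x$ along horizontal lines. Note that $w=\partial_xY$ and $\int_{\mathbb T}w\,dx=0$ along each streamline, since $Y(\cdot,\psi)$ is periodic.

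I expect this control of the bad fibres to be the main obstacle. Both $u$ and the gradient of $w$ may concentrate on $\mathcal B$, and no uniform lower bound on $u$ is available there. My first attempt will use the streamline identity $w=\partial_xY$: the horizontal energy on the bad set, $\int u\,(\partial_x W+\dots)^2$, can be absorbed by the good-set contribution through the periodicity of $Y(\cdot,\psi)$ in $x$, via a Poincaré inequality on each streamline with weight $u$.

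If that fails, I will instead test the equation with $u_0w$ rather than $w$ to obtain a modified identity in which $u^2-u_0^2$ appears only multiplied by $w\,\partial w$. That term can then be estimated by $L^2_xL^\infty_y\times L^2_xL^2_y$ Hölder, using the gain above together with the $L^\infty_xL^2_y$ control of $w$ coming from $w=\partial_xY$ and the $C^1$ regularity.

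Once $Q(w)\ge \frac{c_0}{2}\|w\|_{H^1}^2$ is established for $\varepsilon_0$ small, $Q(w)=0$ forces $w=0$, hence $v\equiv0$. The conclusions $u=u(y)$ and $\rho=\rho(y)$ then follow as in Theorem~\ref{thm rigidity non-zero Linfty}.
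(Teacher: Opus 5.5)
Your proposal has a genuine gap, located exactly where you place the main obstacle. Your argument runs through the identity $Q(v/u)=0$. To use \eqref{eq:coercivity Q_0} you must control $\|w\|_{H^1(M)}$ for $w=v/u$ on all of $M$, for two reasons.
\begin{enumerate}
\item The density error $\int(\partial_y\rho-\rho_0')w^2$ is bounded only via $H^1\hookrightarrow L^{2q'}$ on the whole channel.
\item Coercivity holds only for functions in $H^1_0(M)$. The restriction of $w$ to $(\T\setminus\mathcal B)\times(-1,1)$ has artificial vertical boundaries, so it does not qualify.
\end{enumerate}
On $\mathcal B$, however, $u$ has no lower bound when $\frac12<s\le1$. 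The weight $u^2$ (or $u$, $u^3$ in streamline variables) degenerates there, so $\int u^2|\nabla w|^2$ gives no control of $\int_{\mathcal B}|\nabla w|^2$, nor even of $\int_{\mathcal B}w^2$.

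Neither of your fixes closes this. A weighted Poincar\'e inequality along streamlines with weight $u$ has no uniform constant when $u$ can be arbitrarily small. Testing with $u_0w$ still leaves the top-order term $\int u^2u_0|\nabla w|^2$ with the same degenerate weight. The ``$L^\infty_xL^2_y$ control of $w$ from $C^1$ regularity'' is only qualitative: nothing in the hypotheses bounds $v/u$ uniformly in $\varepsilon_0$. A smaller point: on the good fibres the threshold must be $e(x)\le\tau$ with $\tau\ll c_0$, not $m/2$, for the perturbative comparison with $u_0^2$ to work.

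The missing idea is to work one derivative lower and to replace the local speed by its streamline average. The paper tests the vertical momentum equation in streamline coordinates, after subtracting horizontal averages to remove the Bernoulli function, against the height deviation $h-H$. Along each streamline, $\partial_x(h-H)$ is exactly your $v/u$. But $h-H$ itself is uniformly $O(\varepsilon)$ in $L^\infty$, because $\|\psi-\psi_0\|_{L^\infty}\lesssim\|\u-\u_0\|_{H^s}$. This gives the exact identity
\[
\int_MA|\nabla\widetilde w|^2+\int_M\partial_y\rho\,\eta^2=0,
\]
where
\[
\widetilde w=1+y+\int_0^\psi a(t)^{-1}\,\d t,\qquad \eta=y-H(\psi),\qquad A=\tfrac12 a(\psi)\bigl(a(\psi)+u\bigr)\ge m^2/8.
\]
Here $a(c)=\langle u_g^2+v_g^2\rangle^{1/2}$ is uniformly close to $u_0$, by the same $L^2_xL^\infty_y$ embedding you use, combined with averaging along streamlines instead of discarding bad fibres.

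The degeneracy disappears because $\nabla\widetilde w=(v/a,\,1-u/a)$ involves no division by $u$, and $u>0$ enters $A$ only with a favorable sign. Since $A\to u_0^2$ only in $L^1$, a direct perturbative comparison still fails. One therefore needs three further ingredients:
\begin{enumerate}
\item the normalization $z_n=\mathcal K_n^{-1/2}\widetilde w_n$;
\item the bound $0\le \widetilde w-\eta\lesssim\mathcal K$, to recover zero trace and to pass from $\eta$ to $z$ in the density term;
\item a weak-limit Cauchy--Schwarz argument, giving $z\in H^1_0(M)$ with $Q_0(z)\le 0$.
\end{enumerate}
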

\begin{remark}
The main difficulty of the proof is the lack of quantitative control on $\inf u$.
\end{remark}
\begin{proof}
The main idea is to construct an energy functional $\mathcal{K}$ that measures the deviation from being a shear flow and controls the kinetic part of $Q_0$. We also derive an exact identity involving $\mathcal{K}$ and a stratification term which, after normalization and passage to the limit, recovers the density part of $Q_0$. If the theorem were false, this identity and a compactness argument would yield a nonzero $z\in H^1_0(M)$ satisfying $Q_0(z)\leq0$, which contradicts \eqref{eq:coercivity Q_0}.

To improve readability, we break down the proof into three different steps. In the first two steps we work with a fixed solution $(\u,\rho)$, with $\u=(u,v)$ and $\inf u>0$. We denote as $\ep$ its distance to $(\u_0,\rho_0)$ in $H^s\times W^{1,q}$. In the last step we consider a sequence of solutions, adding a subindex $n$ to the quantities constructed in the previous steps. As usual, $\lesssim$ denotes the corresponding inequality modulo a constant that may depend on the exponents $s,q$ and the background shear $(\u_0,\rho_0)$ but not on $\ep$ or the sequence index $n$.

\vspace{2pt}
\noindent \underline{Step 1}\\[4pt]
In this step we introduce the streamline coordinates and derive estimates for some horizontally-averaged quantities. The crucial point is that averaging ensures that these quantities satisfy favorable uniform bounds, independent of $\inf u$.

Let $\psi$ be the streamfunction of $\u$, which is locally constant on $\partial M$ because $\u$ is assumed to be tangent to the boundary. We normalize its value to 0 at $\{y=-1\}$ and denote by $c_-$ its value at $\{y=1\}$. Since $\partial_y \psi=-u<0$, we see that $c_-<0$. Furthermore, by the implicit function theorem, there exists a function $h(x,c)$ such that \begin{equation}
\psi(x,h(x,c))=c
\label{def height}
\end{equation}
for all $c_- \leq c \leq 0$. We will work on the streamline coordinates $(x,c)\in\T\times[c_-,0]$. For a function $f$ defined on $M$, we denote $f_g(x,c)\coloneqq f(x,h(x,c))$. Differentiating \eqref{def height} and recalling $u=-\partial_y\psi$, $v=\partial_x\psi$ yields
\begin{equation}
\partial_xh=\frac{v_g}{u_g}, \hspace{40pt} \partial_c h=-\frac{1}{u_g}.
\label{derivatives h}
\end{equation}
In addition, we denote the horizontal average as $\avg{f}=(2\pi)^{-1}\int_{\T}f\,{\d x}$. We define the average height and speed as:
\begin{equation}\label{averages}
 H(c)\coloneqq\avg{h}(c),\qquad\qquad
 a(c)\coloneqq\avg{u_g^2+v_g^2}^{1/2}.
\end{equation}

Next, we prove some bounds on these quantities. For the first one, choose extensions $\tilde{\u}, \tilde{u}_0$ of $\u,u_0$ to $\mathbb{T}^2$ such that we still have $\norm{\tilde{\u}-\tilde{u}_0}_{H^s}\lesssim \ep$. Comparing the Fourier weights $(1+\eta^2)^s\leq(1+k^2+\eta^2)^s$, we obtain
$$H^s(M)\hookrightarrow L^2_xH^s_y\hookrightarrow L^2_xL^\infty_y.$$
Here the second embedding is just the 1D Sobolev embedding. Restricting again to $y\in[-1,1]$, we conclude
\begin{equation}\label{prelim1}
 \int_{\T}\sup_y|\u(x,y)-(u_0(y),0)|^2\,{\d x}\lesssim\ep^2.
\end{equation}

We also estimate the difference between the streamfunctions. By subtracting a constant, we may assume that $\psi_0$ also vanishes at $\{y=-1\}$. Therefore, since $\nabla(\psi-\psi_0)=(v,u_0-u)$ and $\psi-\psi_0$ vanishes at $\{y=-1\}$, we have
$$\norm{\psi-\psi_0}_{L^\infty}\lesssim \norm{\psi-\psi_0}_{H^{1+s}}\lesssim \norm{\nabla(\psi-\psi_0)}_{H^s}=\norm{\u-\u_0}_{H^s}\leq \ep.$$
Here we have used the Sobolev embedding and Poincar\'e's inequality. In particular, we see that two points $(x_1,y_1)$ and $(x_2,y_2)$ on the same streamline $\{\psi=c\}$ have $\psi_0$ values differing by at most $C\ep$. Since $u_0\geq m$, we thus have
$$C\ep\geq \abs{\psi_0(y_1)-\psi_0(y_2)}=\abs{\int_{y_1}^{y_2}u_0(t){\d t}}\geq m\abs{y_2-y_1}.$$
Since both points are on the streamline $\{\psi=c\}$, they satisfy $y_i=h(x_i,c)$ for $i=1,2$. Hence,
$$\abs{h(x_1,c)-h(x_2,c)}\lesssim \ep.$$
Therefore, the average in $x$ satisfies
\begin{equation}
\label{difference average height}
\norm{h-H}_{L^\infty}\lesssim\ep.
\end{equation}
Combining this with \eqref{prelim1} leads to
\begin{align*}
\abs{a(c)-u_0(H(c))}
 &\leq\avg{|(u_g,v_g)-(u_0(H(c)),0)|^2}^{1/2}\\&\leq \avg{|(u_g,v_g)-(u_0(h(\cdot,c)),0)|^2}^{1/2}+\avg{|u_0(H(c))-u_0(h(\cdot,c))|^2}^{1/2}
 \lesssim \ep,
\end{align*}
where we have used the reverse triangle inequality for the averaged $L^2$-norm $\langle\,\abs{\hspace{1pt}\cdot\hspace{1pt}}^2\,\rangle^{1/2}$. Thus, if $\ep$ is sufficiently small:
\begin{equation}\label{uniform-a}
 \norm{a(\psi)-u_0}_{L^\infty}\lesssim\ep,\qquad\qquad 
 \frac m2\leq a\leq C.
\end{equation}

\noindent\underline{Step 2}\\[4pt]
In this step we construct a suitable energy functional $\mathcal{K}$, which is shown to satisfy useful estimates that are independent of $\inf u$. We also establish a key identity relating $\mathcal{K}$ to a stratification term. This identity will produce a contradiction with \eqref{eq:coercivity Q_0} along a suitable normalized sequence.

We begin by deriving equations adapted to the streamline coordinates. First, we write the momentum equation as $\u\cdot\nabla(p+|\u|^2/2+y\rho)=0$. Hence, the density and the Bernoulli function are constant on each connected streamline, see Section \ref{sec:steady_reformulation},
$$\rho=\Theta(\psi),\qquad \qquad\quad p+\tfrac12|\u|^2+y\rho=\mathcal B(\psi).$$
Next, using \eqref{def height} and \eqref{derivatives h} we compute
$$u_g\partial_x v_g=(u\partial_xv)_g+(u\partial_y v)_g\,\partial_xh=(u\partial_x v+v\partial_y v)_g=(-\partial_yp-\rho)_g=u_g\partial_cp_g-\Theta(c).$$
Hence, the vertical momentum equation in these coordinates is
$$\partial_xv_g-\partial_cp_g=\Theta(c)\partial_c h$$
Substituting the Bernoulli expression for $p_g$:
$$\partial_c\left[\frac{1}{2}(u_g^2+v_g^2)+h\Theta(c)-\mathcal{B}(c)\right]+\partial_x v_g-\Theta(c)\partial_c h=0.$$
Subtracting the horizontal average gets rid of the unknown $\mathcal{B}$ and will lead to an energy that vanishes for shears:
\begin{equation}\label{height-equation}
 \partial_c\left(\frac{u_g^2+v_g^2-a^2}{2}\right)
 +\partial_xv_g+\Theta'(c)(h-H)=0.
\end{equation}
By definition of $h$ and the values of $\psi$, we have $h(x,0)=-1$ and $h(x,c_-)=1$ for all $x\in\T$. Hence, $h-H$ vanishes at $c=0,c_-$. We multiply the previous equation by $h-H$ and integrate over $\mathbb{T}\times[c_-,0]$. After integration by parts, we obtain:
\begin{equation}\label{height-energy}
 \mathcal{K}-\int_{c_-}^{0}\int_{\T}\Theta'(c)(h-H)^2\,{\d x}\,{\d c}=0,
\end{equation}
where,
\begin{equation}
\mathcal{K}\coloneqq\frac12\int_{c_-}^{0}\int_{\T}\frac{a^2-u_g^2+v_g^2}{u_g}\,{\d x}\,{\d c}
\label{original def K}
\end{equation}
To obtain \eqref{height-energy} we have used \eqref{derivatives h} and the fact that the term involving $H'$ vanishes because $\int_\T (u_g^2+v_g^2-a^2)\hspace{1pt}{\d x}=0$, by definition of $a$. These are the energy functional and the key identity of the proof. From this expression it is not clear that $\mathcal{K}\geq0$ and that it vanishes for shears. To see this, we add the integral of $(u_g^2+v_g^2-a^2)/(2a)$, which is zero, by definition of $a$. Operating then leads to:
\begin{equation}
\mathcal{K}=\frac12\int_{c_-}^{0}\int_{\T}
       \left(\frac1{u_g}+\frac1a\right)
       \bigl((u_g-a)^2+v_g^2\bigr)\,{\d x}\,{\d c}\geq0.
 \label{energy-positive}
\end{equation}
From this expression it is clear that $\mathcal{K}\geq 0$. Furthermore $\mathcal{K}=0$ implies $v\equiv 0$; arguing as in Theorem~\ref{thm rigidity non-zero Linfty} then shows that the solution is a shear.

In their current form, these expressions are not very convenient, since they include the denominator $u_g$, which can be very small. The solution is changing coordinates back to $(x,y)\in M$, which will get rid of the denominator because the absolute Jacobian of $(x,y)\mapsto(x,\psi(x,y))$ is $u$. Doing so for $\mathcal{K}$ gives
\begin{align*}
    \mathcal{K} &= \frac12 \int_M \left(\frac{1}{u(x,y)} + \frac{1}{a(\psi(x,y))}\right) \left( (u(x,y) - a(\psi(x,y)))^2 + v(x,y)^2 \right) u(x,y) \d x \d y \\
    &=\frac12\int_M a(\psi(x,y)) (a(\psi(x,y)) +u(x,y))  \left( \left(\frac{v(x,y)}{a(\psi(x,y))}\right)^2 + \left( 1 - \frac{u(x,y)}{a(\psi(x,y))}\right)^2 \right) \d x \d y \\
    &= \int_M A(x,y) |\nabla w(x,y)|^2 \d x \d y 
\end{align*}
for 
\begin{align*}
    A(x,y) :=\frac{a(\psi(x,y))( a(\psi(x,y)) + u(x,y))}{2}\geq \frac{m^2}{8}, \quad w(x,y):= 1+y + \int_0^{\psi(x,y)}\frac{\d t}{a(t)}
\end{align*}
with further $w(x,-1) = 0$ for all $x\in \T$. Similarly, since $\partial_y\rho = -\Theta'(\psi)u$ we have
\begin{align*}
    -\int_{c_-}^0\int_\T \Theta'(c) (h- H)^2 \d x \d c = \int_M \partial_y\rho (y - H(\psi(x,y)))^2 \d x \d y = \int_M \partial_y\rho \eta(x,y)^2 \d x \d y ,
\end{align*}
for $\eta(x,y):= y - H(\psi(x,y))$. Hence, \eqref{height-energy} now reads
\begin{align}\label{eq:new-height-energy}
    0 = \int_M A |\nabla w|^2  + \int_M \partial_y \rho \eta^2.
\end{align}
Notice that the coefficient that appears in the expression of $\mathcal{K}$ has a uniform lower bound. The denominators in $w$ are also harmless, due to \eqref{uniform-a}.

In the next step we will link these objects to $Q_0$. Since $Q_0$ is defined on $H^1_0$, we will need to control the trace of $w$. By construction, $w$ does vanish on $\{y=-1\}$, but it may be nonzero on $\{y=1\}$. To control its values on the upper wall, we compare it with $\eta$, which vanishes on $\partial M$. Indeed, it follows from the definition of both quantities that the difference has the simple expression
\begin{equation}
\begin{aligned}
w-\eta&=H(\psi)+1+\int_0^\psi a(t)^{-1}\,{\d t}\\
&=\int_0^\psi\bigl(H'(t)+a(t)^{-1}\bigr)\,{\d t}
=\int_\psi^0\bigl(-H'(t)-a(t)^{-1}\bigr)\,{\d t},
\end{aligned}
\label{diff w-eta}
\end{equation}
where we have used that $H(0)=-1$. Differentiating under the integral sign in the definition of $H$ and using \eqref{derivatives h} leads to
$$-H'=\avg{u_g^{-1}}\geq\avg{u_g}^{-1}\geq a^{-1}.$$
In particular, $w-\eta\geq 0$. Furthermore, writing 
\begin{align*}
    \mathcal{K} = \int_{c_-}^0\frac12\int_\T \frac{a^2 - u_g^2 + v_g^2}{u_g} \d x \d c = \int_{c_-}^0 \mathcal{K}(c) \d c
\end{align*}
we have 
\begin{align*}
    \mathcal{K}(c) = \pi \left(-a^2 H'(c) - \langle u_g \rangle + \langle v_g^2 u_g^{-1} \rangle \right) \geq \pi a^2 \left( - H'(c) - a^{-1}(c) \right)
\end{align*}
since $H'=-\langle u_g^{-1}\rangle$ and $\avg{u_g}^2\leq\langle u_g^{2}\rangle\leq a^2$. Combining the previous bound with \eqref{uniform-a} and \eqref{diff w-eta}, we conclude
\begin{equation}\label{boundary}
 0\leq w-\eta\lesssim \mathcal{K},\qquad
 w|_{y=-1}=0,\qquad \norm{w|_{y=1}}_{L^\infty}\lesssim \mathcal{K}.
\end{equation}
Finally, for $\ep$ sufficiently small, it follows from \eqref{difference average height} and \eqref{eq:new-height-energy} that
\begin{equation}
\label{cota K}
0\leq \mathcal{K}\leq \norm{\partial_y \rho}_{L^1}\norm{\eta}_{L^\infty}^2\lesssim \ep^2.
\end{equation}

\noindent\underline{Step 3}\\[4pt]
In this step we complete the proof of the theorem by contradiction. If untrue, we shall use a compactness argument to construct a sequence whose limit $z\in H^1_0(M)$ satisfies $1+\int_M\partial_y\rho_0z^2=0$. This arises as the limit of the identity in \eqref{eq:new-height-energy}. Furthermore, we show that the left-hand side is an upper bound for $Q_0(z)$, reaching a contradiction with \eqref{eq:coercivity Q_0}.

Suppose that the theorem is false. We can then find a sequence of non-shear solutions $(\u_n,\rho_n)$ of the steady Boussinesq equations such that $\inf u_n>0$ and their distance $\ep_n$ to $(\u_0,\rho_0)$ in $H^s\times W^{1,q}$ converges to $0$. We claim no bound on $\inf_n \inf_M u_n$. As mentioned in the previous step, since these solutions are not shears, we have $\mathcal{K}_n>0$. Using the objects defined in the previous steps, we consider the sequence $z_n\coloneqq \mathcal{K}_n^{-1/2}\,w_n$. This normalization ensures that the sequence is bounded in $H^1(M)$. Indeed, from $\mathcal{K}_n = \int_M A_n |\nabla w_n |^2$ and Poincar\'e's inequality using the bottom wall, we deduce
\begin{equation}\label{normalization}
 \int_M A_n|\nabla z_n|^2=1,\qquad \qquad\norm{z_n}_{H^1}\lesssim 1.
\end{equation}
Moreover, \eqref{boundary}--\eqref{cota K} imply
\begin{equation}\label{normalized-error}
 \norm{z_n|_{\partial M}}_{L^\infty}\lesssim \mathcal{K}_n^{1/2}\to0,
 \qquad \qquad
 \left\lVert \mathcal{K}_n^{-1/2}\eta_n-z_n\right\rVert_{L^\infty}\lesssim \mathcal{K}_n^{1/2}\to0.
\end{equation}
After possibly taking a subsequence, for $q'\coloneqq q/(q-1)<\infty$ we have:
\begin{equation}\label{convergence}
 z_n\rightharpoonup z\ \text{in }H^1(M),\qquad
 z_n\to z\ \text{in }L^{2q'}(M),\qquad z\in H^1_0(M).
\end{equation}
Indeed, the first claim follows from the Banach-Alaoglu theorem and the second from compactness of the embedding $H^1(M)\hookrightarrow L^{2q'}(M)$. The zero trace follows from \eqref{normalized-error} and continuity of the trace map $H^1(M)\to L^2(\partial M)$.
Combining this with \eqref{normalized-error}, we also deduce
$$\mathcal{K}_n^{-1/2}\eta_n\to z \qquad \text{in }L^{2q'}.$$
Therefore, dividing the identity in \eqref{eq:new-height-energy} by $\mathcal{K}_n$ and using $\partial_y\rho_{n}\to\rho_0'$ in $L^q$ leads to
\begin{equation}\label{potential-limit}
 1+\int_M\rho_0'z^2=0.
\end{equation}
In particular $z\neq0$, so \eqref{eq:coercivity Q_0} implies $Q_0(z)>0$. In the remainder of the proof we show that $K\coloneqq \int_M u_0^2\abs{\nabla z}^2\leq 1$. Combining this with \eqref{potential-limit} leads to $Q_0(z)\leq 0$, reaching a contradiction. Indeed, by \eqref{uniform-a}, we have that $b_n:=a_n(\psi_n)\to u_0$ uniformly, while
$u_n\to u_0$ in $L^2$. Thus $$A_n=(b_n^2+b_nu_n)/2\to u_0^2\qquad \text{in }L^1.$$
In fact, after taking another subsequence, we have convergence almost everywhere. Next, by Cauchy--Schwarz and \eqref{normalization}, we have
$$\left|\int_M u_0^2\nabla z_n\cdot\nabla z\right|^2\leq\left(\int_M A_n|\nabla z_n|^2\right)\left(\int_M\frac{u_0^4}{A_n}|\nabla z|^2\right)=\int_M\frac{u_0^4}{A_n}|\nabla z|^2.$$
The left-hand side tends to $K^2$ by weak convergence. The right-hand side tends to $K$ by dominated convergence, since
$$\frac{u_0^4}{A_n}\to u_0^2 \hspace{6pt} \text{ a.e.},\qquad \qquad 0\leq\frac{u_0^4}{A_n}|\nabla z|^2 \leq\frac{8\norm{u_0}_{L^\infty}^4}{m^2}|\nabla z|^2\in L^1(M).$$
Hence $K^2\leq K$, so $K\leq1$. Combining this with \eqref{potential-limit} gives $Q_0(z)=K-1\le0$. Since $z\neq 0$, this contradicts \eqref{eq:coercivity Q_0} and completes the proof.
\end{proof}

\section{Laminar bifurcation nearby stagnation points}\label{sec:bifurcation_-zero_01}
In this section we show the existence of non-trivial steady states arbitrarily close to a shear stationary solution containing stagnation points, such as the Couette flow $\u(y)= (y,0)$. 

\begin{theorem}\label{thm:zeroflexibility}
Let $((u_0(y),0),\rho_0(y))$ be a
shear equilibrium with $u_0, \rho_0\in C^{\infty}([-1,1])$ and $u_0(y_0)=0$ for some
$y_0\in[-1,1]$.
For every \textcolor{black}{$k\geq 2$ and} every $1\le q<\infty$,
and every $\varepsilon>0$, there exists a nontrivial
laminar stationary Boussinesq solution $(\u,\rho)$
satisfying
\[\|\u-(u_0,0)\|_{C^{k-1}}+\|\rho-\rho_0\|_{W^{1,q}}<\varepsilon.\]
\end{theorem}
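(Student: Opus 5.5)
We deform the stream function and the density through a common vertical change of coordinates, so the laminar foliation by periodic graphs is preserved. Since the velocity must stay close to $u_0$ in $C^{k-1}$, we do not touch the velocity profile. Instead, near the stagnation point $y_0$ (moving to a nearby interior simple zero if necessary), we modify only the density in a thin strip, in the spirit of \eqref{eq:pos_density_profiles}:
\[
\rho_{\delta,a}(y):=\rho_0(y)-\int_{-1}^y a\,\delta^{-s}\eta(\delta^{-1}(z-y_0))\,dz,\qquad 0<s<q^{-1}.
\]
This perturbation is $O(\delta^{1/q-s})$ in $W^{1,q}$, while $u_0$ is left intact.

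Concretely, we look for solutions of the form
\[
\psi(x,y)=\overline{\psi}_0(y+\zeta(x,y)),\qquad \rho=\overline{\rho}(y+\zeta(x,y)),
\]
with $\zeta$ small and vanishing on $\partial M$. Here $\overline{\psi}_0$ is the background stream function and $\overline{\rho}=\rho_{\delta,a}$. This automatically gives \eqref{second_firstintegral1} and laminarity, because the leaves $\{y+\zeta=\eta\}$ are graphs. Substituting into \eqref{first_firstintegral1} and requiring $B$ to be a function of the leaf label gives a quasilinear elliptic equation for $\zeta$. Its linearization about $\zeta=0$ has the Sturm--Liouville form
\[
-\partial_y\bigl(u_0^2\partial_y\zeta\bigr)-u_0^2\partial_x^2\zeta+\rho_{\delta,a}'\,\zeta=0,
\]
which is exactly the operator behind $Q_{\mathrm{sh}}$ in Remark~\ref{motivation_quadform_intro}. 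Taking the mode $\cos(jx)$ gives the one-dimensional degenerate operator
\[
\mathcal{M}_{j,a}\zeta:=-(u_0^2\zeta')'+j^2u_0^2\zeta+\rho_{\delta,a}'\zeta .
\]
Because $\rho'_{\delta,a}=\rho_0'-a\delta^{-s}$ on the strip, its first eigenvalue can be pushed down. We test the Rayleigh quotient with a bump supported in the strip, where $u_0^2\lesssim\delta^2$: the negative mass $-a\delta^{-s}\int\phi^2$ beats $\int u_0^2(\phi')^2\lesssim\delta^{2}\delta^{-2}\int\phi^2$ as $\delta\to0$. For fixed $j=j_\ast$, monotonicity in $a$ (the Kato derivative formula, as in Lemma~\ref{lemma: control eigenvalues no stag}) together with continuity lets us tune $a=a_\ast$ so that $0$ is the simple first eigenvalue of $\mathcal{M}_{j_\ast,a_\ast}$. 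We remove possible degeneracies with other modes $j$ or higher eigenvalues by a small generic perturbation $b\,g$, as in that lemma.

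We then apply Theorem~\ref{thm:unifCR} with $a$ as the bifurcation parameter. The transversality derivative $\partial_a\lambda^1>0$ replaces $\partial_\mu$, and the remaining parameters are frozen. This produces a curve $\zeta_\sigma=\sigma\Phi+o(\sigma)$ with $\Phi=\phi_1(y)\cos(j_\ast x)$.

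The main obstacle is the functional framework, because the linear operator degenerates at $y_0$ (coefficient $u_0^2$ vanishing quadratically). Three things must be checked: the Fredholm property, that the kernel is exactly one-dimensional and even in $x$, and enough regularity of $\phi_1$ and of the nonlinear solution to conclude that $\u=\nabla^\perp\psi$ is $C^{k-1}$-close to $(u_0,0)$. Nothing is claimed about the $C^1$ closeness of the density.

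For the first two, we work in weighted spaces with norm $\int u_0^2|\nabla\zeta|^2+|\zeta|^2$, where Hardy-type inequalities hold at a simple zero. For regularity, the Frobenius exponents at $y_0$ are governed by $\rho'(y_0)/u_0'(y_0)^2$, which is essentially the Richardson number. We plan to choose $a$ so that this indicial ratio is large, making the exponent of the regular branch exceed $k$; this is where the assumption $k<\infty$ is used. Finally, since $\psi=\overline{\psi}_0(y+\zeta)$, the velocity is $C^{k-1}$-close to $(u_0,0)$ as soon as $\zeta$ is small in $C^k$. We obtain this by taking $\sigma$ small, and choosing $\delta$ small first gives the $W^{1,q}$ bound on the density.
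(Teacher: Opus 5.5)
There is a genuine gap, and it sits exactly where you locate the main obstacle: the behaviour at $y_0$. Your density modification uses $\eta(\delta^{-1}(z-y_0))$, which equals $1$ at $z=y_0$, so $\rho_{\delta,a}'(y_0)=\rho_0'(y_0)-a\delta^{-s}\to-\infty$.

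For $-(u_0^2\zeta')'+\rho'\zeta$ with $u_0\approx u_0'(y_0)(y-y_0)$, the indicial equation at $y_0$ is $r^2+r-c=0$ with $c:=\rho'(y_0)/u_0'(y_0)^2$. Its roots are $-\frac12\pm\frac12\sqrt{1+4c}$, and they become complex, $-\frac12\pm i\beta$, as soon as $c<-\frac14$. This has three consequences:
\begin{enumerate}
\item[(i)] Solutions behave like $|y-y_0|^{-1/2}\cos(\beta\log|y-y_0|)$, so there is no $C^k$ regularity.
\item[(ii)] The form $\int u_0^2|\zeta'|^2+\rho'\zeta^2$ is unbounded below relative to $\int u_0^2\zeta^2$: rescale a fixed profile towards $y_0$.
\item[(iii)] For your unweighted $\mathcal{M}_{j,a}$ on $L^2$, substituting $y-y_0=e^{s}$, $\zeta=e^{-s/2}g$ shows that the essential spectrum begins at $\frac14u_0'(y_0)^2+\rho'(y_0)<0$. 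So $0$ lies in the essential spectrum, and there is neither an isolated simple eigenvalue $0$ nor a Fredholm property.
\end{enumerate}
This cannot be fixed by tuning constants. Near $y_0$, Hardy's inequality gives essentially $\int u_0^2|\zeta'|^2\geq\frac14u_0'(y_0)^2\int\zeta^2$. Hence a negative strip centred at $y_0$ only makes the form negative once it has pushed $\rho'$ there below roughly $-\frac14u_0'(y_0)^2$, and your own Rayleigh estimate needs $a\delta^{-s}\gg1$. Your proposed remedy, choosing $a$ so that the indicial ratio is large, is incompatible with the eigenvalue push: the same $a$, with the same sign, drives that ratio to $-\infty$.

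The missing idea is to \emph{decouple} the two roles of the density modification.
\begin{enumerate}
\item[(a)] First change $\rho_0$, by an amount small in $W^{1,q}$ but not in $C^1$, so that $\rho_0'(y)=(k+1)(k+2)u_0'(y_0)^2+u_0(y)^2f(y)$ on $[y_0,1]$ (Lemma~\ref{lemma:reduction}). This makes the indicial roots $k+1$ and $-(k+2)$, so the admissible branch is flat to order $k$ at $y_0$.
\item[(b)] Then place the negative bump $a\delta^{-1/2q}\chi(\delta^{-1}(y-y_0))$ with $\chi\in C_c^\infty(0,1)$. It is supported in $(y_0,y_0+\delta)$, away from $y_0$, where $u_0\sim\delta$ is small but nonzero. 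Thus $\rho'$ near $y_0$ is untouched, and the weighted test function $\delta^{-3/2}\chi(\delta^{-1}(y-y_0))$ gives a first eigenvalue $\lesssim-\delta^{-2}$ of $\mathcal{L}_a$ in $L^2_{u_0^2}$.
\item[(c)] Solve only on $\mathbb{T}\times(y_0,1)$, in $C^{k,\frac12}$ spaces of functions flat to order $k$ at $y_0$. Fredholmness comes from the model operator $-\partial_y^2-\frac{2}{y}\partial_y+\frac{(k+1)(k+2)}{y^2}$, via Hardy plus Lax--Milgram, the barrier $y^{k+3/2}$ and rescaled Schauder estimates. Then extend $h$ by zero below $y_0$; the flatness is what makes this extension $C^k$.
\end{enumerate}
With this repair, your use of $a$ as the bifurcation parameter at fixed integer $j_\ast$ would work. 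Transversality is $\int\partial_a\rho_a'\,\phi_1^2\neq0$, and the kernel is one-dimensional under the same genericity condition you mention. This is a legitimate simplification of the paper's route, which bifurcates in $j$ via the uniform Crandall--Rabinowitz Theorem~\ref{thm:unifCR} and then uses an intermediate value argument to reach an integer period.
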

If the shear flow is allowed to vanish, the density need no longer be a function of the streamfunction $\psi_0$. Hence, the functional framework used for the case with no stagnation points does not work. Here we will consider deformations of the level sets of $\psi_0$ and $\rho_0$:

\begin{lemma}\label{lemma formulation h}
Let $\psi_0, \rho_0\in C^{\infty}(\R)$ be two smooth profiles and let $h\in C^k (M)$ be a function vanishing at $\partial M$, with $k\geq 2$. Consider the functions $\psi,\rho \in C^k(M)$ given by 
\begin{equation}
    \psi(x,y)\coloneqq \psi_0(y+h(x,y)), \qquad \rho(x,y)\coloneqq \rho_0(y+h(x,y)).
    \label{eq:deformation_streamlines}
\end{equation}
If $h$ satisfies
\begin{equation}
-\psi_0'(y+h)^2\Delta h-\psi_0'(y+h)\psi''_0(y+h)\left[(\partial_xh)^2+(\partial_yh)^2+2\partial_y h\right]+\rho_0'(y+h)h=0,
\label{eq:sufficient_h}
\end{equation}
then $(\nabla^\perp \psi,\rho)$ is a steady solution of the Boussinesq equations.
\end{lemma}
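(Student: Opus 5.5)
The plan is to check the two conditions of Proposition~\ref{Bernoullie_function_2} directly: the transport identity \eqref{second_firstintegral1}, and the vorticity form of the momentum equation. Set $Y:=y+h(x,y)$, so $\psi=\psi_0(Y)$ and $\rho=\rho_0(Y)$.

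\textbf{Transport.} Both $\psi$ and $\rho$ are functions of the single scalar $Y$. Hence $\nabla^\perp\psi\cdot\nabla\rho=\psi_0'(Y)\rho_0'(Y)\,\nabla^\perp Y\cdot\nabla Y=0$ identically. The boundary condition also holds automatically: since $h=0$ on $\partial M$, $\psi$ is constant on each component of $\partial M$.

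\textbf{Momentum.} Rather than constructing $B$ directly, I will use the equivalent vorticity equation \eqref{vorticity_eq_rigidt1}, $\u\cdot\nabla\omega=-\partial_x\rho$ with $\omega=\Delta\psi$. Once this holds, $\nabla^\perp\cdot(\Delta\psi\nabla\psi+\rho e_2)=0$ on $M$ (up to sign conventions), and a pressure can be recovered.

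\emph{The closed-form vorticity.} Compute
\[
\Delta\psi=\psi_0''(Y)|\nabla Y|^2+\psi_0'(Y)\Delta h,
\qquad |\nabla Y|^2=1+2\partial_yh+(\partial_xh)^2+(\partial_yh)^2 .
\]
Then \eqref{eq:sufficient_h}, divided by $-\psi_0'(Y)$ where this is nonzero, reads
\[
\psi_0'(Y)\Delta h+\psi_0''(Y)\bigl(|\nabla Y|^2-1\bigr)-\frac{\rho_0'(Y)}{\psi_0'(Y)}h=0 .
\]
Therefore
\[
\omega=\psi_0''(Y)+\frac{\rho_0'(Y)}{\psi_0'(Y)}\,h=\psi_0''(Y)+\frac{\rho_0'(Y)}{\psi_0'(Y)}\,(Y-y).
\]
This has the form $F(\psi)+y\,\Theta'(\psi)$ with $\Theta'(\psi)=-\rho_0'(Y)/\psi_0'(Y)$, exactly the structure of \eqref{semi_elliptic_stream}.

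\emph{Checking the vorticity equation.} Since $\u\cdot\nabla$ annihilates every function of $Y$, we get
\[
\u\cdot\nabla\omega=-\frac{\rho_0'(Y)}{\psi_0'(Y)}\,\u\cdot\nabla y=-\frac{\rho_0'(Y)}{\psi_0'(Y)}\,\partial_x\psi=-\rho_0'(Y)\,\partial_xY=-\partial_x\rho .
\]
This is the required identity, obtained without dividing in the final expression. Equivalently, $\Delta\psi\,\nabla\psi+y\nabla\rho=\nabla B$ with $B=G(Y)$ for a suitable $G$ defined by integration in the $Y$ variable.

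\textbf{Main obstacle.} The difficulty is the stagnation set $\{\psi_0'(Y)=0\}$, where the division above is illegitimate; this is exactly the point of the lemma.

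\emph{First attempt.} Avoid division entirely by verifying the polynomial identity
\[
\psi_0'(Y)\,\bigl(\u\cdot\nabla\omega+\partial_x\rho\bigr)=0,
\]
which follows from multiplying the computation above by $\psi_0'(Y)$ and substituting \eqref{eq:sufficient_h}. This gives $\u\cdot\nabla\omega+\partial_x\rho=0$ on the open set where $\psi_0'(Y)\neq0$, and by continuity on its closure.

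\emph{Remaining set.} On the interior of $\{\psi_0'(Y)=0\}$ we have $\u\equiv0$, so we need $\partial_x\rho=\rho_0'(Y)\partial_xY=0$ there. If that interior set is nonempty, then $Y$ takes values in a set where $\psi_0'=0$. If the stagnation points of $\psi_0$ are isolated, $Y$ is constant on each component, so $\partial_xY=0$ and the identity holds.

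\emph{Fallback.} For a general $\psi_0$, I would go through the Bernoulli form $\Delta\psi\nabla\psi+y\nabla\rho=\nabla B$ with $\nabla\psi,\nabla\rho\parallel\nabla Y$. It then suffices that the coefficient of $\nabla Y$ is a function of $Y$. From the displayed formula multiplied by $\psi_0'$, that coefficient equals
\[
\psi_0'\psi_0''(Y)+\rho_0'(Y)\,Y,
\]
which is manifestly a function of $Y$ with no division. Hence $B=G(Y)$ with
\[
G(\tau)=\int^\tau\bigl(\psi_0'\psi_0''+\rho_0'(s)\,s\bigr)\,ds .
\]
This route removes the obstacle cleanly, and I would use it as the final argument, invoking the converse part of Proposition~\ref{Bernoullie_function_2}.
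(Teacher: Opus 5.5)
Your final (``fallback'') argument is exactly the paper's proof. Since $\nabla\psi$ and $\nabla\rho$ are parallel to $\nabla Y$, one has $\Delta\psi\,\nabla\psi+y\nabla\rho=\bigl[\psi_0'(Y)\Delta\psi+\rho_0'(Y)y\bigr]\nabla Y$, and \eqref{eq:sufficient_h} is precisely the division-free identity $\psi_0'(Y)\Delta\psi+\rho_0'(Y)y=\psi_0'(Y)\psi_0''(Y)+\rho_0'(Y)Y$, so $B=G(Y)$ and the converse part of Proposition~\ref{Bernoullie_function_2} applies. You were right to abandon the vorticity detour: besides the stagnation-set issue, it needs $h\in C^3$ to make sense of $\nabla\omega$ classically, and a separate check that the recovered pressure is single-valued on the periodic channel.
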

\begin{proof}
We prove that $(\nabla^\perp \psi,\rho)$ is a steady solution of the Boussinesq equations using Proposition~\ref{Bernoullie_function_2}. We denote $Y(x,y)\coloneqq y+h(x,y)$. Since $\nabla \psi$ and $\nabla \rho$ are both parallel to $\nabla Y$, we see that \eqref{second_firstintegral1} holds. Furthermore, the fact that $h$ vanishes on $\partial M$ implies that $\psi$ is constant on each component of $\partial M$. Regarding \eqref{first_firstintegral1}, we write
\[\Delta\psi\,\nabla\psi+y\nabla\rho=\bigl[\psi_0'(Y)\Delta\psi+\rho_0'(Y)y\bigr]\nabla Y.\]
For this expression to be a gradient, it suffices that
\[\psi_0'(Y)\Delta\psi+\rho_0'(Y)y=F(Y)\]
for some smooth function $F$. We choose $F$ so that the previous condition holds for $h\equiv0$, namely
\[F(y):=\psi_0'(y)\psi_0''(y)+\rho_0'(y)y.\]
Substituting this value of $F$, we obtain the condition
\[\psi_0'(Y)\Delta\psi+\rho_0'(Y)y=\psi_0'(Y)\psi_0''(Y)+\rho_0'(Y)Y.\]
If this equation holds, then $(\nabla^\perp\psi,\rho)$ is a steady solution of the Boussinesq equations, by Proposition~\ref{Bernoullie_function_2}. Substituting our ansatz for $\psi$, we have
\[\Delta\psi=\operatorname{div}\bigl(\psi_0'(Y)\nabla Y\bigr)=\psi_0''(Y)|\nabla Y|^2+\psi_0'(Y)\Delta Y.\]
Finally, using the definition of $Y$ and doing some straightforward computations leads to \eqref{eq:sufficient_h}. All computations involve at most second derivatives of $h$, so they are valid for $h\in C^k(M)$ with $k\ge2$. The resulting velocity $\u\coloneqq\nabla^\perp \psi$ and pressure given by Proposition~\ref{Bernoullie_function_2} belong to $C^{k-1}(M)$, and the equations hold classically.
\end{proof}

Without loss of generality, we will make the following simplifying assumptions:
\begin{lemma}
\label{lemma:reduction}
Let $u_0,\rho_0\in C^{\infty}([-1,1])$ and assume that $u_0$ has at least one zero in $[-1,1]$. Then, after an arbitrarily small perturbation in $C^k$, we may assume that all zeros of $u_0$ are simple and contained in $(-1,1)$. Furthermore, if $y_0$ denotes the right-most zero of $u_0$, after an arbitrarily small perturbation in $W^{1,q}, \:q<\infty$, we may assume that $\rho_0$ satisfies 
\begin{equation}
\label{eq:condition rho0 eigenvalues}
\rho_0'(y)=(k+1)(k+2)u_0'(y_0)^2+u_0(y)^2f(y) \qquad \forall y\in[y_0,1]
\end{equation}
for some smooth function $f$.
\end{lemma}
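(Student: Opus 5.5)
We prove the lemma in two independent steps. The first makes the zeros of $u_0$ simple and interior by a small $C^k$ perturbation. The second modifies $\rho_0$ on $[y_0,1]$ by a perturbation that is small in $W^{1,q}$ but not in $C^1$. For each step we check that the modified pair is still a shear equilibrium. This is automatic, since any pair of profiles $((u(y),0),\rho(y))$ is stationary; see \eqref{intro:shear}.

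\emph{Step 1: simple zeros.} By Sard's theorem, regular values of $u_0$ are dense. Choose a regular value $c$ with $|c|$ arbitrarily small, and also $c\neq u_0(\pm1)$. Set $\tilde u_0:=u_0-c$. Then every zero of $\tilde u_0$ is simple, so there are finitely many, and none lies at $\pm1$. We still need at least one zero, and it must be interior. If $u_0$ had a zero $y_\ast$ and changed sign there, then for small $c$ of the appropriate sign $\tilde u_0$ keeps a zero nearby. If $u_0\geq0$ everywhere (touching zero at $y_\ast$, possibly at $y_\ast=\pm1$), take $c>0$ small; then $\tilde u_0(y_\ast)=-c<0$. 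If needed, first add a small $C^k$ bump so that $u_0$ takes a positive value somewhere in the interior. Then $\tilde u_0$ changes sign, so it has a simple zero in $(-1,1)$. The case $u_0\leq 0$ is symmetric. Finally, $\|\tilde u_0-u_0\|_{C^k}\lesssim|c|$ (plus the bump), which can be made arbitrarily small.

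\emph{Step 2: density profile.} Let $y_0$ be the right-most zero of the new $u_0$. Then $u_0\neq0$ on $(y_0,1]$ and $u_0'(y_0)\neq0$. Write $\kappa:=(k+1)(k+2)u_0'(y_0)^2$. Pick a small $\delta>0$ and a cutoff $\chi_\delta$ equal to $1$ on $[y_0,y_0+\delta]$ and supported in $[y_0-\delta,y_0+2\delta]$. Define
\[
\tilde\rho_0'(y):=\chi_\delta(y)\,\kappa+(1-\chi_\delta(y))\,\rho_0'(y),
\qquad \tilde\rho_0(y):=\rho_0(-1)+\int_{-1}^y\tilde\rho_0'.
\]
The derivatives $\tilde\rho_0'$ and $\rho_0'$ differ only on a set of measure $O(\delta)$, and the difference is bounded there. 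Hence
\[
\|\tilde\rho_0'-\rho_0'\|_{L^q}\lesssim\delta^{1/q},
\]
and the same bound holds for $\tilde\rho_0-\rho_0$ in $L^\infty$. So the perturbation is small in $W^{1,q}$ for every $q<\infty$. This is the only point where finiteness of $q$ is used.

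We now verify \eqref{eq:condition rho0 eigenvalues}. Set $f:=(\tilde\rho_0'-\kappa)/u_0^2$ on $[y_0,1]$. On $[y_0,y_0+\delta]$ the numerator vanishes identically, so $f\equiv0$ there. On $[y_0+\delta,1]$ we have $u_0^2\geq c_\delta>0$, so $f$ is smooth. The two pieces glue smoothly because $f$ vanishes on an interval containing the transition region near $y_0$. Moreover, $\tilde\rho_0$ is smooth since $\chi_\delta$ is smooth. If one wants \eqref{eq:condition rho0 eigenvalues} to hold on all of $[y_0,1]$ in the literal sense, $f$ can also be extended smoothly to the left of $y_0$.

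\emph{Main obstacle.} The only delicate point is the potential division by $u_0^2$ at $y_0$. It is avoided by making $\tilde\rho_0'\equiv\kappa$ exactly on a neighbourhood of $y_0$ inside $[y_0,1]$. This forces the large-in-$C^1$ but small-in-$W^{1,q}$ modification, and it is consistent with the remark following Theorem~\ref{informal_stagnant_laminar_flexibility}.
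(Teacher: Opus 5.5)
Your proof is correct and follows the paper's argument. Step 1 is the same Sard-based subtraction of a small regular value $c\neq u_0(\pm1)$; the paper treats $u_0\equiv 0$ by taking $sy$ rather than a bump, and secures a zero by choosing $c$ in the interior of $u_0([-1,1])$ rather than by your sign-case analysis. In Step 2 the paper only matches the jet $\rho_0'(y_0)=(k+1)(k+2)u_0'(y_0)^2$, $\rho_0''(y_0)=0$, so that $f$ is smooth by Taylor division, whereas you make $\rho_0'$ exactly constant near $y_0$, which gives $f\equiv 0$ there and avoids the division; both modifications are small in $W^{1,q}$ for the same small-measure reason.
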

\begin{proof}
If $u_0$ is constant, it suffices to take $\tilde{u}_0\coloneqq sy$ for a sufficiently small $s>0$. Next, we assume that $u_0$ is not constant. Since $0\in u_0([-1,1])$, the interior of the interval $u_0([-1,1])$ contains values arbitrarily close to zero. By Sard's theorem, we can choose a regular value $c\in u_0([-1,1])$ of $u_0$ with $\abs{c}$ arbitrarily small. By further reducing $\abs{c}$, we may assume that $c\notin\{u_0(-1),u_0(1)\}$. Then $\widetilde u_0\coloneqq u_0-c$ has at least one zero, all its zeros are simple and contained in $(-1,1)$, and can be made arbitrarily close to $u_0$ in any norm by taking $\abs{c}$ sufficiently small.

On the other hand, since $y_0$ is simple and is the only zero of $u_0$ on $[y_0,1]$, we see that \eqref{eq:condition rho0 eigenvalues} essentially amounts to choosing $\rho_0'(y_0)=(k+1)(k+2)u_0'(y_0)^2$ and $\rho_0''(y_0)=0$. This can be achieved with an arbitrarily small perturbation in $L^q$, since $q<\infty$.
\end{proof}
\begin{remark}\label{remark:reduction}
Throughout the rest of this section we will assume $y_0=0$. This is done merely for notational convenience; the arguments are the same, mutatis mutandis. 
\end{remark}

Let us summarize our strategy to prove Theorem~\ref{thm:zeroflexibility}. We work on $M_+\coloneqq \T\times(0,1)$. We are looking for a function $h\in C^k(M_+)$ with $k\geq 2$ whose derivatives up to order $k$ vanish at $\{y=0\}$ and such that
\begin{equation}
-\partial_y^2h-j^2\partial_x^2 h-\frac{u'_0(y+h)}{u_0(y+h)}\left[j^2(\partial_xh)^2+(\partial_yh)^2+2\partial_y h\right]+\frac{\rho_0'(y+h)}{u_0(y+h)^2}h=0.
\label{aux eq h}
\end{equation}
Here $j$ acts as a bifurcation parameter. We will perturb the given profiles $\rho_0$ to a nearby density profile $\rho_a$ that allow us to bifurcate from $h\equiv 0$ for some value $j_0$ of the parameter. We will obtain a solution $h$ with non-trivial $x$-dependence that is arbitrarily close to $h\equiv 0$. We then extend $h$ by zero to the whole channel and consider the $C^k$ function $\tilde{h}(x,y)\coloneqq h(j x,y)$. As before, the actual argument is more complicated because we need to ensure that $j$ is an integer. By Lemma~\ref{lemma formulation h} the streamfunction $\psi_0(y+\tilde{h})$ and density $\rho_a(y+\tilde{h})$ will provide a solution to the steady Boussinesq equations that has non-trivial $x$-dependence and is close to the original solution $(u_0,\rho_0)$.

As in the case without stagnation points, the underlying mechanism is to force $\rho_a'$ to be very negative in a region where $u_0$ is very small. Since here $u_0$ has a zero, we do not need to perturb it to achieve this. In addition, by controlling the value of $\rho_a'(0)$ we can make $h$ as flat as we want at $y=0$, which allows us to treat one portion of the channel and then extend by zero. These are the two reasons why we can use any $C^k$ norm in the theorem. 

\subsection{Study of the vertical operator} The first step to prove Theorem~\ref{thm:zeroflexibility} is to understand how the linearization of the operator in \eqref{aux eq h} acts in the vertical variable. For every integer $m\geq 0$ we define the following spaces, which we endow with the $C^{m,\frac12}$ H\"older norm:
\begin{align*}
Z^m\coloneqq \left\{f\in C^{m,\frac12}([0,1]):\;  f^{(j)}(0)=0 \;\;\,\forall 0\leq j\leq m\right\}, \hspace{30pt} Z^m_0\coloneqq \left\{f\in Z^m:\; f(1)=0\right\}.
\end{align*}
As usual, we need to use H\"older spaces to prevent the failure of elliptic estimates at integer index $C^m$. These H\"older spaces will be truncated to $C^m$ spaces at the end of the proof. The value $\frac12$ is not important: any $0<\beta\leq \frac12$ would suffice. In the next lemma we study how the relevant operator behaves on these spaces:
\begin{lemma}
\label{lemma: Fredholmness T}
Let $u_0,\rho_0\in C^\infty([0,1])$ be as in Lemma~\ref{lemma:reduction} and Remark~\ref{remark:reduction}. Then, for any integer $2\leq m\leq k$, the operator
\[\mathcal{L}_0: Z^m_0\to Z^{m-2}, \quad\phi(y)\mapsto -\phi''(y)-2\frac{u_0'(y)}{u_0(y)}\phi'(y)+\frac{\rho_0'(y)}{u_0(y)^2}\phi(y)\]
is Fredholm and has index 0.
\end{lemma}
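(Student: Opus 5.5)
The plan is to write $\mathcal{L}_0$ as a model Euler-type operator, which we invert explicitly, plus a compact remainder. Fredholmness with index $0$ then follows from the stability of the index under compact perturbations. Set $a_0:=u_0'(0)\neq 0$ and $c_k:=(k+1)(k+2)$. By Lemma~\ref{lemma:reduction} (with $y_0=0$), on $[0,1]$ we have
\[
\frac{\rho_0'}{u_0^2}=\frac{c_k a_0^2}{u_0^2}+f,
\]
with $f$ smooth. Since $u_0(y)=a_0y\,\alpha(y)$ with $\alpha$ smooth, $\alpha(0)=1$ and $u_0\neq0$ on $(0,1]$, we have
\[
\frac{u_0'}{u_0}=\frac1y+r_1(y),\qquad \frac{a_0^2}{u_0^2}=\frac{1}{y^2}+\frac{r_2(y)}{y},
\]
with $r_1,r_2\in C^\infty([0,1])$. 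Hence
\[
\mathcal{L}_0=E+K,\qquad E\phi:=-\phi''-\frac2y\phi'+\frac{c_k}{y^2}\phi,\qquad K\phi:=-2r_1\phi'+\Bigl(\frac{c_k r_2}{y}+f\Bigr)\phi.
\]

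\emph{Step 1: $K$ is compact.} If $\phi\in Z^m_0$, then $\phi'$ and $\phi/y=\int_0^1\phi'(ty)\,\d t$ belong to $C^{m-1,\frac12}$ and vanish to order $m-1$ at $0$. So $K$ maps $Z^m_0$ boundedly into $Z^{m-1}$ (with a slightly weaker Hölder index if needed). The embedding $Z^{m-1}\hookrightarrow Z^{m-2}$ is compact by Arzelà--Ascoli, and the vanishing conditions pass to the limit. Hence $K:Z^m_0\to Z^{m-2}$ is compact.

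\emph{Step 2: $E:Z^m_0\to Z^{m-2}$ is an isomorphism.} The indicial equation $r(r-1)+2r=c_k$ has roots $r=k+1$ and $r=-(k+2)$. The homogeneous solutions are therefore $y^{k+1}$ and $y^{-(k+2)}$, with Wronskian weight giving the factor $1/(2k+3)$. Injectivity is immediate: a solution in $Z^m_0$ must be a multiple of $y^{k+1}$, since $y^{-(k+2)}$ is singular, and the condition $\phi(1)=0$ kills it. For surjectivity, given $F\in Z^{m-2}$ we define
\[
\phi(y)=\frac{1}{2k+3}\Bigl[y^{-(k+2)}\int_0^y s^{k+3}F(s)\,\d s+y^{k+1}\int_y^1 s^{-k}F(s)\,\d s\Bigr]-\beta\,y^{k+1}.
\]
The constant $\beta$ is chosen so that $\phi(1)=0$; only the regular homogeneous solution is used for this correction. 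One checks directly that $E\phi=F$ on $(0,1]$.

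\emph{Step 3: Hölder bounds for $E^{-1}$ (main obstacle).} It remains to show $\phi\in Z^m_0$ with $\|\phi\|_{C^{m,\frac12}}\lesssim\|F\|_{C^{m-2,\frac12}}$. Here the restriction $m\le k$ enters. Since $F$ vanishes to order $m-2$, we have $|F(s)|\lesssim s^{m-\frac32}\|F\|$, and the first integral is $O(y^{m+\frac12})$. The second integral has integrand of size $s^{m-\frac32-k}$. When this exponent is below $-1$ the integral diverges at $0$, but multiplication by $y^{k+1}$ still gives $O(y^{m+\frac12})$; the borderline logarithmic case is also harmless because $k+1>m$. Thus $\phi=O(y^{m+\frac12})$ near $0$, and analogous bounds hold for its derivatives. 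To control the $C^{m,\frac12}$ seminorm, I would use a dyadic scaling argument. On each annulus $y\sim2^{-\ell}$, the rescaled function $\phi_\ell(t)=2^{\ell(m+\frac12)}\phi(2^{-\ell}t)$, $t\in[\frac12,2]$, solves the same scale-invariant equation $E\phi_\ell=F_\ell$, with $F_\ell$ rescaled in the same way and bounded in $C^{m-2,\frac12}$ uniformly in $\ell$. Interior Schauder estimates for this uniformly elliptic ODE on $[\frac12,2]$, combined with the pointwise decay bounds, then give uniform $C^{m,\frac12}$ bounds on the $\phi_\ell$. Summing over $\ell$ yields the weighted Hölder estimate and the vanishing of $\phi^{(j)}(0)$ for $j\le m$. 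Away from $0$ the operator is regular and standard estimates apply.

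With $E$ an isomorphism and $K$ compact, $\mathcal{L}_0=E+K$ is Fredholm of index $0$.
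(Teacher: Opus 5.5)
Your proof is correct and is essentially the paper's argument. The paper omits it as the 1D version of Lemma~\ref{fredholmness of 2D operator}: split $\mathcal{L}_0$ into the Euler-type model operator plus a remainder made compact by the integral quotient identities, prove the model operator is an isomorphism using $y^{m+\frac12}$ decay and rescaled Schauder estimates, then use stability of the index under compact perturbations. The only difference is that you get existence and decay from the explicit variation-of-parameters formula (the one the paper writes in Lemma~\ref{lemma eigenvalues with stagnation points}) instead of Lax--Milgram for $w=y\phi$ plus the barrier $y^{m+\frac32}$; also, the global $C^{m,\frac12}$ seminorm comes not from ``summing over $\ell$'' but from the near/far two-point argument, which uses your uniform local bounds and $|\phi^{(m)}(y)|\lesssim y^{1/2}\|F\|$.
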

The proof is just a 1D version of the proof of Lemma~\ref{fredholmness of 2D operator}, so we omit it. We next introduce some ODE tools that will be useful when studying the spectrum of the operator:
\begin{lemma}
\label{lemma:integral operators}
Let $\alpha>0$ be a real number and $n\geq 2$ an integer. Consider the following operators acting on functions $f:(0,1)\to\R$:
\[(A_\alpha f)(y)\coloneqq y^{-\alpha-1}\int_0^y t^{\alpha} f(t)\,{\d t}, \qquad (B_n f)(y)\coloneqq y^{n-1}\int_y^1 t^{-n} f(t)\,{\d t}.\]
Both of these operators are bounded on $L^2((0,1),y^{2r}{\d y})$ for any $0\leq r<\alpha$.
\end{lemma}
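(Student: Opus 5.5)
These are weighted Hardy inequalities, and I will prove them either by Schur's test with power weights or by Cauchy--Schwarz plus Fubini. First I conjugate away the weight. Set $g(y):=y^{r}f(y)$, so that $\|f\|_{L^2(y^{2r}\d y)}=\|g\|_{L^2(0,1)}$. Boundedness of $A_\alpha$ on the weighted space is then equivalent to boundedness on unweighted $L^2(0,1)$ of the integral operator
\[
\widetilde A g(y):=\int_0^y K_A(y,t)\,g(t)\,\d t,\qquad K_A(y,t):=y^{r-\alpha-1}t^{\alpha-r},\quad 0<t<y.
\]
In the same way, boundedness of $B_n$ is equivalent to unweighted boundedness of
\[
\widetilde B g(y):=\int_y^1 K_B(y,t)\,g(t)\,\d t,\qquad K_B(y,t):=y^{r+n-1}t^{-n-r},\quad y<t<1.
\]
Both kernels are homogeneous of degree $-1$. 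Extending $g$ by zero to $(0,\infty)$, I will use the standard fact that such operators are bounded on $L^2(0,\infty)$ whenever $\int_0^\infty |K(1,s)|\,s^{-1/2}\,\d s<\infty$. This is Schur's test with test function $t^{-1/2}$, or equivalently Minkowski's integral inequality after the substitution $t=sy$.

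\textbf{Operator $A_\alpha$.} Here $K_A(1,s)=s^{\alpha-r}\mathbf 1_{(0,1)}(s)$, so the condition is $\int_0^1 s^{\alpha-r-1/2}\,\d s<\infty$. This holds exactly when $\alpha-r>-1/2$, which is guaranteed by $r<\alpha$. Concretely, Minkowski gives
\[
\|\widetilde A g\|_{L^2}\le \int_0^1 s^{\alpha-r}\,\|g(s\,\cdot)\|_{L^2}\,\d s=\Big(\int_0^1 s^{\alpha-r-1/2}\,\d s\Big)\|g\|_{L^2}=\frac{\|g\|_{L^2}}{\alpha-r+\tfrac12}.
\]

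\textbf{Operator $B_n$.} Here $K_B(1,s)=s^{-n-r}\mathbf 1_{(1,\infty)}(s)$. The Schur integral is $\int_1^\infty s^{-n-r-1/2}\,\d s$, which converges because $n+r+\tfrac12>1$, and this holds since $n\ge 2$ and $r\ge0$. Extending $g$ by zero beyond $1$ only helps, because the truncation at $t=1$ is then automatic. This gives
\[
\|\widetilde B g\|_{L^2}\le \frac{\|g\|_{L^2}}{n+r-\tfrac12}.
\]

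The only delicate point is the justification of Minkowski's integral inequality, or Schur's test, for nonnegative measurable $g$. I expect no real obstacle there, since it suffices to treat $|g|$ and then apply Tonelli. I will state the constants explicitly for later use: $(\alpha-r+\tfrac12)^{-1}$ for $A_\alpha$ and $(n+r-\tfrac12)^{-1}$ for $B_n$. Note that the hypothesis $r<\alpha$ is stronger than what the $A_\alpha$ bound needs, since $r<\alpha+\tfrac12$ would already suffice. The statement as given is therefore certainly covered.
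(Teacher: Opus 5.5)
Your proof is correct, but it uses a different Schur test function from the paper.

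The paper applies Schur's test to the same conjugated kernel $y^{r-\alpha-1}t^{\alpha-r}\mathbf{1}_{(0,y)}(t)$ with the constant test function. That is, it bounds the two marginal integrals directly:
\[
\int_0^1 K\,\d y\le(\alpha-r)^{-1},\qquad \int_0^1 K\,\d t=(\alpha-r+1)^{-1}.
\]
This gives the norm bound $[(\alpha-r)(\alpha-r+1)]^{-1/2}$. The first integral is where the hypothesis $r<\alpha$ is genuinely used, since at $r=\alpha$ it becomes $-\log t$. The paper treats $B_n$ as ``completely analogous''; the constant test function there needs $n+r>1$.

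You instead exploit the degree $-1$ homogeneity of the kernel, which is equivalent to Schur's test with test function $t^{-1/2}$. This reduces everything to the single integral of $K(1,s)s^{-1/2}$. Your route buys two things:
\begin{itemize}
\item the sharp Hardy constant $(\alpha-r+\tfrac12)^{-1}$, which improves on the paper's since $(\alpha-r)(\alpha-r+1)<(\alpha-r+\tfrac12)^2$;
\item the optimal range $r<\alpha+\tfrac12$, beyond which $A_\alpha f$ need not even be defined on the weighted space.
\end{itemize}
It also makes explicit the condition $n+r>\tfrac12$ for $B_n$, which the statement's wording ``$0\le r<\alpha$'' leaves implicit.

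The paper's route is slightly more hands-on, with no change of variables or Minkowski step. For the application in the proof of the eigenvalue lemma ($r=1$, $\alpha=k+3$, $n=k\ge2$), either argument gives what is needed.
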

\begin{proof}
We write
\[y^r(A_\alpha f)(y)=\int_0^1y^{-\alpha-1+r}t^{\alpha}\,\ind_{(0,y)}(t)\,f(t)\;{\d t}\equiv \int_0^1K(y,t)\,[t^rf(t)]\,{\d t}.\]
Then, we have
\begin{align*}
\int_0^1K(y,t)\,{\d y}&=t^{\alpha-r}\int_t^1y^{-\alpha-1+r}\,{\d y}=\frac{1}{\alpha-r}(1-t^{\alpha-r})\leq \frac{1}{\alpha-r}, \\ \int_0^1K(y,t)\,{\d t}&=y^{-\alpha-1+r}\int_0^yt^{\alpha-r}\,{\d t}=\frac{1}{\alpha-r+1}.
\end{align*}
By the Schur test, we conclude that
\begin{align*}
    \norm{A_\alpha f}_{L^2((0,1),y^{2r}{\d y})}=\norm{y^rA_\alpha f(y)}_{L^2(0,1)}&\leq [(\alpha-r)(\alpha-r+1)]^{-1/2}\norm{t^rf(t)}_{L^2(0,1)}\\&=[(\alpha-r)(\alpha-r+1)]^{-1/2}\norm{f}_{L^2((0,1),y^{2r}{\d y})}.
\end{align*}
The argument for $B_n$ is completely analogous.
\end{proof}
In the following lemma we study the spectrum and eigenfunctions of the operator $\mathcal{L}_0$:
\begin{lemma}
\label{lemma eigenvalues with stagnation points}
Let $u_0,\rho_0\in C^\infty([0,1])$ be as in Lemma~\ref{lemma:reduction} and Remark~\ref{remark:reduction}. Consider the operator
\begin{equation}
\mathcal{L}_0:D\subset L^2_w(0,1)\to L^2_w(0,1), \quad \phi(y)\mapsto-\phi''(y)-2\frac{u_0'(y)}{u_0(y)}\phi'(y)+\frac{\rho_0'(y)}{u_0(y)^2}\phi(y),
\label{eq:def T L^2_w}
\end{equation}
where the weight is $w=u_0^2$ and
\[D\coloneqq \{\phi\in H^2(\hspace{0.3pt}(0,1),u_0^2dy)\cap H^1_0(\hspace{0.3pt}(0,1),{\d y}): u_0^{-2}\phi\in L^2_w(0,1)\}.\]
Then the spectrum of $\mathcal{L}_0$ consists of an increasing sequence of real eigenvalues $\lambda_1<\lambda_2<\cdots$ that are simple and accumulate only at $+\infty$. The corresponding eigenfunctions belong to $Z^k_0$. Furthermore, the first eigenvalue is given by
\begin{equation}
\lambda_1=\inf\left\{\frac{\int_0^1u_0^2\abs{\phi'}^2+\rho_0'\phi^2}{\int_0^1u_0^2\phi^2}\,:\; \phi\in C^\infty_c(0,1),\; \phi\neq 0\right\}.
\label{eq:Rayleigh quotient T}
\end{equation}
\end{lemma}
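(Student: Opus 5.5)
The strategy is to conjugate $\mathcal{L}_0$ into a symmetric Sturm--Liouville operator with compact resolvent, and then read off the regularity of eigenfunctions at $y=0$ from a Frobenius-type analysis based on \eqref{eq:condition rho0 eigenvalues}.

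\textbf{Step 1: symmetric form and the quadratic form.} Since $u_0$ has its right-most zero at $y_0=0$ (simple) and $u_0\neq0$ on $(0,1]$, we have $u_0(y)\sim u_0'(0)y$ near $0$. Writing $\mathcal{L}_0\phi=-u_0^{-2}(u_0^2\phi')'+u_0^{-2}\rho_0'\phi$, the operator is formally symmetric on $L^2_w$ with $w=u_0^2$. Its quadratic form is
\[
\mathfrak q(\phi)=\int_0^1u_0^2|\phi'|^2+\rho_0'\phi^2,
\]
defined on the closure of $C^\infty_c(0,1)$ in the norm $\|\phi\|_{\mathfrak q}^2=\int_0^1u_0^2(|\phi'|^2+\phi^2)+\int_0^1\phi^2$.

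Because $\rho_0'(0)=(k+1)(k+2)u_0'(0)^2>0$, we have $\rho_0'\geq c>0$ near $0$. Hence $\mathfrak q$ is bounded below and controls $\int_0^1\phi^2\sim\int_0^1 y^{-2}(y\phi)^2$ near the origin, so $\mathfrak q+C$ is coercive. Friedrichs' extension then yields a self-adjoint operator, and we identify it with \eqref{eq:def T L^2_w} on the stated domain $D$.

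\textbf{Step 2: compact resolvent, simplicity, and the variational formula.} For compactness of the embedding of the form domain into $L^2_w$, I would split $(0,1)=(0,\epsilon)\cup(\epsilon,1)$. On $(\epsilon,1)$ the ordinary $H^1$ compactness applies. On $(0,\epsilon)$ the tail is small, since $\int_0^\epsilon u_0^2\phi^2\lesssim\epsilon^2\int_0^\epsilon\phi^2\lesssim\epsilon^2\mathfrak q$-norm. This gives a discrete spectrum accumulating only at $+\infty$, and the min--max principle gives \eqref{eq:Rayleigh quotient T}.

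Simplicity follows from the ODE. The equation is regular on $(0,1]$ with the Dirichlet condition at $1$, so two eigenfunctions with the same eigenvalue have proportional Cauchy data at $y=1$ and are therefore proportional.

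\textbf{Step 3: indicial analysis at $y=0$ (main obstacle).} Near $0$ the eigenvalue equation reads
\[
\phi''+\frac{2}{y}(1+O(y))\phi'-\frac{(k+1)(k+2)+O(y^2)}{y^2}\phi=-\lambda\phi.
\]
The indicial equation is $r(r+1)=(k+1)(k+2)$, with roots $r=k+1$ and $r=-(k+2)$.

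To see that the $O(y^2)$ correction in the potential is harmless, note that \eqref{eq:condition rho0 eigenvalues} gives
\[
\rho_0'/u_0^2=(k+1)(k+2)u_0'(0)^2/u_0^2+f,
\]
and that $u_0'(0)^2/u_0^2=y^{-2}+O(y^{-1})$. So some care is needed with the $O(y^{-1})$ term; I expect it is precisely offset by the $2u_0'/u_0$ drift correction, and I would verify this by working with the conjugated variable $g=u_0\phi$.

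The solution of order $y^{-(k+2)}$ is not in the form domain, so every eigenfunction is the recessive solution $\phi\sim y^{k+1}$. To construct it rigorously with smoothness, I would rewrite the ODE as a Volterra fixed point using the integral operators $A_\alpha$ with $\alpha=2k+3$ and $B_n$ from Lemma~\ref{lemma:integral operators}. Their boundedness on weighted $L^2$ spaces gives $\phi=y^{k+1}\times(\text{smooth})$, and a bootstrap through the equation then gives $\phi\in C^{k,\frac12}$ with $\phi^{(j)}(0)=0$ for $j\leq k$. Combined with $\phi(1)=0$, this places every eigenfunction in $Z^k_0$.

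The delicate point is controlling the lower-order $O(y^{-1})$ terms in the coefficients so that the recessive exponent stays exactly $k+1$. If that fails, I would first try to absorb them through the substitution $g=u_0\phi$.
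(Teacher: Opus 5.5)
Your Steps 1--2 are a correct argument, and a genuinely different one from the paper's, but only for the \emph{Friedrichs extension} $\widetilde{\mathcal L}$ of $\mathcal L_0|_{C^\infty_c(0,1)}$. The form is coercive, your splitting $(0,\epsilon)\cup(\epsilon,1)$ gives compactness of the form domain in $L^2_w$, and min--max gives \eqref{eq:Rayleigh quotient T}. Uniqueness of the Cauchy problem at the regular endpoint $y=1$ is equivalent to the paper's Wronskian argument.

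\textbf{The gap.} It is the sentence ``we identify it with \eqref{eq:def T L^2_w} on the stated domain $D$''. The lemma concerns $\mathcal L_0$ with domain $D$, and this identification is where all the analysis at the singular endpoint lives.
\begin{itemize}
\item It cannot be read off from the form. The form domain is strictly larger than $H^1_0(0,1)$: near $0$ it only controls $\int y^2|\phi'|^2+\int\phi^2$, so a cut-off of $y^{-1/4}$ belongs to it.
\item Showing that eigenfunctions lie in $D$ is not enough either. The inclusion $D\subset\operatorname{dom}\widetilde{\mathcal L}$ is the easy direction. But if $D\subsetneq\operatorname{dom}\widetilde{\mathcal L}$, then for every $\lambda\in\rho(\widetilde{\mathcal L})$ the map $\mathcal L_0-\lambda:D\to L^2_w$ fails to be onto, and the spectrum of $\mathcal L_0|_D$ would be all of $\mathbb C$.
\end{itemize}
What you need is that $(\widetilde{\mathcal L}+\mu)^{-1}$ maps \emph{all} of $L^2_w$ into $D$. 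Equivalently, $\mathcal L_0+\mu:D\to L^2_w$ is surjective, together with an a priori bound of the type \eqref{eq:bounds for complete T}.

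\textbf{How the paper does it.} It writes $\mathcal L_0=T_0+K$ with the Euler-type model $T_0=-\partial_y^2-\frac2y\partial_y+\frac{(k+1)(k+2)}{y^2}$. It inverts $T_0:D\to L^2_w$ explicitly through $A_{k+3}$ and $B_k$ of Lemma~\ref{lemma:integral operators} (Schur test). It then proves self-adjointness of $\mathcal L_0|_D$ directly: the operator is symmetric, and $\mathcal L_0+\mu$ is surjective by a Fredholm argument since $K$ is $T_0$-compact. No Friedrichs extension is needed.

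Alternatively, you could use that $0$ is limit-point, since the solution $\sim y^{-(k+2)}$ is not in $L^2_w$. Then $\widetilde{\mathcal L}$ is the maximal operator with $\phi(1)=0$, and you would bound the Green's operator built from your Frobenius pair. That is the same Hardy/Schur computation, applied to the resolvent rather than to eigenfunctions.

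\textbf{On Step 3.} The ``delicate point'' is not delicate. Multiplying the eigenvalue equation by $-y^2$ gives $y^2\phi''+yp\phi'+q\phi=0$, where $p=2yu_0'/u_0$ and $q=-y^2\rho_0'/u_0^2+\lambda y^2$ are smooth up to $y=0$, with $p(0)=2$ and $q(0)=-(k+1)(k+2)$. The $O(y^{-1})$ part of $\rho_0'/u_0^2$ only contributes to $q-q(0)=O(y)$. So it cannot move the indicial roots $k+1$ and $-(k+2)$, and it need not be offset by the drift (in general it is not). These are exactly the terms the paper collects into $K\phi=g_1\phi'+g_0\phi/y$.

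Also, weighted $L^2$ bounds for $A_\alpha$ and $B_n$ do not by themselves give ``$y^{k+1}\times$smooth''. The clean route is to set $\phi=y^{k+1}g$, which yields $yg''+\beta g'+cg=0$ with $\beta,c$ smooth and $\beta(0)=2k+4>0$. The solution regular at $0$ is smooth by Volterra iteration, since $f\mapsto\int_0^1 s^{\beta(0)-1}f(sy)\,\d s$ preserves every $C^m$.

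The paper instead gets $\phi\in Z^k_0$ by a resolvent bootstrap through the H\"older-space Fredholm property of Lemma~\ref{lemma: Fredholmness T}, avoiding Frobenius altogether. Your route is fine for this part once the domain identification is in place.
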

\begin{proof}
We see that $\phi'\in L^2(0,1)$ is equivalent to $u_0^{-1}\phi'\in L^2_w(0,1)$. Thus, the operator $\mathcal{L}_0:D\to L^2_w$ is well-defined. Taking into account \eqref{eq:condition rho0 eigenvalues} and the fact that $y=0$ is a simple zero of the smooth function $u_0$, we can write
\begin{equation}
    \mathcal{L}_0\phi(y)=\left[-\phi''(y)-\frac{2}{y}\phi'(y)+\frac{(k+1)(k+2)}{y^2}\phi(y)\right]+\left[g_1(y)\phi'(y)+g_0(y)\frac{\phi(y)}{y}\right]\equiv T_0\phi(y)+K\phi(y)
    \label{eq:decomposition T}
\end{equation}
for some smooth functions $g_0,g_1\in C^\infty([0,1])$. Again, $T_0$ and $K$ are well-defined operators $D\subset L^2_w\to L^2_w$.

Let us study the operator $T_0:D\subset L^2_w\to L^2_w$. First of all, the two linearly independent solutions to the ODE $-\phi''-2y^{-1}\phi'+(k+1)(k+2)y^{-2}\phi=0$ are $\phi_1(y)\coloneqq y^{k+1}$ and $\phi_2(y)\coloneqq y^{-(k+2)}$. Since none of them belong to $D$ due to the boundary conditions, we see that $T_0$ is injective. Regarding surjectivity, let us fix a smooth function $h\in C^\infty([0,1])$. The unique solution to the ODE $-\phi''-2y^{-1}\phi'+(k+1)(k+2)y^{-2}\phi=h$ vanishing at the endpoints of the interval is given by
$$\phi_h(y)\coloneqq \frac{1}{2k+3}[F_h(y)-y^{k+1}F_h(1)],$$
where 
$$F_h(y)\coloneqq y^{-(k+2)}\int_0^yt^{k+3}h(t)\,{\d t}+y^{k+1}\int_y^1t^{-k}h(t)\,{\d t}=y^2\,A_{k+3}[h](y)+y^2\,B_{k}[h](y)$$
and $A_{k+3}$ and $B_{k}$ were defined in Lemma~\ref{lemma:integral operators}. Indeed, direct computation shows:
\begin{align*}
F'_h&=-(k+2)y\,A_{k+3}h+(k+1)y\,B_{k}h, \\ F_h''&=-(2k+3)h+(k+2)(k+3)A_{k+3}h+k(k+1)B_{k}h.
\end{align*}
Hence, we see that $\phi_h$ satisfies the ODE. Since $y^{-1}u_0$ is a bounded smooth function, it follows from Lemma~\ref{lemma:integral operators} that $A_{k+3}$ and $B_{k}$ are bounded on $L^2_w(0,1)$. Hence, we see that $F_h''$, $y^{-1}F_h'$ and $y^{-2}F_h$ are in $L^2_w(0,1)$ and
\[\norm{F_h'}_{L^2(0,1)}\leq C\norm{y^{-1}F'_h}_{L^2_w(0,1)}\leq C\norm{h}_{L^2_w(0,1)}, \hspace{30pt}\norm{F_h}_{H^2_w(0,1)}\leq C\norm{h}_{L^2_w(0,1)}.\]
Here the constant $C$ is allowed to change between inequalities. By the embedding $H^1_0((0,1),{\d y})\hookrightarrow L^\infty(0,1)$, we conclude that the inverse $T_0^{-1}:L^2_w(0,1)\to D\subset L^2_w(0,1)$ exists and satisfies
\begin{equation}
\norm{T_0^{-1}h}_{H^1(0,1)}+\norm{T_0^{-1}h}_{H^2_w(0,1)}\leq C\norm{h}_{L^2_w(0,1)}.
\label{eq:estimates T_0^{-1}}
\end{equation}
This has been checked only for smooth $h$ but holds for any $h\in L^2_w(0,1)$ by density of the smooth functions.

Using the compactness of the embedding $H^2_w(0,1)\hookrightarrow L^2_w(0,1)$, we deduce that $T_0$ has compact resolvent. Furthermore, it follows from \eqref{eq:estimates T_0^{-1}} and the compactness of the embeddings $H^2_w\hookrightarrow H^1_w$ and $H^1\hookrightarrow L^2$ that $KT_0^{-1}:L^2_w\to L^2_w$ is compact, i.e., $K$ is relatively compact with respect to $T_0$. This implies that it is $T_0$-bounded with relative bound 0, that is:
\begin{equation}
\norm{K\psi}_{L^2_w}\leq s\norm{T_0\psi}_{L^2_w}+C_s\norm{\psi}_{L^2_w}
\label{eq:K relatively bound 0}
\end{equation}
for any $s>0$ and some constant $C_s$. 
For later reference, choosing $s<1$ in \eqref{eq:K relatively bound 0} so that we can bound $K\psi$ in terms of $\mathcal{L}_0\psi$ instead of $T_0\psi$ and applying \eqref{eq:estimates T_0^{-1}} to $T_0\psi=\mathcal{L}_0\psi-K\psi$ leads to
\begin{equation}
\norm{\psi}_{H^2_w}+\norm{\psi}_{H^1_0}\leq C'(\norm{\mathcal{L}_0\psi}_{L^2_w}+\norm{\psi}_{L^2_w}).
\label{eq:bounds for complete T}
\end{equation}

Next, we prove that $\mathcal{L}_0$ is self-adjoint. Since it is symmetric, it suffices to prove that $\mathcal{L}_0+\mu\Id$ is surjective for some $\mu\in \R$. Consider the decomposition
$$\mathcal{L}_0+\mu\Id=(\Id+KT_0^{-1}+\mu T_0^{-1})T_0.$$
We have proved that $T_0:D\to L^2_w$ is a bijection. Meanwhile, the operator $L^2_w\to L^2_w$ in parenthesis is the sum of the identity and two compact operators, so it is Fredholm with index 0. Therefore, to prove that $\mathcal{L}_0+\mu\Id$ is surjective, it suffices to prove that it is injective. Fix $\phi\in D$. Since $D\subset H^1_0(0,1)$, we can choose a sequence $\{\psi_n\}_{n=0}^\infty\subset C^\infty_c(0,1)$ converging to $\phi$ in $H^1$. Then, integration by parts yields
\[(\mathcal{L}_0\phi,\psi_n)_{L^2_w}=\int_0^1(-u_0^2\phi''\psi_n-2u_0u_0'\phi'\psi_n+\rho_0'\phi\psi_n)=\int_0^1(u_0^2\phi'\psi_n'+\rho_0'\phi\psi_n).\]
Since $\psi_n\to \phi$ in $H^1$, passing to the limit leads to:
\begin{equation}
(\mathcal{L}_0\phi,\phi)_{L^2_w}=\int_0^1(u_0^2\abs{\phi'}^2+\rho_0'\phi^2)\geq \int_0^1fu_0^2\phi^2\geq -\norm{f}_{L^\infty}\norm{\phi}_{L_w^2}^2.
\label{lower bound L_0}
\end{equation}
Choosing $\mu\coloneqq 1+\norm{f}_{L^\infty}$, we obtain:
\begin{equation}
((\mathcal{L}_0+\mu\Id)\phi,\phi)_{L^2_w}\geq \norm{\phi}_{L^2_w}^2
\label{coercivity mathcal L0}
\end{equation}
We deduce that $\mathcal{L}_0+\mu\Id$ is injective, which shows that $\mathcal{L}_0$ is self-adjoint.

We next show that $\mathcal{L}_0$ has compact resolvent. First of all, \eqref{coercivity mathcal L0} shows that $-\mu$ is in the resolvent set $\rho(\mathcal{L}_0)$, which is thus nonempty. For any $z\in\rho(\mathcal{L}_0)$, we consider the decomposition
\[\mathcal{L}_0-z\Id=(\Id+KT_0^{-1}-zT_0^{-1})T_0.\]
Since $\mathcal{L}_0-z\Id$ must have a bounded inverse, and this holds for $T_0$, we see that the bounded operator $(\Id+KT_0^{-1}-zT_0^{-1})$ must have a bounded inverse. Hence,
\[(\mathcal{L}_0-z\Id)^{-1}=T_0^{-1}(\Id+KT_0^{-1}-zT_0^{-1})^{-1},\]
and compactness of $T_0^{-1}$ implies compactness of $(\mathcal{L}_0-z\Id)^{-1}$.

We conclude that $\mathcal{L}_0$ is a self-adjoint operator with compact resolvent, so its spectrum consists of real eigenvalues that accumulate only at infinity. From \eqref{lower bound L_0} we infer that the spectrum is bounded below, so the eigenvalues form an increasing sequence. Let us prove that the first eigenvalue is given by \eqref{eq:Rayleigh quotient T}. First of all, the first equality in \eqref{lower bound L_0} shows that the infimum that appears in \eqref{eq:Rayleigh quotient T} is at least $\lambda_1$. Hence, it suffices to prove that the infimum is at most $\lambda_1$. Since $D\subset H^1_0(0,1)$, we can choose a sequence $\psi_n\in C^\infty_c(0,1)$ converging to a normalized first eigenfunction $\phi_1$ in $H^1$. Since $u_0^2$ and $\rho_0'$ are bounded, it follows from $\psi_n\to \phi$ in $H^1$ that the numerators converge to $\lambda_1$ and the denominators to $1$. Therefore, the infimum is at most $\lambda_1$.

Let $\phi$ be an eigenfunction associated to an eigenvalue $\lambda$. We will prove that $\phi\in Z^k_0$. To this end, let $\mu\in \R$ be in the resolvent set of $\mathcal{L}_0:D\subset L^2_w\to L^2_w$ and consider the operator $(\mathcal{L}_0-\mu\Id ):Z^2_0\to Z^0$. By Lemma~\ref{lemma: Fredholmness T}, this operator has Fredholm index 0. Since $Z^2_0\subset D$, the fact that $\mu$ is not an eigenvalue implies that the kernel of $\mathcal{L}_0-\mu\Id $ must be trivial. Therefore, the operator has a bounded inverse $(\mathcal{L}_0-\mu\Id )^{-1}:Z^0\to Z^2_0$. Due to the embedding $H^1_0(0,1)\hookrightarrow C^{0,\frac{1}{2}}(0,1)$, we have $\phi\in Z^0$, so we may consider \[\psi\coloneqq (\lambda-\mu)(\mathcal{L}_0-\mu\Id )^{-1}\phi\in Z^2_0.\] By construction of $\psi$ and the fact that $\phi$ is an eigenfunction associated to the eigenvalue $\lambda$, we have: \[(\mathcal{L}_0-\mu\Id )\psi=(\lambda-\mu)\phi=(\mathcal{L}_0-\mu\Id )\phi.\]
Since $\mu$ is not an eigenvalue, $(\mathcal{L}_0-\mu\Id ):D\subset L^2_w\to L^2_w$ must have trivial kernel. Hence, $\phi=\psi$, as expected, and we have $\phi\in Z^2_0$. Iterating this argument yields $\phi\in Z^k_0$.

Finally, we use the standard trick to prove that the eigenvalues are simple: suppose  that $\phi_1$ and $\phi_2$ are two eigenfunctions associated to the same eigenvalue $\lambda$ and consider the Wronskian $W\coloneqq \phi_1\phi_2'-\phi_1'\phi_2$. Since $\phi_1$ and $\phi_2$ are $C^2$ and vanish at $y=1$, we see that $W$ is $C^1$ and vanishes at $y=1$. Furthermore, we have:
\[W'=\phi_1\phi_2''-\phi_1''\phi_2=\phi_1\left[-2\frac{u_0'}{u_0}\phi_2'+\frac{\rho_0'}{u_0^2}\phi_2-\lambda\phi_2\right]-\left[-2\frac{u_0'}{u_0}\phi_1'+\frac{\rho_0'}{u_0^2}\phi_1-\lambda\phi_1\right]\phi_2=-2\frac{u_0'}{u_0}W.\]
Thus, $W$ must be proportional to $u_0^{-2}$ for $y>0$. The fact that $W(1)=0$ but $u_0(1)\neq 0$ forces $W\equiv 0$, which means that $\phi_1$ and $\phi_2$ are linearly dependent, so the eigenvalues are simple.
\end{proof}

Next, we show that we can prescribe the first eigenvalue with an appropriate perturbation of the density:
\begin{lemma}
\label{lemma: perturbation eigenvalues stagnation}
Let $\varepsilon>0$. Let $u_0,\rho_0\in C^\infty([0,1])$ be as in Lemma~\ref{lemma:reduction} and Remark~\ref{remark:reduction}. There exist an integer $j_\ast\geq 1$ depending on $(u_0,\rho_0)$ and functions $\rho_a\in C^\infty([0,1])$ for $a\in (-1,1)$ with the following properties: 
\begin{enumerate}
\renewcommand{\labelenumi}{(\roman{enumi})}
    \item $\norm{\rho_0-\rho_a}_{W^{1,q}(0,1)}<\varepsilon$.
    \item $\rho_a$ satisfies \eqref{eq:condition rho0 eigenvalues} for every $a\in (-1,1)$.
    \item the first eigenvalue $\lambda_a$ of the operator $\mathcal{L}_a$ defined as in \eqref{eq:def T L^2_w} for $(u_0,\rho_a)$ depends continuously on $a$ and is strictly increasing.
    \item There exist intervals $\mathcal{A}\subset (-1,1)$ and $\mathcal{J}\ni -j_\ast^2$ such that for all $\lambda\in \mathcal{J}$ there exists a unique $a_\lambda\in \mathcal{A}$ such that $\lambda_{a_\lambda}=\lambda$. In particular, there exists $a_\ast\in \mathcal{A}$ such that $\lambda_{a_\ast}=-j_\ast^2$.
    \item $0$ is not an eigenvalue of $\mathcal{L}_a$ for any $a\in\mathcal{A}$.
    \item The map $(-1,1)\to Z^k_0$, $a\mapsto \phi_a$ is continuous. Here $\phi_a$ denotes the $L^2_w$-normalized eigenfunction associated to $\lambda_a$ (with the appropriate sign).
\end{enumerate}
\end{lemma}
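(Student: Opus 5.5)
The plan is to mirror the proof of Lemma~\ref{lemma: control eigenvalues no stag}, with the perturbation now placed away from the stagnation point so that \eqref{eq:condition rho0 eigenvalues} is not affected. Fix a cutoff $\eta$ as in \eqref{eq:pos_density_profiles} and a small $\delta>0$. Choose $\chi_\delta\ge0$ smooth, supported in $[\delta,2\delta]$, with $\chi_\delta=1$ on $[\tfrac54\delta,\tfrac74\delta]$, and set
\[
\rho_a(y):=\rho_0(y)+a\,\delta^{-s}\int_0^y\chi_\delta(z)\,\d z,\qquad 0<s<q^{-1}.
\]
Then (i) holds because $\|\rho_a'-\rho_0'\|_{L^q}\lesssim\delta^{1/q-s}$ and $\|\rho_a-\rho_0\|_{L^q}$ is also small. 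For (ii), note that $\rho_a'-\rho_0'$ vanishes near $0$ and is smooth. Since $u_0$ has no zero on $(0,1]$, we can write it as $u_0^2\tilde f$ with $\tilde f$ smooth, so \eqref{eq:condition rho0 eigenvalues} persists with a new $f$. Lemma~\ref{lemma eigenvalues with stagnation points} then applies to every $\mathcal{L}_a$ with the same domain $D$.

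For (iii), the operators $\mathcal{L}_a=\mathcal{L}_0+a\delta^{-s}\chi_\delta u_0^{-2}$ differ by a bounded multiplication operator on $L^2_w$ (since $\chi_\delta$ is supported away from $0$). This gives an analytic self-adjoint family of type (A), and by \cite[Chapter VIII]{kato_perturbation_1995} each simple eigenvalue is differentiable with
\[
\frac{\d}{\d a}\lambda^n_a=\delta^{-s}\int_0^1\chi_\delta\,|\phi^n_a|^2>0.
\]
Positivity holds by ODE uniqueness, exactly as in \eqref{eq: derivative eigenvalues no stagnation}. Alternatively, continuity follows from the Rayleigh-quotient comparison $|\lambda_a-\lambda_b|\le|a-b|\delta^{-s}\|\chi_\delta u_0^{-2}\|_{L^\infty}$.

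For (iv), I would fix $j_\ast$ with $-j_\ast^2<\lambda_1(\mathcal{L}_0)$ (for instance the first integer beyond $\sqrt{|\lambda_1|}$), so that $\lambda_0>-j_\ast^2$. I would then show $\lambda_{-1}<-j_\ast^2$ for small $\delta$ by testing \eqref{eq:Rayleigh quotient T} with a tent function supported in $[\delta,2\delta]$, equal to $1$ on the plateau. Its numerator is
\[
\int u_0^2|\phi'|^2+\int\rho_0'\phi^2-\delta^{-s}\int\chi_\delta\phi^2\lesssim \delta^{2}\delta^{-1}+\delta-c\delta^{1-s},
\]
and its denominator is about $\delta^3$ since $u_0\sim y$. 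The quotient behaves like $-c\delta^{-2-s}+O(\delta^{-2})\to-\infty$. By continuity and monotonicity there is a unique $a_\ast\in(-1,0)$ with $\lambda_{a_\ast}=-j_\ast^2$. Taking $\mathcal{A}$ a small interval around $a_\ast$ and $\mathcal{J}=\lambda_{\cdot}(\mathcal{A})$ gives (iv).

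For (v), I would argue as in Lemma~\ref{lemma: control eigenvalues no stag}(iii). If no higher eigenvalue vanishes at $a_\ast$, shrinking $\mathcal{A}$ suffices by continuity of ordered eigenvalues. Otherwise I would add a second perturbation $b\int_0^y g\,u_0^2$ with $g\in C_c^\infty$ supported away from $0$ and satisfying \eqref{eq: properties of g} in $L^2_w$. The implicit function theorem then breaks the degeneracy while keeping $\lambda_1=-j_\ast^2$, and (ii) is preserved.

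For (vi), I would use the compactness argument of Lemma~\ref{lemma:properties_lambda_ep_a} in $L^2_w$ with the bound \eqref{eq:bounds for complete T}: bounded sequences in $D$ are compact in $L^2_w$, limits are nonnegative normalized eigenfunctions, and uniqueness gives continuity in $L^2_w$. I would then upgrade to $Z^k_0$ using the bounded inverses $(\mathcal{L}_a-\mu)^{-1}:Z^{m-2}\to Z^m_0$ from Lemma~\ref{lemma: Fredholmness T}, which depend continuously on $a$ because the perturbation is smooth and supported away from $0$. Iterating the bootstrap $\phi_a=(\lambda_a-\mu)(\mathcal{L}_a-\mu)^{-1}\phi_a$ up to $m=k$ gives the claim.

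The main obstacle is this last upgrade. It needs a uniform choice of $\mu$ in the resolvent set for $a$ near a given value, together with continuity of $a\mapsto(\mathcal{L}_a-\mu)^{-1}$ in $\mathcal{L}(Z^{m-2},Z^m_0)$. I would get this from the resolvent identity, since $\mathcal{L}_a-\mathcal{L}_b$ is multiplication by a smooth function vanishing near $0$ with norm $O(|a-b|)$ on every $Z^m$.
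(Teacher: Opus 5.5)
Your proposal is correct and follows essentially the paper's route. The ingredients match: a density perturbation of size $\delta^{-s}$ (the paper uses $\delta^{-1/2q}$) supported at scale $\delta$ away from $y=0$; Hellmann--Feynman monotonicity in $a$; a test function concentrated on that support, giving a Rayleigh quotient of order $-\delta^{-2-s}$; the auxiliary $b$-perturbation to remove a zero eigenvalue; and compactness plus a Schauder bootstrap for (vi).

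The only difference is in (vi). The paper sidesteps your ``main obstacle'' by keeping the fixed operator $\mathcal{L}_0-\mu$ on the left and putting the $a$-dependent term, a smooth potential vanishing near $y=0$, on the right-hand side, so no continuity of $a\mapsto(\mathcal{L}_a-\mu)^{-1}$ is needed. It also fixes the sign by requiring $(\phi_{a_\infty},\phi_a)_{L^2_w}>0$, whereas you rely on nonnegativity of the ground state, which you would still need to justify (e.g.\ by inserting $|\phi|$ in the Rayleigh quotient and using simplicity).
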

\begin{remark}
Since $\rho_a$ satisfies \eqref{eq:condition rho0 eigenvalues}, Lemma~\ref{lemma: Fredholmness T} and Lemma~\ref{lemma eigenvalues with stagnation points} also hold for the operator $\mathcal{L}_a$.
\end{remark}
\begin{proof}
Let $\delta\in (0,1)$ be a parameter to be determined later and fix a nonnegative bump function $\chi \in C^\infty_c(0,1)$ with $\norm{\chi}_{L^2(0,1)}=1$. We define
\[\rho_a(y)=\rho_0(y)+\int_0^ya\delta^{-1/2q}\chi(\delta^{-1}t)\,{\d t}.\]
Since $\chi$ vanishes in a neighborhood of $y=0$, we see that $\rho_a$ equals $\rho_0$ in a neighborhood of $y=0$, so \eqref{eq:condition rho0 eigenvalues} still holds. Furthermore, it is easy to check that we can achieve \textit{(i)} by taking $\delta>0$ sufficiently small. 

Let $j_\ast\geq 1$ be the smallest integer such that $\lambda_0^1\geq -j_\ast^2+2$. Such an integer exists because in Lemma~\ref{lemma eigenvalues with stagnation points} we proved that the spectrum of $\mathcal{L}_0$ is bounded below. Fix $C>0$ such that $u_0(y)^2\leq Cy^2$ for all $y\in[0,1]$. We further reduce $\delta>0$ so that
\begin{align*}
    \delta^{-1/2q}\norm{\chi}_{L^3}^3&\geq 1+C\norm{\chi'}_{L^\infty}^2+\norm{\rho_0'}_{L^\infty}, \\ \delta^{-2}&\geq C(j_\ast^2+2).
\end{align*}
Having chosen $\delta>0$ satisfying these conditions, we keep it fixed throughout the rest of the proof. The reasons behind these conditions will be apparent later on.

Concerning property \textit{(iii)}, it follows from \cite[Chapter VIII, Theorem~2.6]{kato_perturbation_1995} that the $n$-th eigenvalue $\lambda^n_a$ of $\mathcal{L}_a$ is differentible in $a$ and its derivative is given by
\begin{equation}
\label{eq: derivative eigenvalues}
\frac{\d}{\d a}\lambda^n_a=\left(\phi^n_a,\, u_0^{-2}\chi_\delta\,\phi^n_a\right)_{L^2_w}=\int_0^1\chi_\delta\abs{\phi^n_a}^2,
\end{equation}
where we have denoted $\chi_\delta\coloneqq \delta^{-1/2q}\chi(\delta^{-1}\,\cdot\,)$ and $\phi^n_a$ is a $L^2_w$-normalized eigenfunction associated to $\lambda^n_a$. Note that these cannot be identically zero on any open set. Indeed, on any interval $[b,1]$ with $b>0$ they solve an ODE with smooth coefficients that also has the trivial solution $\phi\equiv 0$. Hence, at the endpoints of an interval where $\phi^n_a$ is identically zero we would have a contradiction with the uniqueness of solutions for the ODE. Since $\chi_\delta$ is nonnegative and its support has nonempty interior, it then follows from \eqref{eq: derivative eigenvalues} that $\lambda^n_a$ is strictly increasing in $a$.

Regarding \textit{(iv)}, we consider the smooth test-function $\theta(y)\coloneqq \delta^{-3/2}\chi(\delta^{-1}y)$ and compute:
\begin{align*}
&\int_0^1u_0^2\abs{\theta'}^2\leq C\int_0^1 y^2\delta^{-5}\chi'(\delta^{-1}y)^2\,{\d y}\leq C\norm{\chi'}_{L^\infty}^2\delta^{-2}, \\
&\int_0^1\rho_0'\theta^2\leq \norm{\rho_0'}_{L^\infty}\delta^{-2}, \\ &\int_0^1[\delta^{-1/2q}\chi(\delta^{-1}y)]\theta(y)^2\,{\d y} =\delta^{-2-1/2q}\int_0^1\delta^{-1}\chi(\delta^{-1}y)^3\,{\d y}=\delta^{-2-1/2q}\norm{\chi}_{L^3}^3,\\
&\int_0^1u_0^2\theta^2\leq C\int_0^1y^2\delta^{-3}\chi(\delta^{-1}y)^2\,{\d y}\leq C.
\end{align*}
Here we have used the fact that the support of $\chi(\delta^{-1}\,\cdot\,)$ and $\chi'(\delta^{-1}\,\cdot\,)$ is contained in $(0,\delta)$. We conclude:
\[\int_0^1\left(u_0^2\abs{\theta'}^2+\rho_{-1}'\theta^2\right)\leq \delta^{-2}\left(C\norm{\chi'}_{L^\infty}^2+\norm{\rho_0'}_{L^\infty}-\delta^{-1/2q}\norm{\chi}_{L^3}^3\right)\leq -\delta^{-2}.\]
Therefore, by Lemma~\ref{lemma eigenvalues with stagnation points}, we have
\[\lambda^1_{-1}\leq \frac{\int_0^1u_0^2\abs{\theta'}^2+\rho_{-1}'\theta^2}{\int_0^1u_0^2\theta^2}\leq- C^{-1}\delta^{-2}\leq -j_\ast^2-2.\]
We obtain $\lambda^1_{-1}\leq -j_\ast^2-2$ and $\lambda^1_0\geq -j_\ast^2+2$. Therefore, property \textit{(iv)} follows from the continuity and strict monotonicity of the map $a\mapsto \lambda^1_a$ for $\mathcal{J}\coloneqq (-j_\ast^2-1,-j_\ast^2+1)$ and $\mathcal{A}$ sufficiently small. Furthermore, the property still holds if we choose a smaller neighborhood $\mathcal{J}$ of $-j_\ast^2$ if we reduce $\mathcal{A}$ accordingly, and vice versa.

The proof of property \textit{(v)} is completely analogous to the proof of property \textit{(iii)} in Lemma~\ref{lemma: control eigenvalues no stag}. The only difference is that now we are working in the weighted space $L^2_w$, which removes some denominators in the expressions. In particular, here we should take $\rho_{a,b}(y)\coloneqq \rho_a(y)+b\int_0^yg(t)\,{\d t}$. In addition, here \eqref{eq: derivative eigenvalues} replaces \eqref{eq: derivative eigenvalues no stagnation}. Other than these two minor differences, the argument is the same. Once $b_\ast$ is fixed, we omit the subindex $b_\ast$ to simplify the notation. 

Finally, we prove \textit{(vi)}. We drop the index 1 for simplicity. For the time being, $\phi_a$ denotes a normalized eigenfunction associated to the first eigenvalue $\lambda_a$. At the moment, the choice between $+\phi_a$ and $-\phi_a$ is arbitrary; which is the right one will be decided later. Fix $a_\infty\in (-1,1)$.  We claim that for any sequence $\{a_n\}_{n=1}^\infty\subset (-1,1)$ that converges to $a_\infty$, the associated sequence of eigenfunctions contains a subsequence that converges to either $\phi_{a_\infty}$ or $-\phi_{a_\infty}$ in $L^2_w(0,1)$. Indeed, we write
\[\mathcal{L}_0\phi_{a_n}=[\lambda_{a_n}-a_n\delta^{-1/2q}\chi(\delta^{-1}\,\cdot\,)u_0^{-2}-b_\ast gu_0^{-2}]\phi_{a_n}.\]
Note that the term in brackets is a smooth function on $[0,1]$ because $\chi$ and $g$ vanish in a neighborhood of $y=0$ and $\delta>0$ is fixed. Therefore, we see that the right-hand side is bounded on $L^2_w$ uniformly in $n$, due to continuity of $a\mapsto \lambda_a$ and the normalization of $\phi_a$. The claim then follows from \eqref{eq:bounds for complete T}, compactness of the embedding $H^2_w\hookrightarrow L^2_w$ and simplicity of the eigenvalue $\lambda_{a_\infty}$.

Having proved this, we deduce that we must have $(\phi_{a_\infty},\phi_a)_{L^2_w}\neq 0$ for any $a$ in a neighborhood $I_{a_\infty}$ of $a_{\infty}$. Hence, for any $a\in I_{a_\infty}$ we choose the direction ($+\phi_a$ or $-\phi_a$) that makes the previous product positive. After choosing the direction, the property from the previous paragraph ensures that for any sequence $\{a_n\}_{n=1}^\infty\subset I_{a_\infty}$ with $a_n\to a_\infty$, the associated eigenfunctions $\{\phi_{a_n}\}_{n=1}^\infty$ converge to $\phi_{a_\infty}$ in $L^2_w$. This proves continuity of the map $a\mapsto \phi_a$ at $a_\infty$.

If $I_{a_\infty}=(b_1,b_2)$, we can repeat the argument to choose the direction in a neighborhood of both endpoints, thus extending the choice of the direction slightly. This serves to propagate it to the whole interval $(-1,1)$ and then we can prove continuity of $a\mapsto \phi_a$ as before.

Once we know that $a\mapsto \phi_a$ is continuous as a map $(-1,1)\to L^2_w(0,1)$, we can use repeatedly the expression
\[\mathcal{L}_0\phi_{a}=[\lambda_{a}-a\delta^{-1/2q}\chi(\delta^{-1}\,\cdot\,)u_0^{-2}-b_\ast gu_0^{-2}]\phi_{a}\]
to bootstrap regularity. Indeed, the right-hand side is continuous as a map $(-1,1)\to L^2_w(0,1)$. Hence, it follows from \eqref{eq:bounds for complete T} that $a\mapsto \phi_a$ is continuous as a map $(-1,1)\to H^1_0(0,1)$. Since $H^1_0(0,1)\hookrightarrow Z^0$, we can now repeat the argument for $Z^m$ spaces. Indeed, it follows from Lemma~\ref{lemma: Fredholmness T} that for $\mu\in \R$ sufficiently negative so that it is not in the spectrum of $\mathcal{L}_0$, the operator $(\mathcal{L}_0-\mu\Id ):Z^{m}_0\to Z^{m-2}$ has a bounded inverse for $2\leq m\leq k$. Iterating the argument yields the continuity of $a\mapsto \phi_a$ as a map $(-1,1)\to Z^k_0$.
\end{proof}

\subsection{Proof of Theorem 7.1}
We now return to the proof of Theorem~\ref{thm:zeroflexibility}. First, we define the spaces 
\begin{align*}
    X&\coloneqq\left\{h\in C^{k,\frac12}(\overline{M}_+):\;\begin{array}{l}
        h(\xi,1)=\partial^\alpha h(\xi,0)=0 \quad \forall \xi\in\T, \;\forall \alpha \text{ multiindex}, \abs{\alpha}\leq k, \\[4pt] h(\xi,y)=h(-\xi,y) \hspace{32pt}\forall (\xi,y)\in \T\times [0,1]
    \end{array}\hspace{-4pt}\right\}, \\[2pt]
    Y&\coloneqq\left\{f\in C^{k-2,\frac12}(\overline{M}_+):\;\begin{array}{l}
        \partial^\alpha f(\xi,0)=0 \hspace{38pt} \forall \xi\in\T, \;\forall \alpha \text{ multiindex}, \abs{\alpha}\leq k-2, \\[2pt] f(\xi,y)=f(-\xi,y) \hspace{18pt}\forall (\xi,y)\in \T\times [0,1]
    \end{array}\hspace{-4pt}\right\},
\end{align*}
which we endow with the $C^{k,\frac12}$ and $C^{k-2,\frac12}$ norm, respectively. Let $U\subset X$ be the ball of radius $1/2$. We assume $u_0,\rho_0\in C^\infty([0,1])$ are as in Lemma~\ref{lemma:reduction} and Remark~\ref{remark:reduction}. Let $\rho_a$ and $\lambda_a$ be given by Lemma~\ref{lemma: perturbation eigenvalues stagnation}. We define $j(a)\coloneqq \sqrt{-\lambda_a}$ and consider the functional $\mathcal{G}:\R\times U\times \mathcal{A}\to Y$ given by
\[[\mathcal{G}(\mu,h,a)](\xi,y)\coloneqq -(j(a)+\mu)^2\partial_\xi^2h-\partial_y^2h-\frac{u'_0(y+h)}{u_0(y+h)}\left[(j(a)+\mu)^2(\partial_\xi h)^2+(\partial_y h)^2+2\partial_y h\right]+\frac{\rho_a'(y+h)}{u_0(y+h)^2}h.\]
We have to check that $\mathcal{G}$ is well-defined. First of all, writing $u_0(y)=y\hspace{1pt}b(y)$ for some $b\in C^\infty([0,1])$ with $b(0)\neq 0$, we have
$$\frac{u'_0(y+h)}{u_0(y+h)}=\frac{b'(y+h)}{b(y+h)}+\frac{1}{(1+h/y)}\,\frac{1}{y}\hspace{1pt}, \qquad \qquad \frac{\rho_a'(y+h)}{u_0(y+h)^2}h=\frac{\rho_a'(y+h)}{b(y+h)^2}\,
\frac{1}{(1+h/y)^2}\,\frac{h}{y^2}.$$
Since $h\in U$, we have 
$$\frac12\leq1+\frac hy\leq\frac32, \qquad \partial_y(y+h)\geq \frac12, \qquad (y+h)\vert_{y=0}=0, \qquad (y+h)\vert_{y=1}=1.$$
Therefore, it suffices to consider expressions with denominators $y$ and $y^2$. Using the boundary conditions, we obtain the following identities:
\begin{equation}
\label{integral identities quotient}
\begin{aligned}
\frac hy&=\int_0^1\partial_y h(\xi,ty)\,{\d t},
\qquad\qquad\qquad
\frac{\partial_yh}{y}=\int_0^1\partial_y^2h(\xi,ty)\,{\d t},
\\[2pt]
\frac{\partial_\xi h}{y}&=\int_0^1\partial_{y\xi}h(\xi,ty)\,{\d t},
\qquad\qquad\qquad\hspace{3pt}
\frac h{y^2}=\int_0^1(1-t)\partial_{y}^2h(\xi,ty)\,{\d t}.
\end{aligned}
\end{equation}
Hence, we see that the maps $h\mapsto y^{-1}h, \;y^{-1}\partial_yh,\; y^{-1}\partial_\xi h,\; y^{-2}h$ are bounded from $X$ into $C^{k-2,\frac{1}{2}}$. Differentiating under the integral sign shows that their derivatives through order $k-2$ vanish at $y=0$. Since $\mathcal{G}$ also respects parity in $\xi$, we conclude that $\mathcal{G}:\R\times U\times \mathcal{A}\to Y$ is well-defined.

In the remainder of this section we will prove that $\mathcal{G}$ satisfies the conditions of Theorem~\ref{thm:unifCR} and then use it to prove Theorem~\ref{thm:zeroflexibility}. It is clear that $\mathcal{G}(\mu,0,a)=0$ for all $(\mu,a)\in \R\times \mathcal{A}$. Next, for each $a\in \mathcal{A}$ consider the Fr\'echet derivative $L_a\coloneqq \partial_h \mathcal{G}(0,0,a)$ of $\mathcal{G}$ with respect to $h$, which is given by:
    \[[L_a\phi](\xi,y)= -j(a)^2\partial_\xi^2\phi(\xi,y)-\partial_y^2\phi(\xi,y)-2\frac{u_0'(y)}{u_0(y)}\partial_y\phi(\xi,y)+\frac{\rho_a'(y)}{u_0(y)^2}\phi(\xi,y)=[\lambda_a\partial_\xi^2\phi+\mathcal{L}_a\phi](\xi,y).\]
Again, this is well-defined and bounded by \eqref{integral identities quotient}. Our control on the spectrum of $\mathcal{L}_a$ will allow us to control the kernel of the 2D operator $L_a$. In order to control its range, we first show that $L_a:X\to Y$ is Fredholm with index 0. To improve readability, we present this in a separate lemma:
\begin{lemma}\label{fredholmness of 2D operator}
For every fixed $a\in \mathcal{A}$, the operator $L_a:X\to Y$ defined above is Fredholm with index 0.
\end{lemma}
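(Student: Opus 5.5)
We argue as in the one-dimensional Lemma~\ref{lemma: Fredholmness T}: split $L_a$ into a model operator that is explicitly invertible between the flat Hölder spaces $X$ and $Y$, plus a compact perturbation. By \eqref{eq:condition rho0 eigenvalues} (which $\rho_a$ still satisfies) and $u_0(y)=y\,b(y)$ with $b(0)\neq0$, write
\[
L_a\phi=\Bigl[-j(a)^2\partial_\xi^2\phi-\partial_y^2\phi-\tfrac{2}{y}\partial_y\phi+\tfrac{(k+1)(k+2)}{y^2}\phi\Bigr]+\Bigl[g_1(y)\partial_y\phi+g_0(y)\tfrac{\phi}{y}\Bigr]=:T\phi+K\phi,
\]
with $g_0,g_1$ smooth up to $y=0$; the constant $(k+1)(k+2)u_0'(0)^2$ in $\rho_a'$, divided by $u_0^2$, gives the leading coefficient after absorbing $b(0)$ into $g_0$ (taking $u_0'(0)=1$ for notation). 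By \eqref{integral identities quotient}, $K$ maps $X$ boundedly into $C^{k-1,\frac12}$ with vanishing derivatives at $y=0$ up to order $k-1$. The inclusion of such functions into $Y$ is compact: Arzelà--Ascoli gives $C^{k-1,\frac12}\Subset C^{k-2,\frac12}$ on the compact $\overline{M}_+$, and the flatness condition is closed. Hence $K:X\to Y$ is compact.

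The core step is to show that $T:X\to Y$ is an isomorphism. Expand in the even Fourier basis $\cos(n\xi)$. For each mode, $T$ becomes the ODE operator $-\phi''-\frac2y\phi'+\frac{(k+1)(k+2)}{y^2}\phi+j^2n^2\phi$. For $n=0$ this is the $T_0$ of Lemma~\ref{lemma eigenvalues with stagnation points}, with the explicit inverse built from $A_{k+3}$ and $B_k$. For $n\ge1$ the homogeneous solutions behave like $y^{k+1}$ and $y^{-(k+2)}$ near $0$ (modified spherical Bessel functions $i_{k+1}(jny)$, $k_{k+1}(jny)$). Injectivity follows from an energy identity: multiply by $y^2\phi$ and integrate, using the positive potential together with the flatness at $0$ and the Dirichlet condition at $1$. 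A cleaner route avoiding mode sums is to write $T\phi=y^{-(k+1)}(-\Delta_{j,5+2k})(y^{k+1}\phi)$-type conjugation, i.e. $\Phi=y^{k+1}\cdot$ radial, which turns $T$ into a Laplacian in $\xi$ and radial variables of dimension $2k+5$, namely $-\partial_y^2-\frac{2k+4}{y}\partial_y-j^2\partial_\xi^2$ acting on $\psi=y^{-(k+1)}\phi$. This is a uniformly elliptic operator in $\T\times B^{2k+5}$ with axisymmetric data, so Schauder theory on the ball (Dirichlet at $|z|=1$) yields existence, uniqueness and $C^{\ell,\frac12}$ estimates. Translating back, flatness of order $k$ at $y=0$ of $\phi$ corresponds to $\psi=y^{-(k+1)}\phi$ being Hölder, and $y^{-(k+1)}f\in C^{\ell,\frac12}$ for $f\in Y$ by iterating \eqref{integral identities quotient}.

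The main obstacle is this last bookkeeping: showing that $\phi\mapsto y^{-(k+1)}\phi$ and its inverse exchange $X$, $Y$ with the appropriate Hölder spaces of axisymmetric functions without losing derivatives, since dividing by $y^{k+1}$ costs $k+1$ derivatives. If it fails, we fall back to weighted estimates. We would first try the mode-wise ODE argument, using the kernel representation (as with $A_\alpha$, $B_n$) uniformly in $n$ and the sharp decay $e^{-jn(\cdot)}$ to sum the modes in Hölder norms. With $T$ invertible and $K$ compact, $L_a=T+K$ is Fredholm of index $0$.
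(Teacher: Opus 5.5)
\textbf{There is a genuine gap, in the core step: the bounded inverse $T^{-1}:Y\to X$.} Several pieces of your plan are fine and coincide with the paper:
\begin{itemize}
\item the splitting $L_a=T+K$;
\item compactness of $K$ via $C^{k-1,\frac12}\Subset C^{k-2,\frac12}$;
\item injectivity of $T$, since multiplying by $y^2\phi$ gives $\int j^2y^2|\partial_\xi\phi|^2+y^2|\partial_y\phi|^2+(k+1)(k+2)\phi^2=0$.
\end{itemize}

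Your conjugation is algebraically correct: $T\phi=y^{k+1}\bigl(-\partial_y^2-\tfrac{2k+4}{y}\partial_y-j^2\partial_\xi^2\bigr)(y^{-(k+1)}\phi)$. However, the two claims you need in order to apply Schauder theory on $\T\times B^{2k+5}$ are false, not merely lossy. Flatness in $C^{k,\frac12}$ only gives $|\phi|\lesssim y^{k+\frac12}$, and $f\in Y$ only gives $|f|\lesssim y^{k-\frac32}$. The identities \eqref{integral identities quotient} let you divide $\phi$ by $y^{k}$ and $f$ by $y^{k-2}$ while staying H\"older, not by $y^{k+1}$.

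Here is a concrete counterexample. Take $f=y^{k-\frac32}$, which lies in $Y$, and $\phi=(k+\tfrac54)^{-1}\bigl(y^{k+\frac12}-y^{k+1}\bigr)$, which lies in $X$. Then $T\phi=f$, since $T(y^{k+1})=0$ and $T(y^{k+\frac12})=(k+\tfrac54)y^{k-\frac32}$. Yet $y^{-(k+1)}f=y^{-\frac52}$ and $y^{-(k+1)}\phi\sim y^{-\frac12}$ are both unbounded. So in your $(2k+5)$-dimensional picture the data and the solution are singular on the axis, and Schauder estimates on the ball give nothing.

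The Fourier fallback is not a proof either. You would need bounds uniform in $n$ and also a way to reassemble them in $C^{k,\frac12}$. Sup-norm bounds on the modes do not yield that without losing regularity in $\xi$.

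\textbf{What is missing is an estimate weighted by the distance to $\{y=0\}$.} The paper proceeds as follows.
\begin{enumerate}
\item It gets a weak solution by Lax--Milgram for $w=y\phi$, using the form associated with $Q_j=-j^2\partial_x^2-\partial_y^2+(k+1)(k+2)y^{-2}$. This form is bounded by Hardy's inequality and coercive by Poincar\'e.
\item It proves $|\phi|\le AFy^{k+\frac12}$ by comparing $w$ with the barrier $AFy^{k+\frac32}$, using $Q_j(y^{k+\frac32})=(k+\tfrac54)y^{k-\frac12}>0$ and testing with $(w-\overline w)_+$.
\item It rescales the equation on balls $B_{r/2}(x_0,r)$. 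There the operator is uniformly elliptic with smooth coefficients independent of $r$, so interior Schauder estimates give $|D^\alpha\phi|\lesssim Fy^{k+\frac12-|\alpha|}$ for $|\alpha|\le k$.
\item The global bound on $[D^k\phi]_{\frac12}$ follows by splitting into the cases $d<y/8$ and $d\ge y/8$, together with boundary Schauder estimates near $y=1$.
\end{enumerate}
An argument of this kind, or an equivalent weighted Schauder theory for your $(2k+5)$-dimensional operator with data of size $O(|z|^{-5/2})$, is needed to close the isomorphism step.
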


\begin{proof}
First, using \eqref{eq:condition rho0 eigenvalues} we write $L_a=P_{j(a)}+K$, where
\begin{align*}
[P_j\phi](\xi,y)&\coloneqq -j^2\partial_\xi^2\phi(\xi,y)-\partial_y^2\phi(\xi,y)-\frac{2}{y}\partial_y\phi(\xi,y)+\frac{(k+1)(k+2)}{y^2}\phi(\xi,y),\\[2pt]
[K\phi](\xi,y)&\coloneqq g_1(y)\partial_y\phi(\xi,y)+g_0(y)\frac{\phi(\xi,y)}{y}
\end{align*}
for some $g_0,g_1\in C^\infty([0,1])$. Again, each of them is a bounded operator $X\to Y$ by \eqref{integral identities quotient}. Furthermore, it follows from \eqref{integral identities quotient} that $\norm{K\phi}_{C^{k-1,\frac12}}\leq C\norm{\phi}_{C^{k,\frac12}}$. Since the embedding $C^{k-1,\frac12}\hookrightarrow C^{k-2,\frac12}$ is compact, we conclude that $K:X\to Y$ is a compact operator. On the other hand, isomorphisms are Fredholm operators with index 0, and it is a standard fact that adding a compact operator preserves Fredholmness and the index. Therefore, everything reduces to proving that $P_j:X\to Y$ is an isomorphism for every $j>0$. Let us fix $f\in Y$ and set
$$F\coloneqq\norm f_{C^{k-2,\frac{1}{2}}(\overline{M}_+)}.$$
For any multi-index $\alpha$ with $|\alpha|\leq k-2$, the boundary conditions on $f$ and Taylor's theorem lead to
\begin{equation}\label{eq:f-decay}
|D^\alpha f(x,y)|\leq CFy^{k-\frac{3}{2}-|\alpha|}.
\end{equation}

First of all, we show the existence of a weak inverse. For this, it is more convenient to use the substitution $w=y\phi$. Direct differentiation gives, for $y>0$,
$$yP_j\phi=Q_jw,\qquad \qquad Q_j\coloneqq-j^2\partial_x^2-\partial_y^2+(k+1)(k+2)y^{-2}.$$
We consider the associated bilinear form on $H^1_0(M_+)$:
$$a_j(w,v)\coloneqq\int_{M_+}\left(j^2\partial_x w\hspace{1pt}\partial_xv+\partial_y w\hspace{1pt}\partial_yv+(k+1)(k+2)y^{-2}wv\right)\,{\d x}\,{\d y}.$$
Hardy's inequality, applied on each vertical segment and integrated in $x$, shows that $a_j$ is bounded on $H^1_0(M_+)\times H^1_0(M_+)$. Moreover, using Poincar\'e's inequality in the vertical variable, it is easy to see that $a_j$ is coercive. Therefore, the Lax-Milgram theorem gives a unique $w\in H^1_0(M_+)$ such that
$$a_j(w,v)=\int_{M_+} yf\,v\,{\d x}\,{\d y}\qquad\qquad\forall v\in H^1_0(M_+).$$
Furthermore, since $f(-x,y)=f(x,y)$, reflection sends the weak solution to another weak solution. Uniqueness of the wea solution implies $w(-x,y)=w(x,y)$, as desired. Finally, set $\phi\coloneqq w/y$ for $y>0$. Then $P_j\phi=f$ in the sense of distributions on $M_+$. On every strip $\T\times[\delta,1]$ with $\delta>0$, weak elliptic regularity followed by Schauder regularity gives $\phi\in C^{k,\frac{1}{2}}$ locally, including up to the regular boundary $y=1$, where $\phi=0$. We next obtain estimates that remain valid as $y$ approaches zero.

We estimate the size of $\phi$ using $y^{k+\frac{3}{2}}$ as a barrier. First, by \eqref{eq:f-decay}, we have $|yf|\leq CFy^{k-\frac{1}{2}}$. On the other hand,
$$Q_j(y^{k+\frac{3}{2}})=(k+5/4)y^{k-\frac{1}{2}}.$$
Note that the coefficient is positive. We then choose a numerical constant $A>0$ sufficiently large so that $\overline w\coloneqq AFy^{k+\frac{3}{2}}$ satisfies $Q_j\overline w\geq|yf|$. Consider $g\coloneqq(w-\overline w)_+$. At $y=0$ both $w$ and $\overline w$ have zero trace, while at $y=1$ the trace of $w$ is zero and $\overline w\geq0$. Hence, $g\in H^1_0(M_+)$. Although $\overline w$ does not vanish at $y=1$, the expression $a_j(\overline w,g)$ is well-defined and integration by parts yields $a_j(\overline w,g)=\int_{M_+}(Q_j\overline w)g$. The choice of $A$ then leads to:
$$a_j(w-\overline w,g)=\int_{M_+}(yf-Q_j\overline w)g\leq0.$$
The left-hand side equals $a_j(g,g)$ due to the identity $\nabla g=\ind_{\{w>\overline{w}\}}\nabla(w-\overline w)$. Coercivity then implies $g=0$, so $w\leq\overline w$. Applying the same argument to $-w$ proves
\begin{equation}
\label{eq:h-decay}
|w(x,y)|\leq AFy^{k+\frac{3}{2}},\qquad |\phi(x,y)|\leq AFy^{k+\frac{1}{2}}.
\end{equation}
These bounds hold almost everywhere and therefore pointwise for $y>0$, by the interior continuity already obtained.

Next, we estimate its derivatives by rescaling the equations according to the distance to $\{y=0\}$. We fix a point $p=(x_0,r)$ with $0<r<1/2$, so that the periodization of the ball $B_{r/2}(p)$ is contained in $M_+$. We introduce the change of coordinates
$$x=x_0+r\zeta_1,\qquad y=r(1+\zeta_2),$$
and define the following rescaled functions on $B_{1/2}(0)$:
\begin{equation}\label{eq:rescaled-functions}
\phi_r(\zeta)\coloneqq \phi\bigl(x_0+r\zeta_1,r(1+\zeta_2)\bigr),
\qquad f_r(\zeta)\coloneqq r^2f\bigl(x_0+r\zeta_1,r(1+\zeta_2)\bigr).
\end{equation}
Note that there is no amplitude normalization in $\phi_r$. Each differentiation in the original coordinates contributes a factor $r^{-1}$. Substitution into $P_j\phi=f$ and multiplication by $r^2$ give
\begin{equation}\label{eq:rescaled-equation}
-j^2\partial_{\zeta_1}^2\phi_r-\partial_{\zeta_2}^2\phi_r
-\frac2{1+\zeta_2}\partial_{\zeta_2}\phi_r
+\frac{(k+1)(k+2)}{(1+\zeta_2)^2}\phi_r=f_r.
\end{equation}
On $B_{1/2}$ we have $1/2<1+\zeta_2<3/2$, so the coefficients are smooth up to the boundary. Since the principal part of the operator is uniformly elliptic, the interior Schauder estimates~\cite[Chapter~6]{GilbargTrudinger2001} on the two fixed balls yield
$$\norm{\phi_r}_{C^{k,\frac{1}{2}}(B_{1/4})}\leq C\left(\norm{\phi_r}_{L^\infty(B_{1/2})}+\norm{f_r}_{C^{k-2,\frac{1}{2}}(B_{1/2})}\right).$$
Here $C$ depends on $j$ but not on $r$ or $x_0$. However, if $j$ takes values in a compact $J\subset (0,\infty)$, the constant can be chosen to depend on $J$ but not on $j$. Undoing the scaling, this estimate translates into the following bound in the original coordinates:
\begin{align*}
&\sum_{|\alpha|\leq k}r^{|\alpha|}
  \norm{D^\alpha \phi}_{L^\infty(B_{r/4}(p))}
 +r^{k+\frac{1}{2}}[D^k\phi]_{\frac{1}{2};B_{r/4}(p)}\\
&\hspace{40pt}\leq C\Bigg(
 \norm \phi_{L^\infty(B_{r/2}(p))}
 +\sum_{|\alpha|\leq k-2}r^{|\alpha|+2}
    \norm{D^\alpha f}_{L^\infty(B_{r/2}(p))}+r^{k+\frac{1}{2}}[D^{k-2}f]_{\frac{1}{2};B_{r/2}(p)}\Bigg).
\end{align*}
Using the definition of $F$ and substituting \eqref{eq:f-decay}--\eqref{eq:h-decay} leads to the following bound for any point $(x_0,r)\in\T\times(0,1/2)$ and any multi-index $\alpha$ with  $|\alpha|\leq k$:
\begin{equation}\label{eq:local-conclusion}
|D^\alpha \phi(x_0,r)|\leq CFr^{k+\frac{1}{2}-|\alpha|},
\qquad \qquad
[D^k\phi]_{\frac{1}{2};B_{r/4}(p)}\leq CF.
\end{equation}

Meanwhile, away from the boundary the coefficients are smooth and uniformly bounded. The boundary Schauder estimates on the fixed strip $\T\times[1/4,1]$ thus give
\begin{equation}\label{eq:upper-boundary}
\norm \phi_{C^{k,\frac{1}{2}}(\T\times[1/2,1])}
\leq C\Big(\norm \phi_{L^\infty(\T\times[1/4,1])}+\norm f_{C^{k-2,\frac{1}{2}}(\T\times[1/4,1])}\Big)
\leq CF.
\end{equation}
There is no boundary-data term because $\phi(x,1)=0$. The artificial lower boundary is handled by interior estimates. Combining these estimates with \eqref{eq:local-conclusion} yields:
\begin{equation}\label{eq:global-decay}
|D^\alpha \phi(x,y)|\leq CFy^{k+\frac{1}{2}-|\alpha|}
\qquad\qquad\forall(x,y)\in\T\times(0,1],\ |\alpha|\leq k.
\end{equation}
In particular, every derivative through order $k$ extends continuously by zero to $y=0$. Hence $\phi$ extends to a $C^k$ function on $\overline{M}_+$ with the required boundary conditions.

It remains to estimate the global top H\"older seminorm. We fix two points $p=(x,y)$ and $q=(x',y')$ with $0<y\leq y'$, and denote $d=\operatorname{dist}(p,q)$. We first consider the case $y<1/2$, which is split into two subcases depending on the size of $d$. If $d<y/8$, both points lie in the ball $B_{y/4}(p)$, and \eqref{eq:local-conclusion} gives
\[|D^\alpha \phi(p)-D^\alpha \phi(q)|\leq CFd^\frac{1}{2}\qquad\]
for any multi-index with $|\alpha|=k$. If $d\geq y/8$, then $y\leq8d$ and $y'\leq y+d\leq9d$, so \eqref{eq:global-decay} gives the same bound. Meanwhile, when $y\geq 1/2$, both points lie on the strip $\T\times [1/2,1]$, so the desired bound follows from \eqref{eq:upper-boundary}. Therefore, we conclude
\begin{equation}
\norm \phi_{C^{k,\frac{1}{2}}(\overline{M}_+)}\leq CF.
\label{final estimate fredhomlness}
\end{equation}

Finally, we check that the solution is unique. Indeed, if $\phi\in X$ satisfies $P_j\phi=0$, then $w=y\phi$ belongs to $H^1_0(M_+)$ and satisfies $a_j(w,v)=0$ for every $v\in H^1_0(M_+)$. Taking $v=w$ gives $w=0$, by coercivity, and hence $\phi=0$. Thus $P_j$ is bijective and its inverse is bounded, by \eqref{final estimate fredhomlness}. Since $P_{j(a)}$ is an isomorphism and $K$ is compact, we conclude that $L_a=P_{j(a)}+K$ is Fredholm with index 0.
\end{proof}

\begin{proof}[Proof of Theorem~\ref{thm:zeroflexibility}]
We are now ready to study the kernel and image of $L_a$. We define $\Phi_a(\xi,y)\coloneqq \beta \cos(\xi)\hspace{1pt}\phi_a(y)$, where $\phi_a$ is given by Lemma~\ref{lemma: perturbation eigenvalues stagnation} and $\beta>0$ is chosen so that $\Phi_a$ has unit norm in $L^2_w(M_+)$ for $w=u_0^2$. It follows from the definition of $\lambda_a$ and $\phi_a$ that $\Phi_a\in\ker L_a$. To prove that the kernel is spanned by $\Phi_a$, we fix $\phi\in \ker L_a$ and do a Fourier expansion in $\xi$. In what follows, convergence is in the space $L^2_w$:
\[\phi(\xi,y)=\sum_{l=0}^\infty f_l(y)\cos(l\xi)\qquad \Rightarrow \qquad L_a\phi(\xi,y)=\sum_{l=0}^\infty[\mathcal{L}_af_l-l^2\lambda_af_l](y)\,\cos(l\xi).\]
The boundary conditions on $\phi\in X$ imply that $f_l\in Z^k_0$, which is contained in the domain $D$ of $\mathcal{L}_a$. If $\phi\in \ker L_a$, each function $f_l$ must lie in the kernel of $\mathcal{L}_a-l^2\lambda_a$. Since $\lambda_a<0$ is the first eigenvalue, we see that $l^2\lambda_a<\lambda_a$ for $l\geq 2$ cannot be eigenvalues. Hence, we must have $f_l\equiv 0$ for $l\geq 2$. Similarly, since $0$ is not an eigenvalue of $\mathcal{L}_a$, we must have $f_0\equiv 0$. We conclude that $\ker L_a$ and a complement $\mathcal{W}_a$ are given by
\begin{align*}
    \ker L_a &= \vspan \{\Phi_a\}, \\
    \mathcal{W}_a&=\{h\in X: \,(h,\Phi_a)_{L^2_w}=0\}.
\end{align*}
Next, we study the image of $L_a$. For any $\phi\in X$, integration by parts shows that
\[(\Phi_a,L_a\phi)_{L^2_w}=(L_a\Phi_a,\phi)_{L^2_w}=0,\]
which implies that the image of $L_a$ is contained in the orthogonal complement to $\vspan \{\Phi_a\}$. Furthermore, it follows from Lemma~\ref{fredholmness of 2D operator} that $R(L_a)$ is closed and the cokernel $Y/R(L_a)$ is one-dimensional, so it must be spanned by the equivalence class of $\Phi_a$. We conclude that the image $\mathcal{R}_a\coloneqq R(L_a)$ and a complement $\mathcal{Z}_a$ are given by:
\begin{align*}
    \mathcal{R}_a&=\{f\in Y: \,(f,\Phi_a)_{L^2_w}=0\}, \\
    \mathcal{Z}_a &= \vspan\{\Phi_a\}.
\end{align*}

By Lemma~\ref{lemma: perturbation eigenvalues stagnation}, $a\mapsto \phi_a$ is continuous as a map $\mathcal{A}\to Z^k_0$. Hence, $a\mapsto \Phi_a$ is continuous as a map $\mathcal{A}\to X\hookrightarrow Y$. Once we have this characterization of $\mathcal{Z}_a$, $\mathcal{R}_a$ and $\mathcal{W}_a$ and we have continuity of $a\mapsto \Phi_a$, the proof of $(2)$ and $(3)$ in Theorem~\ref{thm:unifCR} is completely analogous to the arguments in Lemma~\ref{lemma:conditionsCR}.


By Theorem~\ref{thm:unifCR} for any compact neighborhood $a_\ast\in \cA'\subset \cA$, there exist open neighbourhoods $0\in U_{\cA'}\subset \cI^\mu\times U$, and $0\in \cI_{\cA'}^s\subset\R$ and continuous functions 
    \begin{align}
        (\kappa(s,a),\varpi(s,a)):\cI_{\cA'}^s\times \cA'\rightarrow\mathcal{U}_{\cA'}
    \end{align}
    with $(\kappa(0,a),\varpi(0,a))=0$ for all $a\in \cA'$ and such that
    \begin{align}
        \G(\kappa(s,a),s\Phi_a + sT_a^X\varpi(s,a),a) =0
    \end{align}
    for all $(s,a)\in \cI_{\cA'}^s\times \cA'$. Moreover, for all $a\in {\cA'}$,
\begin{align}
        \lbrace (s,\mu,w)\in (\cI_{\cA'}^s \setminus \lbrace 0 \rbrace)\times\mathcal{U}_{\cA'} : \G(\mu, s\varphi_a+sT_a^X w, a) = 0,  \rbrace &=  \lbrace (s,\kappa(s,a), \varpi(s,a)) : s\in \cI^s_{\cA'}\setminus \lbrace 0 \rbrace \rbrace.
\end{align}
Recall that $j(a)=\sqrt{-\lambda_a}$, so $j(a_\ast)=j_\ast$ by definition of $a_\ast$. Since $a\mapsto -\lambda_a$ is strictly decreasing in $a$, for $a_1,a_2\in \mathcal{A}'$ such that $a_1<a_\ast<a_2$, we have $j(a_2)<j_\ast<j(a_1)$. Since $\kappa$ is continuous and $\kappa(0,a)=0$, for sufficiently small $s$ we have
\[j(a_2)+\kappa(s,a_2)<j_\ast<j(a_1)+\kappa(s,a_1).\]
Therefore, by the intermediate value theorem, for every $s$ sufficiently small there exists $a(s)\in \mathcal{A}'$ such that $j(a(s))+\kappa(s,a(s))=j_\ast$. Hence, for $s\neq 0$ sufficiently small, $h\coloneqq s\Phi_{a(s)}+sT^X_{a(s)}\varpi(s,a(s))$ satisfies
\[-j_\ast^2\partial_\xi^2h-\partial_y^2h-\frac{u'_0(y+h)}{u_0(y+h)}\left[j_\ast^2(\partial_\xi h)^2+(\partial_y h)^2+2\partial_y h\right]+\frac{\rho'_{a(s)}(y+h)}{u_0(y+h)^2}h=0\]
and has nontrivial $\xi$-dependence because so does $\Phi_a$ and $\varpi(s,a)=o(1)$. Let us consider $\widetilde{h}(x,y)\coloneqq h(j_\ast x, y)$. Since $h\in X$, we see that $\widetilde{h}$ can be extended by zero to a $C^k$ function on the whole channel. Furthermore, we see that it satisfies \eqref{eq:sufficient_h}. It thus follows from Lemma~\ref{lemma formulation h} that $\psi(x,y)\coloneqq \psi_0(y+\widetilde{h}(x,y))$ and $\rho(x,y)\coloneqq \rho_{a(s)}(y+\widetilde{h}(x,y))$ yield a steady solution of the Boussinesq equations $(\nabla^\perp\psi,\rho)$ with nontrivial $x$-dependence. Moreover, given $\varepsilon>0$, by further reducing $s\neq 0$ we have
\[\norm{\psi-\psi_0}_{C^k}+\norm{\rho-\rho_0}_{W^{1,q}}<\varepsilon\]
because $\rho_a$ can be chosen arbitrarily close to $\rho_0$ in $W^{1,q}$ for any $a\in (-1,1)$. Finally, the solution is laminar because the diffeomorphism $F(x,y) \coloneqq (x, y + \tilde{h}(x, y))$ fixes the boundary and has positive Jacobian $1 + \partial_y \tilde{h}$. Hence $\widetilde{X} \coloneqq F^{-1}$ is orientation-preserving and $\psi = \psi_0 \circ \widetilde{X}^{-1}$, as required by Definition~\ref{def_laminar_shear}.
\end{proof}

\section{Non-laminar flexibility and rigidity near stagnant shear flows}\label{sec:local_rigidity-zero_012}
In this section we show that the non-degenerate condition $\min_{y\in[-1,1]} \rho_0'(y) >0$ in Corollary \ref{cor:rigidity_stratified_dens} is sharp. In what follows let $\rho_0\in C^4([-1,1])$ with $\rho'_0(0)=0$. Let also $u_0(y)\in C^4([-1,1])$ be a shear flow such that $u_0(0)=0$, $u_0(y)\neq 0$ for all $y\neq 0$ and $u_0'(0)\neq 0$. We assume without loss of generality that $u_0'(0)=1$. We have the following. 
\begin{theorem}\label{theorem:nonlaminar_flexibility}
    For all $0<\alpha<1$ and all $\ep>0$ there exist non-trivial non-laminar steady Boussinesq pairs $(\u,\rho)$ such that
    \begin{align}
        \Vert \u - (u_0(y),0) \Vert_{C^{1,\alpha}} + \Vert \rho - \rho_0 \Vert_{C^{1,\alpha}} < \ep.
    \end{align}
\end{theorem}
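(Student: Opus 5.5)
Following the strategy announced in the introduction, the plan is to reduce to the Euler equations inside a thin strip of constant density around the stagnation point and to bifurcate cat's-eye (non-laminar) states there, in the spirit of \cite{LinZeng}.

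\emph{Step 1: modify the density.} Fix a cutoff $\eta$ as in Section~\ref{sec: bifurcation non-zero} and a small $\delta>0$. Set
\[
\rho_\delta'(y):=\rho_0'(y)\bigl(1-\eta(\delta^{-1}y)\bigr),
\]
so that $\rho_\delta$ is constant on $[-\delta,\delta]$. Since $\rho_0'(0)=0$ and $\rho_0\in C^4$, we have $|\rho_0'(y)|\lesssim|y|$ near $0$. A direct computation of the $C^{0,\alpha}$ seminorm of $\rho_0'\eta(\delta^{-1}\cdot)$ on scale $\delta$ then gives
\[
\|\rho_\delta-\rho_0\|_{C^{1,\alpha}}\lesssim\delta^{1-\alpha}\to0 .
\]
The pair $((u_0,0),\rho_\delta)$ is still a shear equilibrium. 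Inside $\T\times(-\delta,\delta)$ the buoyancy force is a gradient, so any Euler steady state that is supported (as a perturbation) in this strip and glued to the shear outside solves Boussinesq there.

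\emph{Step 2: set up a semilinear problem.} Work with $\Delta\psi=F(\psi)$ in the strip, or more globally with $\Delta\psi+y\Theta'(\psi)+G'(\psi)=0$ as in Proposition~\ref{Bernoullie_function_2}. Because $u_0$ has a simple zero, $\psi_0$ is not monotone, so $\Theta$ is defined separately on the two branches $y>0$ and $y<0$. Both branches take values in $\psi_0\geq\psi_0(0)$ (say). The density is constant near $\psi_0(0)$ on both branches, so a single function $\Theta$ is possible only if the two branch values of $\rho_\delta$ match. I would arrange this by an additional $C^{1,\alpha}$-small modification of $\rho_\delta$ outside the strip, or by prescribing the two-sided profile from the start.

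Following Lin--Zeng, I would further modify $u_0$ near $0$ (for instance to $u_\delta=u_0+\text{small}$, with the modification tending to zero in $C^{1,\alpha}$), so that the Rayleigh-type operator $-\partial_y^2+Q_\delta(y)$ acquires a negative eigenvalue $-j^2$ with $j\in\N$. Here $Q_\delta=(u_\delta''+\partial_y\rho_\delta/u_\delta)/u_\delta$. Inside the strip $Q_\delta=u_\delta''/u_\delta$, and making $u''_\delta/u_\delta$ very negative on a small scale is $C^{1,\alpha}$-cheap precisely because $u_\delta$ vanishes there. The nonlinearity $F'(\psi)=u_\delta''/u_\delta\circ\psi_0^{-1}$ must be regular; I would ensure this by making $u_\delta$ odd and $u_\delta''/u_\delta$ even near $0$.

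\emph{Step 3: bifurcate.} Apply Crandall--Rabinowitz, in the uniform version Theorem~\ref{thm:unifCR}, in even-in-$x$ H\"older spaces $C^{2,\alpha}$ for $\psi$, exactly as in the proof of Theorem~\ref{thm:nonzeroflexibility}. Use the parameter $a$ to tune the eigenvalue so that the period is an integer. The bifurcating $\psi=\psi_\delta+s\beta\phi(y)\cos(jx)+o(s)$ has, near $y=0$, the local form
\[
\tfrac12u_\delta'(0)y^2+s\phi(0)\cos(jx),
\]
with $\phi(0)\neq0$ since $\phi$ is a positive ground state. Its level sets therefore contain closed contractible curves (cat's eyes), so the solution is non-laminar.

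Finally, choose $\delta$ and then $s$ small. The velocity error is controlled in $C^{1,\alpha}$ by $\|\psi-\psi_0\|_{C^{2,\alpha}}$, and the density $\Theta(\psi)$ differs from $\rho_\delta$ by $O(s)$ in $C^{1,\alpha}$.

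\emph{Main obstacle.} I expect the hardest part to be Step 2: producing a single smooth pair $(\Theta,G)$ across the stagnation line together with a spectral control that is uniform as $\delta\to0$. The modified $u_\delta$ must keep $C^{1,\alpha}$-closeness while forcing a negative eigenvalue; this costs $u_\delta''$ but not $u_\delta'$, which explains why the threshold is exactly $C^{1,\alpha}$. I would first try Lin--Zeng's explicit modification of Couette near the critical layer, adapted by the factor $u_0'(0)=1$.
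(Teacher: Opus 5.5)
Your Steps 1 and 3 follow the paper's strategy: flattening $\rho_0$ near the critical layer at $C^{1,\alpha}$ cost $\delta^{1-\alpha}$, an odd, small-scale correction of $u$ so that $u''/u$ acts as a negative delta-like potential while $u$ barely moves in $C^{1,\alpha}$, and the uniform Crandall--Rabinowitz theorem with $a$ tuning the period. Step 2, however, has a genuine gap, exactly at the point you flagged, and your proposed fix cannot work.

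A single pair $(\Theta,G)$ of functions of $\psi$ alone is too rigid once $\psi$ has two branches $y_\pm(c)$ over each level $c$. Differentiating $\rho(y_+(c))=\rho(y_-(c))$ gives $\rho'(y_+)/u(y_+)=\rho'(y_-)/u(y_-)$. Inserting this into the matching of $G'(\psi(y))=u'(y)+y\rho'(y)/u(y)$ on the two branches gives $|\rho'|\le 2(1+\epsilon)\epsilon$ wherever both branches exist, for any $u$ with $u(0)=0$ and $\|u'-1\|_{L^\infty}\le\epsilon$. For Couette, $u=y$, this forces $\rho$ to be constant. So for, e.g., $\rho_0=y^3$, no $C^{1,\alpha}$-small modification of the density yields a single pair. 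Matching $\Theta$ alone, as you propose, is also not enough: $G$ must match as well.

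Retreating to the strip does not help either. The kernel element $\phi(y)\cos(jx)$ comes from the ground state of a Sturm--Liouville problem on all of $(-1,1)$, which is positive everywhere. Hence the bifurcating perturbation is not supported in $\T\times(-\delta,\delta)$. If you instead pose the problem in the strip with Dirichlet data, $\partial_y\psi$ jumps at $y=\pm\delta$, and the glued velocity is not even continuous.

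Your alternative of prescribing a two-sided profile points the right way, but you need a mechanism telling the equation which branch each point of the perturbed solution belongs to. The missing idea is to let the nonlinearities depend explicitly on $y$ and switch branch inside the region where the two branches agree.

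The paper builds $\Theta^\pm:=\rho_{\delta,a}\circ\phi^\pm_{\delta,a,\gamma}$ and $G^\pm$ from the inverses $\phi^\pm_{\delta,a,\gamma}$ of $\psi_{\delta,a,\gamma}$ on $[0,1]$ and $[-1,0]$. On $|y|\le\delta$ the density is constant and $u$ is odd; your oddness requirement is exactly what is needed, via $\psi=H(\tfrac12 y^2)$ and $\tfrac12u^2=Q(\tfrac12y^2)$. Consequently both branches give $\Theta^\pm=\rho_0(0)$ and $G^\pm=-p_0-Q\circ H^{-1}$ for all $\psi$-values near and above the critical one, and the extensions are chosen to agree there. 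One then sets
\[
\Theta(y,s)=\eta(y)\Theta^+(s)+(1-\eta(y))\Theta^-(s),\qquad G(y,s)=\eta(y)G^+(s)+(1-\eta(y))G^-(s),
\]
with $\eta$ switching from $0$ to $1$ inside $|y|<\delta/4$. The equation to solve is $\Delta\psi+y\,\partial_s\Theta(y,\psi)+\partial_sG(y,\psi)=0$, with $\rho=\Theta(y,\psi)$.

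The spurious terms $\eta'(y)\bigl(\Theta^+(\psi)-\Theta^-(\psi)\bigr)e_2$ and $\eta'(y)\bigl(G^+(\psi)-G^-(\psi)\bigr)e_2$ in $\nabla\rho$ and $\nabla G(y,\psi)$ vanish as long as the perturbation keeps $\psi$ in that common range on $|y|<\delta/4$. Then $\rho$ is transported and the momentum equation follows as in Proposition~\ref{Bernoullie_function_2}. The linearized potential is still $u''/u+\rho'/u^2$, so your spectral analysis carries over.

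Finally, the integer-period step also needs two further facts, which you do not address. First, $a\mapsto\lambda_{a,\gamma}$ must be strictly monotone near $a_*$; the paper obtains this from the extra density term $-\beta a(a-a_*)\int_0^y\chi_\delta(r-5\delta)\,dr$ with $\beta$ large. Second, $0$ must not be an eigenvalue of the vertical operator; the paper secures this by a further small density perturbation (Lemma~\ref{lemma:control_eigenvalues_stag}).
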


\begin{remark}
    We recall one could use the crude construction presented in \cite[Theorem 1.3]{drivas2024geometric} to obtain non-laminar steady states $(\u, \rho)$ close to $((u_0,0), \rho_0)$ in $C^{0,\alpha}\times C^{1,\alpha}$ by introducing a region of constant density and reducing to the Euler equations. The steady states constructed in Theorem \ref{theorem:nonlaminar_flexibility} also introduce a region of constant density where the system reduces to the Euler equations. There, we bifurcate the shear flow in the spirit of \cite{LinZeng, sinambela2023transition, zhao2026traveling} thus obtaining a non-laminar velocity field.
\end{remark}
The regularity threshold $C^{1,\alpha}$ for $0<\alpha<1$ in Theorem~\ref{theorem:nonlaminar_flexibility} is almost sharp under a spectral assumption. For
\begin{align}
    q_0(y)\coloneqq  \frac{\partial_y\rho_0(y)}{u_0(y)}\in C^{0,\alpha}(M), \quad Q_0(y) \coloneqq u_0''(y) + q_0(y).
\end{align}
we recall the Rayleigh operator
\begin{align}
    \mathcal{R}_0\phi\coloneqq  u_0(y)\Delta\phi(x,y) - Q_0(y)\phi(x,y)
\end{align}
for $\phi \in H^2(M)\cap H_0^1(M)$.

\begin{theorem}\label{thm:nonlaminar_rigidity}
    Assume $((u_0, 0), \rho_0)$ is a stratified shear flow with $u(0)=\rho'(0)=0$ and $u'_0(y)\geq c_0>0$. Assume further that the Rayleigh operator $\mathcal{R}_0$ has trivial kernel in $H^2(M)\cap H^1_0(M)$. Then, for all $0<\alpha<1$ there exists $\ep>0$ such that any stationary configuration $(\u, \rho)$ with
    \begin{align*}
        \Vert \u - (u_0,0) \Vert_{C^{2,\alpha}} + \Vert \rho - \rho_0 \Vert_{C^{2,\alpha}} < \ep
    \end{align*}
    is necessarily a stratified shear flow, namely $\partial_x\u = \partial_x\rho = 0$.
\end{theorem}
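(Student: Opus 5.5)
We argue by contradiction, following the scheme of Lin--Zeng for Couette flow and its inhomogeneous adaptations. Suppose there are non-shear stationary solutions $(\u_n,\rho_n)$, $\u_n=(u_n,v_n)$, with $\varepsilon_n:=\|\u_n-(u_0,0)\|_{C^{2,\alpha}}+\|\rho_n-\rho_0\|_{C^{2,\alpha}}\to0$. Since $u_0'\ge c_0>0$, for $n$ large $\partial_y u_n\ge c_0/2$. Hence $\Gamma_{u_n}=\{u_n=0\}$ is a single graph $\{y=h_n(x)\}$ with $\|h_n\|_{C^{2,\alpha}}\lesssim\varepsilon_n$, and $u_n\neq0$ away from it; in particular $u_n$ does not vanish on $\partial M$.

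\textbf{Step 1: reduction to the stagnation curve.} If $b_n(x)=\partial_y\rho_n(x,h_n(x))\not\equiv0$, the proof of Lemma~\ref{lemma:flat_stagnation_curve} applies verbatim, since it is local on the component of $\mathcal B$ and uses only $C^2$. It gives $\mathcal B=\T$, so $h_n$ is constant and $b_n$ is a nonzero constant. Near $\rho_0'(0)=0$ we get $\mathcal J_n=-b_n/a_n^2\to0$. For large $n$ this lies in $(-\tfrac34,\tfrac14)\setminus\{0\}$, and the $C^2$ version of Theorem~\ref{thm:rigidity_noncrit_J} (Remark~\ref{remark:rigidity_smooth_carleman}) forces a shear, which is a contradiction. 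So we may assume $\partial_y\rho_n\equiv0$ on $\Gamma_{u_n}$.

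\textbf{Step 2: the Rayleigh-type equation.} We use \eqref{vorticity_eq_rigidt2} divided by $u_n$:
\[
u_n\Delta v_n-(\Delta u_n)v_n-\frac{\partial_y\rho_n}{u_n}v_n=0 \quad\text{in } M\setminus\Gamma_{u_n},\qquad v_n|_{\partial M}=0 .
\]
The key point is that the quotient $q_n:=\partial_y\rho_n/u_n$ is bounded in $C^{0,\alpha}$ uniformly in $n$. Since $\partial_y\rho_n$ vanishes on $\Gamma_{u_n}$, we can write $u_n=(y-h_n)\alpha_n$ and $\partial_y\rho_n=(y-h_n)\beta_n$ via integral formulas in the variable $y-h_n(x)$. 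These give $\alpha_n,\beta_n\in C^{1,\alpha}$ with $\alpha_n\ge c_0/2$, hence $q_n=\beta_n/\alpha_n$. Moreover $q_n\to q_0$ in $C^{0,\alpha'}$ for $\alpha'<\alpha$. This is exactly where $C^{2,\alpha}$ proximity of $\rho$ is used, and it is the main technical obstacle: the quotient must be controlled uniformly across the moving curve $h_n$. The equation then holds on all of $M$ as $\mathcal R_n v_n:=u_n\Delta v_n-Q_nv_n=0$, with $Q_n:=\Delta u_n+q_n\to Q_0$ in $C^{0,\alpha'}$.

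\textbf{Step 3: normalization and compactness.} Set $w_n:=v_n/\|v_n\|_{L^2}$; this is well defined, since $v_n\not\equiv0$ because the solution is not a shear (incompressibility and the vorticity equation). We need uniform $H^2$ bounds for $w_n$ from $\mathcal R_nw_n=0$, despite the degeneracy of $u_n$ on $\Gamma_{u_n}$. We proceed as in the standard Rayleigh analysis:
\begin{itemize}
\item away from $\Gamma_{u_n}$, elliptic estimates apply directly;
\item near $\Gamma_{u_n}$, we exploit $v_n=\partial_x\psi_n$. From $\int_\T v_n\,\d x=0$ only the nonzero $x$-modes are present, which yields a Poincar\'e inequality in $x$;
\item testing $\mathcal R_nw_n=0$ against $w_n/u_n$-type multipliers, or using the Lin--Zeng hyperbolic-free estimate $\|\nabla w\|_{L^2}\lesssim\|w\|_{L^2}$ for Rayleigh equations with $u'\ge c_0$, gives $\|w_n\|_{H^1}\lesssim1$.
\end{itemize}
We then pass to a subsequence $w_n\rightharpoonup w$ in $H^1_0$ with $w_n\to w$ in $L^2$, so $\|w\|_{L^2}=1$. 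The limit satisfies $\mathcal R_0w=0$ in the distributional sense. Since $\Delta w=Q_0w/u_0$ and $Q_0(0)=u_0''(0)+\rho_0''(0)$ is finite, regularity near $y=0$ must be checked: we show $w\in H^2$ by proving the $x$-mean-zero solutions have no logarithmic singular part. This holds because $w_n$ are smooth with uniform bounds on $\Delta w_n=Q_nw_n/u_n$ in $L^2$, which requires a Hardy-type bound $\|w_n/u_n\|_{L^2}\lesssim\|\nabla w_n\|_{L^2}$. That bound needs $w_n$ to vanish on $\Gamma_{u_n}$, which we get from Step 1 via $u_n\partial_x\rho_n+v_n\partial_y\rho_n=0$ and $\partial_x\rho_n=0$ on $\Gamma_{u_n}$ (as in Lemma~\ref{lemma:flat_stagnation_curve}). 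Hence $w\in H^2\cap H^1_0$ is a nonzero element of $\ker\mathcal R_0$, contradicting the spectral assumption.
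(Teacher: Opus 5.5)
There is a genuine gap in Step~3, which is where the critical-layer difficulty of this theorem sits. Your Steps~1 and~2 coincide with the paper. The paper also uses the dichotomy on $\Gamma_{u_n}$ from Lemma~\ref{lemma:flat_stagnation_curve}, sends the nonresonant case to Theorem~\ref{thm:rigidity_noncrit_J}, and controls $q_n=\partial_y\rho_n/u_n$ as in Lemma~\ref{lemma:bounds_qn}.

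\textbf{The failing step.} The one concrete mechanism you give for $w_n|_{\Gamma_{u_n}}=0$ is wrong. In the case left after Step~1 you have $\partial_y\rho_n\equiv0$ on $\Gamma_{u_n}$. The transport equation $u_n\partial_x\rho_n+v_n\partial_y\rho_n=0$ restricted to $\Gamma_{u_n}$ then reads $0=0$, even if $\partial_x\rho_n=0$ were known. Moreover, $\partial_x\rho_n=0$ on the curve was derived in Lemma~\ref{lemma:flat_stagnation_curve} only where $\partial_y\rho\neq0$, which is exactly the case you discarded.

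This has three consequences.
\begin{itemize}
\item The Hardy bound $\|w_n/u_n\|_{L^2}\lesssim\|\nabla w_n\|_{L^2}$ and the uniform $L^2$ bound on $\Delta w_n$ are unjustified.
\item The key estimate $\|\nabla w_n\|_{L^2}\lesssim\|w_n\|_{L^2}$ is only asserted. The Poincar\'e inequality in $x$ goes the wrong direction, and the multiplier/Lin--Zeng remark is not an argument. Without this estimate, normalizing in $L^2$ gives no compactness.
\item Passing to the limit in $u_n\Delta w_n=Q_nw_n$ only gives $u_0\Delta w=Q_0w$. This does not exclude a term $c(x)\delta_{y=0}$ in $\Delta w$, since the factor $u_0$ kills it. So $w\in H^2$ does not follow.
\end{itemize}

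\textbf{How the paper closes this.} It normalizes $\|\nabla w_n\|_{L^2}=1$ and regularizes $u_n$ to $u_n+i\ep_n$. Lemma~\ref{lemma:bound_Bn} shows that $\mathcal{B}_n(f,g)=\int_M\frac{Q_n}{u_n+i\ep_n}f\bar g$ is uniformly bounded on $L^2_xH^\sigma_y$, $\sigma>\frac12$, with Plemelj limit $\mathcal{B}_0$. Compactness of $H^1_0\hookrightarrow L^2_xH^\sigma_y$ then gives $1+\mathcal{B}_0(w_0,w_0)=0=\|\nabla w_0\|_{L^2}^2+\mathcal{B}_0(w_0,w_0)$, so $w_0\neq0$. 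The imaginary part forces $Q_0(0)w_0(\cdot,0)=0$, and the limiting weak formulation with principal value gives $w_0\in H^2$.

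\textbf{How your route could be repaired.} The $L^2$-normalized approach can work, but it needs two ingredients you did not supply.
\begin{itemize}
\item The correct vanishing mechanism: the divided equation $u_n\Delta v_n=Q_nv_n$ extends by continuity to $\Gamma_{u_n}$, so $Q_nv_n=0$ there. This forces $v_n|_{\Gamma_{u_n}}=0$ only when $Q_0(0)\neq0$.
\item A splitting that also covers $Q_0(0)=0$. Write $Q_n=\bigl(Q_n-Q_n(x,h_n(x))\bigr)+Q_n(x,h_n(x))$. The first piece divided by $u_n$ is $O(|y-h_n|^{\alpha-1})$, and is controlled by $\|w_n(x,\cdot)\|_{L^\infty_y}^2\lesssim\|w_n(x,\cdot)\|_{L^2_y}\|\partial_yw_n(x,\cdot)\|_{L^2_y}$. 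The second piece is handled by Hardy on the vertical lines where $Q_n(x,h_n(x))\neq0$, since $w_n$ vanishes there.
\end{itemize}
Together these give $\|\nabla w_n\|_{L^2}^2=-\int_M\frac{Q_n}{u_n}w_n^2\lesssim\|w_n\|_{L^2}\|\nabla w_n\|_{L^2}$. You must then pass to the limit in the divided weak form $\int_M\nabla w_n\cdot\nabla\psi+\int_M\frac{Q_n}{u_n}w_n\psi=0$, not in $u_n\Delta w_n=Q_nw_n$. This yields $\Delta w=\frac{Q_0}{u_0}w\in L^2$, using $w(\cdot,0)=0$ when $Q_0(0)\neq0$. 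With these additions you would have a more elementary alternative to the limiting-absorption argument, but as written this step is missing.
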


\begin{remark}
    In terms of Sobolev spaces, similar ideas to those presented below show that the critical regularity threshold is $H^\frac52$. That is, for all $s>0$ and all $\ep>0$ we can find non-trivial non-laminar steady Boussinesq pairs $(\u, \rho)$ such that
    \begin{align}
        \Vert \u - (u_0(y),0) \Vert_{H^{\frac52-s}} + \Vert \rho - \rho_0 \Vert_{H^{\frac52-s}} < \ep.
    \end{align}
    while there exists $\ep_0>0$ such that all steady solutions with
    \begin{align}
        \Vert u - u_0 \Vert_{H^{\frac52+s}} + \Vert v \Vert_{H^2} + \Vert \rho - \rho_0 \Vert_{H^{\frac52+s}}< \ep_0 
    \end{align}
    must be one-dimensional, $u=u(y)$, $v = 0$ and $\rho=\rho(y)$. 
\end{remark}

\begin{remark}
    If $u_0''(y)u_0(y) + \rho'_0(y) \geq 0$ then $\mathcal{R}_0\phi=0$ has no non-zero solution for $\phi\in H^2(M)\cap H_0^1(M)$. Indeed, 
    \begin{align*}
        \frac{u_0''(y)u_0(y)+\rho'_0(y)}{u_0^2(y)}\phi = \Delta\phi \in L^2(M)
    \end{align*}
    which then yields
    \begin{align*}
        0 = \int_M |\nabla \phi|^2 + \int_M \frac{u_0''(y)u_0(y)+\rho'_0(y)}{u_0^2(y)}|\phi|^2.
    \end{align*}
    The sign condition and $\phi\in H_0^1(M)$ gives $\phi\equiv 0$. On the other hand, there also exist $u_0, \rho'_0$ such that $\mathcal{R}_0$ has non-trivial kernel. Indeed, one may choose $u_0(y)=y$ and $\rho'_0(y) = -\left(k^2+\frac{\pi^2}{4}\right)y^2 \leq 0$, so that $\phi_0=\cos(kx)\cos\left(\frac{\pi y}{2} \right)\in \ker \mathcal{R}_0$. See \cite{enciso2025linear} for a spectral study of $\mathcal{R}_0$ nearby other stratified shear flow configurations.
\end{remark}

To prove Theorem \ref{theorem:nonlaminar_flexibility} we also set up a Crandall-Rabinowitz bifurcation argument in the spirit of \cite{LinZeng, zhao2026traveling}. The proof of Theorem \ref{thm:nonlaminar_rigidity} is postponed to Section \ref{sec:proof-thm-nonlaminar-rigidity}.

\subsection{Functional relations}
We first linearise the shear flow near the origin as follows: let $\chi_\delta = \chi(\cdot/\delta)$, where $\delta>0$ and $\chi\in C^\infty_0$ is a non-negative smooth bump function such that $\chi(y)=1$ for $|y|\leq 1$ and $\chi(y) = 0$ for $|y|\geq 2$. We define
\begin{align}\label{eq:def_u_delta}
    u_\delta(y) = \chi_\delta(y) y + (1-\chi_\delta(y))u_0(y).
\end{align}
It is such that $\Vert u_\delta - u_0 \Vert_{C^{1,\alpha}}< \ep$ for $\delta>0$ small enough. To control the eigenvalue associated to the linearised operator, we further define
\begin{align}\label{eq:def_u_delta_a_gamma}
    u_{\delta,a,\gamma}\coloneqq  u_\delta(y) + a\gamma\int_0^y \Gamma(s/\gamma) \d s,
\end{align}
where $\Gamma(s):=\frac{e^{-s^2}}{2\sqrt{\pi}}\in C^\infty(\R)$ is even, positive and 
\begin{align}
    \int_\R \frac{\Gamma'(s)}{s} \d s = -1.
\end{align}
For $\alpha\in(0,1)$, there holds
\begin{align}
    \Vert u_{\delta,a,\gamma} - u_\delta \Vert_{C^{1,\alpha}} \lesssim_\alpha \gamma^{1-\alpha}
\end{align}
and we define the stream-function
\begin{align}
    \psi_{\delta,a,\gamma}(y) = -\int_0^y u_{\delta,a,\gamma}(z) \d z
\end{align}
and we note that since $yu_{\delta,a,\gamma}(y)>0$ for all $y\neq 0$, $|a|$ bounded and $\gamma>0$ small enough, there exist inverse functions
\begin{align}
    \phi_{\delta,a,\gamma}^+:[\psi_{\delta,a,\gamma}(1),0]\rightarrow [0,1], \qquad \quad\phi_{\delta,a,\gamma}^-:[\psi_{\delta,a,\gamma}(-1),0]\rightarrow [-1,0]
\end{align}
such that $\phi_{\delta,a,\gamma}^+\in C^4([\psi_{\delta,a,\gamma}(1), 0))$ and $\phi_{\delta,a,\gamma}^-\in C^4([\psi_{\delta,a,\gamma}(-1), 0))$, they are the inverse functions of $\psi_{\delta,a,\gamma}$ in the interval $(0,1]$ and $[-1,0)$, where $\psi_{\delta,a,\gamma}$ is monotone. They are uniformly continuous in $a\in \R$. Moreover, since $u_{\delta,a,\gamma}$ is odd in $|y|\leq \delta$ we have $\psi_{\delta,a,\gamma}$ is even in $|y|\leq \delta$, namely there exists some $H=H_{\delta,a,\gamma}$ depending continuously on $a\in\R$ such that $\psi_{\delta,a,\gamma}(y) = H(\frac12 y^2)$ for all $|y|\leq \delta$. Furthermore,
\begin{align}
    H'(\frac12y^2) = \frac{\psi_{\delta,a,\gamma}'(y)}{y} < 0
\end{align}
for all $|y|\leq \delta$ so there exists $H^{-1}\in C^4$ such that $\frac12y^2 = H^{-1}(\psi_{\delta,a,\gamma}(y))$, for all $|y|\leq \delta$. Similarly, since $\frac12|u_{\delta,a,\gamma}(y)|^2$ is even in $|y|\leq \delta$, there exists some function $Q=Q_{\delta,a,\gamma}$ continuous in $a$ such that $\frac12|u_{\delta,a,\gamma}(y)|^2 = Q(\frac12y^2)$ for all $|y|\leq \delta$. In particular, we have
\begin{align}
    \frac12|u_{\delta,a,\gamma}(y)|^2 = (Q\circ H^{-1})(\psi_{\delta,a,\gamma}(y)),
\end{align}
for all $|y|\leq \delta$.

We next define the modified density profile $\tilde\rho_\delta(y) := \chi_\delta(y) \rho_0(0) + (1-\chi_\delta(y))\rho_0(y)$ and
\begin{align}\label{eq:def_rho_delta}
    \rho_{\delta,a}(y) \coloneqq \tilde\rho_\delta(y) - \beta a(a-a_*)\int_0^y \chi_\delta(r-5\delta) \d r
\end{align}
for some $a,a_*\geq 0$ and $\beta > 0$ to be determined later on. We note that $\beta a(a-a_*)\int_0^y \chi_\delta(r-5\delta) \d r$ is non-zero on $(3\delta,1)$, with further
\begin{align*}
    \left\Vert \beta a(a-a_*)\int_0^y \chi_\delta(r-5\delta) \d r \right\Vert_{C^{1,\alpha}}\lesssim \delta^{-\alpha}\beta a |a - a_*|.
\end{align*}
and we note that for all $0<\alpha< 1$,
\begin{align}
    \Vert \rho_{\delta,a} - \rho_0 \Vert_{C^{1,\alpha}} < \ep
\end{align}
for $\delta>0$ small enough, $a_*,\beta$ bounded and $|a-a_*|$ small enough. We next define 
\begin{align}
    \tilde\Theta_{\delta,a,\gamma}^+(s) = (\rho_{\delta,a}\circ \phi_{\delta,a,\gamma}^+)(s)
\end{align}
for $s\in[\psi_{\delta,a,\gamma}(1),0]$. It is such that $\rho_{\delta,a}(y) = \tilde{\Theta}_{\delta,a,\gamma}^+(\psi_{\delta,a,\gamma}(y))$ for all $y\in [0,1]$. For $s\leq 0$ sufficiently close to 0, we see that $|\phi_{\delta,a,\gamma}^+(s)|\leq \delta$ and thus $\tilde\Theta_{\delta,a,\gamma}^+(s) = \rho_0(0)$. Hence, let ${\Theta}_{\delta,a,\gamma}^+$ denote a $C^4$ extension of $\tilde\Theta_{\delta,a,\gamma}^+(s)$ on $\R$ such that $\Theta_{\delta,a,\gamma}^+(s)=\rho_0(0)$ for all $s\geq 0$. Similarly, we define
\begin{align}
    \tilde\Theta_{\delta,a,\gamma}^-(s) = (\rho_{\delta,a}\circ \phi_{\delta,a,\gamma}^-)(s)
\end{align}
for $s\in[\psi_{\delta,a,\gamma}(-1),0]$. As before, it is such that $\rho_{\delta,a}(y) = \tilde{\Theta}_{\delta,a,\gamma}^-(\psi_{\delta,a,\gamma}(y))$, for all $y\in [-1,0]$ and we let ${\Theta}_{\delta,a,\gamma}^-(\cdot)$ denote a $C^4$ extension of $\tilde\Theta_{\delta,a,\gamma}^-(\cdot)$ on $\R$ such that $\Theta_{\delta,a,\gamma}^-(s)=\rho(0)$ for all $s\geq 0$. In particular, we see that $\Theta_{\delta,a,\gamma}^-(s) = \Theta_{\delta,a,\gamma}^+(s)=\rho(0)$ for all $s\geq 0$ and all $s\in(s_0,0)$, where $s_0<0$ is such that $\max \lbrace |\phi_{\delta,a,\gamma}^+(s_0)|, |\phi_{\delta,a,\gamma}^-(s_0)| \rbrace \leq \delta$.

We next set $\eta\in C^\infty$ with $\eta\geq 0$, $\eta'\geq 0$ and such that 
\begin{align}
    \eta(u) = \begin{cases}
        0, & u \leq -\delta/4, \\
        1, & u \geq \delta/4.
    \end{cases}
\end{align}
and we define
\begin{align}
    \Theta_{\delta,a,\gamma}(y,s) \coloneqq  \eta(y) \Theta_{\delta,a,\gamma}^+(s) + (1-\eta(y))\Theta_{\delta,a,\gamma}^-(s).
\end{align}
Now, it holds
\begin{align}
    \rho_{\delta,a}(y) = \Theta_{\delta,a,\gamma}(y,\psi_{\delta,a,\gamma}(y))
\end{align}
for all $y\in [-1,1]$. Moreover,
\begin{align}
    \nabla \Theta_{\delta,a,\gamma}(y,\varphi) = e_2 \eta'(y) \left( \Theta_{\delta,a,\gamma}^+(\varphi) - \Theta_{\delta,a,\gamma}^-(\varphi) \right) + \left( \eta(y) (\Theta_{\delta,a,\gamma}^+)'(\varphi) + (1-\eta(y)) (\Theta_{\delta,a,\gamma}^-)'(\varphi) \right) \nabla \varphi,
\end{align}
for all $\varphi\in H_0^1$. With a slight abuse of notation, we define $\Theta_{\delta,a,\gamma}'(y,s) = \eta(y) (\Theta_{\delta,a,\gamma}^+) '(s) + (1-\eta(y)) (\Theta_{\delta,a,\gamma}^-) '(s)$. In particular, 
\begin{align}
    \nabla \Theta_{\delta,a,\gamma}(y,\varphi) = \Theta_{\delta,a,\gamma}'(y,\varphi) \nabla \varphi,
\end{align}
for all $\varphi\in H_0^1$ such that $\Vert \varphi \Vert_{L^\infty(\T \times (-\delta/4, \delta/4))}\leq |s_0|$ and 
\begin{align}
    \Theta_{\delta,a,\gamma}''(y,s) \coloneqq  \eta(y)(\Theta_{\delta,a,\gamma}^+)''(s) + (1-\eta(y)) (\Theta_{\delta,a,\gamma}^-)''(s)  
\end{align}
denotes the second derivative of $\Theta_{\delta,a,\gamma}(y,s)$ with respect to $s$.


The pressure $p_{\delta,a,\gamma}$ associated to the Boussinesq steady state $(u_{\delta,a,\gamma}(y),\rho_{\delta,a}(y))$ reads
\begin{align}
    p_{\delta,a,\gamma}(y) = p_0 - \int_0^y\rho_{\delta,a}(z) \d z,
\end{align}
for some constant $p_0\in \R$. Therefore, 
\begin{align}
    P(y) + y\rho_{\delta,a}(y) = p_0 + \int_0^y z\rho_{\delta,a}'(z) \d z + \frac12 |u_{\delta,a,\gamma}(y)|^2 
\end{align}
and we define
\begin{align}
    \tilde G_{\delta,a,\gamma}^+(s) \coloneqq  -p_0 - 
    \begin{cases} \int_0^{\phi_{\delta,a,\gamma}^+(s)} z\rho_{\delta,a}'(z) \d z + \frac12|u_{\delta,a,\gamma}(\phi_{\delta,a,\gamma}^+(s))|^2, & s\in [\psi_{\delta,a,\gamma}(1),\psi_{\delta,a,\gamma}(\delta/2)), \\
    (Q\circ H^{-1})(s), & s\in [\psi_{\delta,a,\gamma}(\delta/2), 0]
    \end{cases}
\end{align}
To show that $\tilde{G}_{\delta,a,\gamma}^+(s)$ is smooth across the interface $s=\psi_{\delta,a,\gamma}(\delta/2)$, we note that since $\psi_{\delta,a,\gamma}$ is monotone in $(0,1)$, it is injective there so that for all $s\in (\psi_{\delta,a,\gamma}(3\delta/4), \psi_{\delta,a,\gamma}(\delta/4))$, there exists a unique $y\in (\delta/4, 3\delta/4)$ such that $\phi_{\delta,a,\gamma}^+(s)=y$. 

We now claim that $\phi_{\delta,a,\gamma}^+(s) = \sqrt{2H^{-1}(s)}$ for all $s\in (\psi_{\delta,a,\gamma}(3\delta/4), \psi_{\delta,a,\gamma}(\delta/4))$. Since 
\begin{enumerate}
\setlength{\itemsep}{2pt}
\renewcommand{\labelenumi}{(\roman{enumi})}%
    \item $\psi_{\delta,a,\gamma}:(\delta/4, 3\delta/4)\rightarrow (\psi_{\delta,a,\gamma}(3\delta/4), \psi_{\delta,a,\gamma}(\delta/4))$ is strictly monotone,
    \item $H:[0,\delta^2/2]\rightarrow [\psi_{\delta,a,\gamma}(\delta),0]$ is invertible and
    \item $\phi_{\delta,a,\gamma}^+(s)$ and $ \sqrt{2H^{-1}(s)}$ are positive for all $s\in (\psi_{\delta,a,\gamma}(3\delta/4), \psi_{\delta,a,\gamma}(\delta/4))$,
\end{enumerate}
it is enough to show that $\psi_{\delta,a,\gamma}(\phi_{\delta,a,\gamma}^+(s)) = \psi_{\delta,a,\gamma}(\sqrt{2H^{-1}(s)})$. Indeed,
\begin{align}
    \psi_{\delta,a,\gamma}(\phi_{\delta,a,\gamma}^+(s)) = s = H(\frac12(\sqrt{2H^{-1}(s)})^2) = \psi_{\delta,a,\gamma}(\sqrt{2H^{-1}(s)}).
\end{align}
Hence, the smoothness of $\tilde{G}_{\delta,a,\gamma}^+(s)$ at the interface follows from noting that $\frac12 |u_{\delta,a,\gamma}(y)|^2 = Q(\frac12 y^2)$ and that $\rho'_{\delta,a}(z) = 0$ for all $|z|\leq \delta$. As usual, we denote by $G_{\delta,a,\gamma}^+(s)$ a $C^4$ extension of $\tilde G_{\delta,a,\gamma}^+(s)$ for all $s\in \R$.

Similarly, we define
\begin{align}
    \tilde G_{\delta,a,\gamma}^-(s) \coloneqq  -p_0 - 
    \begin{cases} \int_0^{\phi_{\delta,a,\gamma}^-(s)} z\rho_{\delta,a}'(z) \d z + \frac12|u_{\delta,a,\gamma}(\phi_{\delta,a,\gamma}^-(s))|^2, & s\in [\psi_{\delta,a,\gamma}(-1),\psi_{\delta,a,\gamma}(\delta/2)), \\
    (Q\circ H^{-1})(s), & s\in [\psi_{\delta,a,\gamma}(\delta/2), 0],
    \end{cases}
\end{align}
we note that it is smooth across the interface by the same reason as above and we denote by $G_{\delta,a,\gamma}^-(s)$ the $C^4$ extension of $\tilde{G}_{\delta,a,\gamma}^-(s)$ such that $G_{\delta,a,\gamma}^-(s) = G_{\delta,a,\gamma}^+(s)$ for all $s> \psi_{\delta,a,\gamma}(\delta/2)$. Finally, we define
\begin{align}
    G_{\delta,a,\gamma}(y,s) \coloneqq  \eta(y) G_{\delta,a,\gamma}^+(s) + (1-\eta(y)) G_{\delta,a,\gamma}^-(s)
\end{align}
for which there holds $P_{\delta,a,\gamma}(y) - y \rho_{\delta,a}(y) = -G_{\delta,a,\gamma}(y,\psi_{\delta,a,\gamma}(y))$ for all $y\in[-1,1]$. Moreover,
\begin{align}
    \nabla G_{\delta,a,\gamma}(y,\varphi) = e_2 \eta'(y) \left( G_{\delta,a,\gamma}^+(\varphi) - G_{\delta,a,\gamma}^-(\varphi) \right) +G_{\delta,a,\gamma}'(y,\varphi)\nabla\varphi
\end{align}
for all $\varphi\in H^1$, where we have defined $G_{\delta,a,\gamma}'(y,s) = \eta(y) (G_{\delta,a,\gamma}^+)'(s) + (1-\eta(y)) (G_{\delta,a,\gamma}^-)'(s)$. As before, there holds $\nabla G_{\delta,a,\gamma}(y,\varphi) = G_{\delta,a,\gamma}'(y,\varphi)\nabla\varphi$ for all $\varphi\in H^1$ such that $\Vert \varphi \Vert_{L^\infty(\T\times (-\delta/4, \delta/4))}\leq |\psi_{\delta,a,\gamma}(\delta/2)|$ and we further define
\begin{align}
    G_{\delta,a,\gamma}''(y,s) \coloneqq  \eta(y) (G_{\delta,a,\gamma}^+)''(s) + (1-\eta(y)) (G_{\delta,a,\gamma}^-)''(s). 
\end{align}


Fixed $\delta>0$ small enough, for the change of variables $\xi=kx$ we define the functional
\begin{align}
    \mathcal{F}(k, \varphi,a) = k^2\partial_\xi^2 \varphi + \partial_y^2 \varphi + y\Theta'_{\delta,a,\gamma}(y,\varphi) + G'_{\delta,a,\gamma}(y,\varphi)
\end{align}
and we aim to solve the functional equation
\begin{align}
    \sF(k,\varphi,a)=0, \quad \sF(k,\varphi,a) \coloneqq  \mathcal{F}(k, \psi_{\delta,a,\gamma}+\varphi,a) - \mathcal{F}(k,\psi_{\delta,a,\gamma},a).
\end{align}
Indeed, for any $\varphi\in H^4$ with $\Vert \varphi\Vert_{H^4}\leq \epsilon$ such that $\sF(k,\varphi,a)=0$, the triplet
\begin{align}
    \tilde\psi\coloneqq \psi_{\delta,a,\gamma}+\varphi, \quad \tilde\rho \coloneqq  \Theta_{\delta,a,\gamma}(y, \tilde\psi), \quad \tilde P \coloneqq  -y \tilde\rho - G_{\delta,a,\gamma}(y, \tilde\psi)
\end{align}
is a stationary solution of the Boussinesq equations because $\mathcal{F}(k,\psi_{\delta,a,\gamma},a)=0$. 

\subsection{Linearised operator and spectral properties}
To apply a Crandall-Rabinowitz bifurcation argument, we consider the linearised operator
\begin{align}
    \partial_\varphi \sF(k,0,a)\varphi = k^2\partial_\xi^2 \varphi - \cL_{\delta,a,\gamma}\varphi,  
\end{align}
with 
\begin{align}
    \cL_{\delta,a,\gamma}\varphi\coloneqq  - \partial_y^2\varphi - y \Theta''_{\delta,a,\gamma}(y,\psi_{\delta,a,\gamma})\varphi - G''_{\delta,a,\gamma}(y,\psi_{\delta,a,\gamma})\varphi
\end{align}
and where
\begin{align}
    - y \Theta''_{\delta,a,\gamma}(y,\psi_{\delta,a,\gamma}) - G''_{\delta,a,\gamma}(y,\psi_{\delta,a,\gamma}) =  \frac{u''_{\delta,a,\gamma}(y)}{u_{\delta,a,\gamma}(y)} + \frac{\rho'_{\delta,a}(y)}{u_{\delta,a,\gamma}^2(y)}.
\end{align}
Hence, we obtain the more suitable formulation
\begin{align}
    \cL_{\delta,a,\gamma}\varphi = -\partial_y^2 \varphi + \left( \frac{u''_{\delta,a,\gamma}(y)}{u_{\delta,a,\gamma}(y)} + \frac{\rho'_{\delta,a}(y)}{u_{\delta,a,\gamma}^2(y)} \right) \varphi
\end{align}
where we note that $\frac{\rho'_{\delta,a}(y)}{u_{\delta,a,\gamma}^2(y)}\in C([-1,1])$ since $\rho'_{\delta,a}(y)=0$ for all $|y|\leq \delta$. The first eigenvalue $\lambda_{a,\gamma}$ of $\cL_{\delta,a,\gamma}$ is characterized by the Rayleigh quotient,
\begin{align}\label{eq:Ray_quot_zero_vel}
    \lambda_{a,\gamma} = \inf_{\varphi\in H_0^1(-1,1),\, \varphi\neq 0}\frac{\int_{-1}^1 |\varphi'(y)|^2 \d y + \int_{-1}^{1} \left( \frac{u''_{\delta,a,\gamma}(y)}{u_{\delta,a,\gamma}(y)} + \frac{\rho'_{\delta,a}(y)}{u_{\delta,a,\gamma}^2(y)} \right) |\varphi(y)|^2 \d y }{\int_{-1}^1 |\varphi(y)|^2 \d y}.
\end{align}

Following the ideas of Lemmas \ref{lemma:properties_lambda_ep_a}, for all $\gamma>0$ fixed the minimal eigenvalue $\lambda_{a,\gamma}$ exists for all $a\geq 0$, the associated eigenfunction $\varphi_{a,\gamma}\in H_0^1(-1,1)$ is not identically zero on open subsets of $(-1,1)$ and the map $a\mapsto (\lambda_{a,\gamma}, \varphi_{a,\gamma})$ is continuous in the product space $\R\times [H^3(-1,1)\cap H^1_0(-1,1)]$.

We next observe that for $a=0$ the linearised operator $\cL_{0,\gamma}$ is independent of $\gamma>0$ and thus its minimal eigenvalue $\lambda_0\coloneqq  \lambda_{0,\gamma}$ is independent of $\gamma>0$ as well. As before, let $j'=\min \lbrace i\in \N : -i^2 \leq \lambda_0 \rbrace$ and let $j=j'+1$ so that $-j^2+\frac12 < \min \lbrace 0,\lambda_0\rbrace$. 

\begin{lemma}\label{lemma:control_eigenvalues_stag}
There exists $\delta_\ast\in(0,\delta_0)$, $\gamma>0$ small enough and intervals $\mathrm{J}\ni -j^2$ and  $\tilde\cA\subset \R$ such that:
\begin{enumerate}
\renewcommand{\labelenumi}{(\roman{enumi})}
    \item $a\mapsto \lambda_{a,\gamma}$ is strictly decreasing for all $a\in {\tilde\cA}$.
    \item For any $\lambda\in \mathrm{J}$ there exists $a\in {\tilde\cA}$ such that $\lambda_{a,\gamma}=\lambda$. In particular, there exists $a_\ast\in{\tilde\cA}$ such that $\lambda_{a_\ast,\gamma}=-j^2$.
    \item $0$ is not an eigenvalue of $\mathcal{L}_{a,\gamma}$ for any $a\in {\tilde\cA}$.
\end{enumerate}
\end{lemma}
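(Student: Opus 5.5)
Three steps: differentiate the quotient \eqref{eq:Ray_quot_zero_vel} in $a$ to get monotonicity, show that $\lambda_{a,\gamma}$ can be driven below $-j^2$ as $\gamma\to0$, and then remove a possible zero eigenvalue. The $a$-dependence of the potential
\[
V_{a,\gamma}=\frac{u''_{\delta,a,\gamma}}{u_{\delta,a,\gamma}}+\frac{\rho'_{\delta,a}}{u_{\delta,a,\gamma}^2}
\]
enters in two places. Near $y=0$, where $u_{\delta}=y$ and $\rho'_{\delta,a}=0$, we have $u''_{\delta,a,\gamma}=a\Gamma'(y/\gamma)$ and $u_{\delta,a,\gamma}=y+a\gamma\int_0^{y/\gamma}\Gamma$. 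In $(3\delta,1)$ the density term $-\beta a(a-a_*)\chi_\delta(\cdot-5\delta)/u^2$ enters.

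\textbf{Monotonicity.} By \cite[Chapter VIII, Theorem~2.6]{kato_perturbation_1995}, as in \eqref{eq: derivative eigenvalues no stagnation1}, we have $\partial_a\lambda_{a,\gamma}=\int\partial_aV_{a,\gamma}\,|\varphi_{a,\gamma}|^2$. Rescaling $y=\gamma s$ in the shear part gives
\[
\frac{u''}{u}=\frac{a\Gamma'(s)}{\gamma\bigl(s+a\int_0^s\Gamma\bigr)}.
\]
Since $s\Gamma'(s)<0$ and $s\int_0^s\Gamma\ge0$, this term is pointwise nonpositive for $a\ge0$ and decreasing in $a$, with $\int\Gamma'(s)/s\,ds=-1$. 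Its $a$-derivative is therefore negative and of size $\gamma^{-1}$ on an $O(\gamma)$ layer. The $\beta$ term contributes $-\beta(2a-a_*)\int\chi_\delta(\cdot-5\delta)u^{-2}|\varphi|^2$. I would take $a_*$ and $\beta$ so that for $a$ near $a_*$ this is also $\le0$ (i.e.\ $a\ge a_*/2$), or else is dominated. Positivity of $|\varphi_{a,\gamma}|^2$ near $0$ (eigenfunctions do not vanish on open sets, and the ground state is positive) then gives strict decrease, which is (i).

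\textbf{Reaching $-j^2$.} This is the main obstacle. I would test \eqref{eq:Ray_quot_zero_vel} with a fixed $\varphi$ satisfying $\varphi(0)\neq0$, for instance the positive ground state of $\cL_{0,\gamma}$ (which is $\gamma$-independent). As $\gamma\to0$, the shear part of the potential behaves like $\gamma^{-1}\cdot$(integrable profile): with $a$ fixed, $\int\frac{u''}{u}|\varphi|^2\to C_a|\varphi(0)|^2\gamma^{-1}\cdot\gamma$. This needs care, so let me recompute. The potential $\frac{a\Gamma'(y/\gamma)}{\gamma(y/\gamma+\dots)}\cdot\frac1\gamma$ scales as $\gamma^{-2}$ on width $\gamma$, so its integral scales like $\gamma^{-1}\int\frac{a\Gamma'(s)}{s+a\int_0^s\Gamma}\,ds$. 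This tends to $-\infty$ for every fixed $a>0$. Hence for $\gamma$ small there is $a_1$ with $\lambda_{a_1,\gamma}<-j^2-\tfrac12$, while $\lambda_{0,\gamma}=\lambda_0>-j^2+\tfrac12$. Continuity and strict monotonicity in $a$ then give $a_*$ with $\lambda_{a_*,\gamma}=-j^2$, and I choose $\mathrm J,\tilde\cA$ as neighbourhoods, which is (ii). I expect the delicate point to be making this lower-order estimate uniform, i.e.\ controlling $u_{\delta,a,\gamma}$ away from zero except at $y=0$, together with the role of $a_*$ in $\rho_{\delta,a}$. The role of $\rho_{\delta,a}$ is to vanish at $a=a_*$, so that the density $C^{1,\alpha}$ closeness holds.

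\textbf{No zero eigenvalue.} For (iii) I would argue exactly as in Lemma~\ref{lemma: control eigenvalues no stag}(iii). If no higher eigenvalue vanishes at $a_*$, shrink $\tilde\cA$ by continuity. Otherwise, add a small perturbation $b\,g\,u^2$ to $\rho'$ with $g$ supported in $(3\delta,1)$, orthogonal to $|\varphi^1|^2$ and with $\int g|\varphi^{n_0}|^2=1$. Then use the implicit function theorem to keep $\lambda^1=-j^2$ while moving $\lambda^{n_0}$ off zero.
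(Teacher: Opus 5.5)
Your plan for (iii) is fine, but the arguments for (ii) and for (i) each contain a step that fails as written.

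\textbf{(ii).} The claimed divergence comes from a dropped factor of $\gamma$. By definition, $a\gamma\int_0^y\Gamma(s/\gamma)\,ds=a\gamma^2\int_0^{y/\gamma}\Gamma$, not $a\gamma\int_0^{y/\gamma}\Gamma$. Also $u''_{\delta,a,\gamma}=a\Gamma'(y/\gamma)$, as you correctly wrote. Your recomputation, however, effectively uses $u''\sim a\gamma^{-1}\Gamma'(y/\gamma)$. Near $y=0$, with $y=\gamma r$, one actually has
\[
\frac{u''_{\delta,a,\gamma}}{u_{\delta,a,\gamma}}=\frac{a\Gamma'(r)}{\gamma\bigl(r+a\gamma\int_0^r\Gamma\bigr)},\qquad
\int\frac{u''_{\delta,a,\gamma}}{u_{\delta,a,\gamma}}\,|\varphi|^2\,dy\;\longrightarrow\; a|\varphi(0)|^2\int_{\R}\frac{\Gamma'(r)}{r}\,dr=-a|\varphi(0)|^2 .
\]
This is exactly what the normalization $\int\Gamma'(s)/s\,ds=-1$ is for. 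Your first estimate ($\gamma^{-1}\cdot\gamma$) was the right one.

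For fixed $a$ the potential has $L^1$ norm $O(1)$ uniformly in $\gamma$; it converges to a point well of strength $a$. Hence $\lambda_{a,\gamma}$ stays bounded below as $\gamma\to0$, and what drives it below $-j^2$ has to be the size of $a$. The paper proceeds this way. With $\varphi=1-|y|$ and $\beta=0$, the limiting quotient is $\bigl(2+C_{1,\delta}+C_{2,\delta}-a\bigr)/(2/3)$. One fixes $a_M$ so that this equals $-2(j^2+\frac12)$, then fixes $\gamma$ small. The intermediate value theorem on $[0,a_M]$ then gives $a_0$, which is declared to be $a_*$.

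\textbf{(i).} It is not true that the $a$-dependence enters only near $0$ and through the $\beta$-term. Let $w:=\gamma\int_0^y\Gamma(s/\gamma)\,ds$, which is $O(\gamma^2)$ but nonzero for every $y\neq0$. Then $u_{\delta,a,\gamma}=u_\delta+aw$ on all of $(-1,1)$, and
\[
\partial_a\Bigl(\frac{u''_{\delta,a,\gamma}}{u_{\delta,a,\gamma}}+\frac{\tilde\rho'_\delta}{u^2_{\delta,a,\gamma}}\Bigr)=\frac{u_\delta\,\Gamma'(y/\gamma)}{u^2_{\delta,a,\gamma}}-u''_\delta\,\frac{w}{u^2_{\delta,a,\gamma}}-2\tilde\rho'_\delta\,\frac{w}{u^3_{\delta,a,\gamma}} .
\]
The first term is your nonpositive near-zero term. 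The last two are supported in $|y|\ge\delta$ and have no sign, because $u''_\delta$ and $\tilde\rho'_\delta$ are arbitrary there. So positivity of $\varphi_{a,\gamma}$ near $0$ does not give strict decrease. You could try to beat them with the order-one negative contribution near $0$, but that needs a $\gamma$-uniform lower bound on $\varphi_{a,\gamma}(0)$, which you do not supply.

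The missing idea is the actual role of the term $-\beta a(a-a_*)$, which you read as a device for $C^{1,\alpha}$ closeness:
\begin{itemize}
\item It vanishes at $a=a_*$, so neither $\lambda_{a_*,\gamma}=-j^2$ nor $\varphi_{a_*,\gamma}$ depends on $\beta$.
\item It adds $-\beta a_*\int\chi_\delta(y-5\delta)\,u^{-2}_{\delta,a_*,\gamma}|\varphi_{a_*,\gamma}|^2<0$ to $\partial_a\lambda_{a_*,\gamma}$.
\item For fixed $\gamma$, the displayed terms are bounded by a $\beta$-independent constant $C_{\delta,a_*,\gamma}$.
\end{itemize}
Taking $\beta$ large therefore gives $\partial_a\lambda_{a_*,\gamma}<0$, and (i) follows on a neighbourhood by continuity. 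Choosing $a_*$ at $\beta=0$ first and $\beta$ afterwards is also what resolves the circularity you flagged.

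Your (iii) matches the paper, except that you use $a$ as the implicit-function variable where the paper uses a second bump $b_1g_1$. Once (i) holds, either choice works.
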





\begin{proof}
    We first show there exists some $a_*>0$ such that $\lambda_{a_\ast,\gamma}=-j^2$. To do that, let $\varphi=1-|y|\in H_0^1((-1,1))$, for which $\varphi(0)=1$, $\Vert \varphi\Vert_{L^2}^2=\frac23$ and $\Vert \varphi' \Vert_{L^2}^2=2$. There holds
\begin{align}
    \lim_{\gamma\rightarrow 0} \int_{-1}^1 \frac{\rho'_{\delta,a}(y)}{u_{\delta,a,\gamma}^2(y)} |\varphi(y)|^2 \d y &= \int_{-1}^1 \frac{\tilde\rho'_\delta(y)}{u_{\delta}^2(y)}|\varphi(y)|^2 \d y - \beta a (a-a_*)\int_{-1}^1 \frac{\chi_\delta(y-5\delta)}{u^2_{\delta}(y)}|\varphi(y)|^2 \d y \\
    &=:C_{1,\delta} - \beta a (a-a_*) C_{3,\delta} < \infty
\end{align}
for all $a\in \R$. Moreover, the dominated convergence theorem yields
\begin{align}
    \lim_{\gamma\rightarrow 0}\int_{-1}^1 \frac{u''_{\delta,a,\gamma}(y)}{u_{\delta,a,\gamma}(y)} |\varphi(y)|^2 \d y &= \int_{-1}^1 \frac{u''_{\delta}(y)}{u_{\delta}(y)}|\varphi(y)|^2 \d y + a \lim_{\gamma\rightarrow 0} \int_{-\frac1\gamma}^{\frac1\gamma} \frac{\Gamma'(r)}{r}\frac{|\varphi(\gamma r)|^2}{\frac{u_\delta(\gamma r)}{\gamma r} +  \gamma \frac{a}{\gamma r}\int_0^{\gamma r}\Gamma(s/\gamma) \d s} \d r \\
    &=C_{2,\delta} - a.
\end{align}


For $\beta=0$ and $a_M:=2 + C_{1,\delta} + C_{2,\delta} + \frac{4}{3}(j^2 + \frac12)$ we note that 
\begin{align*}
    \lim_{\gamma\rightarrow 0} \frac{\langle \cL_{\delta,a_M,\gamma}\varphi, \varphi \rangle}{\Vert \varphi \Vert_{L^2}^2} = \frac{2 + C_{1,\delta} + C_{2,\delta} - a_M }{2/3} = -2(j^2 +\frac12)<0
\end{align*}
In particular, we have that
\begin{align*}
    \lambda_{a_M,\gamma} \leq -j^2 - \frac12
\end{align*}
for $\gamma>0$ small enough fixed, and since $\lambda_{0,\gamma}>-j^2 + \frac12$, by continuity of $a\mapsto \lambda_{a,\gamma}$ there exists some $a_0\in (0, a_M)$ such that $\lambda_{a_0,\gamma} = -j^2$. 

Set $a_*:=a_0$ and observe that for $\beta>0$, we still have $\lambda_{a_*,\gamma} = -j^2$. We next show that $a\mapsto \lambda_{a,\gamma}$ is strictly decreasing in a neighbourhood ${\tilde\cA}$ of $a_0$ for $\beta>0$ large enough. Since the first Dirichlet eigenvalue is simple, we have that
\begin{align*}
    \frac{\d}{\d a}\lambda_{a_*,\gamma} = \int_{-1}^1 \left( \frac{\d}{\d a} \left( \frac{u''_{\delta,a_*,\gamma}}{u_{\delta,a_*,\gamma}} + \frac{\tilde\rho'_\delta(y)}{u^2_{\delta,a_*,\gamma}}\right)  - \beta a_*\frac{\chi_{\delta}(y-5\delta)}{u^2_{\delta,a_*,\gamma}(y)} \right) |\varphi_{a_*,\gamma}|^2 \d y, 
\end{align*}
where $\varphi_{a_*,\gamma}$ denotes the $L^2$ normalized eigenfunction associated to $\lambda_{a_*,\gamma}$. In view of \eqref{eq:def_u_delta_a_gamma}, we have
\begin{align*}
    \partial_a u_{\delta,a,\gamma} = \gamma\int_0^y \Gamma(s/\gamma) \d s, \quad \partial_a u''_{\delta,a,\gamma} = \Gamma'(y/\gamma)
\end{align*}
so that a straightforward computation yields
\begin{align*}
    \partial_a \left( \frac{u''_{\delta,a,\gamma}}{u_{\delta,a,\gamma}} + \frac{\tilde\rho'_\delta}{u^2_{\delta,a,\gamma}}\right) = \frac{u_\delta(y) \Gamma'(y/\gamma)}{u^2_{\delta,a,\gamma}(y)} - u''_\delta(y)\frac{\gamma\int_0^y \Gamma(s/\gamma) \d s}{u^2_{\delta,a,\gamma}(y)} - 2\tilde\rho_\delta'(y)\frac{\gamma\int_0^y \Gamma(s/\gamma) \d s}{u^3_{\delta,a,\gamma}(y)}.
\end{align*}
 At $a=a_*$ and for $\gamma>0$ fixed, we have $\left\Vert \partial_a \left( \frac{u''_{\delta,a,\gamma}}{u_{\delta,a,\gamma}} + \frac{\tilde\rho'_\delta}{u^2_{\delta,a,\gamma}}\right)\right\Vert_{L^\infty}\leq C_{\delta,a_*,\gamma} <\infty$ and thus
\begin{align*}
     \frac{\d}{\d a}\lambda_{a_*,\gamma} \leq -\beta a_* \int_{-1}^1 \frac{\chi_\delta(y-5\delta)}{u^2_{\delta,a_*,\gamma}(y)}|\varphi_{a_*,\gamma}|^2 + C_{\delta,a_*,\gamma} < 0
\end{align*}
for $\beta > \beta_0 > 0$ sufficiently large. Hence, $\lambda_{a,\gamma}$ is strictly monotone decreasing in a small neighbourhood ${\tilde\cA}$ of $a_*$ and we define $\mathrm{J} := \lbrace \lambda_{a,\gamma} : a\in {\tilde\cA} \rbrace$. 


\paragraph{\textbf{Proof of (iii)} }
If $\lambda_{a_*,\gamma}^{(n)}\neq 0$, for all $n\geq 2$, since the eigenvalues form an increasing sequence and $a\mapsto\lambda_{a,\gamma}$ is continuous, we shrink ${\tilde\cA}\ni a_*$ if necessary to ensure  $\lambda_{a,\gamma}^{(n)}\neq 0$, for all $n\geq 2$ and all $a\in {\tilde\cA}$. Assume next that we have $\lambda_{a_*,\gamma}^{(n)}=0$, for some $n\geq 2$. We introduce $b_1,b_2\in \R$ and $g_1,g_2\in C^\infty_0(-1,1)$ with $\text{supp } g_i \subset (3\delta,6\delta)$ for $i=1,2$ and such that
\begin{align*}
\begin{array}{cc}
    \int_{-1}^1 g_1(y) |\varphi_{a_*,\gamma}(y)|^2 \d y = 1, & \int_{-1}^1 g_1(y) |\varphi_{a_*,\gamma}^{(n)}(y)|^2 \d y = 0, \\
     \int_{-1}^1 g_2(y) |\varphi_{a_*,\gamma}(y)|^2 \d y = 0, & \int_{-1}^1 g_2(y) |\varphi_{a_*,\gamma}^{(n)}(y)|^2 \d y = 1, 
\end{array}
\end{align*}
where $\varphi_{a_*,\gamma}$ and $\varphi_{a_*,\gamma}^{(n)}$ denote the normalized eigenfunctions associated to $\lambda_{a_*,\gamma}$ and $\lambda_{a_*,\gamma}^{(n)}$ respectively. We redefine the density profile as
\begin{align*}
    \rho_{\delta,a,\gamma,b_1,b_2}(y) = \tilde\rho_\delta(y) -\beta a (a-a_*)\int_0^y\chi_\delta(r-5\delta)\d r + \sum_{i=1}^2 b_i \int_0^y g_i(r) u^2_{\delta,a,\gamma}(r) \d r
\end{align*}
which is constant in $(-\delta,\delta)$ so that the functional relation is still well-defined and we introduce the linearised operator
\begin{align}
    \cL_{\delta,a,\gamma,b_1,b_2}\varphi := -\partial_y^2\varphi + \left( \frac{u''_{\delta,a,\gamma}(y)}{u_{\delta,a,\gamma}(y)} + \frac{\rho'_{\delta,a,\gamma,b_1,b_2}(y)}{u_{\delta,a,\gamma}^2(y)} \right) \varphi.
\end{align}
Clearly, $\cL_{\delta,a,\gamma} = \cL_{\delta,a,\gamma,0,0}$ so that $\lambda_{a_*,\gamma,0,0}=-j^2$ and $\mu_{a_*,\gamma,0,0}=0$ are eigenvalues of $\cL_{\delta,a,\gamma,0,0}$. Since
\begin{align}
    \frac{\d}{\d b_1}\lambda_{a_*,\gamma, 0,0} = \int_{-1}^1 g_1(y) |\varphi_{a_*,\gamma}(y)|^2 \d y = 1, \quad \frac{\d}{\d b_2}\lambda_{a_*,\gamma, 0,0} = \int_{-1}^1 g_2(y) |\varphi_{a_*,\gamma}(y)|^2 \d y = 0,
\end{align}
the implicit function theorem shows there exists a $C^1$ function $h:(-\nu,\nu)\rightarrow \R$ with $h(0)=0$ and $h'(0)=0$ such that $\lambda_{a_*,\gamma,h(b_2),b_2}=-j^2$ for all $b_2\in (-\nu,\nu)$. On the other hand,
\begin{align*}
    \left.\frac{\d}{\d b_2}\mu_{a_*,\gamma, h(b_2),b_2}\right|_{b_2=0} &= \frac{\d}{\d b_1}\mu_{a_*,\gamma, 0,0}h'(0) + \frac{\partial}{\partial b_2}\mu_{a_*,\gamma, 0,0} \\
    &=  \int_{-1}^1 g_2(y) |\varphi_{a_*,\gamma}^{(n)}(y)|^2 \d y = 1,
\end{align*}
from which we deduce that $\mu_{a_*,\gamma,h(b_2),b_2}>0$ for all $b_2\in (0,\nu_0)$ for some $0<\nu_0<\nu$. Fixing $b_2\in (0,\nu_0)$ small enough, we have that 
\begin{align*}
    \lambda_{a_*,\gamma,h(b_2),b_2}=-j^2, \quad \mu_{a_*,\gamma, h(b_2),b_2} >0
\end{align*}
and no other eigenvalue of $\cL_{\delta,a_*,\gamma,h(b_2),b_2}$ is zero since all other eigenvalues of $\cL_{\delta,a_*,\gamma,0,0}$ were a positive distance away from zero and they are continuous with respect to $b_1,b_2$. Finally, properties \textit{(i)} and \textit{(ii)} still hold if $b_2$ is chosen small enough. Indeed, it suffices to check that $\frac{\d}{\d a}\lambda_{a,\gamma,h(b_2),b_2}\big|_{a=a_\ast}<0$ for small $b_2$. The proof is the same as for \textit{(ii)}; we get extra terms than can be bounded in an analogous manner. The lemma then follows.
\end{proof}

Throughout the rest of the section, we assume that $0$ is not an eigenvalue of $\cL_{a,\gamma}$ for any $a\in {\tilde\cA}$, omitting the notation introduced in the proof of the above lemma.

\subsection{Proof of Theorem \ref{theorem:nonlaminar_flexibility}}
We consider the Banach spaces $X$ and $Y$ defined in \eqref{eq:defX} and \eqref{eq:defY} respectively. For $a\in \tilde\cA$ the function $j_\ast(a) \coloneqq  \sqrt{-\lambda_{a,\gamma}}$ is well-defined, strictly monotone and continuous for all $a\in \tilde\cA$. Let $\cA := \tilde\cA -a_*$ an open interval centered at $0$ and define the functional $\G(\mu,\varphi,a):\R\times X\times \cA\rightarrow Y$ given by
\begin{align}\label{eq:def_G_delta_a_gamma}
    \G(\mu,\varphi,a)&\coloneqq  (j_\ast(a_\ast + a)+\mu)^2\partial_\xi^2\varphi + \partial_y^2(\psi_{\delta,a_\ast + a,\gamma}+\varphi) \\
    &\quad + y \Theta_{\delta,a_\ast + a,\gamma}'(y,\psi_{\delta,a_\ast + a,\gamma}+\varphi) + G_{\delta,a_\ast + a,\gamma}'(y,\psi_{\delta,a_\ast + a,\gamma}+\varphi).
\end{align}

\begin{lemma}\label{lemma:conditions_CR_monotone}
    The functional $\G(\mu,\varphi,a)$ satisfies the conditions of the Crandall-Rabinowitz bifurcation Theorem \ref{thm:unifCR}.
\end{lemma}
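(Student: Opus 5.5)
The plan is to follow the structure of Lemma~\ref{lemma:conditionsCR} essentially verbatim, with the spaces $X,Y$ from \eqref{eq:defX}--\eqref{eq:defY}. The new ingredients are the $y$-dependent functional relations $\Theta_{\delta,a,\gamma}(y,s)$ and $G_{\delta,a,\gamma}(y,s)$, and the fact that the parameter is recentred at $a_\ast$.

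\textbf{Step 1: regularity of $\G$.} First I will check that $\G$ maps a neighbourhood $\cI^\mu\times U\times\cA$ of $(0,0,0)$ into $Y$ with the continuity required in Theorem~\ref{thm:unifCR}. Since $\Theta^\pm_{\delta,a,\gamma}$, $G^\pm_{\delta,a,\gamma}$ are $C^4$ in $s$ and $\eta$ is smooth in $y$, composition with $\psi_{\delta,a,\gamma}+\varphi\in H^4\hookrightarrow C^2$ gives an $H^2$ output. One has to verify that $\partial_y^2\psi_{\delta,a,\gamma}$ and the $y$-dependence through $\eta(y)$ only produce $H^2$ terms, and that evenness in $\xi$ is preserved.

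The gradient identities $\nabla\Theta=\Theta'\nabla\varphi$ and $\nabla G=G'\nabla\varphi$ are only valid when $\|\varphi\|_{L^\infty(\T\times(-\delta/4,\delta/4))}$ is small. For this reason I will take $U$ to be a small $H^4$-ball, so that the $e_2\eta'$ terms vanish: there $\Theta^+=\Theta^-$ and $G^+=G^-$ on the relevant range of $s$. This also guarantees that zeros of $\G$ are genuine Boussinesq solutions.

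Continuity in $a$ of $\G$, $\partial_\mu\G$, $D_\varphi\G$ and $\partial_\mu D_\varphi\G$ follows from the continuous dependence in $a$ of $u_{\delta,a,\gamma}$, $\rho_{\delta,a}$, the inverses $\phi^\pm_{\delta,a,\gamma}$, and hence of $\Theta^\pm$, $G^\pm$ in $C^4$ on compacts. It also uses continuity of $a\mapsto j_\ast(a_\ast+a)$, which is Lemma~\ref{lemma:control_eigenvalues_stag}(i)--(ii). The Fréchet derivatives are as in \eqref{eq:Frechet_G}, with $\Theta''$ and $G''$ now $y$-dependent.

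\textbf{Step 2: conditions \ref{CR_condi_1}--\ref{CR_condi_3}.} The linearisation is
\[
L_a=\partial_\varphi\G(0,0,a)=-\lambda_{a_\ast+a,\gamma}\partial_\xi^2-\cL_{\delta,a_\ast+a,\gamma}.
\]
Its potential $u''/u+\rho'/u^2$ is bounded because $\rho'_{\delta,a}$ vanishes near $0$ and $u_{\delta,a,\gamma}$ has a simple zero.

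\begin{itemize}
\item \textbf{Kernel.} Expand in $\cos(l\xi)$. The mode $l=1$ gives the first eigenfunction. Modes $l\ge2$ would require $l^2\lambda<\lambda$ to be an eigenvalue, which is impossible. The mode $l=0$ is excluded by Lemma~\ref{lemma:control_eigenvalues_stag}(iii). Hence $\ker L_a=\vspan\{\Phi_a\}$ with $\Phi_a=\beta\varphi_{a_\ast+a,\gamma}(y)\cos\xi$.
\item \textbf{Range.} $L_a$ is an isomorphism plus a compact perturbation, so it is Fredholm of index zero. By symmetry, $\mathcal R_a=\{f:\langle f,\Phi_a\rangle=0\}$.
\item \textbf{Transversality.} $\partial^2_{\mu\varphi}\G(0,0,a)\Phi_a=-2j_\ast\Phi_a\notin\mathcal R_a$.
\item \textbf{Isomorphisms.} $T_a^X$ is defined as before, and its continuity follows from that of $a\mapsto\varphi_{a_\ast+a,\gamma}$ in $H^4$. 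That continuity should be obtained by the bootstrap of Lemma~\ref{lemma:properties_lambda_ep_a}, applied to the $C^2$ potential after possibly taking more regularity of $u_0,\rho_0$ into account.
\end{itemize}

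\textbf{Main obstacle.} I expect the hardest part to be Step 1's justification that the $y$-dependent interpolation by $\eta$ does not spoil either smoothness or the equivalence with the Boussinesq system for perturbed $\varphi$. The smallness of $\varphi$ in $L^\infty$ must keep $\psi_{\delta,a,\gamma}+\varphi$ inside the range $(s_0,0]$ where $\Theta^+=\Theta^-$ and $G^+=G^-$, uniformly for $|y|\le\delta/4$ and $a\in\cA$. I will handle this by shrinking $\cA$ to a compact neighbourhood and choosing the radius of $U$ in terms of $\inf_{a}|s_0(a)|$ and $\inf_a|\psi_{\delta,a,\gamma}(\delta/2)|$.

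A secondary point is the regularity of the eigenfunction map into $H^4$. The potential is only as smooth as $u_{\delta,a,\gamma}''$ allows, but $u_0\in C^4$ gives enough for $H^4$.
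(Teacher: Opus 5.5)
Your proposal is correct and is essentially the paper's argument. The paper reuses \eqref{eq:Frechet_G}, reads off the continuity hypotheses from the smoothness of $\Theta'_{\delta,a,\gamma}(y,s)$ and $G'_{\delta,a,\gamma}(y,s)$ in $(y,a)$, and transfers conditions (1)--(3) from Lemma~\ref{lemma:conditionsCR} mutatis mutandis, exactly as your Step~2 does. That includes the $H^4$ (not merely $H^3$) continuity of $a\mapsto\varphi_{a,\gamma}$, which your bootstrap with the $C^2$ potential supplies.

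Minor points:
\begin{itemize}
\item $u''_{\delta,a,\gamma}/u_{\delta,a,\gamma}$ is bounded because $u_{\delta,a,\gamma}$ is odd on $|y|\le\delta$, so $u''_{\delta,a,\gamma}(0)=0$; a simple zero alone would not give this.
\item Smallness of $\varphi$ on $\T\times(-\delta/4,\delta/4)$ is needed to interpret zeros of $\G$ as Boussinesq solutions, not for the Crandall--Rabinowitz hypotheses.
\item You should instead record the trivial branch $\G(\mu,0,a)=0$, which follows from $\mathcal{F}(k,\psi_{\delta,a,\gamma},a)=0$.
\end{itemize}
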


\begin{proof}
    The Fr\'{e}chet partial derivatives of $\G(\mu,\varphi,a)$ are also given by \eqref{eq:Frechet_G}. Due to the smoothness of $\Theta_{\delta,a,\gamma}'(y, s)$ and $\G_{\delta,a,\gamma}'(y,s)$ with respect to both $y\in(-1,1)$ and $a\in \tilde\cA$, it is immediate to see that $\G, \partial_\mu \G, \in \mathcal{C}(\R\times X \times \cA; Y)$ and $\partial_\varphi\G, \partial_{\mu\varphi}\G\in \mathcal{C}(\R\times X \times \cA; \mathcal{L}(X,Y))$. Finally, in view of Lemma \ref{lemma:conditionsCR}, conditions $(1)-(3)$ follow mutatis mutandis.
\end{proof}

With Lemma \ref{lemma:conditions_CR_monotone} at hand, the proof of Theorem \ref{theorem:nonlaminar_flexibility} follows analogously to that of Theorem \ref{thm:nonzeroflexibility}. In particular, for $s>0$ small enough, $\u=\nabla^\perp\psi$ develops islands since 
\begin{align}
    \psi = \psi_{\delta,a_*+a(s),\gamma}(y) + s\cos(j_*x)\varphi_{a(s)}(y) + sT_{a(s)}^X\varpi(s,a(s))
\end{align}
holds in $H^4(M)\hookrightarrow C^{2,\alpha}(M)$ for all $0<\alpha<1$ and there holds $\psi_{\delta,a_*+a(s),\gamma}'(0)=0$, $\psi_{\delta,a_*+a(s),\gamma}''(0)\neq 0$ with $\varphi_{a(s)}(y)\neq 0$ and $\varpi(s,\cdot) = o(1)$.

\begin{remark}\label{rmk:extension_shear}
    The conclusion of Theorem \ref{theorem:nonlaminar_flexibility} holds for a broader class of shear flows and density stratifications. More precisely, the theorem holds for all smooth pairs $(u(y),\rho(y))$ such that for
    \begin{align}
        \mathcal{Z}_u \coloneqq  \lbrace z\in (-1,1) : u(z) = 0 \rbrace
    \end{align}
    non-empty, there holds $u'(z)\neq 0$ and $\rho'(z)=0$ for all $z\in \mathcal{Z}_u$. Indeed, linearizing $u(y)$ as in \eqref{eq:def_u_delta} for all stagnation points of $u(y)$, choosing $y_0\in \mathcal{Z}_u$, defining
    \begin{align*}
        u_{\delta,a,\gamma}=u_\delta(y) + a\gamma\chi_\delta(y-y_0) \int_{y_0}^y \Gamma(s/\gamma) \d s
    \end{align*}
    and flattening $\rho(y)$ as in \eqref{eq:def_rho_delta} also for all stagnation points of $u(y)$, one can define the functional relations $\Theta_{\delta,a,\gamma}$ and $G_{\delta,a,\gamma}$ using the sided inverse functions $\phi^\pm_{\delta,a,\gamma}(z)$ on each side of the stagnation point. By uniqueness of the implicit and inverse function theorem, any overlaps on regions with non-zero velocity between inverse functions from two distinct stagnation points are well defined. Hence, the functional $\G(\mu,\varphi,a)$ given by \eqref{eq:def_G_delta_a_gamma} is well-defined and satisfies the conditions of Theorem \ref{thm:unifCR}, thus yielding stationary non-trivial solutions to the Boussinesq equations close to $(u(y), \rho(y))$.
\end{remark}

\subsection{Proof of Theorem \ref{thm:nonlaminar_rigidity}}\label{sec:proof-thm-nonlaminar-rigidity}
We shall prove the following result.
\begin{theorem}\label{thm:rigidity_crit_shear_dens}
    Let $0<\alpha<1$, $u_0,\rho_0\in C^4$ satisfying the assumptions of Theorem \ref{thm:nonlaminar_rigidity}.  Let $((u,v),\rho)$ be a steady Boussinesq solution such that $\frac{\partial_y \rho}{u}\in C^{0,\alpha}$. There exists $\ep_0>0$ such that if
    \begin{align}
        \Vert u-u_0 \Vert_{C^{2,\alpha}} + \Vert v \Vert_{H^2} + \Vert \rho - \rho_0 \Vert_{C^{2,\alpha}} < \ep_0
    \end{align}
    then $v\equiv 0$, $u=u(y)$ and $\rho=\rho(y)$.
\end{theorem}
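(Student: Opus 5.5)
We argue by contradiction: we assume $v\not\equiv 0$, use the stagnation-line structure to divide the vorticity equation by $u$, and show that the normalized vertical velocity $v/\|v\|$ converges to a nonzero element of $\ker\mathcal R_0$, which is excluded by hypothesis.

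\textbf{Step 1: stagnation curve and the equation for $v$.} Since $\partial_y u_0\geq c_0>0$ and $\|u-u_0\|_{C^{2,\alpha}}<\ep_0$, the implicit function theorem shows that $\{u=0\}$ is a single graph $y=h(x)$ with $\|h\|_{C^{2,\alpha}}\lesssim\ep_0$, and $\partial_y u\geq c_0/2$ everywhere. From the vorticity equation \eqref{vorticity_eq_rigidt1} and the transport equation $u\partial_x\rho=-v\partial_y\rho$ we get
\[
u(u\Delta v-v\Delta u)=v\,\partial_y\rho .
\]
On $\{u>0\}$ and $\{u<0\}$ this can be divided by $u$, which gives
\begin{equation*}
u\Delta v-\Big(\Delta u+\frac{\partial_y\rho}{u}\Big)v=0 .
\end{equation*}
By hypothesis $q:=\partial_y\rho/u\in C^{0,\alpha}$, and the terms are continuous across $\{u=0\}$. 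The identity therefore holds on all of $M$ (it extends by density, since $\{u=0\}$ is a null set). We rewrite it as
\[
\mathcal R_0 v = (u_0-u)\Delta v+\big(\Delta u-u_0''+q-q_0\big)v=:E(v),
\]
with $v\in H^2\cap H^1_0$, because $v=0$ on $\partial M$.

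\textbf{Step 2: smallness of the error.} We need $\|E(v)\|_{L^2}\lesssim \ep_0^{\theta}\|v\|_{H^2}$ for some $\theta>0$. The term $(u_0-u)\Delta v$ and the term $(\Delta u-u_0'')v$ are bounded directly by the $C^{2,\alpha}$ closeness. The delicate term is $(q-q_0)v$. The closeness of $\rho$ and $u$ in $C^{2,\alpha}$ does not give $\|q-q_0\|_{L^\infty}$ small, because both are quotients by functions vanishing on nearby but distinct curves.

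I would handle this with a division identity. Write $\partial_y\rho=u\,q$. Since $\partial_y\rho_0(0)=0$, we have $q_0=\int_0^1\rho_0''(ty)\,\d t\,/\int_0^1u_0'(ty)\,\d t$. Similarly, along the vertical line through $(x,h(x))$,
\[
q(x,y)=\frac{\partial_y\rho(x,y)-\partial_y\rho(x,h(x))}{u(x,y)}+\frac{\partial_y\rho(x,h(x))}{u(x,y)} .
\]
The assumption $q\in C^{0,\alpha}$ forces $\partial_y\rho(x,h(x))=0$. The remaining difference quotients $\int_0^1\partial_y^2\rho\,/\int_0^1\partial_y u$, taken along $y\mapsto h+t(y-h)$, are close to those of $(u_0,\rho_0)$ in $L^\infty$, up to a shift by $h=O(\ep_0)$. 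Hence $\|q-q_0\|_{L^\infty}\lesssim\ep_0^\alpha$, which is enough.

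\textbf{Step 3: contradiction.} Because $\mathcal R_0$ degenerates on $\{y=0\}$, invertibility in $L^2$ is unavailable. Instead I would use a compactness argument. Take a sequence of solutions with $\ep_0=1/n$ and $v_n\neq0$, and normalize $w_n=v_n/\|v_n\|_{H^2}$. Then $w_n\rightharpoonup w$ in $H^2$ and $w_n\to w$ in $H^1$, and $\mathcal R_0w_n=E_n(w_n)\to0$ in $L^2$, so $\mathcal R_0w=0$ and hence $w=0$ by the trivial-kernel hypothesis.

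To reach a contradiction we need a uniform estimate
\[
\|w\|_{H^2}\lesssim\|\mathcal R_0w\|_{L^2}+\|w\|_{L^2}
\]
for the degenerate operator. This is the main obstacle, since $u_0\Delta w$ loses control at $y=0$. I would first try the interior elliptic estimate for $\Delta w$ away from $\{y=0\}$. Near the line I would work Fourier-mode-wise in $x$: the equation reads $u_0(w''-k^2w)-Q_0w=f$. Differentiating, or using $\Delta w=(f+Q_0w)/u_0$ together with a Hardy-type bound $\|f/u_0\|\lesssim\|\partial_yf\|$, suggests the estimate must be paid for with one extra derivative of $f$. This is plausibly why the hypotheses are at the $C^{2,\alpha}$ level: $\|v_n\|_{H^2}$ controls $\partial_y E_n$.

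Combining these, $\|w_n\|_{H^2}\to0$, which contradicts $\|w_n\|_{H^2}=1$. Therefore $v\equiv0$. Then $u=u(y)$ by incompressibility, and $\partial_x\rho=0$ follows from the vorticity equation.
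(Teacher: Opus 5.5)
Your Steps 1 and 2 are fine. Step 2 is essentially the paper's lemma that $q_n\to q_0$ in $C^{0,\alpha}$, and reading off $\partial_y\rho=0$ on $\{u=0\}$ directly from $q\in C^{0,\alpha}$ is a clean way to obtain its hypothesis. Step 3, however, does not close, and the obstruction is structural rather than technical.

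The estimate $\|w\|_{H^2}\lesssim\|\mathcal R_0w\|_{L^2}+\|w\|_{L^2}$ is false. Take $w=\cos x\,g(y)$ with $(\partial_y^2-1)g=f_\delta:=\delta^{-1/2}\chi(y/\delta)$ and $g(\pm1)=0$. Then $\|\Delta w\|_{L^2}\sim1$, while $\|u_0\Delta w\|_{L^2}\lesssim\delta$ and $\|w\|_{L^\infty}\lesssim\|f_\delta\|_{L^1}\lesssim\delta^{1/2}$, so the right-hand side is $O(\delta^{1/2})$. Under your $H^2$ normalization, $\Delta w_n$ can therefore concentrate on the critical layer, and $w=0$ is perfectly compatible with $\|w_n\|_{H^2}=1$.

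Your suggested remedy of paying one extra $y$-derivative would need $\|\partial_yE_n(w_n)\|_{L^2}\to0$. But $\partial_yE_n(w_n)$ contains $(u_0-u_n)\partial_y\Delta w_n$, a third derivative not controlled by $\|w_n\|_{H^2}=1$. It also contains $\partial_y(\Delta u_n-u_0''+q_n-q_0)\,w_n$, which requires three derivatives of $u_n$ and $\rho_n$, whereas $C^{2,\alpha}$ closeness only gives $Q_n\in C^{0,\alpha}$. The root cause is that you compare $u_n\Delta$ with $u_0\Delta$. This makes the error small only relative to $\|v\|_{H^2}$, and under any weaker normalization $E_n(w_n)$ is no longer small.

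The missing idea is to keep $u_n$ and divide by it, working at the $\dot H^1$ level. The paper normalizes $w_n=v_n/\|\nabla v_n\|_{L^2}$ and writes the equation weakly as $\int_M\nabla w_n\cdot\nabla\psi+\mathcal B_n(w_n,\psi)=o(1)\|\psi\|_{L^2}$, with $\mathcal B_n(f,g)=\int_M\frac{\Delta u_n+q_n}{u_n+i\varepsilon_n}f\bar g$. For each fixed $n$, $\varepsilon_n\to0$ is chosen so that the regularization error $\int_M\frac{i\varepsilon_n}{u_n+i\varepsilon_n}\Delta w_n\bar\psi$ is small; this uses $\Delta w_n\in L^2$ only qualitatively.

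Hölder continuity of $Q_n=\Delta u_n+q_n$ (this is where your Step 2 is used) and uniform bounds on $\int_{-1}^1(u_n+i\varepsilon_n)^{-1}\,\d y$ give $|\mathcal B_n(f,g)|\lesssim\|f\|_{L^2_xH^\sigma_y}\|g\|_{L^2_xH^\sigma_y}$ for $\sigma\in(\frac12,1)$. Moreover $\mathcal B_n\to\mathcal B_0$, a principal value plus the residue term $-i\pi\frac{Q_0(0)}{u_0'(0)}\int_{\T}f(x,0)\overline{g(x,0)}\,\d x$.

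Since $H^1_0(M)\hookrightarrow L^2_xH^\sigma_y$ is compact, testing with $w_n$ yields $1+\mathcal B_0(w_0,w_0)=0$. So the limit is nonzero, and the convergence is in fact strong in $H^1$. The imaginary part of the limit equation forces $Q_0(0)w_0(\cdot,0)=0$. Hence $Q_0w_0/u_0\in L^2$, and $w_0\in H^2\cap H^1_0$ is a nontrivial element of $\ker\mathcal R_0$.

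In this route, nontriviality of the limit comes from an energy identity at a level where compactness is available. It does not rely on an $H^2$ estimate, which cannot hold near the critical layer.
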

Once Theorem \ref{thm:rigidity_crit_shear_dens} is established, we combine it with Theorem \ref{thm:rigidity_noncrit_J} to show Theorem \ref{thm:nonlaminar_rigidity}. 

\begin{proof}[Proof of Theorem \ref{thm:nonlaminar_rigidity}]
   Any solution $((u,v),\rho)$ sufficiently close to $((u_0,0),\rho_0)$ has $\partial_y u \neq 0$ on $\Gamma = \lbrace u = 0 \rbrace$ a closed graph on $x\in \T$. In view of Lemma \ref{lemma:flat_stagnation_curve}, either $\partial_y\rho \equiv 0$ on $\Gamma$ or $\partial_y \rho \equiv \ep$ on $\Gamma$ with $0<|\ep|< \ep_0$. In the first case, Theorem \ref{thm:rigidity_crit_shear_dens} applies, while in the second case we appeal to Theorem \ref{thm:rigidity_noncrit_J}, see also Remark \ref{remark:rigidity_smooth_carleman}. In both settings, we conclude that $u=u(y)$, $v=0$ and $\rho=\rho(y)$. 
\end{proof}

We next prove Theorem \ref{thm:rigidity_crit_shear_dens}. Under the conditions of the theorem, we define 
\begin{align}
    q(x,y) = \frac{\partial_y\rho(x,y)}{u(x,y)}\in C^{0,\alpha}(M), \quad Q(x,y) = \Delta u+q(x,y) \in C^{0,\alpha}(M).
\end{align}
and we note that 
\begin{align}
    u\Delta v - Q v =0.
\end{align}

Since $u_0$ is strictly monotone and vanishes at $y=0$, we can describe the zero level set of any sufficiently close function.
\begin{lemma}
    Assume that $u_n\rightarrow u_0$ in $C^{2,\alpha}$ for $0<\alpha<1$ as $n\rightarrow \infty$. Then, $\Gamma_n\coloneqq  \lbrace u_n=0 \rbrace$ is a $C^1$ graph given by $\lbrace (x,h_n(x)) : x\in \T \rbrace$ for some $h_n:\T\to(-1,1)$ such that $\Vert h_n \Vert_{C^1}\rightarrow 0 $ as $n\rightarrow \infty$. Moreover, there exists some $\tilde u_n\in C^1(M)$ such that $u_n(x,y)=(y-h_n(x))\tilde u_n(x,y)$, with further $\tilde u_n \geq c_0/2$.
\end{lemma}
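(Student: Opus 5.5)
The plan is to apply the implicit function theorem to $u_n$ around the zero set of $u_0$, using the uniform monotonicity $u_0'\geq c_0>0$ to control everything uniformly in $n$, and then to produce the factorization by a Taylor-type integral formula along vertical segments.

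\emph{Step 1: the zero set is a graph.} Set $\epsilon_n:=\Vert u_n-u_0\Vert_{C^{2,\alpha}}\to0$. Since $\partial_y u_0=u_0'\geq c_0$ on $[-1,1]$, for $n$ large we have $\partial_y u_n\geq c_0-\epsilon_n\geq 3c_0/4$ on all of $\overline{M}$. Hence for each fixed $x\in\T$ the map $y\mapsto u_n(x,y)$ is strictly increasing on $[-1,1]$. Because $u_0(0)=0$, we get $u_0(\pm 1)=\pm$(something of size at least $c_0$), so $u_n(x,-1)<0<u_n(x,1)$ once $\epsilon_n<c_0$. There is therefore a unique zero $h_n(x)\in(-1,1)$. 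By the implicit function theorem, $h_n\in C^2(\T)$, with
\[
h_n'(x)=-\frac{\partial_xu_n(x,h_n(x))}{\partial_yu_n(x,h_n(x))}.
\]

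\emph{Step 2: $h_n\to0$ in $C^1$.} For the $C^0$ bound, note that
\[
0=u_n(x,h_n)=u_0(h_n)+(u_n-u_0)(x,h_n),
\]
and monotonicity gives $|u_0(h_n)|\geq c_0|h_n|$, so $|h_n|\leq\epsilon_n/c_0$. For the derivative, $\partial_xu_0\equiv0$ implies $|\partial_xu_n|\leq\epsilon_n$, and together with $\partial_y u_n\geq 3c_0/4$ the formula above gives $|h_n'|\leq 4\epsilon_n/(3c_0)$. Thus $\Vert h_n\Vert_{C^1}\to0$.

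\emph{Step 3: the factorization.} I would use the integral form of the fundamental theorem of calculus along vertical segments. Define
\[
\tilde u_n(x,y):=\int_0^1\partial_yu_n\bigl(x,h_n(x)+t(y-h_n(x))\bigr)\,\d t.
\]
Since $u_n(x,h_n(x))=0$, this gives $u_n=(y-h_n)\tilde u_n$. The integrand is a convex combination of values of $\partial_yu_n$ on the segment between $h_n(x)$ and $y$, which lies in $[-1,1]$. Hence $\tilde u_n\geq 3c_0/4\geq c_0/2$. For regularity, $\partial_yu_n\in C^{1,\alpha}$ and $h_n\in C^1$, so differentiating under the integral sign shows $\tilde u_n\in C^1(M)$.

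The only point requiring care is the uniformity in $n$ (the ordering of $u_n(x,\pm1)$ and the lower bounds), and this is supplied entirely by $u_0'\geq c_0$ together with the $C^1$ closeness. I expect no genuine obstacle.
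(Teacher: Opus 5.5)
Your proposal is correct and follows the paper's proof essentially step for step: $C^1$-closeness gives $\partial_y u_n$ bounded below, and the intermediate value and implicit function theorems then give the graph $h_n$. The bounds $c_0|h_n|\le|u_0(h_n)|$ and $h_n'=-\partial_xu_n/\partial_yu_n$ give $\Vert h_n\Vert_{C^1}\to0$, and the same integral formula $\tilde u_n(x,y)=\int_0^1\partial_yu_n\bigl(x,h_n(x)+t(y-h_n(x))\bigr)\,\d t$ gives the factorization. You are only slightly more explicit than the paper about the signs of $u_n(x,\pm1)$ and the lower bound on $\tilde u_n$.
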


\begin{proof}
    Since $u_n\rightarrow u_0$ in $C^{2,\alpha}$ for $\alpha>0$, we have $u_n \rightarrow u_0$ in $C^1(-1,1)$ and thus $\partial_y u_n\geq c_0/2$ for all $n$ sufficiently large. For all $x\in \T$, the intermediate value theorem and strict monotonicity yields the existence of a unique $y_x\in (-1,1)$ such that $u_n(x,y_x)=0$. The uniform implicit function theorem then asserts the existence of $h_n:\T\rightarrow\R$ such that $u_n(x,h_n(x))=0$ for all $x\in \T$. Next,
    \begin{align}
        c_0|h_n(x)| \leq |u_0(h_n(x))| = |u_0(h_n(x)) - u_n(x,h_n(x))| \leq \Vert u_0 - u_n \Vert_{C(-1,1)}\leq C\ep
    \end{align}
    for all $x\in \T$ shows that $\Vert h_n \Vert_{C(-1,1)}\lesssim \ep$ for $n$ large and $0 = \partial_x u_n(x,h_n(x)) + h_n'(x) \partial_y u_n(x,h_n(x))$ yields $\Vert h_n' \Vert_{C(-1,1)}\lesssim \ep$ as well.

    Since $u_n(x,h_n(x))=0$, we have that
    \begin{align}
        u_n(x,y) = \int_0^1 \frac{\d }{\d t} u_n(x, ty + (1-t)h_n(x)) \d t &= (y-h_n(x))\int_0^1\partial_y u_n (x, ty + (1-t)h_n(x)) \d t \\
        &=: (y-h_n(x)) \tilde u_n(x,y) 
    \end{align}
    from which the last statement of the lemma follows.
\end{proof}

\begin{lemma}\label{lemma:bounds_qn}
    Let $0<\alpha<1$ and assume $\rho_n\rightarrow \rho_0$ in $C^{2,\alpha}$ and $u_n\rightarrow u_0$ in $C^{2,\alpha}$ are such that $\partial_y \rho_n=0$ on $\Gamma_n = \lbrace u_n = 0 \rbrace$. Then, for $q_n = \frac{\partial_y \rho_n}{u_n}$ we have that $q_n \rightarrow q_0$ in $C^{0,\alpha}$.
\end{lemma}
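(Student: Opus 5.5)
The plan is to write $q_n$ as a division by $y-h_n(x)$ of a function that vanishes on $\Gamma_n$, and to use the factorization $u_n=(y-h_n)\tilde u_n$ from the previous lemma. Since $\partial_y\rho_n(x,h_n(x))=0$, we have
\[
\partial_y\rho_n(x,y)=(y-h_n(x))\,r_n(x,y),\qquad r_n(x,y)\coloneqq \int_0^1\partial_y^2\rho_n\bigl(x,ty+(1-t)h_n(x)\bigr)\,\d t,
\]
and therefore
\[
q_n=\frac{r_n}{\tilde u_n},\qquad \tilde u_n(x,y)=\int_0^1\partial_y u_n\bigl(x,ty+(1-t)h_n(x)\bigr)\,\d t .
\]
For the limit profile, $\rho_0'(0)=0$ and $u_0(0)=0$ give the same representation with $h_0\equiv0$: $q_0=r_0/\tilde u_0$, where $r_0(y)=\int_0^1\rho_0''(ty)\,\d t$ and $\tilde u_0(y)=\int_0^1u_0'(ty)\,\d t\ge c_0$. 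So it suffices to prove $r_n\to r_0$ and $\tilde u_n\to\tilde u_0$ in $C^{0,\alpha}$. Since $\tilde u_n\ge c_0/2$ and $C^{0,\alpha}$ is a Banach algebra in which $f\mapsto1/f$ is continuous on functions bounded away from zero, this gives the claim.

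For the convergence of the averages, set $g_n=\partial_y^2\rho_n$ (respectively $\partial_yu_n$). These converge to $g_0=\rho_0''$ (respectively $u_0'$) in $C^{0,\alpha}$, and in the $\tilde u$ case even in $C^{1,\alpha}$. Write $\Phi_n(x,y,t)=(x,ty+(1-t)h_n(x))$ and split
\[
g_n\circ\Phi_n-g_0\circ\Phi_0=(g_n-g_0)\circ\Phi_n+\bigl(g_0\circ\Phi_n-g_0\circ\Phi_0\bigr).
\]
For the first term, $\Phi_n(\cdot,\cdot,t)$ is Lipschitz with constant bounded by $1+\|h_n'\|_\infty\lesssim1$, by the previous lemma. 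Hence $\|(g_n-g_0)\circ\Phi_n\|_{C^{0,\alpha}}\lesssim\|g_n-g_0\|_{C^{0,\alpha}}$ uniformly in $t$. Integrating in $t$ preserves the bound, because the $C^{0,\alpha}$ norm of a $t$-average is at most the supremum over $t$ of the norms.

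The second term is the main obstacle: $g_0$ is only $C^{0,\alpha}$ when $g_0=\rho_0''$, and composition with a perturbed map is not continuous in the $C^{0,\alpha}$ norm without extra regularity. I would use the extra regularity the hypotheses provide. Since $\rho_0\in C^4$, $g_0=\rho_0''\in C^2$, in particular $C^1$. The difference $\Phi_n-\Phi_0=(0,(1-t)h_n(x))$ is small in $C^1$ because $\|h_n\|_{C^1}\to0$. For $g_0\in C^1$ this gives
\[
\|g_0\circ\Phi_n-g_0\circ\Phi_0\|_{C^{0,\alpha}}\lesssim\|g_0\|_{C^{1,\alpha}}\|h_n\|_{C^1}\to0 .
\]
This follows by writing the difference as $\int_0^1\nabla g_0(\Phi_0+\sigma(\Phi_n-\Phi_0))\,\d\sigma\cdot(\Phi_n-\Phi_0)$ and using the algebra property together with the uniform Lipschitz bounds on the interpolated maps. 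The same argument applies to $u_0'\in C^3$.

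Combining the two terms yields $r_n\to r_0$ and $\tilde u_n\to\tilde u_0$ in $C^{0,\alpha}$, and hence $q_n\to q_0$ in $C^{0,\alpha}$.
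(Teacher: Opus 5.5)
Your proof is correct and follows the paper's route. You factor $\partial_y\rho_n=(y-h_n)\varrho_n$ and $u_n=(y-h_n)\tilde u_n$, write $q_n=\varrho_n/\tilde u_n$, and pass to the limit in numerator and denominator. The paper calls the $C^{0,\alpha}$ convergence of these averages immediate; you rightly note that the composition step $g_0\circ\Phi_n\to g_0\circ\Phi_0$ needs extra regularity of the limit profiles, and you supply it from $u_0,\rho_0\in C^4$.
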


\begin{proof}
    We can write $\partial_y \rho_n = (y-h_n(x)) \varrho_n(x,y)$, for 
    \begin{align}
        \varrho_n(x,y) := \int_0^1 \partial_y^2 \rho_n(x, ty + (1-t) h_n(x)) \d t 
    \end{align}
    Hence, $q_n = \frac{\varrho_n}{\tilde u_n}$ and $\Vert q_n \Vert_{C^\alpha(M)} \leq C$ for some $C>0$. It is also immediate to see that 
    \begin{align}
        \varrho_n\rightarrow \int_0^1 \rho_0''(ty)\, \d t, \qquad\qquad \tilde u_n \rightarrow \int_0^1 u_0'(ty)\, \d t
    \end{align}
    and thus $q_n \rightarrow \frac{\partial_y\rho_0}{u_0}=q_0$ in $C^{0,\alpha}$ as $n\rightarrow \infty$.
\end{proof}




The following technical lemma gives bounds on a bilinear operator that will appear when obtaining a coercivity for $v$.

\begin{lemma}\label{lemma:bound_Bn}
Let $0 < \alpha< \sigma-\frac12$ and $\frac12 < \sigma < 1$. Let $\ep_n\rightarrow 0$, $u_n\rightarrow u_0$ in $C^{2,\alpha}$ and $\rho_n\rightarrow \rho_0$ in $C^{2,\alpha}$ such that $\partial_y\rho_n = 0$ on $\lbrace u_n = 0 \rbrace$. Then, for $\ep_n>0$ the map 
\begin{align}
    \cB_n(f,g):= \int_M \frac{\Delta u_n + q_n}{u_n + i \ep_n}f \overline{g} \d x \d y
\end{align}
is uniformly bounded, namely there exists some $C>0$ such that
\begin{align}
    |\cB_n(f,g)| \leq C \Vert f \Vert_{L^2_x H^\sigma_y} \Vert g \Vert_{L^2_x H^\sigma_y}
\end{align}
for all $n\in \N$ large enough and all $f,g\in L^2_x H^{\sigma}_y\cap H_0^1(M)$. Moreover,
\begin{align}
    \lim_{n\rightarrow \infty} B_n(f,g) = B_0(f,g):= p.v.\int_M \frac{Q_0(y)}{u_0(y)}f \overline{g} \d x \d y - i\pi \frac{Q_0(0)}{u_0'(0)}\int_\T f(x,0)\overline{g(x,0)} \d x
\end{align}
for all $f,g\in H^1_0(M)$.
\end{lemma}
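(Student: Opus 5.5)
The plan is to split the singular weight into a regular part and a Plemelj-type part localized near the stagnation curve $\Gamma_n=\{y=h_n(x)\}$. By the preceding two lemmas we have $u_n=(y-h_n(x))\tilde u_n$ with $\tilde u_n\ge c_0/2$, $\|h_n\|_{C^1}\to0$, and $Q_n:=\Delta u_n+q_n\to Q_0$ in $C^{0,\alpha}$ (Lemma~\ref{lemma:bounds_qn}). We then write
\[
\frac{Q_n}{u_n+i\ep_n}=\frac{Q_n(x,y)-Q_n(x,h_n(x))}{u_n+i\ep_n}+Q_n(x,h_n(x))\,\frac{1}{u_n+i\ep_n}.
\]
In the first term the numerator is $O(|y-h_n|^\alpha)$ uniformly in $n$, so the quotient is bounded by $C|y-h_n(x)|^{\alpha-1}$. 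Fixing $x$, the product $f\bar g$ lies in $L^\infty_y$ because $H^\sigma_y\hookrightarrow L^\infty_y$ for $\sigma>\tfrac12$. Since $|y-h_n|^{\alpha-1}$ is integrable in $y$, this term is bounded by $C\|f(x,\cdot)\|_{H^\sigma}\|g(x,\cdot)\|_{H^\sigma}$, and integrating in $x$ with Cauchy--Schwarz gives the claimed bound.

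The second term is the main obstacle. It is a Cauchy-type integral $\int (y-h_n+i\ep_n/\tilde u_n)^{-1}\tilde u_n^{-1}F\,\d y$ with $F=f\bar g$. To handle it, we change variables $z=u_n(x,y)$ for fixed $x$; this is legitimate since $\partial_yu_n\ge c_0/2$ near $\Gamma_n$, and a cutoff localizes to a neighborhood of $\Gamma_n$, with the far region trivially bounded. The term becomes $\int (z+i\ep_n)^{-1}\tilde F(z)\,\d z$ with $\tilde F=F/\partial_yu_n$. We then use the uniform bound $\bigl|\int \tilde F(z)/(z+i\ep)\,\d z\bigr|\lesssim\|\tilde F\|_{C^{0,\beta}}$ for $\beta>0$, obtained by subtracting $\tilde F(0)$ and integrating the remainder against $|z|^{\beta-1}$. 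For $F=f\bar g$ we have $F(x,\cdot)\in C^{0,\sigma-1/2}_y$ by the 1D embedding $H^\sigma\hookrightarrow C^{0,\sigma-1/2}$, with norm $\lesssim\|f(x,\cdot)\|_{H^\sigma}\|g(x,\cdot)\|_{H^\sigma}$. The Jacobian $1/\partial_yu_n$ is $C^{1}$ uniformly, and the condition $\alpha<\sigma-\tfrac12$ ensures that all Hölder exponents are compatible. Integrating in $x$ again closes the uniform bound.

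For the limit with $f,g\in H^1_0(M)$, we argue by density: by the uniform bound it suffices to check convergence for smooth $f,g$, since $H^1_0\subset L^2_xH^\sigma_y$. For smooth data, $Q_n\to Q_0$ and $h_n\to0$ uniformly, and the Sokhotski--Plemelj formula $(z+i\ep)^{-1}\to \mathrm{p.v.}\,z^{-1}-i\pi\delta_0$, applied in the variable $z=u_n$, gives the limit
\[
\mathrm{p.v.}\int_M\frac{Q_0}{u_0}f\bar g-i\pi\int_\T\frac{Q_0(0)}{u_0'(0)}f(x,0)\bar g(x,0)\,\d x.
\]
The perturbation $u_n$ versus $u_0$ and $h_n$ versus $0$ is controlled by the same Hölder estimate applied to differences.
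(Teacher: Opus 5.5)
Your argument is correct and follows essentially the paper's route. You subtract the value on the stagnation curve $y=h_n(x)$ and bound the H\"older remainder by the integrable weight $|y-h_n(x)|^{\eta-1}$. The leftover Cauchy integral of an $x$-dependent constant is then controlled uniformly in $\ep_n$: you do this via the substitution $z=u_n(x,y)$, the paper by integrating by parts against $\log(u_n+i\ep_n)/\partial_y u_n$. Sokhotski--Plemelj then gives the limit.

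One point to add: your density step also needs $\cB_0$ itself to be bounded on $L^2_xH^\sigma_y$. The trace term is handled by $H^\sigma_y\hookrightarrow L^\infty_y$, and the principal value by the same H\"older estimate. This is what lets the limit identity pass from smooth $f,g$ to all of $H^1_0(M)$.
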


\begin{proof}
    Let $\varphi\coloneqq f\overline{g}$ and $Q_n\coloneqq \Delta u_n+q_n$. Note that
    \begin{align}
    \varphi(x,y)= \varphi(x,y)- \varphi(x,h_n(x)) + \varphi(x,h_n(x)) - \varphi(x,-1)
    \end{align}
    with 
    \begin{align}
        |Q_n(x,y_1)\varphi(x,y_1) - Q_n(x,y_2)\varphi(x,y_2)|\leq |y_1-y_2|^{\eta}\Vert Q_n(x,\cdot)\Vert_{C^{0,\eta}_y} \Vert \varphi(x,\cdot)\Vert_{C^{0,\eta}_y} \lesssim |y_1 - y_2|^{\eta}\Vert \varphi(x,\cdot)\Vert_{H^\sigma_y}
    \end{align}
    for some $0< \eta < \alpha < \sigma-\frac12$, where we have used that $C^{0,\eta}(-1,1)$ is an algebra, the Sobolev embedding and Lemma \ref{lemma:bounds_qn}. 
    As
    \begin{align*}
        \int_{-1}^{1} \frac{1}{u_n + i\ep_n} \d y  = \frac{\log(u_n(x,y)+i\ep_n)}{\partial_y u_n(x,y)}\Big|_{y=-1}^{y=1}  - \int_{-1}^{1} \partial_y \left( \frac{1}{\partial_y u_n}\right)\log(u_n(x,y)+i\ep_n) \d y
    \end{align*}
    is uniformly bounded for all $x\in \T$, $n\geq 1$ large enough and $0<\ep_n \leq 1$, we deduce
    \begin{align}
        |\cB_n(f,g)| &\lesssim \int_\T \Vert \varphi(x,\cdot) \Vert_{H_y^\sigma} \int_{-1}^1 |y-h_n(x)|^{\eta-1} \d y \d x  + \int_\T \Vert \varphi(x,\cdot) \Vert_{H_y^\sigma} \left|\int_{-1}^1 \frac{1}{u_n + i\ep_n} \d y \right| \d x \\
        &\lesssim \int_\T \Vert f(x,\cdot)\Vert_{H^\sigma_y} \Vert g(x,\cdot) \Vert_{H^\sigma_y} \d x \\
        &\lesssim \Vert f \Vert_{L^2_xH^\sigma_y} \Vert g \Vert_{L^2_xH^\sigma_y}
    \end{align}
    since $H^\sigma_y(-1,1)$ is an algebra.
    
    To prove the limit formula for $\cB_n(f,g)$, with $f,g\in H_0^1(M)$, we first note that 
    \begin{align}
        \lim_{n\rightarrow \infty} \int_{M}\frac{Q_n - Q_0}{u_n + i\ep_n}f \overline{g} \d x \d y = 0
    \end{align}
    since $Q_n \rightarrow Q_0$ in $C^{0,\alpha}$ for $0<\alpha< 1$. The dominated convergence theorem on the usual Cauchy integral then gives 
    \begin{align}
        \lim_{n\rightarrow \infty} \cB_n(f,g) = \cB_0(f,g) 
    \end{align}
    for all $f,g\in H_0^1(M)$.
\end{proof}

With this, we are in position to prove rigidity.

\begin{proof}[Proof of Theorem~\ref{thm:rigidity_crit_shear_dens}]
    Assume by contradiction that for all $n\geq 1$ we can find $u_n\rightarrow u_0$ in $C^{2,\alpha}$, $v_n\rightarrow 0$ in $C^2$ and $\rho_n\rightarrow \rho_0$ in $C^{2,\alpha}$ such that  $q_n:=\frac{\partial_y\rho_n}{u_n}\in C^{0,\alpha}$ and $v_n\in C^2$ satisfies
    \begin{align*}
        u_n \Delta v_n - Q_n v_n = 0,
    \end{align*}
    with $v_n(x,-1)=v_n(x,1)=0$ and $v_n\not \equiv 0$. In particular, $\Vert \nabla v_n \Vert_{L^2}\neq 0$ and we normalize $w_n := \frac{v_n}{\Vert \nabla v_n \Vert}_{L^2}$, so that $\Vert \nabla w_n \Vert_{L^2}=1$. For any $\ep > 0$ we have
    \begin{align*}
        (u_n + i \ep) \Delta w_n - Q_n w_n = i\ep\Delta w_n
    \end{align*}
    and thus
    \begin{align}\label{eq:weakform_wn}
        \int_M \nabla w_n \cdot \nabla \psi + \cB_n(w_n,\psi) = - \int_M \frac{i\ep}{u_n + i \ep}\Delta w_n \overline{\psi}.
    \end{align}
    for all $\psi\in H_0^1(M)$. Since $|\Delta w_n|^2\in L^1(M)$ (it might not be uniformly bounded in $n\geq 1$) and $u_n$ vanishes on a set of measure zero, we have with the dominated convergence theorem that
    \begin{align*}
        \lim_{\ep\rightarrow 0}\int_M \frac{\ep^2}{|u_n+i\ep|^2}|\Delta w_n|^2 = 0.
    \end{align*}
    Therefore, for all $n\geq 1$ there exists $0<\ep_n<\frac1n$ such that 
    \begin{align*}
        \left| \int_M \frac{i\ep_n}{|u_n+i\ep_n|}\Delta w_n \overline{\psi} \right| \leq \frac{\Vert \psi \Vert_{L^2}}{n}.
    \end{align*}
    Since $w_n\in H_0^1(M)$ and $\Vert \nabla w_n \Vert_{L^2}=1$, there exists some $C>0$ such that $\Vert w_n \Vert_{L^2}\leq C$, for all $n\geq 1$. Moreover, after passing to a subsequence we have that $w_n\rightharpoonup w_0$ in $H^1$ for some $w_0\in H^1$. In addition, we have $w_n \rightarrow w_0$ in $L^2_xH^\sigma_y$ due to compactness of the embedding $ H_0^1(M)\hookrightarrow L^2_xH^\sigma_y$ for $\frac12 < \sigma < 1$. Hence,
    \begin{align}
        \left| \cB_n(w_n,w_n) - \cB_0(w_0,w_0) \right| \leq \left| \cB_n(w_n,w_n) - \cB_n(w_0,w_0) \right| + \left| \cB_n(w_0,w_0) - \cB_0(w_0,w_0) \right| 
    \end{align}
    and due to linearity of $\cB_n$ and Lemma \ref{lemma:bound_Bn} we have
    \begin{align}
        \left| \cB_n(w_n,w_n) - \cB_n(w_0,w_0) \right| \leq C \Vert w_n - w_0 \Vert_{L^2_xH^\sigma_y}\left( \Vert w_n \Vert_{L^2_x H^\sigma_y} + \Vert w_0 \Vert_{L^2_xH^\sigma_y}\right) \rightarrow 0
    \end{align}
    due to strong convergence. Therefore, together with Lemma \ref{lemma:bound_Bn}, we conclude that
    \begin{align}
        \lim_{n\rightarrow\infty} \cB_n(w_n, w_n) = \cB_0(w_0,w_0)
    \end{align}
    Now, testing with $\psi= w_n\in H_0^1(M)$, noting that $\Vert \psi \Vert_{\dot{H}^1}=1$ and taking the limit we obtain
    \begin{align}
        1 + \cB_0(w_0,w_0) = 0.
    \end{align}
    On the other hand, testing with $\psi=w_0\in H_0^1(M)$, weak convergence in $H^1$ yields
    \begin{align}
        \Vert \nabla w_0 \Vert_{L^2}^2 + \cB_0(w_0,w_0) = 0.
    \end{align}
    Therefore, $\Vert \nabla w_0 \Vert_{L^2} =1$. Thus, $w_n$ both weakly converges and norm converges to $w_0\in H^1$, so that $w_n\rightarrow w\in H^1(M)$. Moreover, passing to the limit in \eqref{eq:weakform_wn} we reach
    \begin{align}
        \int_M \nabla w_0 \cdot \nabla \psi + \cB_0(w_0,\psi) = 0
    \end{align}
    for all $\psi\in H^1_0(M)$. In particular, we have that
    \begin{align}
        \Vert \nabla w_0 \Vert_{L^2}^2 + p.v.\int_M \frac{Q_0(y)}{u_0(y)}|w_0|^2  -i\pi \frac{Q_0(0)}{u_0'(0)}\int_\T |w_0(x,0)|^2 \d x = 0,
    \end{align}
    from which we deduce that $\frac{Q_0(0)}{u_0'(0)}\int_\T |w_0(x,0)|^2 \d x=0$ after taking the imaginary part. Thus, $Q_0(0)w_0(x,0)=0$ for a.e $x\in \T$ and since $Q_0w_0\in H^1$, we have that $\frac{Q_0}{u_0}w\in L^2$ and hence $w_0\in H^1_0(M)$ solves
    \begin{align}
        \Delta w_0 - \frac{Q_0}{u_0}w_0 = 0
    \end{align}
    in distributions. In particular, $w_0\in H^2(M)\cap H^1_0(M)$ is a non-trivial solution to $\cR_0w_0=0$, reaching a contradiction with $\cR_0$ having trivial kernel.
\end{proof}

\appendix

\section{Rigidity results for non-vanishing velocity or density gradient}\label{app:rigidity}
In this section we present two simplifications of Theorem \ref{informal_rigidity_1}. They concern steady states with non-vanishing velocity field and suitably oriented density, Theorem \ref{thm:mainnonzerovelocityrigidity}, which is an extension of \cite[Theorem 1.4]{CDG21}, and unstably strictly stratified density, Theorem \ref{thm:maindensityrigidity}.

\begin{theorem}\label{thm:mainnonzerovelocityrigidity}
Let $(\u, \rho, p)$ be a steady Boussinesq solution such that $\u \neq 0$ on $M$. Assume 
\begin{align}\label{eq:rho_orinetation}
    (\u^\perp\cdot\nabla \rho)\int_M u\, \d x \d y \geq 0.
\end{align}
Then, $\u$ is a shear flow and $\rho = \rho(y)$.
\end{theorem}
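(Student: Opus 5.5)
The plan is to reduce the statement to Theorem~\ref{laminar_monotone_rigidity_formal}, via the laminar-structure Theorem~\ref{informal_rigidity_1} on $M$; the proof for $M_-$ carries over after an affine rescaling in $y$. Since $\u\in C^1(\overline M)$, $|\u|>0$ on $\overline M$ and $\u\cdot n=0$ on $\partial M$, the remark after Definition~\ref{def_laminar_shear} gives a $C^2$ laminar representation $\u=\nabla^\perp(\overline\psi\circ X^{-1})$.

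\textbf{Step 1: the stream function profile has a fixed sign of derivative.} Because $\u=\overline\psi'(Y)\nabla^\perp Y$ with $\nabla Y\neq0$ (see \eqref{nondegenerate_DY}), non-vanishing of $\u$ forces $\overline\psi'(\eta)\neq 0$ for every $\eta$. Hence $\overline\psi'$ has a constant sign on $[-1,1]$, every leaf is non-stagnant, and $\psi$ is strictly monotone in the leaf label. I will relate this sign to $\int_M u$. With $Y=\pm1$ on the walls,
\[
\int_M u\,\d x\,\d y=-\int_M\partial_y\psi\,\d x\,\d y=2\pi\bigl(\overline\psi(-1)-\overline\psi(1)\bigr).
\]
So $\int_M u\neq 0$, and $\operatorname{sign}\overline\psi'=-\operatorname{sign}\int_M u$.

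\textbf{Step 2: the orientation condition is density monotonicity across the leaves.} The transport equation together with $\u\neq0$ shows that $\rho$ is constant on each leaf. I therefore set $\overline\rho(\eta):=\rho(X(0,\eta))\in C^1$, so that $\rho=\overline\rho(Y)$. Using $\u^\perp=-\nabla\psi$ (the convention consistent with $\u=\nabla^\perp\psi$), we get
\[
\u^\perp\cdot\nabla\rho=-\overline\psi'(Y)\,\overline\rho'(Y)\,|\nabla Y|^2 .
\]
Multiplying by $\int_M u$ and using Step 1, the sign of the product is $\overline\rho'(Y)\,|\overline\psi'(Y)|\,\bigl|\int_M u\bigr|\,|\nabla Y|^2$. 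Assumption \eqref{eq:rho_orinetation} therefore gives $\overline\rho'\ge0$ on every leaf. Thus $\rho$ is nondecreasing across the leaves, which is exactly \ref{density_mon_r11} / \ref{density_monT}.

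\textbf{Step 3: conclude.} Both hypotheses \ref{laminar} and \ref{density_monT} now hold, so Theorem~\ref{informal_rigidity_1} (Theorem~\ref{laminar_monotone_rigidity_formal}) yields $\partial_x\u=0$ and $\partial_x\rho=0$.

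The main obstacle I expect is the sign bookkeeping in Step 2, namely fixing the convention for $\u^\perp$ and the orientation of the leaf labelling so that the inequality points the right way. If the convention turns out reversed, I will use the symmetry $\u\mapsto-\u$, which flips both $\int_M u$ and $\u^\perp$ and so leaves \eqref{eq:rho_orinetation} invariant, to normalise the sign before applying the theorem.
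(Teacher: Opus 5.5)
Your argument is correct, but it follows a different route from the paper's proof of this statement.

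\textbf{The paper's route.} The paper argues directly by the sliding method of \cite{CDG21}. After normalising $\int_M u>0$, it writes $\rho=\Theta(\psi)$ and extracts $\Theta'\le 0$ from \eqref{eq:rho_orinetation}. This is the same sign bookkeeping as your Step~2, since $\overline\rho=\Theta\circ\overline\psi$ with $\overline\psi'<0$. It then compares $\psi$ with its translates $\psi(x+\tau\xi_1,y+\tau\xi_2)$, $\xi_2>0$. Their difference $W$ satisfies $\Delta W+FW=-\tau\xi_2\Theta'(\cdot)\ge 0$, so the strong maximum principle rules out an interior touching point. Letting $\xi_2\to0$ gives $\psi(x+\tau\xi_1,y)\le\psi(x,y)$ for all shifts, hence $\psi=\psi(y)$.

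\textbf{Your route and the trade-off.} You recognise the statement as the non-stagnant special case of Theorem~\ref{informal_rigidity_1}, which matches how the paper itself describes Appendix~\ref{app:rigidity}. Once Section~\ref{sec:Rigidity_l} is available your proof is very short and needs only $(\u,\rho)\in C^1(\overline M)$. The cost is that it imports the whole symmetrization machinery: monotone modification, continuous Steiner symmetrization, Brock's local-symmetry theorem and sliding reflections. The paper's proof is self-contained and elementary. Its price is that $\Theta'$ and $G'$ must be Lipschitz so that the zeroth-order coefficient $F$ is bounded.

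\textbf{Two small corrections.}
\begin{enumerate}
\item The remark after Definition~\ref{def_laminar_shear} that you invoke is stated under $|\u|>0$ on $\overline M$. Say explicitly that you read the hypothesis as holding up to the walls.
\item The passage from $M_-$ to $M$ is not an affine rescaling in $y$ alone, which would not preserve the equations. It is an isotropic dilation plus a vertical translation, with $\rho$ rescaled accordingly. Only the horizontal period changes, and the period plays no role in Section~\ref{sec:Rigidity_l}. Alternatively, simply cite Theorem~\ref{informal_rigidity_1} as stated on $M$.
\end{enumerate}
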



\begin{proof}
Our argument follows that of \cite{CDG21}, we refer the interested reader to the original proof for the precise details. We denote $\u=(u,v)$. Since $\u\neq 0$, we have $\int_M u \,\d x \d y\neq 0$. Otherwise, $$0= \int_M u\, \d x \d y =-\int_M \partial_y\psi\, \d x \d y = -2\pi(\psi(x,1) - \psi(x,-1)),$$ since $\psi$ is constant along $\partial M$. Hence, $\psi(x,1)=\psi(x,-1)$, which produces an interior critical point $p\in M$, a contradiction with $\u =\nabla^\perp\psi\neq 0$.

Assume without loss of generality that $\int_M u\, \d x \d y >0$ (we consider otherwise $\tilde \u = -\u$, noting that $(\tilde \u, \rho,p)$ is a steady Boussinesq solution and \eqref{eq:rho_orinetation} holds for $\tilde \u$ as well). Since $\u\neq 0$, there exists $C^k$ differentiable functions $\Theta$ and $G$ such that
    \begin{align}
        \rho = \Theta(\psi), \quad -P-\rho  y = G(\psi).
    \end{align}
Moreover, $\psi(x,1) < \psi(x,-1)$ and $0\leq \u^\perp\cdot\nabla \rho = -\nabla\psi\cdot\nabla\rho = - |\nabla\psi|^2 \Theta'(\psi)$, which yields $\Theta'(\psi)\leq 0$. Then, $\psi$ satisfies 
    \begin{equation}
        \begin{cases}
            \Delta\psi + y \Theta'(\psi) + G'(\psi)=0 & \\
            \psi(x,1) < \psi(x,-1). & 
        \end{cases}
    \end{equation}
    Consider next $\xi\in \R^2$ with $\xi=(\xi_1,\xi_2)$ such that $\xi_2>0$. Set
    \begin{align}
        w^\tau(x,y) = \psi(x+\tau\xi_1,y+\tau\xi_2) - \psi(x,y)
    \end{align}
    for all $x\in \overline{M}_\tau$, where
    \begin{align}
        M_\tau = \T \times (-1,1-\tau\xi_2)
    \end{align}
    We claim that 
    \begin{align}
        w^\tau(x,y) < 0, \quad \text{in } \overline{M}_\tau, \text{ for all } \tau\in (0, 2/\xi_2). 
    \end{align}
     and as $\xi_2\rightarrow 0$ we obtain 
    \begin{align}
        \psi(x+\tau\xi_1, y ) \leq \psi(x,y)
    \end{align}
    for all $\tau>0$ and all $\xi_1\in \R$. Hence, $\psi=\psi(y)$. To prove the claim, we define
    \begin{align}
        \tau_* = \inf \lbrace \tau\in(0,2/\xi_2) \text{ such that } w^{\tau'}<0 \text{ in } \overline{M}_{\tau'} \text{ for all }\tau'\in(\tau, 2/\xi_2) \rbrace.
    \end{align}
    Since $\nabla\psi\neq 0$ we have that $\psi_b > \psi(x,y) > \psi_{t}$ for all $(x,y)\in M$, where $\psi_{t}:=\psi(x,1)$ and $\psi_{b}:=\psi(x,-1)$. As $\psi$ is continuous, we have that $\tau_*< 2/\xi_2$, we shall prove that $\tau_*=0$. On the contrary, if $\tau_*>0$, then (since the space is compact) there exists $(x_0,y_0)\in \overline{M}_{\tau_*}$ such that $w^{\tau_*}(x_0,y_0) = 0$. We further observe that $(x_0,y_0)\in {M}_{\tau_*}$ since $\psi_{b} > \psi(x,y) > \psi_{t}$ for all $(x,y)\in M$. Then,
    \begin{align}
        W(x,y) \coloneqq  \psi(x+\tau_*\xi_1, y+\tau_*\xi_2) - \psi(x,y)
    \end{align}
    is such that $W < 0$ on $\partial\overline{M_{\tau_*}}$, $W \leq 0$ on ${M_{\tau_*}}$ by definition of $\tau_*$ and 
    \begin{align}
        \Delta W(x,y) + F(x,y)W(x,y) = -\tau_*\xi_2  \Theta'(\psi(x+\tau_*\xi_1, y+\tau_*\xi_2)) \geq 0,
    \end{align}
    in $M_{\tau_*}$ for some $F\in L^\infty$. Here we use the assumption that $\Theta'(\psi)\leq 0$. Hence, the strong maximum principle for non-positive functions shows that $W(x,y) < 0$ for all $(x,y)\in M_{\tau_*}$, a contradiction with $W(x_0,y_0)=0$ for some $(x_0,y_0)\in {M_{\tau_*}}$. Hence, $\tau_*=0$, the claim is proved and the theorem follows.
\end{proof}

In a different direction, the next theorem ensures that shear flows are the only admissible stationary stratified flows with positive density gradient.
\begin{theorem}\label{thm:maindensityrigidity}
Let $(\u, \rho, p)$ be a steady Boussinesq solution such that $\nabla\rho \neq 0$ on $\overline{M}$ and $\sup_{x\in\T} \rho(x,-1)\leq \inf_{x\in\T} \rho(x,1)$. Then, $\u$ is a shear flow and $\rho = \rho(y)$.
\end{theorem}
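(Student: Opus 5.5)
The plan is to reduce the statement to the laminar rigidity result, Theorem~\ref{laminar_monotone_rigidity_formal} (in its full-channel form, Theorem~\ref{informal_rigidity_1}). The density $\rho$ will play the role of the stream function of the foliation.

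\emph{Step 1: the level sets of $\rho$ foliate $M$ by non-contractible curves.} By Remark~\ref{rmk:rhoctnpartialM}, $\rho$ is constant on each boundary component, say $\rho\equiv\rho_-$ on $\{y=-1\}$ and $\rho\equiv\rho_+$ on $\{y=1\}$, with $\rho_-\le\rho_+$. Since $\nabla\rho\neq0$ on $\overline M$, $\rho$ has no interior extremum. Hence $\rho_-<\rho<\rho_+$ in $M$ and $\rho_-<\rho_+$; in particular $\rho$ is not constant. Every $c\in(\rho_-,\rho_+)$ is a regular value, so $\{\rho=c\}$ is a compact $1$-manifold, i.e. a finite union of closed curves.

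A contractible component would bound a disc on which $\rho$, being constant on the boundary, attains an interior extremum. This is impossible, so every component is non-contractible. Two disjoint non-contractible simple closed curves in the annulus bound a sub-annulus, and $\rho$ restricted to that sub-annulus would also have an interior extremum. Hence each level set is exactly one curve $\Gamma_c$. Flowing along $\nabla\rho/|\nabla\rho|^2$ from the bottom wall then yields a $C^1$ orientation-preserving diffeomorphism $X:M\to M$ that preserves each boundary component and satisfies $\rho\circ X(\xi,\eta)=\overline\rho(\eta)$ with $\overline\rho$ strictly increasing.

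\emph{Step 2: $\u$ is laminar with respect to this foliation.} The transport equation gives $\u\parallel\nabla^\perp\rho$, so $\u=\lambda\nabla^\perp\rho$ with $\lambda=\u\cdot\nabla^\perp\rho/|\nabla\rho|^2\in C^1$. Incompressibility gives $\nabla^\perp\rho\cdot\nabla\lambda=0$, so $\lambda$ is constant on each $\Gamma_c$: $\lambda=\Lambda(\rho)$. Then $\psi=\overline\psi\circ X^{-1}$ with $\overline\psi(\eta)=\int^{\overline\rho(\eta)}\Lambda$, and we have $\u=\nabla^\perp\psi$. Therefore $\u$ admits a laminar representation in the sense of Definition~\ref{def_laminar_shear}, with leaves $\Gamma_c$. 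The density is constant on every leaf and strictly increasing across leaves, so hypothesis~\ref{density_monT} holds.

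\emph{Step 3: conclude and handle regularity.} Theorem~\ref{informal_rigidity_1} then gives $\partial_x\u=\partial_x\rho=0$. The only real obstacle is regularity: that theorem asks for a $C^2$ laminar representation, whereas the construction above gives $X$ and $\overline\psi$ only as smooth as $\rho$ and $\Lambda$. I would first assume $(\u,\rho)$ regular enough, for instance $\rho\in C^2$, so that $X\in C^2$ and $\Lambda\in C^1$.

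Alternatively, I would bypass the issue by rerunning the argument of Section~\ref{sec:Rigidity_l} directly with $Y:=\overline\rho^{-1}\circ\rho$. This $Y$ has $\nabla Y\neq0$, and only $Y\in W^{2,\infty}$ and the identities of Proposition~\ref{monotonicity_lemma} are used there, which hold here because $\overline\theta=\overline\rho$ is monotone by construction. The stagnant set is harmless: on it, $\rho$ is still constant on leaves by Step~1, so no modification of $\rho$ is needed. I expect Step~1, namely the topological claim that each level set is a single non-contractible curve, together with this regularity bookkeeping, to be the main places needing care.
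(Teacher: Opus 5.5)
Your proposal is correct, but it takes a genuinely different route from the paper's. The paper never builds a laminar representation. It quotes Lemma~\ref{lemma:TransportImplicitFunction} to write $\psi=\Psi(\rho)$. It then uses the Bernoulli relation $\Delta\psi\,\nabla\psi=(G'(\rho)-y)\nabla\rho$ to show that every stagnant level set of $\rho$ is a horizontal line, so $\{\u=0\}=\T\times J$, and gets $\rho=\rho(y)$ on $\T\times J$ from the vorticity equation. Finally it reruns the sliding/strong maximum principle argument of Theorem~\ref{thm:mainnonzerovelocityrigidity} on each strip of $\T\times J^c$, where $\rho=\Theta(\psi)$.

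Your Steps~1--2 in effect reprove that functional relation: each level set of $\rho$ being a single non-contractible curve is exactly what makes $\lambda$, and hence $\psi$, a function of $\rho$. You then feed the foliation $Y=\overline\rho^{-1}\circ\rho$ into the symmetrization result, Theorem~\ref{laminar_monotone_rigidity_formal}.

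Your route buys two things:
\begin{itemize}
\item Stagnation needs no analysis, because that theorem allows arbitrary stagnant leaves and stagnant open regions. You avoid both the horizontality lemma and the strip-by-strip maximum principle; in the latter, $\Theta'=1/\Psi'$ is unbounded at the stagnant boundary lines, a point the paper handles only by a remark.
\item It makes precise the introduction's assertion that Theorem~\ref{thm:maindensityrigidity} is a special case of Theorem~\ref{informal_rigidity_1}.
\end{itemize}
The paper's route, in turn, is elementary and self-contained, using only the maximum principle, which is the point of the appendix. Yours rests on the full continuous Steiner symmetrization machinery of Section~\ref{sec:Rigidity_l}.

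One correction to your regularity bookkeeping. The flow of $\nabla\rho/|\nabla\rho|^2$ is only $C^{k-1}$ when $\rho\in C^k$, so $\rho\in C^2$ gives $X\in C^1$, not $C^2$. You can fix this in either of two ways:
\begin{itemize}
\item Assume $\rho\in C^3$, which the paper's Lemma~\ref{lemma:TransportImplicitFunction} already requires.
\item Take the transversal coordinate $Z$ to be constant along the flow of a smooth field $W$ with $W\cdot\nabla\rho>0$ on $\overline M$, started from the bottom wall. Then $X^{-1}=(Z,Y)$, and hence $X$, is as regular as $\rho$.
\end{itemize}
Either way, smooth solutions meet the hypotheses of Theorem~\ref{laminar_monotone_rigidity_formal} directly, so your fallback of rerunning Section~\ref{sec:Rigidity_l} by hand is unnecessary.
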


Towards the proof of the theorem, we first recall from Remark \ref{rmk:rhoctnpartialM} that steady Boussinesq configurations have constant density on each connected component of the domain boundary. The next lemma, whose proof can be found in \cite[Lemma 2.1]{CDG21} establishes functional relations for any quantity steadily transported by $\nabla^\perp\rho$.

\begin{lemma}\label{lemma:TransportImplicitFunction}
    Let $k\geq 3$. Assume that $\rho\in C^k(\overline{M})$, with $\nabla\rho\neq 0$ in $\overline{M}$ and $\rho(x,-1)=\rho_{b}$ and $\rho(x,1)=\rho_t$ for all $x\in \T$ for some $\rho_{b},\rho_t\in \R$. Let $h\in C^{k-2}(\overline{M})$ be such that $\nabla^\perp\rho \cdot \nabla h = 0$ in $\overline{M}$. Then, there exists $H\in C^{k-2}(\R)$ such that $h(p) = H(\rho(p))$ for all $p\in \overline{M}$.
\end{lemma}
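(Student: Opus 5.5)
The plan is to show that every level set of $\rho$ is a single connected, non-contractible closed curve, so that $h$, being constant along level sets, is a well-defined function of $\rho$. Regularity of that function then comes from reading $h$ along one fixed transversal curve that $\rho$ parametrizes.

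\textbf{Step 1: the range of $\rho$.} Since $\nabla\rho\neq0$ on $\overline M$, $\rho$ has no critical points, and in particular no interior extremum. Hence $\rho_b\neq\rho_t$: if they were equal, $\rho$ would be constant on $\partial M$ and would attain an interior maximum or minimum. Assume without loss of generality that $\rho_b<\rho_t$. Then $\rho(M)\subset(\rho_b,\rho_t)$, and $\{\rho=\rho_b\}=\T\times\{-1\}$, $\{\rho=\rho_t\}=\T\times\{1\}$.

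\textbf{Step 2: topology of the level sets.} Fix $c\in(\rho_b,\rho_t)$. The level set $\{\rho=c\}$ is a compact $C^k$ one-dimensional submanifold of $M$ that does not meet $\partial M$, so each component is a closed curve. No component can be contractible: by the Jordan curve theorem in the cylinder, such a curve would bound a disk on whose boundary $\rho\equiv c$, forcing an interior extremum. Hence every component is non-contractible. Two distinct non-contractible components cannot coexist either, since they would bound an annulus with $\rho\equiv c$ on its boundary, again producing an interior extremum. So $\{\rho=c\}$ is a single non-contractible loop $\Gamma_c$.

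\textbf{Step 3: $h$ is constant on each level set.} The field $\nabla^\perp\rho$ is nonvanishing and tangent to $\Gamma_c$. The hypothesis $\nabla^\perp\rho\cdot\nabla h=0$ therefore says that the derivative of $h$ along the connected curve $\Gamma_c$ is zero, so $h$ is constant on $\Gamma_c$. The same holds on the two boundary components.

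\textbf{Step 4: the transversal and regularity of $H$.} Take the normalized gradient flow $\gamma'(s)=\nabla\rho/|\nabla\rho|^2(\gamma(s))$ with $\gamma(0)\in\T\times\{-1\}$. The vector field is $C^{k-1}$, and by compactness $|\nabla\rho|\ge c_0>0$, so it is bounded. Along the flow, $\frac{\d}{\d s}\rho(\gamma(s))=1$, so $\rho(\gamma(s))=\rho_b+s$. Consequently $\gamma$ remains in $\overline M$ and reaches $\T\times\{1\}$ exactly at $s=\rho_t-\rho_b$. Periodicity in $x$ means there are no lateral exits.

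Define $H(c):=h(\gamma(c-\rho_b))$ for $c\in[\rho_b,\rho_t]$. Since $\gamma$ is $C^{k-1}$ and $h$ is $C^{k-2}$, the function $H$ is $C^{k-2}$ up to the endpoints. The curve $\gamma$ meets each $\Gamma_c$, so Step 3 gives $h=H(\rho)$ on all of $\overline M$. Finally, extend $H$ to a $C^{k-2}$ function on $\R$ by a standard extension; note $k-2\ge1$.

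\textbf{Main obstacle.} The main difficulty is Step 2, namely justifying that each level set is connected. I would handle it through the interior-extremum argument on the region bounded by two level components, using the Jordan–Schoenflies theorem on the cylinder $\T\times(-1,1)$.
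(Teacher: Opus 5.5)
Your proof is correct. The paper does not argue this lemma itself but cites \cite[Lemma 2.1]{CDG21}, and your route is the standard argument for that fact: each regular level set is a single non-contractible loop, $h$ is constant on it, and $H$ is read off along one $\nabla\rho/|\nabla\rho|^2$ flow line, which gives $C^{k-2}$ regularity up to the endpoints. If you want to avoid Jordan--Schoenflies in Step 2, note that flowing any $p\in\overline M$ backward along $\nabla\rho/|\nabla\rho|^2$ reaches $\T\times\{-1\}$ at time $\rho(p)-\rho_b$. Hence $\{\rho=c\}$ is the continuous image of $\T$ under the time-$(c-\rho_b)$ flow map started from the bottom wall, and is therefore connected.
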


The following result shows any stagnation of $\u$ must take place on closed horizontal lines.

\begin{lemma}
Let $(\u, \rho)\in C^2\times C^3$ be a stationary solution on the Boussinesq equations in $\overline{M}$ and assume that $\nabla\rho\neq 0$. Then, either $|\u|\neq 0$ or there exists $J\subseteq [-1,1]$ such that   
\begin{align}
    \lbrace |\u| = 0 \rbrace = \T\times J.
\end{align}
\end{lemma}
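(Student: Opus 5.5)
The plan is to exploit the fact that $\nabla\rho\neq 0$ turns the level sets of $\rho$ into a laminar foliation of $\overline{M}$ by non-contractible closed curves. Along these leaves we transport information about stagnation, and then use the vorticity equation to pin down the leaves where $\u$ vanishes.

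First, by Remark~\ref{rmk:rhoctnpartialM}, $\rho$ is constant on each boundary component. Since $\nabla\rho\neq0$, every level set $\{\rho=c\}$ with $c$ strictly between the boundary values is a compact regular curve. Each such curve is non-contractible and homotopic to $\T\times\{0\}$, because a contractible component would bound a disk on which $\rho$ attains an interior extremum. Moreover, the transport equation $\u\cdot\nabla\rho=0$ gives $\u=\lambda\nabla^\perp\rho$ for a scalar $\lambda=\u\cdot\nabla^\perp\rho/|\nabla\rho|^2\in C^1(\overline{M})$.

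Second, I will show that $\lambda$ is constant along each level set of $\rho$. Incompressibility gives $0=\nabla\cdot(\lambda\nabla^\perp\rho)=\nabla^\perp\rho\cdot\nabla\lambda$. Lemma~\ref{lemma:TransportImplicitFunction} then yields $\lambda=\Lambda(\rho)$; if the regularity hypotheses of that lemma are tight, I will instead argue directly that $\lambda$ is constant along the connected level curves. Hence $\{\u=0\}=\{\lambda=0\}$ is a union of entire level curves of $\rho$. If this set is empty we are in the case $|\u|\neq0$, so assume it is nonempty.

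Third, and this is the main obstacle, I must show that every level curve $\Gamma$ on which $\u\equiv0$ is a horizontal line $\T\times\{y_0\}$. On $\Gamma$ we have $\u=0$, so the vorticity equation $\u\cdot\nabla\omega=-\partial_x\rho$ forces $\partial_x\rho=0$ on $\Gamma$. This is exactly the computation in the proof of Proposition~\ref{monotonicity_lemma}, and it is classical here since $\u\in C^2$. Hence $\nabla\rho=(0,\partial_y\rho)$ with $\partial_y\rho\neq0$ on $\Gamma$. Writing $\Gamma$ locally as a graph $y=g(x)$ via the implicit function theorem, we get $g'=-\partial_x\rho/\partial_y\rho=0$. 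So $g$ is constant, and by connectedness and non-contractibility $\Gamma=\T\times\{y_0\}$. The union of such lines is then $\T\times J$ with $J=\{y:\ \u(\cdot,y)\equiv0\}$, which is closed by continuity. This gives the dichotomy. The delicate points I expect are justifying the non-contractibility of the level curves together with the regularity needed to invoke Lemma~\ref{lemma:TransportImplicitFunction}, and treating the boundary leaves, which are already horizontal.
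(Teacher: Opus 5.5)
Your proof is correct, but its key step differs from the paper's. The paper writes $\psi=\Psi(\rho)$ via Lemma~\ref{lemma:TransportImplicitFunction}, so $\u=\Psi'(\rho)\nabla^\perp\rho$; your $\lambda=\Lambda(\rho)$ plays the role of $\Psi'(\rho)$. It then also writes the Bernoulli function as a function of the density, $P+y\rho=G(\rho)$. Evaluating $\Delta\psi\,\nabla\psi=(G'(\rho)-y)\nabla\rho$ on a stagnant level set $\{\rho=\rho_*\}$ gives $y=G'(\rho_*)$ there, so the leaf is horizontal.

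You instead evaluate the vorticity equation $\u\cdot\nabla\omega=-\partial_x\rho$ pointwise at stagnation points. This gives $\partial_x\rho=0$ along the stagnant leaf, and the implicit function theorem then gives horizontality.

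Your route is more local and elementary. It needs no second functional relation and no differentiability of $G$. Your direct argument that $\lambda$ is constant along orbits of $\nabla^\perp\rho$ is also self-contained. It does not even need non-contractibility of the leaves, since a compact connected curve that is locally a horizontal segment is already a full circle $\T\times\{y_0\}$.

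The paper's route, in exchange, identifies the stagnation heights explicitly as $J=\{G'(\varrho):\Psi'(\varrho)=0\}$. It does so through the same functional relations that the sliding argument in Theorem~\ref{thm:maindensityrigidity} later relies on. The vorticity observation you use to get horizontality is exactly what the paper invokes immediately afterwards to deduce $\rho=\rho(y)$ on $\T\times J$.
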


\begin{proof}
Let us denote $[\rho_-, \rho_+]\coloneqq \rho(\overline{M})$. According to Lemma \ref{lemma:TransportImplicitFunction}, there exists $\Psi\in C^1([\rho_{-},\rho_{+}])$ such that $\psi(p) = \Psi(\rho(p))$, for all $p\in \overline{M}$. In particular, $\u(p) = \Psi'(\rho(p))\nabla^\perp \rho(p)$ and since $\nabla\rho\neq 0$ we deduce that in $\overline{M}$ the points where $\u=0$ correspond to is the union of the density level sets $\{\rho=\rho_\ast\}$ for which $\Psi'(\rho_\ast)=0$. 

Moreover, from the equations we see that 
\begin{equation}
\Delta\psi \nabla\psi = \nabla P + e_2\rho = \nabla (P + y\rho) - y \nabla\rho
\end{equation}
and since $P+y\rho = G(\rho)$, for some $G\in C^1([\rho_-, \rho_+])$ we obtain
\begin{equation}
\Delta\psi \nabla \psi = G'(\rho)\nabla\rho - y \nabla\rho.
\end{equation}
Assume next that $\Psi'(\rho_*)=0$, for some $\rho_- \leq \rho_* \leq \rho_+$. Then, $\u = 0$ on $\Gamma_{\rho_*}$, which denotes the level set $\lbrace \rho = \rho_* \rbrace$, and there holds $0 = (G'(\rho_*) - y)\nabla\rho$. Since $\nabla\rho\neq 0$, we conclude that $y=G'(\rho_*)$. Namely, all fluid stagnation lines contained in $\overline{M}$ must be horizontal and we define 
\begin{align}
    J=\lbrace G'(\varrho)\in [-1,1] : \Psi'(\varrho)=0 \text{ for some }\varrho\in [\rho_{-}, \rho_{+}]\rbrace.
\end{align}
With this, the proof is concluded.
\end{proof}

With the above geometric characterization, we are ready to prove the theorem.

\begin{proof}[Proof of Theorem \ref{thm:maindensityrigidity}]
    We note that on $\T\times J$ we have $\u=0$ so the vorticity equation $\u\cdot\nabla \omega = -\partial_x\rho$ implies that $\rho=\rho(y)$ in $\T\times J$. Since $\nabla\rho \neq 0$, we further have that for any $y_1,y_2\in J$ with $y_1<y_2$ we have $\rho(x,y_1) < \rho(x,y_2)$, for all $x\in \T$. Moreover, we have $|\u|>0$ on $\T\times J^c$. We next argue as in Theorem \ref{thm:mainnonzerovelocityrigidity} on each connected component of $\T\times J^c$ . We note that no $C^1$ regularity of $\Theta$ is required on $\T\times \partial J^c$ to use the sliding argument, but rather continuity of $\Theta$ is enough, which is in turn ensured since $|\u|\neq 0$ in $\T\times J^c$.
\end{proof}

A direct consequence of Theorem \ref{thm:maindensityrigidity} is that shear steady states with strictly increasing stratified densities are isolated in $W^{1,\infty}$ from non-shear steady states.

\begin{corollary}\label{cor:rigidity_stratified_dens}
Let $(\u_0(y),\rho_0(y))$ be such that $\min_{y\in[-1,1]}\rho_0'(y) = \rho_{\min} > 0$. Then, all steady states $(\u,\rho)$  such that $\Vert \rho - \rho_0 \Vert_{W^{1,\infty}}< \rho_{min}$ must necessarily be shear, that is, $(\u,\rho) = (\u(y),\rho(y))$.
\end{corollary}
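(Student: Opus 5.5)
The plan is to reduce the statement directly to Theorem~\ref{thm:maindensityrigidity}: it suffices to check that every steady state $(\u,\rho)$ with $\Vert\rho-\rho_0\Vert_{W^{1,\infty}}<\rho_{\min}$ has $\nabla\rho\neq 0$ on $\overline{M}$ and satisfies $\sup_{x}\rho(x,-1)\le\inf_x\rho(x,1)$.

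\textbf{Step 1: non-vanishing gradient.} I would first show that $\partial_y\rho$ stays strictly positive. Pointwise (a.e., and everywhere if $\rho\in C^1$) we have
\[
\partial_y\rho\ \ge\ \rho_0'(y)-|\partial_y\rho-\rho_0'|\ \ge\ \rho_{\min}-\Vert\rho-\rho_0\Vert_{W^{1,\infty}}\ >\ 0 .
\]
Hence $\nabla\rho\neq0$ on $\overline M$.

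\textbf{Step 2: ordering of the boundary values.} Next I would observe that $y\mapsto\rho(x,y)$ is strictly increasing for each fixed $x$. This gives $\rho(x,-1)<\rho(x,1)$ for every $x$. By Remark~\ref{rmk:rhoctnpartialM}, $\rho$ is constant on each boundary component, say $\rho\equiv\rho_b$ on $\{y=-1\}$ and $\rho\equiv\rho_t$ on $\{y=1\}$. So $\sup_x\rho(x,-1)=\rho_b<\rho_t=\inf_x\rho(x,1)$. Even without that remark, I could compare $\rho(x,-1)$ with $\rho(x',1)$ by integrating $\partial_y\rho$ vertically and horizontally, but constancy on the walls makes this immediate.

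\textbf{Step 3: conclusion and regularity.} Theorem~\ref{thm:maindensityrigidity} then yields that $\u$ is a shear flow and $\rho=\rho(y)$. The only point that needs care is regularity. Lemma~\ref{lemma:TransportImplicitFunction} requires $\rho\in C^k$ with $k\ge3$, together with the $C^2\times C^3$ assumptions of the stagnation lemma. I would therefore assume, implicitly as in the corollary's setting, that the steady states are smooth enough for Theorem~\ref{thm:maindensityrigidity} to apply; the $W^{1,\infty}$ closeness then supplies only the quantitative monotonicity.

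I expect no real obstacle beyond this regularity bookkeeping. Step 1 is the heart of the argument, and it is a one-line estimate.
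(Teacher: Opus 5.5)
Your proposal is correct and follows the paper's proof. The bound $\partial_y\rho\ge\rho_{\min}-\Vert\rho-\rho_0\Vert_{W^{1,\infty}}>0$ gives both $\nabla\rho\neq0$ and the boundary ordering, and Theorem~\ref{thm:maindensityrigidity} then applies; your appeal to Remark~\ref{rmk:rhoctnpartialM} to turn the pointwise inequality $\rho(x,-1)<\rho(x,1)$ into $\sup_x\rho(x,-1)\le\inf_x\rho(x,1)$ makes explicit a step the paper leaves implicit.
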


\begin{proof}
    The bound $\Vert \rho - \rho_0 \Vert_{W^{1,\infty}}< \rho_{min}$ ensures that $\min_{\T\times[-1,1]} \partial_y\rho>0$, so that $\rho(\cdot,-1) < \rho(\cdot, 1)$ and  $\nabla\rho\neq 0$ so that Theorem \ref{thm:maindensityrigidity} applies to $(\u,\rho)$.  
\end{proof}

\section{A continuous Crandall-Rabinowitz local bifurcation theorem}\label{CR_conti_appendix}
Any triplet $(k,\varphi,a)$ that solves $\sF(k,\varphi,a)=0$ with $\sF$ as in \eqref{eq:defsF} gives rise to $\frac{2\pi}{k}$-periodic stationary Boussinesq states with velocity $\nabla^\perp(\psi_\ep+\varphi)$ and density $\Theta_{\ep,a}(\psi_\ep+\varphi)$ that are close to the shear pair $(u_0,\rho_0)$ if $\varphi$ is small. We shall obtain solutions to $\sF(k,\varphi,a)=0$ using a Crandall-Rabinowitz bifurcation scheme, where $k$ acts as the bifurcating parameter, since we note that $\sF(k,0,a)=0$ for all $(k,a)\in \R_+\times \R$ and $\partial_\varphi \sF(k,0,a)$ has non-trivial one-dimensional kernel for $k=\sqrt{-\lambda_{\ep,a}}$ and all $a$ in an open subset of $(-1,1)$. Since we are looking for solutions that are $2\pi$ periodic, we must ensure that the bifurcating curve $k(s,a)$ attains the value $j\geq 1$ for some $(s,a)$, the parameters of the bifurcating curve of solutions. This is achieved with a uniformly continuous-in-$a$ Crandall-Rabinowitz bifurcation-type result, Theorem \ref{thm:unifCR} below.

\begin{theorem}[Uniform Implicit Function Theorem]\label{thm:unifIFT}
    Let $\cA\subset\R$ be an open subset, $X,Y,Z$ be Banach spaces and let $F, \partial_y F\in C(X\times Y\times \cA;Z)$. Assume that $F(0,0,a)=0$ and $\partial_y F(0,0,a):Y\rightarrow Z$ is an isomorphism for all $a\in \cA$. Then, for any compact subset $K\subset \cA$ there exists $0  \in U_K\subset X$ and $ 0  \in V_K\subset Y$ open subsets and a continuous map
    \begin{align}
        f:U_K\times K \rightarrow V_K
    \end{align}
    such that 
    \begin{align}
        \lbrace (x,y,a)\in U_K\times V_K\times K : F(x,y,a)=0 \rbrace = \lbrace (x,f(x,a),a) : (x,a)\in U_K\times K \rbrace.
    \end{align}
    Moreover, there holds $f(0,a)=0$ for all $a\in K$.
\end{theorem}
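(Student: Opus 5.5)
The plan is to reduce the parameter-dependent statement to the ordinary implicit function theorem, applied to an augmented map in which $a$ is frozen as a parameter, and then to use compactness of $K$ to make the neighbourhoods uniform.

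First fix $a_0\in K$. Set $L_a:=\partial_yF(0,0,a)$. The map $a\mapsto L_a$ is continuous into $\mathcal L(Y,Z)$, since $\partial_yF$ is jointly continuous with values in $\mathcal L(Y,Z)$; this is how we read the hypothesis $\partial_yF\in C(X\times Y\times\cA;\mathcal L(Y,Z))$. Inversion is continuous on the open set of isomorphisms, so $\sup_{a\in K}\|L_a^{-1}\|=:\Lambda<\infty$. For each $(x,a)$, rewrite $F(x,y,a)=0$ as the fixed point problem
\[
y=T_{x,a}(y):=y-L_a^{-1}F(x,y,a).
\]
Then $D_yT_{x,a}(y)=L_a^{-1}\bigl(L_a-\partial_yF(x,y,a)\bigr)$.

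The key uniform estimate is the following. Since $\partial_yF$ is continuous at every point $(0,0,a)$ with $a\in K$ and $K$ is compact, a standard tube-lemma argument gives $r,\rho>0$ such that
\[
\|\partial_yF(x,y,a)-L_a\|\le \frac{1}{2\Lambda}\quad\text{for all }\|x\|<\rho,\ \|y\|\le r,\ a\in K.
\]
Hence $T_{x,a}$ is a $\frac12$-contraction on the closed ball $\overline B_r\subset Y$. Likewise, $F(x,0,a)\to F(0,0,a)=0$ as $x\to0$, and this convergence is uniform in $a\in K$ by the same compactness argument applied to the continuous map $F(\cdot,0,\cdot)$. So after shrinking $\rho$ we have $\|L_a^{-1}F(x,0,a)\|\le r/2$, and $T_{x,a}$ maps $\overline B_r$ into itself.

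With $U_K:=B_\rho$ and $V_K:=B_r$ (the open ball, after checking that the fixed point actually lies in the interior, or shrinking slightly), Banach's fixed point theorem yields a unique $f(x,a)\in\overline B_r$ with $F(x,f(x,a),a)=0$. This gives both the existence and the uniqueness part of the set equality, since any zero of $F$ in $U_K\times V_K\times K$ is a fixed point of $T_{x,a}$. We also get $f(0,a)=0$ by uniqueness, because $F(0,0,a)=0$. Continuity of $f$ in $(x,a)$ follows from the parametric contraction principle: $T_{x,a}(y)$ is jointly continuous in $(x,a)$ for fixed $y$, because both $F$ and $a\mapsto L_a^{-1}$ are continuous. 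The standard estimate
\[
\|f(x,a)-f(x',a')\|\le 2\,\|T_{x,a}(f(x',a'))-T_{x',a'}(f(x',a'))\|
\]
then gives continuity.

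I expect the main obstacle to be the uniformity step: extracting $r,\rho$ that work for all $a\in K$ at once. This needs joint continuity of $\partial_yF$ together with compactness. Everything else is the standard contraction proof.
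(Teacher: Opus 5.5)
Your proof is correct, but it takes a different route from the paper's.

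\textbf{The paper's route.} The paper does not reprove the implicit function theorem. It applies the classical implicit function theorem with continuous parameter dependence at each point $(0,0,a_0)$, $a_0\in K$, treating $(x,a)$ jointly as the parameter. This gives local neighbourhoods $U_{a_0}$, $I_{a_0}$, $V_{a_0}$ and local solution maps $f_{a_0}$. The paper then:
\begin{enumerate}
\item extracts a finite subcover of $K$ by smaller intervals $J_{a_j}$ with $\overline{J_{a_j}}\subset I_{a_j}$;
\item takes $V_K$ to be a ball inside $\bigcap_j V_{a_j}$;
\item shrinks the $x$-neighbourhoods, using compactness of $\overline{J_{a_j}}$ and joint continuity, so that each $f_{a_j}$ lands in $V_K$;
\item glues the $f_{a_j}$ into a single map $f$, with agreement on overlaps coming from local uniqueness.
\end{enumerate}

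\textbf{Your route.} You open up the contraction-mapping proof and make every constant uniform in $a\in K$. You use a uniform bound $\Lambda$ on $\|L_a^{-1}\|$, and a uniform modulus of continuity for $\partial_yF$ and for $F(\cdot,0,\cdot)$ near $\{0\}\times\{0\}\times K$.

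\textbf{What each buys.} Your approach gives:
\begin{itemize}
\item explicit balls $U_K=B_\rho$ and $V_K=B_r$;
\item uniqueness on all of $V_K$ directly from Banach's theorem, with no gluing or overlap consistency to check;
\item continuity of $f$ from the standard parametric estimate;
\item the extra bound $\|f(x,a)\|\le 2\Lambda\|F(x,0,a)\|$.
\end{itemize}
The cost is that you reprove the implicit function theorem. The paper's proof is a pure compactness-and-patching argument on top of a black box.

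\textbf{Two small points.}
\begin{itemize}
\item In the tube-lemma step, compare both $\partial_yF(x,y,a)$ and $L_a$ with $L_{a_0}$. This gives the bound $\frac{1}{2\Lambda}$ against $L_a$ itself, not only against $L_{a_0}$.
\item To close the step you left open, require $\|L_a^{-1}F(x,0,a)\|<r/2$ strictly. Then $\|f(x,a)\|\le 2\|L_a^{-1}F(x,0,a)\|<r$, so the fixed point lies in the open ball $B_r$. Uniqueness still holds on $\overline{B}_r\supset V_K$.
\end{itemize}
Your reading of $\partial_yF$ as continuous into $\mathcal{L}(Y,Z)$ is the right one; the codomain $Z$ in the statement is a typo.
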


\begin{proof}
    Let $K\subset \cA$ be compact and let $a_0\in K\subset\cA$. Then, $F(x,y,a)$ satisfies the conditions of the implicit function theorem at $(0,0, a_0)$, for which there exist $0\in U_{a_0}\subset X$, $0\in V_{a_0}\subset Y$, $a_0\in I_{a_0}\subset \cA$ open sets and a continuous function $f_{a_0}:U_{a_0}\times I_{a_0} \rightarrow V_{a_0}$ such that $F(x,y,a)=0$ for $(x,y,a)\in U_{a_0}\times V_{a_0}\times I_{a_0}$ if and only if $y=f_{a_0}(x,a)$. In particular, since $F(0,0,a_0)=0$, there holds $f_{a_0}(0,a_0)=0$.

    We next observe that $K\subset \bigcup_{a_0\in K}I_{a_0}$ and since $K$ is compact, there exists a finite sub-cover $K \subset \bigcup_{j=1}^n I_{a_j}$, for some $\lbrace a_j \rbrace_{j=1,...,n}\subset K$. 

    For every $a_0\in K$ and every $a\in I_{a_0}$, $F(0,0,a)=0$ and the local uniqueness yields $f_{a_0}(0,a)=0$. Let $J_{a_0}$ open intervals centered at each $a_0\in K$ such that $\overline{J_{a_0}}\subset I_{a_0}$. Since $K\subset \bigcup_{a_0\in K}J_{a_0}$ is compact, there exists a finite sub-cover $K \subset \bigcup_{j=1}^n J_{a_j}$, for some $\lbrace a_j \rbrace_{j=1,...,n}\subset K$. Since each $V_{a_j}$ is an open neighborhood of the origin and we have finitely many of them, there exists $r>0$ such that
    \begin{align}
        V_K:=B_r(0)\subset \bigcap_{j=1}^n V_{a_j}.
    \end{align}
    Next, note that since $f_{a_j}$ is jointly continuous, and $V_K$ is open, $f_{a_j}^{-1}(V_K)\subset U_{a_j}\times I_j$ is open, with $(0,a)\in f_{a_j}^{-1}(V_K)$ for all $a\in \overline{J}_j$, since $f_{a_j}(0,a)=0\in V_K$ for all $a\in K$. Since $\overline{J}_{a_j}$ is compact, there exists $0\in \tilde U_{a_j}\subset U_{a_j}$ open set such that $f_{a_j}(x,a)\in V_k$, for all $x\in \tilde U_j$ and all $a\in \overline{J}_j$. Setting further
    \begin{align}
        U_K := \bigcap_{j=1}^N \tilde U_{a_j}
    \end{align}
     we conclude that $f:U_K\times K\rightarrow V_K$ given by $f(x,a)=f_{a_i}(x,a)$ for $a\in J_{a_i}$ is well-defined and jointly continuous. Indeed, for $a\in K\cap J_{a_i}\cap J_{a_j}$ and $x\in U_K$, both $f_{a_i}(x,a), f_{a_j}(x,a)\in V_K \subset V_{a_i}\cap V_{a_j}$ where local uniqueness yields $f_{a_i}(x,a)=f_{a_j}(x,a)$. It satisfies $F(x,f(x,a),a)=0$ for all $(x,a)\in U_K \times K$, with $f(0,a)=f_a(0,a)=0$, for all $a\in K$. Finally, if $(x,y,a)\in U_K\times V_K\times K$ satisfies $F(x,y,a)=0$, let $j\in \lbrace 1, ... n \rbrace$ such that $a\in J_j$ and note that $y\in V_K\subset V_{a_j}$, where local uniqueness holds and thus $y= f_{a_j}(x,a)=f(x,a)$.
\end{proof}

\begin{theorem}[Uniform CR Theorem]\label{thm:unifCR}
    Let $0\in\cA\subset\R$ be an open bounded interval and $X,Y$ be real Banach spaces. Let $0\in \mathcal{I}^\mu\subset \R$, $0\in U\subset X$ be open sets. Let $\G(\mu,\varphi,a):\mathcal{I}^\mu\times  U\times\cA\longrightarrow Y$ such that $\G, \partial_\mu\G, \partial_\varphi\G, \partial^2_{\mu,\varphi}\G\in \mathcal{C}(\cI^\mu\times U\times \cA)$ and assume that 
    \begin{align}
        \G(\mu,0,a)=0
    \end{align}
    for all $(\mu,a)\in \cI^\mu\times \cA$. For each $a\in \cA$, let 
    \begin{align}
        L_a\coloneqq  \partial_\varphi \G(0,0,a):X\rightarrow Y
    \end{align}
    be the Fr\'{e}chet derivative of $\G$ with respect to $\varphi\in X$ and further assume that
    \begin{enumerate}[label=(\arabic*)]
        \item\label{CR_condi_1} For every $a\in \cA$, there exist some non-zero $\varphi_a\in X$ such that 
        \begin{align}
            \ker(L_a) = \vspan  \lbrace \varphi_a \rbrace, 
        \end{align}
        the map $L_a$ is Fredholm of index zero and the map $a\mapsto \varphi_a$ is continuous in $X$ for all $a\in \cA$.
        \item\label{CR_condi_2} There holds the transversality condition $\partial_\mu\partial_\varphi\G(0,0,a)\varphi_a\not \in R(L_a)$ for all $a\in \cA$.
        \item\label{CR_condi_3} For all $a\in \cA$, let $\cW_a$ be a closed complement of $\cV_a\coloneqq \ker(L_a)$ in $X$. There exists a family of linear bounded isomorphisms
        \begin{align}
            T_a^X:\cW_0\subset X\rightarrow \cW_a\subset X
        \end{align}
        such that $a\mapsto T_a^X$ is continuous in $\mathcal{L}(\cW_0,X)$ in operator norm for all $a\in \cA$.
    \end{enumerate}

    Then, for any compact interval $\cA'$ containing the origin in its interior and such that ${\cA'}\subset \cA$, there exists open neighbourhoods $(0,0)\in \mathcal{U}_{\cA'}\subset \cI^\mu \times \cW_0$, $0\in \cI_{\cA'}^s\subset\R$ and continuous functions 
    \begin{align}
        (\kappa(s,a),\varpi(s,a)):\cI_{\cA'}^s\times \cA'\rightarrow\mathcal{U}_{\cA'}
    \end{align}
    with $(\kappa(0,a),\varpi(0,a))=0$ for all $a\in \cA'$ and such that

        \begin{align}
        \G(\kappa(s,a),s\varphi_a + sT_a^X\varpi(s,a),a) =0
    \end{align}
    for all $(s,a)\in \cI_{\cA'}^s\times \cA'$. Moreover, for all $a\in {\cA'}$,
\begin{align}
        \lbrace (s,\mu,w)\in (\cI_{\cA'}^s \setminus \lbrace 0 \rbrace)\times\mathcal{U}_{\cA'} : \G(\mu, s\varphi_a+sT_a^X w, a) = 0,  \rbrace &=  \lbrace (s,\kappa(s,a), \varpi(s,a)) : s\in \cI^s_{\cA'}\setminus \lbrace 0 \rbrace \rbrace.
\end{align}
    
\end{theorem}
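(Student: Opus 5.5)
The plan is to reduce Theorem~\ref{thm:unifCR} to the uniform implicit function theorem, Theorem~\ref{thm:unifIFT}, via the standard Lyapunov--Schmidt/blow-up argument, keeping track of the parameter $a$ throughout. For $s\neq 0$, $\mu\in\cI^\mu$, $w\in\cW_0$ and $a\in\cA$, we set
\[
\mathcal H(s,\mu,w,a)\coloneqq
\begin{cases}
\dfrac1s\,\G\bigl(\mu,\,s\varphi_a+sT_a^Xw,\,a\bigr), & s\neq0,\\[6pt]
\partial_\varphi\G(\mu,0,a)\bigl(\varphi_a+T_a^Xw\bigr), & s=0.
\end{cases}
\]
Since $\G(\mu,0,a)=0$, we can write $\mathcal H(s,\mu,w,a)=\int_0^1\partial_\varphi\G(\mu,ts(\varphi_a+T_a^Xw),a)(\varphi_a+T_a^Xw)\,\d t$. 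Continuity of $\partial_\varphi\G$, of $a\mapsto\varphi_a$ in $X$, and of $a\mapsto T_a^X$ in operator norm then give $\mathcal H\in C$ on a neighbourhood of $\{(0,0,0)\}\times\cA$. The same representation, differentiated in $(\mu,w)$ using $\partial_\mu\partial_\varphi\G$ and $\partial_\varphi\G$, gives continuity of $\partial_{(\mu,w)}\mathcal H$. Moreover $\mathcal H(0,0,0,a)=L_a\varphi_a=0$ for every $a$.

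Next, we compute at $(0,0,0,a)$:
\[
\partial_{(\mu,w)}\mathcal H(0,0,0,a)[\nu,z]=\nu\,\partial_\mu\partial_\varphi\G(0,0,a)\varphi_a+L_aT_a^Xz .
\]
We show this is an isomorphism $\R\times\cW_0\to Y$. For injectivity: if the sum vanishes, transversality \ref{CR_condi_2} forces $\nu=0$, and then $L_aT_a^Xz=0$ with $T_a^Xz\in\cW_a$, a complement of $\ker L_a$, so $z=0$ because $T_a^X$ is an isomorphism. For surjectivity: $L_a$ is Fredholm of index zero with one-dimensional kernel, so $R(L_a)$ is closed of codimension one and is spanned, together with $\partial_\mu\partial_\varphi\G(0,0,a)\varphi_a$, to all of $Y$; also $L_a|_{\cW_a}$ is a bijection onto $R(L_a)$. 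The bounded inverse then follows from the open mapping theorem.

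Now we apply Theorem~\ref{thm:unifIFT} with the roles $X\leftarrow\R$ (variable $s$), $Y\leftarrow\R\times\cW_0$ (variable $(\mu,w)$), $Z\leftarrow Y$, and compact set $K=\cA'$. This yields neighbourhoods $\cI^s_{\cA'}$ and $\mathcal U_{\cA'}$ and a continuous map $(\kappa,\varpi)$ with $(\kappa,\varpi)(0,a)=0$ whose graph is exactly the zero set of $\mathcal H$ there. For $s\neq0$, $\mathcal H=0$ is equivalent to $\G(\mu,s\varphi_a+sT_a^Xw,a)=0$, which gives both the existence statement and the characterization of the nontrivial solution set. At $s=0$ the identity $\G(\kappa,0,a)=0$ holds trivially.

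The main obstacle is the regularity bookkeeping in the first step. We need joint continuity in $a$ of $\partial_{(\mu,w)}\mathcal H$ near $s=0$, which uses only continuity (not differentiability) in $a$ together with the integral representation. We must also check that the domain shrinks uniformly, so that $s(\varphi_a+T_a^Xw)\in U$; this follows from boundedness of $\varphi_a$ and $\|T_a^X\|$ on the compact set $\cA'$.
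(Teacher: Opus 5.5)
Your proposal is correct and is essentially the paper's proof: the same map $\mathcal H(s,\mu,w,a)=\int_0^1\partial_\varphi\G(\mu,ts(\varphi_a+T_a^Xw),a)(\varphi_a+T_a^Xw)\,\d t$, the same isomorphism check for $(\tau,\varpi)\mapsto\tau\,\partial_\mu\partial_\varphi\G(0,0,a)\varphi_a+L_aT_a^X\varpi$ from $\R\times\cW_0$ to $Y$, and the same application of Theorem~\ref{thm:unifIFT} with $x=s$ and $y=(\mu,w)$. The only extra bookkeeping in the paper is to define $\mathcal H$ over an open interval $\cA_1$ with $\cA'\subset\cA_1\subset\overline{\cA_1}\subset\cA$, because Theorem~\ref{thm:unifIFT} is stated for an open parameter set; the uniform bounds on $\varphi_a$ and $T_a^X$ are then taken over the compact set $\overline{\cA_1}$ rather than over $\cA'$.
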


\begin{proof}
    We work according to the usual Lyapunov-Schmidt reduction argument for the standard Crandall-Rabinowitz bifurcation theorem, see \cite{crandall1971bifurcation, kielhofer2012bifurcation}. Since $L_a$ is Fredholm of index zero and $\ker (L_a)$ is one-dimensional, we set
    \begin{align}
        X= \cV_a \oplus \cW_a, \quad \cV_a=\ker (L_a) = \vspan   \lbrace \varphi_a \rbrace, \quad \cR_a=R(L_a), 
    \end{align}
    for $\varphi_a\in X$. 
    Let $\cA'\subset \cA$ compact with $0\in \text{int}(\cA')$. Let $\cA_1$ open with $\cA'\subset \cA_1\subset \overline {\cA_1}\subset \cA$. Let $0\in B_r \subset U$ for some $r>0$. There exists $0\in\cI^s\subset \R$ and $0\in\cW'\subset \cW_0$ open sets such that $s(\varphi_a+T_a^Xw)\in B_r\subset U$ for all $(s,w,a)\in \cI^s\times \cW'\times \overline{\cA_1}$. We define the map $\cH:\mathcal{I}^s\times \mathcal{I}^\mu\times \cW'\times \cA_1 \rightarrow Y$ given by
    \begin{align}
         \cH(s,\mu,w,a) \coloneqq  \int_0^1 \partial_\varphi \G(\mu, ts\hat\varphi(a,w),a)\hat\varphi(a,w) \d t, \quad \hat\varphi(a,w) \coloneqq \varphi_a + T_a^X w.
    \end{align}
    which is well-defined because $s\hat\varphi(a,w)\in B_r\subset U$ for all $(s,w,a)\in \cI^s\times \cW'\times \cA_1$. It is immediate to note that 
    \begin{align}
        \cH(s,\mu,w,a) = \begin{cases}
            s^{-1} \G(\mu, s\hat\varphi(a,w),a), & s\neq 0, \\
            \partial_\varphi \G(\mu, 0,a)\hat\varphi(a,w), & s = 0.
        \end{cases}
    \end{align}
    Moreover, $\cH(0,0,0,a)=0$ and 
    \begin{align}
        D_{(\mu,w)} \cH(0,0,0,a)(\tau,\varpi) = \tau (\partial_\mu\partial_\varphi)\G(0,0,a)\varphi_a + \partial_\varphi\G(0,0,a)T_a^X\varpi
    \end{align}
    for $(\tau,\varpi)\in \R\times\cW_0$. Note that   $D_{(\mu,w)} \cH(0,0,0,a):\R\times \cW_0 \rightarrow Y$ is an isomorphism between $\R\times \cW_0$ and $Y$. Indeed, $ \partial_\varphi\G(0,0,a)T_a^X:\cW_0\rightarrow \cR_a$ is a bounded isomorphism with bounded inverse between closed Banach spaces and the transversality condition  $(\partial_\mu\partial_\varphi)\G(0,0,a)\varphi_a \not \in \cR_a$ supplies the missing one-dimensional direction. 

    Since $\cH$, $D_{(\mu,w)}\cH\in C(\cI_s\times \cI_\mu\times \cW'\times \cA_1)$ satisfies the conditions of the Uniform Implicit Function Theorem \ref{thm:unifIFT} for $x=s$ and $y=(\mu,w)$, for $\cA'\subset \cA_1$ compact with $0\in \text{int}(\cA')$ there exist $0\in \cI_{\cA'}^s\subset \cI^s$, $(0,0)\in \mathcal{U}_{\cA'}\subset \cI^\mu \times \cW'$ and continuous functions
    \begin{align}
        (\kappa(s,a),\varpi(s,a)):\cI_{\cA'}^s\times {\cA'}\rightarrow\mathcal{U}_{\cA'}
    \end{align}
    with $(\kappa(0,a),\varpi(0,a))=0$ for all $a\in {\cA'}$ and such that
    \begin{align}
        \cH(s,\kappa(s,a), \varpi(s,a),a) =0,
    \end{align}
    for all $(s,a)\in \cI_{\cA'}^s\times {\cA'}$. Moreover, 
    \begin{align}
        \lbrace (s,\mu,w,a)\in \cI_{\cA'}^s\times \mathcal{U}_{\cA'}\times {\cA'} : \cH(s,\mu,w,a)=0 \rbrace = \lbrace (s,\kappa(s,a), \varpi(s,a),a) : (s,a)\in \cI_{\cA'}^s\times {\cA'} \rbrace
    \end{align}
    which completes the proof.
    \end{proof}

\bibliographystyle{abbrv}
\bibliography{refs}
\end{document}